\algnewcommand{\IIf}[1]{\State\algorithmicif\ #1\ \algorithmicthen}
\newcommand{\rhoh}{\widehat \rho}
\newcommand{\Ac}{\mathcal A}
\newcommand\psim{\mathcal P}
\newcommand\wconv{\rightsquigarrow}
\newcommand\Cc{\mathcal C}
\newcommand\law{\mathcal L}
\DeclareMathOperator{\mult}{Mult}
\DeclareMathOperator{\ber}{Ber}
\newcommand\Ut{\widetilde U}
\newcommand{\Xt}{\widetilde X}
\DeclareMathOperator{\bin}{Bin}
\DeclareMathOperator{\cov}{cov}
\newcommand\approxd{\stackrel{d}{\approx}}
\newcommand\Yh{\widehat Y}
\newcommand\zh{\widehat z}
\newcommand\yh{\widehat y}
\newcommand\ac{{AC}\xspace}
\newcommand\cac{{FNAC}\xspace}
\newcommand{\cacp}{{FNAC+}\xspace}
\newcommand{\scacp}{{SNAC+}\xspace}
\newcommand{\scac}{{SNAC}\xspace}
\newcommand{\cacf}{{FNAC(+)}\xspace}
\newcommand{\scacf}{{SNAC(+)}\xspace}
\newcommand{\nac}{{NAC}\xspace}
\newcommand\Ch{\hat{\mathcal C}}
\newcommand\alh{\hat \alpha}
\newcommand\Tch{\hat{\mathcal T}}
\newcommand\nh{\hat n}
\newcommand\beh{\hat{\beta}}
\newcommand\amax{a_{\max}}
\newcommand\thetamax{\theta_{\max}}
\newcommand\Fc{\mathcal F}
\newcommand\dmin{d_{\min}}
\newcommand\ph{\widehat p}
\newcommand\Delh{\widehat \Delta}
\newcommand\Th{\widehat T}
\newcommand{\dkol}{d_{\text{K}}}
\newcommand\pul{\underline{p}}
\newcommand\dav{d_{\text{av}}}
\DeclareMathOperator{\poi}{Poi}
\newcommand\dt{\widetilde d}
\newcommand\lamax{\lambda_{\max}}
\newcommand\Bc{\mathcal B}
\newcommand{\Dc}{\mathcal{D}}
\newcommand\ds{d^*}
\newcommand{\nt}{\tilde{n}}
\newcommand{\rhou}{\underline{\rho}}
\newcommand\kapt{\widetilde\kappa}
\newcommand\Gc{\mathcal G}
\newcommand\rhot{\widetilde \rho}
\newcommand\Tt{\widetilde T}
\newcommand\Gh{\widehat \Gc}
\newcommand\grad{\nabla}
\newcommand\hess{\nabla^2}
\newcommand\Ec{\mathcal E} 
\newcommand\At{\tilde A}
\newcommand\Yt{\widetilde Y}
\newcommand{\pt}{\widetilde p}
\newcommand\gh{\hat g}
\newcommand\dmax{d_{\max}}
\newcommand\alphan{\alpha_n}
\newcommand{\rhob}{\bar{\rho}}
\newcommand\omone{\omega_{n,1}}
\newcommand\omhalf{\omega_{n, 1/2}}
\newcommand\Delt{\widetilde \Delta}
\newcommand\delh{\hat \delta}
\newcommand\Hc{\mathcal H}
\newcommand\kh{{\hat k}}    
\newcommand\given{\,|\,}
\newcommand\pit{\widetilde{\pi}}
\newcommand{\Mc}{\mathcal{M}}
\newcommand\Qb{\mathbb Q}
\newcommand\Wc{\mathcal{W}}
\newcommand\Rc{\mathcal{R}}
\newcommand{\xip}{\xi'}
\newcommand{\epsnp}{\eps_n'}
\newcommand{\mt}{\widetilde{m}}
\newcommand\fvarmin{\underline{\fvar}}
\newcommand\taur{\tau_\rho}
\newcommand\taut{\tau_\theta}
\newcommand\tauc{\tau_\Cc}
\newcommand\tauh{\tau_h}
\newcommand{\Rt}{\widetilde{R}}
\newcommand\fvar{\vartheta} 
\newcommand\dpk{d_+^k}
\newcommand\Nc{\mathcal N}
\newcommand\evar{\mathbb V}
\DeclareMathOperator{\impu}{imp}
\DeclareMathOperator\miss{Mis}
\newcommand\atn{\tilde \alpha_n}
\newcommand\sigb{\bar \sigma}
\newcommand\Kn{K}
\title{Adjusted  chi-square test for degree-corrected block models} 
\author{Linfan Zhang and Arash A. Amini}
\begin{document}

\maketitle

\begin{abstract}
	We propose a goodness-of-fit test for degree-corrected stochastic block models (DCSBM). The test is based on an adjusted chi-square statistic for measuring equality of means among groups of $n$ multinomial distributions with $d_1,\dots,d_n$ observations. In the context of network models, the number of multinomials, $n$, grows much faster than the number of observations, $d_i$, corresponding to the degree of node $i$, hence the setting deviates from classical asymptotics. We show that a simple adjustment allows the statistic to converge in distribution, under null, as long as the harmonic mean of $\{d_i\}$ grows to infinity. 
	When applied sequentially, the test can also be used to determine the number of communities. The test operates on a 
	compressed version of the adjacency matrix, conditional on the degrees, and as a result is highly scalable to large sparse networks. We incorporate a novel idea of compressing the rows based on a $(K+1)$-community assignment when testing for $K$ communities. This approach increases the power in sequential applications without sacrificing computational efficiency, and we prove its consistency in recovering the number of communities. Since the test statistic does not rely on a specific alternative, its utility goes beyond sequential testing and can be used to simultaneously test against a wide range of alternatives outside the DCSBM family. In particular, we prove that the test is consistent against a general family of latent-variable network models with community structure.
	The test can also be easily applied to Poisson count arrays in clustering or biclustering applications, as well as bipartite and directed networks. We show the effectiveness of the approach by extensive numerical experiments with simulated and real data. In particular, applying the test to the Facebook-100 dataset, a collection of one hundred social networks, we find that a DCSBM with a small number of communities (say $ < 25$) is far from a good fit in almost all cases. Despite the lack of fit, we show that the statistic itself can be used as an effective tool for exploring community structure, allowing us to construct a community profile for each network.
\end{abstract}

\section{Introduction}

Network analysis has become an increasingly prominent part of data analysis as the developments in the age of the internet and in various sciences, especially life and social sciences, have produced a substantial collection of network data. Given a network, it is of interest to understand its structure, which is often done by finding communities or clusters. Probabilistic network models such as the Stochastic Block Model (SBM) \cite{Holland1983} and its variant the Degree-Corrected Stochastic Block Model (DCSBM)~\cite{Karrer2011} are commonly used to recover the community structure from network data. Both  models use a latent variable, the node label, to categorize nodes in a network into different communities. In the SBM, the probability of an edge formation between two nodes depends on the communities  they belong to. The DCSBM incorporates an additional propensity parameter to determine the edge probability, allowing heterogeneous node degrees within a community. 

The SBM and its degree-corrected variant have been the subject of intense study in recent years and numerous methods have been developed for fitting them.  A very incomplete list includes modularity maximization~\cite{Newman2004, Bickel21068}, likelihood-based approaches such as the profile likelihood~\cite{Bickel21068, zhao2012}, the pseudo-likelihood~\cite{amini2013} and the variational likelihood~\cite{daudin2008mixture, bickel2013, zhang2020variantional}, spectral methods based on the adjacency matrix \cite{rohe2011, chaudhuri2012spectral, qin2013regularized, Donniell2013, yun2014accurate, lei2015, chin2015spectral, joseph2016, abbe2016exact, zhou2019analysis}, the non-backtracking matrix \cite{Krzakala20935} and the Bethe-Hessian matrix~\cite{NIPS2014Saade}, 
semidefinite relaxations \cite{abbe2016exact, amini2018, li2018convex, fei2019SDP}, local refinements \cite{mossel2014consistency, gao2017achieving, gao2018, Lei2017generic, zhou2020optimal}, message-passing algorithms \cite{Decelle2011BP,  Zhang2014, abbe2015detection, mossel2016BP} and Bayesian approaches \cite{snijders1997estimation, hofman2008bayesian, Morup2012bayesian, suwan2016, vanderpas2018, paez2019hierarchical}. Many of these methods are based on the assumption that the number of communities $K$ is given and most come with consistency guarantees, when the data is generated from the corresponding model with $K$ communities. We refer to~\cite{Abbe2018} for a review of the theoretical limits of community detection in SBMs.

On the other hand, how well these network models fit the data, the so-called goodness-of-fit question, is studied comparatively much less. Prominent work in this area include the graphical approach of~\cite{hunter2008goodness} for general network models, and the recent work of Bickel and Sarkar~\cite{bickel2016} and its extension by Lei~\cite{Lei2016}, on a spectral goodness-of-fit test for the SBM. Developing goodness-of-fit tests specifically for the DCSBM is more challenging and to the best of our knowledge has not been considered so far, except for the work of Karwa~et~al.~\cite{karwa2016} on the related $\beta$-SBM. A related problem is that of model selection, that is, determining the number of communities assuming that the network is generated from some SBM (or DCSBM). 
An application of model selection is 
designing the stopping rule in hierarchical clustering~\cite{li2020hierarchical}.
The model selection problem has been studied more extensively and we provide an overview of the literature in Section~\ref{sec:related}. 

Compared to model selection, goodness-of-fit testing is a more general problem. When applied sequentially, such tests can also be used for model selection. However, their utility goes beyond model selection and they can be used to test against a wide range of alternatives. They also provide a quantitative and baseline-normalized measure of how well the model fits in various situations. On the other hand, the ability to simultaneously test against many alternatives can  be considered a weakness. To quote L. Breiman~\cite{breiman2001statistical}: 

\begin{quote}
    ``Work by Bickel, Ritov and Stoker (2001)~\cite{bickel2006tailor} shows that goodness-of-fit tests have very little power unless the direction of the alternative is precisely specified. The implication is that omnibus goodness-of-fit tests, which test in many directions simultaneously, have little power, and will not reject until the lack of fit is extreme.''
\end{quote}

In our experiments, we have found the opposite to be true for current network models. It is possible to construct powerful tests, without specifying the direction of the alternative, for one of the most established families of network models. For example, we demonstrate both theoretically and empirically
that the tests we develop for DCSBM are extremely powerful against a latent-variable community-structured model outside the DCSBM family (cf. Sections~\ref{sec:consist:DCLVM} and~\ref{sec:roc}). Moreover, for the majority of the real networks that we tested, the null hypothesis of a DCSBM with a small number of communities is strongly rejected (cf.~Section~\ref{sec:fb100:got:testing}). This is all the more surprising given that the DCSBM is considered the state-of-the-art in modeling real community-structured networks.

\subsection{Our contributions}
In this paper, we propose the adjusted chi-square test for measuring the goodness-of-fit of a DCSBM. The idea is as follows: Given a set of column labels, we compress the adjacency matrix by summing each row over the communities specified by the labels, a process we will refer to as column aggregation. Under a DCSBM, the rows of the compressed matrix will have a multinomial distribution, conditional on the node degrees $d_i$ (i.e., the row sums). Rows in the same (row) community will have the same multinomial parameter. Thus, the problem reduces to test whether groups of multinomials have equal means. The challenge is that the number of multinomials in each group is proportional to $n$, the total number of nodes, which grows to infinity fast, while the number of observations in each multinomial, $d_i$, grows much slower. We study this general multi-group testing problem in Section~\ref{sec:ac:test:0} and show that under mild conditions, as long as the harmonic mean $h(d_1,\dots,d_n)$ goes to infinity, a modified version of the classical chi-square statistic, which we refer to as Adjusted Chi-square (\textbf{AC}), has the standard normal distribution under the null hypothesis. 

We then extend these ideas to the analysis of networks, leading to the Network Adjusted Chi-square (\textbf{NAC})
family of tests. 
The family includes many variants depending on which subsets of the adjacency matrix are used and how the columns are aggregated. Assume that we want to test a $K$-community DCSBM. One variant of the test uses a subsampling scheme and aggregates using $K$ communities for the columns. We refer to this version as \textbf{\scac}, for subsampled NAC. We show that given a consistent set of labels, \textbf{\scac} has the standard normal distribution under null. Another variant of the test uses subsampling but aggregates using $(K+1)$-community column labels, while still using $K$-community row labels when testing for the equality of multinomials. We refer to this variant as \textbf{\scacp}. We show that \textbf{\scacp} has the same null distribution as \textbf{\scac}, but is more powerful against DCSBM alternatives in sequential applications (Section~\ref{sec:consist}).

We also develop bootstrapped versions of the tests which are more robust in practice and can be applied even when the null distribution of the test statistic is difficult to compute. Moreover, we introduce a smoothing idea that can further increase the robustness of sequential model selection.  

Our theoretical results are nonasymptotic, controlling the Kolomogrov distance of the distribution of the test statistic to the target, with explicit constants. The results are valid in the regime where the expected average degree of the network, $\lambda$, scales as $\gtrsim \log n$, hence applicable in the same sparsity regime where strong consistency (i.e., exact label recovery) is possible for DCSBMs. Our results, however, only require weakly consistent labels subject to bounds on the rate of convergence that are more relaxed than that of strong consistency. From a computational standpoint, evaluating the statistic is highly scalable, with an expected  computational overhead of $O(n(\lambda + K))$ over the cost of applying the community detection algorithm. To test a sequence of DCSBMs with $K=K_1,\dots,K_2$, the test requires an application of a community detection algorithm at most $K_2-K_1 + 2$ times.

We show the effectiveness of these ideas with extensive experiments on simulated and real networks. The code for these experiments is available at~\cite{zhang2020adjusted}. In particular, we apply the test to the Facebook-100 dataset~\cite{traud2011comparing, TRAUD2012fb}, a collection of one hundred social networks, and find that a DCSBM (or SBM) with a small number of communities (say $ < 25$) is far from a good fit in almost all cases. Despite the lack of fit, we show that the statistic itself can be used as an effective tool for exploring communities, due to its high sensitivity to block structure. Coupled with the smoothing idea,  \scacp allows us to construct a \emph{community profile} for each network, regardless of whether DCSBM is a good fit.

\subsection{Related work}\label{sec:related}
Various methods have been developed to address the model selection problem in the SBM and DCSBM. The popular Bayesian information criterion (BIC) has been adapted to the network setting in~\cite{Yan_2016_BIC, wang2017, Hu_2019_CBIC}. 
Likelihood ratio tests have been developed for comparing two block models in~\cite{yan2014model, wang2017, yuan2018likelihoodratio, ma2018}. Bayesian approaches, though computationally intensive, can estimate the structure and the number of communities simultaneously. Ideas include the use of Dirichlet process prior~\cite{paez2019hierarchical} and mixture of priors~\cite{newman2016estimating, riolo2017efficient, Geng2019baysian}.
Cross-validation, another widely used idea for model selection, has too been adapted to network settings~\cite{Kawamoto_2017, Chen2018, Li2020_ECV}. A leave-one-out scheme has been used in~\cite{Kawamoto_2017}  with the posterior predictive density of an edge, under the SBM, as the loss function. Chen and Lei~\cite{Chen2018} use a node-pair splitting idea, while~\cite{Li2020_ECV} uses edge sampling followed by low-rank matrix completion, an approach that can be applied to any low-rank network model.
A spectral approach to determining the number of communities in the SBM is explored in~\cite{le2015estimating}, based on counting the nonnegative eigenvalues of the non-backtracking and Bethe Hessian matrices.
The approach of \cite{le2015estimating} can be extended to other low-rank structured models such as DCSBM. Semidefinite programming have been shown in~\cite{yan18a} to be capable of performing label recovery and model selection in one shot. Modularity maximization can also perform the two tasks simultaneously~\cite{Newman2004}. 
 
Comparatively, the goodness-of-fit problem has been explored much less. The pioneering work of \cite{hunter2008goodness} graphically compares certain network statistics (such as degree distribution) between the observed network and a collection of networks simulated from the fitted model. This approach is quite general and can be applied to any network model, though its graphical nature makes it somewhat qualitative. The Monte Carlo simulation procedures in \cite{hunter2008goodness} have also been further exploited in other works \cite{li2013assessing, ospina2019assessment} to test the goodness-of-fit of graph models. Among them, we note that Karwa et al.~\cite{karwa2016} develops a chi-square test for SBM and uses Markov Chain Monte Carlo sampling to approximate its exact $p$-value. We make a detailed comparison with~\cite{karwa2016} in Section~\ref{sec:comparison}.  For SBMs, a spectral goodness-of-fit test was developed in~\cite{bickel2016} for the case of $K=2$ communities and subsequently extended to general $K$ in~\cite{Lei2016}. The test is based on the largest eigenvalue of a standardized residual adjacency matrix (cf. Section~\ref{sec:methods} for more details). Using results from random matrix theory~\cite{erdHos2012rigidity, lee2014necessary}, this eigenvalue has an asymptotic Tracy-Widom distribution under the null, a result that can be used to set the critical threshold. Although, we can apply the same ideas in the DCSBM setting, the null distribution result does not hold, due to the uncertainty in estimating the node connection propensity parameters. Whether a rigorous spectral goodness-of-fit test of this form exists for DCSBM is not clear. 

\medskip

The rest of the paper is organized as follows: Section~\ref{sec:ac:test:0}  introduces the adjusted chi-square test and its multi-group extension and establishes its null limiting distribution. Section~\ref{sec:nac:test} introduces NAC family of tests. Section~\ref{sec:analysis} establishes the null limiting distribution of \scac and \scacp and Section~\ref{sec:cons} shows their consistency against underfitted DCSBM and a latent-variable network block model. 
Section~\ref{sec:simulations} demonstrates the competitive performance of NAC tests, as a model selection method, compared to other state-of-the-art approaches. 
In Section~\ref{sec:fb100:got:testing}, we illustrate how  \scacp can be used to assess the goodness-of-fit for an ensemble of real networks, namely the Facebook-100 dataset. Section~\ref{sec:comm:profile} discusses how smoothed \scacp can be used to build community profiles of real networks. 

\section{Adjusted chi-square test}\label{sec:ac:test:0}

We start by developing a general test for the equality of the parameters among groups of multinomial observations. To set the ideas, we first consider the case of a single group and show how the classical chi-square test can be adjusted to accommodate a growing number of multinomials. We then discuss the multi-group extension and provide quantitative bounds for the null distribution of the test statistic in this general setting.

\subsection{Single-group case}
\label{sec:ac:test}
Let $\psim_L$ be the probability simplex in $\reals^L$, and 
consider the following problem: We have 
\begin{align}\label{eq:mult:model}
X_i \sim \text{Mult}(d_i,p^{(i)}), \quad i=1,\dots,n,
\end{align}
independently, where $X_i = (X_{i\ell}) \in \mathbb{N}^L$ and $p^{(i)} \in \psim_L$, and we would like to test the null hypothesis 
\begin{align}\label{eq:mult:null}
H_0: \; p^{(1)} = p^{(2)} = \dots = p^{(n)}= p.
\end{align}
Let $\psi(x,y) := (x-y)^2/y$.
The chi-square statistic for testing this hypothesis is 
\begin{align*}
\Yt^*_{(n,d)} := \sum_{i=1}^n \sum_{\ell=1}^L 
\psi\big(X_{i\ell}, d_i \pt_{\ell}\big), \quad 
\text{where}\quad \pt_\ell = \frac{\sum_{i=1}^n X_{i\ell}}{ \sum_{i=1}^n d_i}, \; \ell \in [L].
\end{align*}
Here, 
$\pt = (\pt_\ell) \in \psim_L$ is the pooled estimate of $p$ under the null, and $d = (d_1,\dots,d_n)$. We are also using the shorthand notation $[L] := \{1,\dots,L\}$.

Standard asymptotic theory gives the following %
(cf. Chapter 17 in~\cite{vaart_1998}): 
If $n$ is fixed and $\dmin := \min_i d_i \to \infty$, then,
\begin{align}\label{eq:Y:chisq:conv}
\Yt^{*}_{(n,d)} \wconv \chi^2_{(n-1)(L-1)}, \;\text{under $H_0$}.
\end{align}
A heuristic for the degrees of freedom of the limiting $\chi^2$ distribution can be given by counting parameters.
In the unrestricted model, we have a total of $n(L-1)$ free parameters among $p^{(1)},\dots,p^{(n)}$, while under the restricted null model, we only have $L-1$ free parameters. The difference gives the degrees of freedom of the limit.

The setting we are interested in, however, is the opposite of the classical setting. We would like to use the statistic when $n \to \infty$, while $\dmin$ is fixed or grows slowly with $n$. Assuming that $n$ is large enough so that $(n-1)(L-1) \approx n(L-1)$, \eqref{eq:Y:chisq:conv} suggests that we can approximate $\Yt^*_{(n,d)}$ in distribution by the sum of $n$ independent $\chi^2_{L-1}$ variables, that is,
\[
	\Yt^*_{(n,d)} \approxd 
	\sum_{i=1}^n \xi_i
\]
for some i.i.d. random variables $\xi_i \sim \chi^2_{L-1}$. The approximate inequality above is only in distribution and $\{\xi_i\}$ are not necessarily related to $\Yt^*_{(n,d)}$. Moreover, the central limit theorem suggests that the standardized version of $\sum_{i} \xi_i$ has a distribution close to a standard normal.

Based on the above heuristic argument, we propose the following adjusted test statistic:
\begin{align}
\Tt^*_n = \frac{1}{\sqrt{2}}\Big(\frac{\Yt^*_{(n,d)}}{\gamma_n} - \gamma_n\Big), \quad \text{where} \; \gamma_n = \sqrt{n(L-1)}.
\label{equ:CCstat}
\end{align}
Note that $\gamma_n^2$ is the expectation of $\sum_i \xi_i$ and $\sqrt{2}\gamma_n$ is its standard deviation. We refer to~\eqref{equ:CCstat} as the \emph{adjusted chi-square} (\textbf{\ac}) statistic. 
\begin{rem}\label{rem:ac:paper}
	The name adjusted chi-square has appeared in the literature in contexts completely different from our work. For example, adjustments to the chi-square statistic to account for the dependence of individuals have been proposed by Reed~\cite{reed2004adjusted} in randomized cluster trials, and by Jung et al.~\cite{jung2001evaluation} and Ahn et al.~\cite{ahn2002application} in  observational studies. 
\end{rem}

\subsection{Multi-group extension}  \label{sec:multi:group}
Before proceeding, let us introduce an extension of the testing problem~\eqref{eq:mult:null} to groups of observations. This extension is needed for the network applications. Consider model~\eqref{eq:mult:model} and assume that each observation is assigned to one of the $K$ known groups, denoted as $[K] = \{1,\dots,K\}$. Let $g_i \in [K]$ be the group assignment of observation $i$ and let $\Gc_k = \{i \in [n]:\; g_i = k\}$ be the $k$th group. We would like to test the null hypothesis that all the observations in the same group have the same parameter vector, that is,
\begin{align}\label{eq:group:null}
	H_0:\; p^{(i)} = p_{k*}, \; \forall i \in \Gc_k, \; k \in [K],
\end{align}
where for each $k \in [K]$, $p_{k*} = (p_{k\ell})_{\ell \in [L]} \in \psim_L$. 

In some problems, it is reasonable to assume that the groups $\Gc_k$ are known. However, in our network applications, the groups themselves are not known. In such	settings, we first estimate the label vector $g$ from data, to obtain $\gh$, and then form the test statistic based on the estimated groups $\Gh_k = \{i : \gh_i = k\}$. The resulting test is based on the extended chi-square statistic
\begin{align}
    \Yh_{(n,d)} = \sum_{k=1}^K \sum_{i \in \Gh_k} \sum_{\ell = 1}^L \psi( X_{i\ell}, d_i \ph_{k\ell})
    \quad\text{where}\quad
    \ph_{k\ell} = \frac{\sum_{i \in \Gh_k} X_{i\ell}}{ \sum_{i \in \Gh_k} d_i}, \; \ell \in [L].
\end{align}
Alternatively, we have $\Yh_{(n,d)} = \sum_{i=1}^n \sum_{\ell=1}^L \psi( X_{i\ell}, d_i \ph_{\gh_i\ell})$. We also let $Y_{(n,d)}$ be the idealized version of $\Yh_{(n,d)}$ with $\ph_{k\ell}$ replaced with $p_{k\ell}$ and $\Gh_k$ replaced with $\Gc_k$.
Let $\Th_n$ and $T_n$ be the adjusted chi-square statistics based on $\Yh_{(n,d)}$ and $Y^{(n,d)}$, respectively, that is,
\begin{align}\label{eq:Tn}
	\Th_n = \frac{1}{\sqrt{2}} \Big( \frac{\Yh_{(n,d)}}{\gamma_n} - \gamma_n\Big), \quad 
    T_n = \frac{1}{\sqrt{2}} \Big( \frac{Y_{(n,d)}}{\gamma_n} - \gamma_n\Big).
\end{align}
We are interested in understanding under what conditions $\Th_n$ has an approximately normal null distribution. This question is nontrivial, since we would like to allow $\{d_i\}$ as well as groups sizes $|\Gc_k|, k \in [K]$ to 
vary with $n$. Moreover, we would like to allow the groups to be estimated based on the same data we use for testing, in which case, $\gh$ and $\Th_n$ are most likely statistically dependent. 

We give a precise answer to the above question by quantifying the Kolomogorv distance between the distribution of $\Th_n$ and that of a standard normal variable $Z$, for any choice of $\{d_i\}$ and $\{|\Gc_k|\}$ that satisfy a mild set of conditions, and for consistent label estimates of a certain quality. We measure the quality of label estimation in terms of misclassification rate:
\begin{defn}
    The misclassification rate between two label vectors $g \in [K]^n$ and $\gh \in [K]^n$ is 
    \begin{equation*}
        \miss(g, \gh) = \min_{\omega} \frac{1}{n}\sum_{i = 1}^n 1\{g_i \neq \omega(\gh_i)\}
    \end{equation*}
    where the minimization ranges over all bijective maps $\omega: [K] \rightarrow [K]$.
\end{defn}

Recall that for two random variables $X$ and $Y$, the Kolomogrov distance between their distributions is defined as
\begin{align}
	\dkol(X, Y) := \sup_{t \in \reals} \big|\pr(X \le t) - \pr(Y \le t)\big|.
\end{align}
For a vector $d = (d_1,\dots,d_n)$, we write
$
h(d) = \big( n^{-1} \sum_{i=1}^n d_i^{-1} \big)^{-1}
$
for the harmonic mean of its elements, and $\dav = n^{-1} \sum_{i=1}^n d_i$ for the arithmetic mean. Since $d$ has positive elements, $\dav \ge h(d) \ge \dmin := \min_i d_i$. Let $\pi_k = |\Gc_k|/ n$ and write $\dav^{(k)} = \frac1{|\Gc_k|} \sum_{i \in \Gc_k} d_i$ for the arithmatic average of  $\{d_i\}$ within group $\Gc_k$, and define
\begin{align}\label{eq:omegan:dmax:taud}
    \omega_n := \min_k \pi_k \dav^{(k)}, \quad d_{\max} := \max_{i} d_i, \quad \tau_d := \omega_n/\dmax
\end{align}

The following result formalizes the heuristic argument of Section~\ref{sec:ac:test}, by providing a quantitative finite-sample bound on the Kolomogrov distances of $T_n$ and $\Th_n$ to a standard normal variable:
\begin{thm}\label{thm:CCtest:null}
	Let $X_i \sim \mult(d_i, p_{k*}),\, i \in \Gc_k, k \in[K]$ be $n$ independent $L$-dimensional multinomial variables, with probability vectors $p_{k*} = (p_{k\ell})$ and group labels $g = (g_i) \in [K]^n$ so that $\Gc_k = \{i: g_i = k\}$. 
	Let $\gh$ be some (estimated) group labels, potentially dependent on $\{X_i\}$ and consider $\Th_n$, based on $\gh$, and $T_n$ as in~\eqref{eq:Tn}. Let  $Z \sim N(0,1)$ and $\pul = \min_{k,\ell} p_{k\ell}$. Assume that $\min\{h(d),L\} \ge 2$.

\begin{enumerate}[(a)]
	\item \label{part:a:thm:CCtest:null} Then, under the null hypothesis~\eqref{eq:group:null}, for all $n \ge 1$,
	\begin{align}
		\dkol(T_n, Z) \le\frac{ C_{1,p}}{\sqrt{L n}}+  \frac{C_{2,p}}{h(d)}
    	\label{eq:Tn:Z:bound}
	\end{align}
	where $C_{1,p} = 55 / \pul^{4}$ and $C_{2,p} = (\pi e)^{-1/2} \max\{1, \pul^{-1} - L-1 \}$.

	\smallskip
	\item \label{part:b:thm:CCtest:null}
	Let $C_{3,p} = 6 /(\pul \, \tau_d)$ and pick a sequence $\{\alpha_n\}$ such that 
	\begin{align}\label{eq:alphan:up:bound:thm1}
		\alphan \le \min\Bigl\{\frac{\pul}{8C_{3,p}},\  \frac{2}{C_{3,p}^2L} \Bigr\}, \quad \text{for all}\; n \ge 1.
	\end{align}
	Assume that $\sqrt{2} \dmax \ge L C_{3,p}$,
	$\omega_n \ge L$ and $\log (K \omega_n) / \omega_n \le (\pul/8)^2 n$. Then, under the null hypothesis~\eqref{eq:group:null}, for all $n \ge 1$,
	\begin{align}
		\dkol(\Th_n, Z) &\le \dkol(T_n, Z) + 12 \frac{\sqrt{L}}{\pul}\Bigg( \sqrt{\frac{ \log (K\omega_n)  }{\omega_n}} + \frac{ K \log (K\omega_n) }{\sqrt{n}} \notag \\
		&\quad + \frac{C_{3,p}}{3L}\dmax \sqrt{K n}\, \alphan \Bigg) 
		+ 2 \pr \bigl(\miss(\gh, g) >  \alphan \bigr).\label{eq:Thn:Tn:bound}
	\end{align}
\end{enumerate}
	
\end{thm}
Note that we always have $\pul^{-1} \ge L$ since the elements of $p_{k*}$ are nonnegative and sum to one. In the proof of Theorem~\ref{thm:CCtest:null}, we will show that $\ex[Y_{(n,d)}] = \gamma_n^2$. But the standard deviation $v_n(p) := \sqrt{\var[Y_{(n,d)}]}$ has a more complicated form and is not equal to $\sqrt 2 \gamma_n$ in general. The proof gives an explicit expression for this variance, and we could have alternatively defined $\Th_n$ by dividing by $v_n(\ph)$ instead of $\sqrt{2} \gamma_n$. Nevertheless, Theorem~\ref{thm:CCtest:null} shows that we do not lose much by using the simpler standardization by $\sqrt{2} \gamma_n$.

In general, for $T_n$ to converge  in distribution  to the standard normal, we need $n \to \infty$ and $h(d) \to \infty$. For $\Th_n$ to converge to the normal distribution, we further need $\omega_n  \to \infty$, $K\log(K\omega_n) = o(\sqrt{n})$, 
 \begin{align}\label{eq:alphan:req:thm1}
	\alphan = o((\dmax \sqrt{n})^{-1}),  \quad 
\pr \bigl(\miss(\gh, g) > \alphan \bigr) = o(1).
 \end{align}
 Note that  $\log (K \omega_n) / \omega_n \le (\pul/8)^2  n$ and 
\eqref{eq:alphan:up:bound:thm1} are satisfied for large $n$, as long as $\pul$ is bounded away from zero. The assumption $\sqrt{2} \dmax \ge  C_{3,p}L$ also holds since $\dmax \ge h(d)$ and we require $h(d) \to \infty$. 
 
 As we will see, in network applications, typically $K$, $L$ and $\pul$ are  of constant order. Then, the requirements reduce to~\eqref{eq:alphan:req:thm1}, $h(d) \to \infty$, $\omega_n \to \infty$ and $\log(\omega_n) = o(\sqrt n)$. The condition $h(d) \to \infty$ is fairly mild in network applications, since $d_i$ will be the degree of node $i$, and one often assumes that the network degrees grow to infinity as $n \to \infty$ (a necessary condition for weak label consistency). See also the empirical evidence in Appendix~\ref{app:deg:grow}. Even if one does not want to assume $h(d) \to \infty$ over the whole network, the condition can still be reasonably achieved by manually filtering out nodes with small $d_i$, as will be discussed in detail in Section~\ref{sec:nac:test}.

 Since, in networks, $\dmax$ grows much slower than $\sqrt n$ (closer to $\log n$ in fact), Condition~\eqref{eq:alphan:req:thm1} on the misclassification rate $\alpha_n$ is, in general, much milder than strong consistency which is equivalent to $\alpha_n = o(n^{-1})$. In the network setting, it is typical to assume that all the degrees grow at the same rate, in which case, $h(d) \asymp \dmax \asymp \omega_n$, and we obtain the convergence rate 
\begin{align*}
    \dkol(\Th_n, Z) \lesssim \sqrt{\log \omega_n / \omega_n} + 
	 \alphan \omega_n \sqrt{n} 
	+ \pr \bigl(\miss(\gh, g) > \alphan \bigr).
\end{align*}

\section{Network AC test} \label{sec:nac:test}
We are now ready to apply the \ac test to DCSBMs. Let $A_{n\times n}$ be the adjacency matrix of a random network on $n$ nodes. A DCSBM with connectivity matrix $B \in [0,1]^{K \times K}$, node label vector $z = (z_i) \in [K]^n$ and connection propensity vector $\theta = (\theta_i)\in \reals_+^n$, assumes the following structure for the mean of $A$,
\begin{align}\label{eq:dcsbm:mean:matrix}
	\ex[ A_{ij} \mid z] = \theta_i \theta_j B_{z_i z_j}, \quad \forall \; i\neq j.
\end{align}
One further assumes that $A$ is symmetric and the entries $A_{ij}, i < j$ are drawn independently, while $A_{ii} = 0$ for all $i$. %
Common choices for the distribution of each element, $A_{ij}$, are Bernoulli and Poisson. In this paper, unless otherwise stated, we assume the Poisson distribution for derivations, following the original DCSBM paper~\cite{Karrer2011}. The Poisson assumption simplifies the arguments and  provides computational advantages. We show in simulations that the tests so-derived work well in the Bernoulli case when the network is sparse. The  SBM is a special case of~\eqref{eq:dcsbm:mean:matrix} with $\theta_i = 1$ for all $i$.

\subsection{\nac family of tests}\label{sec:nac:family}
 The network AC test can be performed on a general submatrix $A_{S_2 S_1} = (A_{ij}: i \in S_2, j \in S_1)$ of the adjacency matrix, for $S_1, S_2 \subseteq [n]$.
 We first present this general form, though one can assume $S_1 = S_2 = [n]$ on the first reading. Consider another label vector on $S_1$, say $\yh = (\yh_j)_{j \in S_1} \in [L]^{S_1}$---for some $L$ that can be different from $K$. Let $R = (R_{k\ell}) \in \reals_+^{K \times L}$ be  the weighted confusion matrix between $z_{S_1}$ and $\yh$, given by
\begin{align}\label{eq:R:def}
	R_{k\ell} = \frac1{|S_1|} \sum_{\ j \in S_1} \theta_j 1\{z_j = k, \yh_j = \ell \}.
\end{align}
Consider the column aggregation of $A_{S_2 S_1}$ w.r.t. $\yh$, defined as $X = (X_{i\ell}) \in \reals_+^{|S_2| \times L}$, with
\begin{align}\label{eq:X:def}
	X_{i\ell}(\yh) = \sum_{j \in S_1} A_{ij} 1\{\yh_j = \ell \}.
\end{align}
Assuming that $\yh$ is deterministic, we have
\begin{align*}
	\ex[X_{i\ell}(\yh)] = \sum_{j \in S_1} B_{z_i z_j} \theta_i \theta_j 1\{\yh_j = \ell \} &=  \theta_i \sum_{k=1}^{K} B_{z_i k}  \sum_{j \in S_1}  \theta_j 1\{z_j = k, \yh_j = \ell \} \\
	&= |S_1| \, \theta_i (B R)_{z_i \ell}.
\end{align*}
Let $d_i = \sum_{j \in S_1} A_{ij}$ be the degree of node $i$ in $S_2$. Under the Poisson model, $(A_{ij}, j \in S_1)$ is a vector of independent Poisson cooridnates. It is well-known that such a vector has a multinomial distribution conditional on the sum of its entries. That is,
\begin{align}\label{eq:Xis:distn}
	X_{i *}(\yh) \mid d_i \sim \mult(d_i, \rho_{z_i *}),
\end{align}
where $\rho_{z_i*}$ denotes the $z_i$th row of $\rho = (\rho_{k\ell})\in [0,1]^{K \times L}$, defined as
\begin{align}\label{eq:rho:def}
	\rho_{k \ell} = \frac{(B R)_{k\ell}}{ \sum_{\ell'} (B R)_{k\ell'}}.
\end{align}
In other words, 
conditioned on the degree sequence $d=(d_i, i \in S_2)$, all the rows of $X$ corresponding to $z$-community $k$, have multinomial distributions with probability vector $\rho_{k *}$. This observation allows us to apply the \ac test developed in Section~\ref{sec:multi:group}, to test whether all the rows with $z_i = k$, have the same multinomial distribution. 

Now, consider two estimated label vectors $\zh = (\zh_i) \in [K]^{n}$ and $\yh = (\yh_i) \in [L]^{S_1}$. Let $\Ch_k = \{i \in [n]: \zh_i = k\}$, $\Gh_k = \Ch_k \cap S_2$ 
 and $\nt = |S_2|$. Consider the multi-group version of the AC statistic based on $\zh$ and $\yh$:
 \begin{align}\label{eq:ncac:Tn:def}
 	 \Th_n = \frac1{\sqrt{2}} \Big( \frac1{\gamma_{\nt}}
 	 \sum_{k=1}^K \sum_{i \in \Gh_k} \sum_{\ell = 1}^L 
 	 \psi\big( X_{i\ell}(\yh), d_i \rhoh_{k\ell}\big)
 	- \gamma_{\nt}\Big)
 \end{align}
 where $\gamma_{\nt} = \sqrt{\nt(L-1)}$ and
 \begin{align}\label{eq:rhoh:def}
 	\rhoh_{k\ell} = \frac{\sum_{i \in \Gh_k} X_{i\ell}(\yh)}{ \sum_{i \in \Gh_k} d_i}, \;\; k \in [K],\, \ell \in [L].
 \end{align}
 
The above construction specifies a family of test statistics, depending on the choices of label vectors $\zh$ and $\yh$, and subsets $S_1$ and $S_2$. We refer to this family, as the NAC family of tests. The acronym \nac stands for Network Adjusted Chi-square, since the test is the natural extension of the adjusted chi-square test, introduced earlier, to networks.

\subsection{Full version}\label{sec:FNAC}
We now single out two specifc members of the NAC family. Let $S_1 = S_2 = [n]$ and consider the following choices for $\zh$ and $\yh$:
\begin{enumerate}
	\item \textbf{\cac}: $\yh = \zh$ and $\zh$ is an estimated label vector with $K$ communities,
	\item \textbf{\cacp}: $\zh$ and $\yh$ are estimated label vectors with $K$ and $L = K+1$ communities.
\end{enumerate}
The acronym \cac stands for Full NAC, where ``full'' refers to the choice $S_1 = S_2 = [n]$. 
There are two main reasons for introducing the \cacp version with $L= K+1$ column communities. First, \cac only works when $K \ge 2$; when $K = L = 1$, \eqref{eq:Xis:distn} leads to a noninformative statistic for \cac, because, then, $X_{i*} = d_i$ almost surely, conditioned on $d_i$. \cacp on the other hand still produces an informative statistic when $K=1$.
Second, the choice $L = K+ 1$ makes \cacp especially powerful in determining the number of communities by sequential testing from below, as we discuss extensively in Section~\ref{sec:consist}.

\begin{rem} \label{rem:biclust}
The NAC family of tests are easily applicable to non-square and nonsymmetric adjacency matrices, 
with potentially unequal number of communities or clusters for the rows and columns. In particular, they can be used to test directed or bipartite DCSBMs or SBMs. In addition, they can be easily applied if the cluster structure of one side is known but not the other. For example, they can be used for model selection and goodness-of-fit testing in problems involving clustering and biclustering of  (Poisson) count arrays, a common task in contemporary bioinformatics~\cite{anders2010differential}. More specifically, the biclustering problem on a Poisson count array corresponds to having an array $A_{n \times m} = (A_{ij})$, where 
\[
    A_{ij} \sim \poi(B_{z_i y_j}), 
\]
independently across $i \in [n]$ and $j \in [m]$. Here $z =(z_i) \in [K]^n$ and $y = (y_j) \in [L]^m$ are the unknown clusters of rows and columns, respestively. The goal of biclustering is to recover estimates of $z$ and $y$, hence simultaneously clustering rows and columns of $A = (A_{ij})$, given only an instance of $A$. It is clear from Section~\ref{sec:nac:family}, that an \nac test with $K$ and $L$ matching the number of row and column communities, resepectively, 
is immediately applicable in this case. In this paper, we focus on the symmetric DCSBM for simplicity. All the results hold in the general nonsymmetric case as well, with suitable modifications.
\end{rem}

\subsection{Subsampled version}
\label{sec:sub:cac}
The asymptotic null distribution of the full version statistics, \cac and \cacp, can be complicated. There are two main obstacles in applying Theorem~\ref{thm:CCtest:null} to these statistics. First, although the theorem allows for the dependence of $\zh$ on the entire adjacency matrix $A$, as long as it converges to the true label vector $z$, it cannot directly handle the dependence of $\yh$ on the entire $A$. Because then, $X_{i*}(\yh)$ will be formed by summing elements of $A_{i*}$ (the $i$th row of $A$) over subsets of the columns that depend on $A_{i*}$ itself. This dependence between $\yh$ and $A$ is algorithm-specific, that is, itself depends on the particular community detection algorithm used, leading to an unknown 
deviation of the distribution 
of individual $X_{i\ell}(\yh)$ from a Piosson. Moreover, the joint dependence of $\yh$ and $A$ induces an algorithm-specific joint distribution on $(X_{i*}(\yh), i \in [n])$ which is hard to characterize for interesting algorithms such as spectral clustering.   These issues are resolved if we assume $\yh = z$ w.h.p., which holds if the algorithm is strongly consistent, but this can only happen for \cac; in the case of \cacp, we always estimate with one more community relative to the truth, and the breaking of at least one true community causes an unknown skewness in the distribution of the resulting partitions; imagine bisecting an Erd\"{o}s-R\'{e}nyi (ER) network, resulting in two subnetworks that are more clustered  than a typical ER network.


The second obstacle is the symmetry of $A$ which makes $X_{i*}(\yh)$ and $X_{j*}(\yh)$ (mildly) dependent through the shared element $A_{ij} = A_{ji}$, even when $\yh = z$, and hence applies to both \cac and \cacp.

To circumvent the above obstacles, we introduce a particular subsampling scheme which provides several advantages. It takes care of the dependence issues, making the results independent of the community detection algorithm used. It also allows us to state unified results that apply regardless of the choice of $L$, hence the same results will be applicable to both \scac and \scacp.  Moreover, as we will show, by using the scheme, we avoid the assumption $\yh =z$. In fact, we no longer even need $\yh$ to be consistent for $z$ for the results to go through. Finally, it allows us to implement a further degree filtering step which potentially improves the growth rate of the harmonic mean of the remaining degrees, $h(d)$, making the assumption $h(d) \to \infty$ easier to satisfy in practice.

The scheme is detailed in Algorithm~\ref{alg:snac}. It involves a sampling step so that: a) $\yh$ no longer depends on the entries of $A$ needed to be summed; b) the symmetry is broken. It also has a filtering step to leave out nodes with small degrees, so that $h(d)$ is large.

\begin{algorithm}[t]
    \linespread{1.25}\selectfont 
    \begin{algorithmic}[1]
      
        \Require  Adjacency matrix $A$, number of row clusters $K$, number of column clusters $L \in \{K, K+1\}$, degree-filtering threshold $\sigma \in [0,1)$. Critical threshold $\tau > 0$.
        
        \Ensure Test statistic $\Th_n$ and whether null is rejected.
        
        \smallskip
        \State Fit $K$ clusters to the whole network 
        to get labels $\zh \in [K]^n$ and clusters $\Ch_k = \{i: \zh_i = k\}$.
        
        \State (\textbf{Sampling}) Choose a subset $S_1 \subset [n]$ by including each index $i \in [n]$, independently, with probability $1/2$. Let $S_2 = [n] \setminus S_1$ be the complement of $S_1$.
        
        \State Fit $L$ clusters to $A_{S_1 S_1} = (A_{ij}: i,j \in S_1)$, to learn the label vector $\yh$ on $S_1$.\label{step:yh}
        
        \State Form partial degrees $d_i := \sum_{j \in S_1} A_{ij}$ for all $i \in S_2$.
        
        \State (\textbf{Quantile filtering}) Within each $\Gh_k = \Ch_k \cap S_2$, keep nodes with $d_i$ at least the $\sigma$-th quantile of all $d_i$ in $\Gh_k$ to form $\Gh'_k$. Let $S'_2 = \bigcup_{k=1}^K \Gh'_k$. \label{step:filt}
        
	    \State Perform the test on $A_{S'_2 S_1^{}}$ using row labels $\zh_{S'_2}$ and column labels $\yh$ from Step~\ref{step:yh} to form $\Th_n$ as in~\eqref{eq:ncac:Tn:def} and reject the null if $\Th_n > \tau$.
    \end{algorithmic}
    \caption{\scacf}
    \label{alg:snac}
\end{algorithm}


We refer to Algorithm~\ref{alg:snac} as  subsampled NAC, or \scac for short, when $L = K$ and as \scacp when $L = K+1$.
 Note that step 5, the quantile filtering, can be skipped if the degrees are mostly large or we do not insist that the normal approximation to the null distribution hold. In the latter case, we can use the bootstrap debiasing of Section~\ref{sec:boot} to determine the critical region. In such cases, we set $S'_2 = S_2$ (equivalently $\sigma = 0$) and perform the test on $A_{S_2 S_1}$.

 \begin{rem}[On notation]\label{rem:notation:plus}
	In the sequel, we often state results that apply to either of \scac or \scacp. We will use the notation \textbf{\scacf} to mean the statement holds for either version. Similarly \textbf{\cacf} refers to either of \cac or \cacp.
\end{rem}

In Section~\ref{sec:analysis}, we show that, under the null model, the distributions of the test statistics of \scacf are close to a standard normal. Furthermore, we show that they are large when the model is underfitted, i.e., the presumed number of communities is smaller than that of the true model, with \scacp often being much larger than \scac. 
We also show that under DCLVM, a latent variable network block model, \scacf values are large. Such properties allow us to use \scacp for assessing the goodness-of-fit of DCSBM or SBM to an observed network and to determine the number of clusters in community detection.

\subsection{Bootstrap debiasing}
\label{sec:boot}
Per our discussion above, without subsampling, the full version statistics, \cacf, do not have a standard normal null distribution in general. However, they are expected 
to produce more powerful tests since
they utilize 
all the information in the network. As a result, they are great choices in practice if we can approximate their null distribution. 
The remedy is to use bootstrap simulation to determine their critical regions. 
In addition, bootstrap can correct deviations of the null distribution of~\scacf from the standard normal when some of the underlying assumptions fail to hold; see Remark~\ref{rem:bern}.

 Given adjacency matrix $A$, the null hypothesis that the number of communities is $K$, and the test statistic $\hat{T} = \hat{T}(A)$, the bootstrap debiasing is performed as follows:
\begin{enumerate}
	\item Fit a $K$-community SBM to $A$ and get label estimates $\hat{z}$ and connectivity matrix $\hat{B}$.
	\item For $j=1,\dots,J$, sample $A^{(j)} \sim \text{SBM}(\hat{z}, \hat{B})$ and evaluate the test statistic $\hat{T}^{(j)}$ based on $A^{(j)}$.
	\item Construct the debiased statistic $\hat{T}^{(\text{boot})} = (\hat{T} - \hat{\mu})/\hat{\sigma}$ where $\hat{\mu}$ and $\hat{\sigma}$ are the sample mean and the standard deviation of $\{\hat{T}^{(j)}\}_{j = 1}^J$.
\end{enumerate}
Note that we sample from SBM instead of DCSBM. To simulate from DCSBM, one has to estimate $(\theta_i)_{i=1}^n$, which cannot be done consistently, and whose estimates are highly variable. As a result, generating from an estimated DCSBM adds extra variance and produces samples that are actually further from the original network than those produced by the SBM fit. We also note that the distribution of our statistics are invariant to degrees, making SBM generation further justified. 

The test rejects for large values of $\hat{T}^{(\text{boot})} $ (or $|\hat{T}^{(\text{boot})}|$), with the threshold set, assuming that $\hat{T}^{(\text{boot})}$ has (approximately) a standard normal distribution under null. A similar idea is used in~\cite{Lei2016} for the spectral test. An alternative to debiasing is to use the empirical quantiles of $\{T^{(j}\}$ to set the critical threshold. We, however, found that the debiasing approach performs better in practice. See Appendix \ref{sec:boot:comp} for a detailed discussion and comparison of all the bootstrap methods in a simulation setting.

\subsection{Model selection}
A goodness-of-fit test can also be used as a model selection method, through a process of sequential testing. In particular, we can use \cacf (with bootstrap debiasing) and \scacf statistics to determine the number of communities when fitting DCSBM models.

The idea is to test the null hypothesis of $K$ communities, starting with $K =K_{\min}$, which is usually taken to be 1, and increasing $K$ to $K +1$ if the null is rejected. The process is repeated until we can no longer reject the null or a preset maximum number of communities, $K_{\max}$, is reached. The value of $K$ on which we stop is selected as the optimal number of communities. We refer to this procedure as \emph{sequential testing from below}. There is also the possibility of starting at $K=K_{\max}$ and working backwards. Testing from below is, however, more advantageous, especially if one expects a small number of communities a priori.

The rejection thresholds for  
\scacf can be determined based on the standard normal distribution. For 
\cacf, we need to apply the bootstrap debiasing of Section~\ref{sec:boot} before comparing the statistic with the threshold. Theorem~\ref{thm:consist} provides a theoretical guarantee for the consistency of the sequential testing from below, when 
\scacf is used. An empirical comparison of the model selection performance of this approach, with existing methods, is provided in Section~\ref{sec:model_select}.

\section{Null distribution}\label{sec:analysis}
We now derive the null distribution of~\scacf. We consider a DCSBM with $K_0$ true community, and the edge probability matrix $B = (\nu_n / n) B^0$ where $\nu_n$ is a scaling factor and $B^0$ satisfies
\begin{align}\label{assum:B}
	\min_{k,\ell} B_{k\ell}^0 \ge \tau_B \cdot \max_{k,\ell} B_{k\ell}^0.
\end{align}
Let $\Cc_k = \{i \in [n] : z_i = k\}$ be the true community $k$. We assume that
\begin{align}\label{assum:nk:theta}
	n_k := |\Cc_k| \ge \tau_\Cc \,n, \quad \theta_i \ge \tau_\theta \cdot \max_i \theta_i
\end{align}
for all $k \in [K_0]$ and $i \in [n]$. Here, $\tau_B, \tau_\Cc$ and $\tau_\theta$ are in $(0,1]$ and measure the deviation of the corresponding parameters from being balanced. 
To make $\nu_n$ identifiable, we further assume without loss of generality that $\infnorm{B^0} := \max_{k,\ell} B_{k\ell}^0 = 1$ and $\infnorm{\theta} := \max_i \theta_i = 1$. We require the following on the community detection algorithm:
\begin{assum}\label{assum:DCSBM}
    The community detection algorithm %
    applied with $K$ communities to the 
     DCSBM described above, producing labels $\{\zh_i\}$, satisfies:
    \begin{enumerate}[label=(\alph*)]
        \item \label{assume:weak:consist} Weak consistency: When $K = K_0$, there is a sequence $\alphan = o(1)$ such that $\pr \bigl(\miss(\zh, z) \le  \alphan \bigr) = 1- o(1)$.
        \item \label{assume:stability} Stability:
        For 
        $K \in [K_0+1]$,
        we have $|\{i: \zh_i= k\}| \ge \tau_0 n$ for all $k \in [K]$.
    \end{enumerate}
\end{assum}

Assumption~\ref{assum:DCSBM}\ref{assume:weak:consist}, known as the weak consistency or partial recovery, allows us to focus on the event where $\zh$ is close to $z$, the true label vector. As long as $\nu_n \to \infty$, there are algorithms that can achieve this~\cite{Abbe2018}. We, in fact, need $\alpha_n$ in Assumption~\ref{assum:DCSBM}\ref{assume:weak:consist} to go down faster than $o(1)$, but still much slower than what is needed for exact recovery (or strong consistency); see the discussion after Theorem~\ref{thm:null:dist:ncac}.  The growth rate of $\nu_n$ is roughly that of the  expected average degree (EAD) of the network, assuming that $B^0$, $\{n_k / n\}_k$ and the distribution of $\{\theta_i\}$ are roughly constant.


Assumption~\ref{assum:DCSBM}\ref{assume:stability} is even milder, and ensures that the algorithm does not produce extremely small communities when applied with $K \neq K_0$. It can be guaranteed by explicitly enforcing it in the algorithm: If the size of a recovered community is too small relative to $n$, we merge it with another community. Whether a specific community detection algorithm satisfies this condition automatically without explicit enforcement is an interesting research question. 

Recall $\sigma$, the threshold in step~\ref{step:filt} of Algorithm~\ref{alg:snac}, and let $\sigb := 1-\sigma$.
To state further assumptions, we define the following constants:
\begin{align}
	\label{eq:const:defs}
	c_1 &:=  
	\frac{\sigb \tau_\Cc}{5K_0},\quad  C_1:= \tau_\theta^2 \tau_\Cc \min_h \norm{B^0_{h*}}_1,
	\\
	\tau_a &:= \tau_\theta \tau_B \tau_\Cc, \quad 
	 \tau_\rho := \tau_\theta \tau_B\tau_{0}. 
	 \label{eq:c1:taua:taurho:def}
\end{align}
where $\tau_0$ is the constant in Assumption~\ref{assum:DCSBM}\ref{assume:stability}.
Let $\beta_n = \log [ (3/4) K_0^2\nu_n]$. We make the following assumptions:
\begin{align}
	\frac{\log n}{n} &\le 
	\frac{C_1}{300},
	\quad L \ge 2,
	\label{assum:scaling:a} \\
	\nu_n &\ge \frac{1}{C_1} \max\Bigl\{
		2\sqrt{2}C_2L, \
		10^3 \log n, \
		\frac{154}{\taur^2 c_1 K_0}
		\frac{\beta_n}{n}\Bigr\}, 
		\label{assum:scaling:b} \\
	\alphan &\le \min\Bigl\{
		 \frac{2}{LC_2^2},\  \frac{\tau_\Cc}{5} \frac{1-\sigma}{1+\sigma} \Bigr\},
		 \label{assum:scaling:c}
\end{align}
where $C_1$ is as defined in~\eqref{eq:const:defs} and $C_2 = 11/(c_1C_1\tau_\rho)$. 

\begin{thm}[Null distribution]\label{thm:null:dist:ncac}
	Consider an $n\times n$ adjacency matrix $A$ that is generated from a Poisson DCSBM with $K_0$ blocks, satisfying~\eqref{assum:B} and~\eqref{assum:nk:theta}.
    Let $\zh \in [K_0]^n$ be an estimated label vector based on $A$ 
	and  $\yh \in [L]^{|S_1|}$ an estimated label vector based on $A_{S_1S_1}$ satisfying Assumption~\ref{assum:DCSBM}\ref{assume:stability}. Let $\Th_{n}$ be the test statistic of \scacf. Assume that~\eqref{assum:scaling:a}--\eqref{assum:scaling:c} hold. Then,
\begin{align}
\begin{split}
    \dkol(\Th_n, Z) &\le  \frac{C_3}{\sqrt{\sigb L n}} + \frac{ C_4 }{C_1 \nu_n}\\
    &+ \frac{19 \sqrt{L}}{\tau_\rho}\left(\frac{1}{\sqrt{c_1C_1}} \sqrt{\frac{\beta_n }{K_0 \nu_n}} + \frac{K_0 \beta_n}{\sqrt{\sigb n}} + C_2\frac{K_0^{3/2}}{\sigb L}\nu_n \sqrt{n}\, \alphan\right) \\
    &+ 3\,\pr(\miss(\zh, z) > \alphan),
\end{split}
\end{align}
    where $C_3 = 94\taur^{-4}$
	and $C_4 = 4 (\pi e)^{-1/2} \max\{1,\tau_\rho^{-1} - L -1\}$.

   
\end{thm}
The bound in Theorem~\ref{thm:null:dist:ncac} applies to both \scac and \scacp. Assuming the common scaling $\log n \lesssim \nu_n \lesssim \sqrt{n}$ and $\alpha_n = o(1)$, the conditions on $\nu_n$ and $\alpha_n$ are satisfied as $n \to \infty$ 
and the bound simplifies to
\begin{align*}
	\dkol(\Th_n, Z) \lesssim \sqrt{\frac{\log \nu_n}{\nu_n}} +  \nu_n\sqrt{n}\, \alphan + \pr(\miss(\zh, z) > \alphan). 
\end{align*}
To have a null distribution close to the standard normal, we need to have 
\begin{align}\label{eq:req:on:alphan}
    \alphan = o((\nu_n \sqrt{n})^{-1}) 
    \quad \text{with} \quad \pr(\miss(\zh, z) > \alphan) = o(1).
\end{align}
There are community detection algorithms that can achieve this as long as $\nu_n \gtrsim \log n$~\cite{qin2013regularized, lei2015, chen2018convexified}. 
In fact, if $\nu_n \ge C \log n$ for a sufficiently large constant $C$, there are algorithms that achieve exact recovery, that is, we can take $\alpha_n = 0$ and still have $\pr(\miss(\zh, z) > \alphan) = o(1)$. It is also possible to satisfy~\eqref{eq:req:on:alphan} below the $\log n$ threshold on $\nu_n$---see for example~\cite{zhang2016minimax, gao2017achieving, zhou2020optimal}. However, for the distribution to converge we still need $\nu_n \gtrsim \log n$ from~\eqref{assum:scaling:b}. This is needed to to guarantee the concentration of degrees $d_i$ uniformly over all nodes $i \in S'_2$. Whether this requirement can be lifted and still achieve convergence in distribution is open.

\begin{rem}[Bernoulli vs. Poisson]\label{rem:bern}
Theorem~\ref{thm:null:dist:ncac} assumes Poisson generation for the DCSBM, and it is not clear if the result holds under the Bernoulli version. The main challenge is the conditional distribution of $X_{i*}(\yh)$ which is no longer a multinomial---that is,  \eqref{eq:Xis:distn} no longer holds---under the Bernoulli model.
To prove Theorem~\ref{thm:CCtest:null}, we use the Esseen's bound and control the moments of the conditional distribution of $X_{i*}(\yh)$. 
Under the Bernoulli model, these moments 
do not have a closed form~\cite{chen2000general} and are also hard to approximate. Another approach is to show that the conditional distribution is close to a multinomial. For example, using results in~\cite{loh1992stein}, one can show that, for any $i$, the Kolmogorov distance between the distribution of $X_{i*}(\yh)$ and a multinomial is of the order $\frac{\nu_n^2}n$, which goes to zero fast under the typical sparse scaling of $\nu_n \sim \log n$. However, since \scacf are roughly sums of $n$ chi-square statistics divided by $\sqrt{n}$, the small distances of their individual terms to the desired distribution may not carry over to the distribution of their sum. In general, it is not clear if the Kolmogorov distance for sums of this form can be controlled based solely on the distances of their individual terms.
Despite the above theoretical challenges, the null distribution under the Bernoulli setting is close enough to a standard normal in practice to make these results useful, especially if the bootstrap debiasing is also applied.
As we show in the simulations, 
which are all based on Bernoulli DCSBM, \scacp can consistently select the correct number of communities when applied sequentially, and the performances are similar with or without bootstrap debaising.

\end{rem}

\section{Consistency}\label{sec:cons}

We show the consistency of \scacf 
against alternative models by deriving lower bounds on the statistic that go to infinity, under the alternatives, as $n \to \infty$. We consider two alternative models: 1) DCSBM with the number of communities less than that of the null; 2) DCLVM, a general class of degree-corrected latent variable models discussed in more details in Section~\ref{sec:consist:DCLVM}. Combined with the null distribution in Theorem~\ref{thm:null:dist:ncac}, the first case above shows that \scacf can be applied in sequential testing from below to determine the number of communities consistently. In addition, its power against DCLVM shows its utility as a very general goodness-of-fit test beyond the DCSBM family.

\subsection{Consistency against underfitted DCSBM}\label{sec:consist}
We analyze the power of \scacf in distinguishing the null hypothesis $H_0: K = K_0$ from the alternative $H_1: K < K_0$. 
Theorem~\ref{thm:consist} provides a lower bound on the growth rate of the test statistic $\Th_n$ under the alternative. Recall that $\yh$ are labels derived for nodes $S_1$ based on $A_{S_1 S_1}$. Let parameters $\rho_{k\ell}$ be defined as in~\eqref{eq:rho:def},
and let
\begin{align}
	\omega_2 &= \frac1{18} \tau_\theta^2 \tau_a^2 c_1^2 \min_{k,h \in [K_0]:\;  k \neq h} \frac1{L}\norm{\rho_{k*} - \rho_{h*}}_2^2.
	\label{eq:omega2:def}
\end{align}
See~\eqref{eq:const:defs} and~\eqref{eq:c1:taua:taurho:def} for the definitions of $c_1$ and $\tau_a$.

\begin{thm}
\label{thm:consist}
	Let $A$ be an $n\times n$ adjacency matrix
    generated from a Poisson DCSBM with $K_0 \ge 2$ blocks %
    that satisfies~\eqref{assum:B} and~\eqref{assum:nk:theta}. Let $\Th_n$ be the 
    \scacf
    test statistic~\eqref{eq:ncac:Tn:def} formed as detailed in Algorithm~\ref{alg:snac},
    with $K < K_0$, 
    estimated by a community detection algorithm satisfying stability Assumption~\ref{assum:DCSBM}\ref{assume:stability}.
    Let $C_5 := c_1 C_1/9$, assume that $(\log n) / \nu_n \le C_1\tau_\rho^2/64$ and consider the event
    \begin{align}
         \Omega_n := \left\{ \max \left(\frac1{C_5 \nu_n}, \;\frac{768}{\tau_\rho^3}\sqrt{\frac{\log n}{C_1\nu_n}}\, \right) \le \omega_2 \right\}. 
    \end{align}
    Then, with probability at least $ 1- 9Ln^{-1} - \pr(\Omega_n^c) - \pr(\miss(\zh, z) > \alphan)$,
    \begin{align*}
    	 \Th_n \ge 	C_5\, \omega_2 \,\nu_n \sqrt{L n}.
    \end{align*}
\end{thm}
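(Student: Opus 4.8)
I would argue conditionally on the subsample $S_1$ (hence on $S_2=[n]\setminus S_1$), on the column labels $\yh$ --- equivalently on $A_{S_1S_1}$ --- and on the degree sequence $d=(d_i)_{i\in S_2}$, under which the rows $\{X_{i*}(\yh)\}_{i\in S_2}$ are \emph{independent} with $X_{i*}(\yh)\sim\mult(d_i,\rho_{z_i*})$ by~\eqref{eq:Xis:distn}, $z$ being the true label vector and $\rho$ the $K_0\times L$ matrix~\eqref{eq:rho:def} built from $\yh$ (this independence being exactly what the subsampling step secures); note the row labels $\zh$, fit on all of $A$, stay dependent on $A_{S_2S_1}$, so the $M$-bound below will be arranged to hold uniformly over $\zh$. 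The claim then reduces to a lower bound on the uncentered sum $\Yh:=\sum_{i\in S_2}\sum_{\ell}\psi(X_{i\ell}(\yh),d_i\rhoh_{\zh_i\ell})$, since $\Th_n=\tfrac1{\sqrt2}(\Yh/\gamma_{\nt}-\gamma_{\nt})$ with $\gamma_{\nt}^2=\nt(L-1)$ and $\nt=|S_2|\asymp n$ whp: a bound $\Yh\gtrsim\omega_2\nu_n nL$ gives the statement once $C_2\omega_2\nu_n\gtrsim1$ on $\Omega_n$ renders $\gamma_{\nt}$ and $\gamma_{\nt}^2$ negligible beside the leading term.

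\textbf{Deterministic skeleton.} First, since $\rhoh_{k*}\in\psim_L$ one has $d_i\rhoh_{\zh_i\ell}\le d_i$ and hence $\psi(X_{i\ell},d_i\rhoh_{\zh_i\ell})\ge(X_{i\ell}-d_i\rhoh_{\zh_i\ell})^2/d_i$. I would expand this square as $((X_{i\ell}-d_i\rho_{z_i\ell})+d_i(\rho_{z_i\ell}-\rhoh_{\zh_i\ell}))^2$, divide by $d_i$, drop the nonnegative term $(X_{i\ell}-d_i\rho_{z_i\ell})^2/d_i$, and sum over $\ell$ and $i\in S_2$, obtaining $\Yh\ge M+C$ with
\[
M:=\sum_{i\in S_2}d_i\,\norm{\rho_{z_i*}-\rhoh_{\zh_i*}}_2^2,\qquad C:=2\sum_{i\in S_2}\sum_{\ell=1}^{L}(X_{i\ell}-d_i\rho_{z_i\ell})(\rho_{z_i\ell}-\rhoh_{\zh_i\ell}).
\]
A Cauchy--Schwarz split with weights $d_i\rho_{z_i\ell}$, using $\rho_{z_i\ell}\le1$, then gives $|C|\le2\sqrt{PM}$, where $P:=\sum_{i\in S_2}\sum_\ell(X_{i\ell}-d_i\rho_{z_i\ell})^2/(d_i\rho_{z_i\ell})$ is a sum of independent per-row Pearson statistics, each of conditional mean $L-1$; consequently $\Yh\ge M-2\sqrt{PM}\ge\tfrac12 M$ as soon as $M\ge16P$.

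\textbf{Lower-bounding the misfit $M$ via $K<K_0$.} This is where the alternative enters, and only through combinatorics of the (possibly inconsistent) labels $\zh$, so the bound holds for every realization of $\zh$. For each $k\in[K]$ pick $h_k^\star\in[K_0]$ minimizing $\norm{\rho_{h*}-\rhoh_{k*}}_2$; the triangle inequality gives $\norm{\rho_{h*}-\rhoh_{k*}}_2\ge\tfrac12\delta$ for every $h\ne h_k^\star$, where $\delta^2:=\min_{h\ne h'}\norm{\rho_{h*}-\rho_{h'*}}_2^2=18L\,\omega_2/(\tau_\theta^2\tau_a^2 c_1^2)$ by~\eqref{eq:omega2:def}. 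Since $K<K_0$, the map $k\mapsto h_k^\star$ misses some true community $h_0$, so every node $i$ with $z_i=h_0$ satisfies $\norm{\rho_{z_i*}-\rhoh_{\zh_i*}}_2^2\ge\tfrac14\delta^2$, whence $M\ge\tfrac14\delta^2\sum_{i\in\Cc_{h_0}\cap S_2}d_i$. A binomial tail bound gives $|\Cc_{h}\cap S_2|\gtrsim\tau_\Cc n$ for all $h\in[K_0]$ whp, and a Chernoff bound for the Poisson degrees --- with $C_1$ of~\eqref{eq:const:defs} the leading constant in $\ex d_i$ and~\eqref{assum:scaling} supplying the concentration --- gives $\sum_{i\in\Cc_{h}\cap S_2}d_i\gtrsim C_1\tau_\Cc\nu_n n$ for all $h$ whp; applying these at $h=h_0$ and substituting $\delta^2$, and noting that the residual ratio $\tau_\Cc/(\tau_\theta^2\tau_a^2 c_1^2)\ge1$ --- precisely what the normalizer $\tfrac1{18}\tau_\theta^2\tau_a^2 c_1^2$ in~\eqref{eq:omega2:def} is calibrated for --- yields $M\gtrsim C_1\,\omega_2\,\nu_n\,nL$ whp.

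\textbf{Noise control, assembly, and the hard part.} It remains to bound $P$: since $\rho_{z_i\ell}\le1$, each summand is a per-row Pearson statistic whose conditional variance is controlled by $\min_{k,\ell}\rho_{k\ell}$, which stability of $\yh$ lower-bounds --- Assumption~\ref{assum:DCSBM}(b) when $K+1<K_0$, and Assumption~\ref{assum:DCSBM}(a), i.e. $\yh=z_{S_1}$, when $K+1=K_0$ --- so a Bernstein bound over the independent rows $i\in S_2$ (the same variance computation behind Theorem~\ref{thm:CCtest:null}) gives $P\le2\nt(L-1)\le2nL$ whp. On $\Omega_n$, the first constraint $\omega_2\ge1/(C_2\nu_n)$ forces $M\ge16P$, hence $\Yh\ge\tfrac12M\gtrsim C_1\omega_2\nu_n nL$, while the second constraint $\omega_2\ge\tfrac{768}{\qul^3}\sqrt{\log n/(C_1\nu_n)}$ keeps the $\log n$-dependent slack of the Bernstein and degree-concentration estimates below $M$; feeding $\Yh$ into $\Th_n=\tfrac1{\sqrt2}(\Yh/\gamma_{\nt}-\gamma_{\nt})$ and tracking constants (with $c_1=0.4\tau_\Cc/K_0$, $C_2=c_1C_1/10$) then produces $\Th_n\ge C_2\omega_2\nu_n\sqrt{Ln}$. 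A union bound over the constantly many failure events --- degree concentration, subsample balance, the Pearson bound --- collects the $9Ln^{-1}$, and $\pr(\Omega_n^c)$ absorbs the event that $\omega_2$ is small. I expect the conceptual step (pigeonhole: an underfit partition cannot reproduce the $K_0$ distinct profiles $\rho_{h*}$) to be easy and the main obstacle to be the constant bookkeeping --- verifying that once $\delta^2$ is rewritten via $\omega_2$, \emph{all} the balance parameters $(\tau_\Cc,\tau_\theta,\tau_B,\tau_0,K_0)$ cancel in the favorable direction so that $M$ genuinely dominates both $\gamma_{\nt}^2$ and the noise $P$ on $\Omega_n$ with exactly these constants. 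A secondary subtlety is that $\rho$, hence $\omega_2$, depends on the random $\yh$, so the whole argument must be kept conditional on $\{A_{S_1S_1},S_1\}$ and only intersected with $\Omega_n$ at the end; again, the subsampling is what preserves the conditional independence of $\{X_{i*}(\yh)\}_{i\in S_2}$ used throughout.
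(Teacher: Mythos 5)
Your proposal is essentially correct, but it follows a genuinely different route from the paper's proof. The paper localizes: by pigeonholing on sizes, some estimated cluster $\Gh_k$ must contain two pieces of distinct true communities, each of cardinality at least $c_1 n$ in $S_2$; it then shows $\rhoh_{k*}$ concentrates near a mixture $\qb$ of the corresponding rows of $\rho$ and each $X_{i*}(\yh)/d_i$ concentrates near its own row $q_{r*}$ (event $\Ec$), transfers these approximations through a Lipschitz bound on $\psi$ (Lemma~\ref{lem:psi:dev:1}, costing a factor $96\qul^{-3}$), and finally lower-bounds the resulting idealized quantity $\sum_r \beh_r\Psi(q_{r*},\qb_*)$ by a variance inequality (Lemma~\ref{lem:var:low}) to reach $L\omega_2$ --- this is exactly why the second constraint $\omega_2 \ge \tfrac{768}{\qul^3}\sqrt{\log n/(C_1\nu_n)}$ appears in $\Omega_n$. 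You instead expand the chi-square algebraically into a signal term $M=\sum_i d_i\norm{\rho_{z_i*}-\rhoh_{\zh_i*}}_2^2$, a cross term controlled by Cauchy--Schwarz against the aggregate Pearson noise $P$, and a nonnegative remainder; and you lower-bound $M$ by pigeonholing on the nearest-true-profile map $k\mapsto h_k^\star$, so that an entire missed community $\Cc_{h_0}$ contributes $\delta^2/4$ per node. This is cleaner in two respects: it avoids the entrywise concentration event and the $\qul^{-3}$ Lipschitz constant altogether (so the second constraint in $\Omega_n$ is not really needed on your route), and it harvests signal from a whole true community rather than one estimated cluster, which, after rewriting $\delta^2$ via $\omega_2$, actually yields a larger constant in front of $L\omega_2 n\nu_n$ than the paper's $\tfrac14 c_1 C_1$. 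Two small repairs are needed: the per-row Pearson statistics in $P$ are unbounded, so "Bernstein" should be a Chebyshev/variance argument using the variance formula of Lemma~\ref{lem:mult_mean_var} (which still gives failure probability $O((nL)^{-1})$, enough for the stated $9Ln^{-1}$ after adjusting constants); and the condition $M\ge 16P$ must be checked against the first constraint $\omega_2\ge 1/(C_2\nu_n)$, which does go through because $c_1=0.4\tau_\Cc/K_0\le 0.2$. With that bookkeeping, your argument delivers the stated conclusion on $\Omega_n$ intersected with the same degree/size concentration event $\Ac$ the paper uses.
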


Quantity $\omega_2$ that appears in Theorem~\ref{thm:consist} is random (via $\{\rho_{k\ell}\}$), due to the randomness in $\yh$, and depends on the specific community detection algorithm used to form the test statistic. As discussed below, for any reasonable algorithm, under mild conditions on the connectivity matrix, we expect $\omega_2$ to be of constant order as $n \to \infty$, i.e., $\omega_2 \asymp 1$. In particular, we expect to have $\pr(\omega_2 \ge c_0) \to 1$ for some constant $c_0 > 0$,  as $n\to \infty$. Then, we have $\pr(\Omega_n^c) \to 0$, as long as $(\log n) / \nu_n \le c_0$.

Under these assumptions, Theorem~\ref{thm:null:dist:ncac} shows that for a given significance level $\alpha > 0$, \scacf statistic $\Th_n \asymp 1$ with probability approaching $1-\alpha$ when $K=K_0$, while Theorem~\ref{thm:consist} guarantees that $\Th_n \gtrsim \nu_n \sqrt{n}$, w.h.p., when $K < K_0$. This shows that  \scacf with a constant threshold or one that grows slower than $\nu_n \sqrt{n}$, leads to consistent model selection when applied sequentially from below (i.e., with $K < K_0$). In short, model selection consistency of 
\scacf only requires two assumptions: (a) $(\log n) / \nu_n = O(1)$, that is, the expected degree should grow no slower than $\log n$, and (b) $\omega_2$ should remain bounded below in probability.

In addition to consistency, Theorem~\ref{thm:consist} suggests that \scacp 
is more powerful than \scac in sequential testing from below,
due to using $L = K+1$ clusters for column compression. The difference between the two algorithms is manifested in their corresponding values of $\omega_2$.
Let us consider the hardest case in Theorem~\ref{thm:consist}, that is, testing the null hypothesis $K = K_0-1$ against the alternative $K = K_0$.
To simplify the discussion, assume that $\nu_n \gtrsim \log n$ and the community detection algorithm is strongly consistent (achieves exact recovery). First, consider the \scacp. Since $L=K+1 = K_0$ in this case, the estimated column labels $\yh$ match the true labels $z$ when computing the  \scacp statistic.
Recalling the definition of the confusion matrix from~\eqref{eq:R:def}, we 
obtain $R = \diag (\pit_k)$, where $\pit_k = \frac{1}{|S_1|}\sum_{j \in S_1} \theta_j 1\{z_j = k\}$ for all $k \in [K_0]$. Then, $\rho_{k \ell} = B^0_{k\ell} \pit_\ell / (\sum_{\ell'} B^0_{k\ell'} \pit_{\ell'})$. Note that both $B^0$ and $\{\pit_k\}$ are stable as $n \to \infty$. In particular, although the entries of $B$ vanish under the scaling $\nu_n / n \to 0$, the entries of $(\rho_{k\ell})$ do not. To guarantee that $\omega_2 > 0$, it is enough that the $K_0 \times K_0$ matrix $(B^0_{k\ell} \pit_\ell)$ has no two colinear rows, a mild identifiability condition.

On the other hand, for \scac we have $L = K_0-1$, causing the multinomial parameter matrix $\rho \in \mathbb{R}^{K_0\times(K_0-1)}$ to have rows 
that are weighted averages of its counterpart when $L = K_0$. We refer to~\cite{wang2017} for an example of how the weighted mixture of the rows of the connectivity matrix $B$ emerges in the underfitted case, and 
$\rho$ is mixed in the 
same way. Due to this averaging, the pairwise distances among the rows of $\rho$ will be smaller compared to when $L = K_0$ and thus $\omega_2$ will smaller for \scac,  suggesting a lower power relative to \scacp.

The $\rho$-mixtures in the case of \scac still lead to an $\omega_2$ that is bounded away from zero---hence preserving consistency---provided that the mixture weights do not converge to specific values that make the rows of $\rho$ identical. This implausible situation, however, can occur in some corner cases. Consider the extreme case of the SBM with a planted partition pattern for $B$ (equal to $p$ on the diagonal and $q$ off the diagonal) and equal community sizes. If the community detection algorithm 
recovers a superset of the true communities when underfiting, as shown, for example, for the spectral clustering in~\cite{ma2018}, $\rho$ will have identical rows in the limit and thus $\omega_2\to 0$
as $n\to \infty$,
making \scac powerless. More details on this example are included in Appendix~\ref{sec:comp:plus}. 

\begin{rem}\label{rem:overfit}
In sequential testing, one may want to know the growth rate of the test statistic $\Th_n$ in the overfitted case where $K > K_0$. The same argument as in Theorem~\ref{thm:null:dist:ncac} shows that under $K > K_0$, if the community detection algorithm is \emph{refinement consistent}---that is, recovers a refinement of the true clusters---then, $\Th_n$ has asymptotically a standard normal distribution, hence $\Th_n \sim 1$ as $n \to \infty$.  Some algorithms, such as spectral clustering, exhibit refinement consistency in practice; for an example see Appendix~\ref{sec:comp:plus}. Recent theoretical discussions of the phenomenon appear in~\cite{ma2018, zhang2021label}. 
\end{rem}

\subsection{Consistency against DCLVM}\label{sec:consist:DCLVM}
We consider a $K$-community DCLVM, with degree parameter $\theta$, label vector $z^* \in [K^*]^n$, mixture components $\{\Qb_k^*\}_{k=1}^K$ and latent variables $\{x_i\}_{i=1}^n \subset \mathcal X$, to be a network model specified as follows: Given $\{x_i\}$, each $(i,j)$ is drawn independently (of other edges) from a  Poisson distribution with mean 
\begin{align*}
	p_{ij} &:= \ex[\,A_{ij} \given x_i,\, x_j] = \frac{\nu_n}{n} \theta_i \theta_j g(x_i, x_j)
\end{align*}
and $x_i \sim \Qb^*_{z^*_i}$ independently across $i$. The mixture components $\{\Qb_k^*\}$ are distributions on the space $\mathcal X$, and when they are different they impose some latent community structure. An example, with specific forms for $g(\cdot,\cdot)$ and $\{\Qb_k\}$ is given in Section~\ref{sec:roc}. Here, we consider the general case, with minimal assumptions on $g(\cdot,\cdot)$ and $\{\Qb_k\}$.  We use similar assumptions on $\theta$ as in the DCSBM, namely,
\begin{align*}
 \max_{i}\theta_{i} = 1, \quad \theta_i \ge \tau_\theta, \; \forall i \in [n].
\end{align*}
By rescaling $\nu_n$ if need be,  we assume that $g$ has range $[0,1]$. 

Without strong assumptions on $\{\Qb_k^*\}$, the distribution of $x_i$ is a nonparametric mixture model which, in general, is not identifiable. One can shift mass from one of $\{\Qb_k^*\}$ to the other ones or create a new component, and redefine the label vector to get the same distribution. For example, suppose that we start with a two-community model with components $\Qb^*_1$ and $\Qb^*_2$. We relabel each $x_i$ by assigning it the new label $z_i \in [K]$ (rather than $z^*_i$). 
The same model for $x_i$ can be stated as $x_i \sim \Qb_{z_i}$ for new mixture components $\Qb_k= \pi_{k1} \Qb^*_1 + \pi_{k2} \Qb^*_2$ which are convex combinations of the original ones. We refer to $\{\Qb_k\}$ as the mixture components induced by $z$. The result that we present here applies to any of these parameterizations.




Assume that we perform the \scacf with $\Kn$ row communities and $L$ column communities.
Let $\zh \in [\Kn]^n$ be the estimated label vector based on the entire adjacency matrix $A$ and $\yh \in [L]^{|S_1|}$ that based on $A_{S_1 S_1}$. We assume that there are deterministic vectors $z \in [\Kn]^n$ and $y \in [L]^n$, and sequences $\{\alphan\}$ and $\{\kappa_n\}$ such that the following event 
\begin{align}\label{eq:M:event}
    \Mc_n := \{\miss(\zh, z) \le  \alphan \ \text{and} \ \miss(\yh,y_{S_1})  \le \kappa_n\},
\end{align}
has probability converging to 1, as $n \to \infty$. Here, $y_{S_1} = (y_i: i \in S_1)$ is the subvector of $y$ on $S_1$. Note that we do not require $z$ (or $y$) to be the original $z^*$.
Letting $n_k = |\{i: z_i = k \}|$, 
we assume
\begin{align}
    n_k \ge \tau_\Cc n, \quad 
    \forall k \in [\Kn].
\end{align}

Let $\{\Qb_k, k \in [\Kn]\}$ be the mixture components induced by label vector $z$ that appears in~\eqref{eq:M:event}. Define
\[
h_k(x) := \ex[g(x,\xi)], \quad \xi \sim \Qb_k, \ k \in [\Kn].
\]
We assume that there is an almost sure event $\Gamma$ with the following property: There exists a constant $\tau_h > 0$ and $r_1, r_2\dots,r_{\Kn} \in [K]$ such that on $\Gamma$, 
\begin{align}\label{eq:r:k:def}
    \forall k \in [\Kn], \; \forall i \in \Cc_k, \quad h_{r_k}(x_i) \ge \tau_h.
\end{align}
Note that~\eqref{eq:r:k:def} can be equivalently stated as $h_{r_{z_i}}(x_i) \ge \tau_h$ for all $i$. Condition~\eqref{eq:r:k:def} is mild and is satisfied 
if for any $k \in[\Kn]$, one of $h_r(\cdot), r \in [K]$ is uniformly bounded below over the support of $\Qb_k$. 
We also define
\begin{align}\label{eq:H:def}
    H_{\ell}(x) := \frac{\sum_k h_k(x) R_{k\ell}}{\sum_{\ell'} \sum_k h_k(x) R_{k\ell'} }, \quad 
    R_{k\ell} := \frac12 \sum_{j = 1}^n\theta_j 1\{z_j = k, \, y_j =\ell\}.
\end{align}
Noe that there exists a sequence $\{\ell_k\}_{k=1}^K$ such that 
\begin{align}\label{eq:ell:k:def}
    R_{k \ell_k} \ge \frac1L \sum_{\ell = 1}^L R_{k \ell}, \quad \forall k \in [K].
\end{align}
Fix one such sequence and consider the following quantities:
\begin{align}
\begin{split}\label{eq:fvar:def}
    \fvar_{k\ell} &:= \var(H_\ell(x)), \quad \text{where}\;x \sim \Qb_k, \\
    \fvarmin &:= \min_{k} \fvar_{k\ell_k}.
\end{split}
\end{align}
Let $\zeta_n = \max\{1, \, L \sqrt{\nu_n / n}, \, L /\sqrt{\nu_n \log n}\}$ and $c_2 = \tauc \tauh \taut^2/100$ and $\taur = \tauc \tauh \taut/(2L)$. We need the following assumptions:
\begin{align}
   \sqrt{\frac{\log n}{n}} &\le 
    \frac{2}{9}  \frac{\taur^2}{K},
    \quad  n \ge 2,
     \quad \label{assume:final:logn:over:n} \\
    \alpha_n &\le \sqrt{\frac{\log n}{\nu_n}} \le 
    \frac{21 \tauc^2 \tauh c_2^2}{L^2}, \quad 
     \frac{ n \kappa_n} {\nu_n} \le 
     4 c_2 \taur,
     \label{assume:mis:rate:bound} \\
    \fvarmin &\ge \frac{L^3}{c_2^3 \taur^3 }\max \left\{ \frac{2 \zeta_n }{\taur \tau_\Cc } \sqrt{\frac{\log n}{\nu_n}}, \ 
    \frac{1}{5 c_2} \frac{n \kappa_n}{\nu_n}
    \right\}. 
    \label{assume:final:fvarmin:bound}
\end{align}

\begin{thm}
\label{thm:consist:others}
	Let $A$ be an $n\times n$ adjacency matrix generated from a Poisson DCLVM with $K$ blocks that satisfies~\eqref{assum:B} and~\eqref{assum:nk:theta}. Let $\Th_n$ be the \scacf statistic~\eqref{eq:ncac:Tn:def} formed as detailed in Algorithm~\ref{alg:snac}. 
	Moreover, assume~\eqref{eq:r:k:def} and~\eqref{assume:final:logn:over:n}--\eqref{assume:final:fvarmin:bound}.
    Then, with probability at least $1- 12KLn^{-1} - Kn^{-c}- \pr(\Mc_n^c)$,
    \begin{align*}
    	 \Th_n \ge \frac{49 \, c_2^3}{\sqrt{L}} \, \fvarmin\sqrt{n}\nu_n,
    \end{align*}
    where $c > 0$ is a universal constant.
\end{thm}

The theorem roughly states the following: As long as the community detection algorithm produces  row and columns labels that converge to some deterministic labels $z$ and $y$ at the rates $\alpha_n \sim \sqrt{(\log n) / \nu_n}$ and $\kappa_n \sim \nu_n / n$ respectively, and the resulting induced mixture components $\{\Qb_k\}$ lead to a positive minimum variance $\fvarmin$, as defined in~\eqref{eq:fvar:def}, then \scacf are consistent in rejecting the underlying DCLVM model, with $\Th_n \gtrsim \sqrt{n} \nu_n \to \infty$.    
Note that  $\fvarmin > 0$, unless there exists a sequence of constants $a_1,\dots,a_K$ such that $\sum_{r} a_r h_r(x) = 0$ for $\Qb_k$-almost all $x$. That is, unless $\{h_r\}_{r=1}^K$ satisfy a non-trivial linear constraint under $\Qb_k$, the condition $\fvarmin > 0$ is guaranteed. An example where the condition $\fvarmin > 0$ is violated is when all $h_r(\cdot)$ are constant functions, as is the case for a DCSBM, consistent with the fact that we should not be able to reject a DCSBM.

\begin{rem}
The constant $\frac12$ in the defintion of $R_{k\ell}$ in~\eqref{eq:H:def} is for the convenience in the proof. It can be changed to any other prefactor (including $\frac1n$) since $H_\ell(x)$ is invariant to a rescaling of $R_{k\ell}$.
\end{rem}

\begin{rem}
One identifiable example of DCLVM is when $\{\Qb_k^*\}$ are Gaussian and their means are far apart. Gao et al.~\cite{gao2020community} consider a variant of such DCLVM  and show that consistent detection of communities is possible in the sparse setting.
Thus, under this setting, 
we can take the deterministic labels $z$ and $y$ in~\eqref{eq:M:event} to be equal to the underlying generating labels $z^*$, when applying \scac, and the missclassification rate requirements in~\eqref{assum:scaling:b} are satisfied. 
Theorem~\ref{thm:consist:others} then applies,
showing that \scac statistic is large and rejects the null hypothesis of  DCSBM generation.
\end{rem}

\begin{rem}
    The only unspecified constant in Theorem~\ref{thm:consist:others} is $c$ in $Kn^{-c}$ in the probability bound. This constant is related to the universal constants in the Hansen--Wright inequality for sub-Gaussian variables and can be specified, if one chooses the constants in that inequality. See the proof of Lemma~\ref{lem:varpi} in the Supplement for details.
\end{rem}

\subsection{Comparison with the existing literature}\label{sec:comparison}

The closest work in the literature to ours is the spectral goodness-of-fit test for SBMs~\cite{bickel2016, Lei2016}. Roughly speaking, Lei \cite{Lei2016} shows that, under a $K$-SBM, $n^{2/3}(\sigma_1(\At) - 2)$ has a \mbox{type-1} Tracy-Widom distribution asymptotically, where  $\sigma_1(\cdot)$ denotes the largest singular value, and $\At$ is a standardized version of the adjacency matrix, calculated based on fitting a $K$-SBM  (see Section~\ref{sec:methods}). This result requires the entries of the connectivity matrix $B$ to be bounded away from zero which excludes the sparse regime $\nu_n / n \to 0$ we consider here. Moreover, Lei's Theorem~3.3 provides an asymptotic power guarantee. Translating the results to our notation, assuming that the true model has more communities than the fitted model, the result shows that $n^{2/3} \sigma_1(\At) \gtrsim \nu_n n^{1/6}$ w.h.p. Since under the true model $n^{2/3}\sigma_1(\At) \approx 2 n^{2/3}$, one obtains a consistent test as long as $\nu_n n^{1/6} \gg n^{2/3}$, that is, $\nu_n \gg n^{1/2}$. This required scaling is in fact better than what is stated in~\cite{Lei2016}. Nevertheless, $\nu_n \gg n^{1/2}$ is far from the sparse regime $\nu_n \asymp \log n$ that our results allow. More importantly, it is not clear how to extend the spectral test to the degree-corrected setting. In Section~\ref{sec:methods}, we discuss the natural extension of the spectral test to the DCSBM and study its performance empirically. Due to the difficulty of estimating the $\theta$ parameter of DCSBM, theoretical guarantees for this (naive) extension are not easy to obtain. Our \scacp test avoids explicitly estimating $\theta$, by conditioning on the degrees which leads to the cancellation of individual $\theta_i$ in the resulting multinomial distributions. In practice, convergence to the Tracy-Widom distributions is known to be slow, whereas convergence to the normal distribution for \scacp happens quite fast (at a rate at most $\approx \nu_n^{-1/2}$ as we showed).

Another work with connections to ours is that of Karwa~et~al.~\cite{karwa2016} where chi-square statistics for the goodness-of-fit testing of SBM and $\beta$-SBM  are proposed. They introduce a block-corrected chi-square statistic for the SBM that uses the idea of block compression and has resemblance to our NAC family statistics. The similarity is, however, superficial, since we work conditional on the degrees, the parameters we consider are not the connectivity parameters $B$ but their normalized versions $\rho$ (compare equation~(5) in~\cite{karwa2016} with our equation~\eqref{eq:ncac:Tn:def}). The $\rho$ parameters have many desirable features; for example, they do not vanish in the sparse regime ($\nu_n / n \to 0$) while the connectivity parameters $B$ do, making the corresponding chi-square statistic numerically very unstable due to the division by these vanishing parameters. The cancellation of the degree-propensity parameters $\theta_i$ in $\rho$ is another key advantage, allowing us to use the same statistic in the degree-corrected case. In contrast, Karwa~et~al.~\cite{karwa2016} devise another test 
for the $\beta$-SBM (a close cousin of DCSBM, in the sparse regime) which requires $O(n^2)$ operations to compute. Another novelty of our approach relative to~\cite{karwa2016} is the idea of block compression with $K+1$ communities instead of $K$ which leads to a dramatic increase in the power of the test.

Another major difference with~\cite{karwa2016} is their interest in computing exact $p$-values which requires enumerating all graphs with a given sufficient statistic as the observed one. For example, for an SBM with known community structure, this translates to enumerating all graphs that have the exact same number of edges between communities as that of the observed network. Although, Karwa~et~al. develop clever sampling schemes to traverse this space, to get an accurate $p$-value, one has to sample a prohibitively large number of graphs in general, rendering the approach infeasible beyond small networks. In addition, their main arguments are for block models with a given community structure, and to get around the unknown nature of the communities in practice, they propose sampling the community labels and applying the known-community test on each. The space of all labels is again exponentially large of size $K^n$, and one requires a a very large sample to get any reasonable estimate, making the approach infeasible for large networks. The authors acknowledge this difficulty and suggest using labels obtained by spectral clustering in practice. One then has to worry about the dependence of these labels on the same data the test is computed from, a point where we carefully address in this paper. The asymptotic distributions we obtain for the adjusted statistics are very good approximations for large networks and allow us to apply the tests with minimal computational overhead to even networks of millions of nodes.

Compared with likelihood ratio (LR) tests~\cite{Yan_2014, wang2017}, our approach is more general since LR tests require a specific alternative model to compare with (often another SBM or DCSBM), while in goodness-of-fit testing, only the null has to be specified. In addition, rigorous results on LR tests, such as~\cite{wang2017}, often work with a computationally intractable version of the test where the label parameter $z$ is marginalized by summing over $K^n$ possibilities. In practice, these tests are often implemented by approximating the sum via variational inference or  
plugging-in parameter estimates obtained by a community detection algorithm into the complete likelihood, 
as we discuss in details in Section~\ref{sec:methods} and compare with in simulations. 
Although, the theory in \cite{wang2017} extends, in the case of SBM, to these approximations if the community detection algorithm is consistent, it is unclear whether the guarantees further extend to the DCSBM.


A pseudo-LR approach with rigorous guarantees is developed in~\cite{ma2018}. As in~\cite{wang2017}, the focus there, too, is on model selection and comparing DCSBM models, specifying both the null and alternative models, in contrast to NAC tests. Our approach is comparable to that of~\cite{ma2018} when applied sequentially for model selection, but NAC family of tests are computationally more efficient: (1) Computing the test statistic of~\cite{ma2018} has $O(n^2)$ computational complexity, whereas due to the column compression, we require only $O(M)$ where $M$ is the number of edges. (2) \cite{ma2018} creates new labels by binary segmentation, but we save time by reusing the labels estimated by the community detection algorithm.
In addition, their consistency results are based on the assumption that the community detection algorithm merges the true communities when it underfits and splits them when it overfits. However, our test only imposes the mild assumption that connectivity parameters are distinguishable among communities, allowing it to be compatible with many community detection algorithms.

As for the degree requirement, our method only requires $\nu_n \gtrsim \log n$, similar to model selection approaches in~\cite{ma2018, le2015estimating, Chen2018}, and slightly better than those of~\cite{Li2020_ECV, wang2017} that require $\nu_n/\log n \to \infty$. In contrast, the spectral goodness-of-fit test~\cite{bickel2016, Lei2016} has a much more severe requirement ($\nu_n \gg n^{1/2}$) as discussed earlier.


\section{Numerical Experiments}
\label{sec:simulations}

We now illustrate the performance of \cacp and \scacp on simulated and real networks.  We  use regularized spectral clustering~\cite{amini2013} as the community detection algorithm, since it is widely used, computationally efficient and conjectured to satisfy Assumption~\ref{assum:DCSBM} \cite{abbe2020entrywise}.
Given the number of communities $K$, the spectral clustering estimates the community labels by applying $k$-means clustering to the rows of the matrix formed by the $K$ leading eigenvectors of the normalized Laplacian. Regularization is attained by adding $\tau \dav / n$ (where $\dav$ is the network average degree) to every entry of the adjacency matrix before forming the Laplacian.  This regularization is known to improve the performance in the sparse regime ($\dav \ll n$) \cite{le2017concentration, zhang2018understanding}.

\subsection{Other methods}\label{sec:methods}

Along with our NAC tests, we consider the following approaches for comparison: Likelihood ratio test (LR)~\cite{wang2017}, Bayesian information criteria (BIC)~\cite{wang2017}, adjusted spectral test (AS)~\cite{Lei2016}, Bethe-Hessian spectral approach (BH)~\cite{le2015estimating}, network cross-validation (NCV)~\cite{Chen2018} and  edge cross-validation (ECV)~\cite{Li2020_ECV}. In each case, we add the suffix ``boot'' to the name, if bootstrap debiasing of Section~\ref{sec:boot} is applied to further adjust the statistic. Our implementation of LR, BIC and AS is slightly different from the corresponding references, as discussed below, with the code available at~\cite{amini2020nett}.

Let $\ell(B,\theta, \pi,z \given A) = \sum_i \log \pi_{z_i} + \sum_{i < j} \phi(A_{ij}; \theta_i \theta_j B_{z_i z_j} )$ be the log-likelihood of a DCSBM where $\pi$ is the class prior. We consider a Poisson likelihood rather than a Bernoulli one, mainly due to its computational efficiency for large sparse networks, hence $\phi(x; \lambda) = x \log \lambda - \lambda$. 
As suggested by~\cite{wang2017},
given some estimated labels $\zh$, we can plug-in the MLE of the remaining parameters ($B, \theta$ and $\pi$) into $\ell(B,\theta, \pi,\zh \given A)$, to get an estimate of the complete log-likelihood for the test. 
That is, we compute $\ell(\hat B,\hat \theta, \hat \pi, \hat z \given A)$ where $\hat z$ is the label estimate from the spectral clustering and $\hat B$, $\hat \theta$ and $\hat \pi$ are the natural estimates based on $\hat z$, that is,
\begin{align}\label{eq:MLE}
	\hat B_{k\ell} = \frac{N_{k\ell}(\hat z)}{m_{k\ell} (\hat z)}, \quad \hat \theta_i =  \frac{n_{\hat z_i}(\zh) d_i}{\sum_{j : \hat z_j = \hat z_i} d_i}, \quad \hat \pi_k = n_k(\zh) / n
\end{align}
where $N_{k\ell}(\zh)$ is the sum of the elements of $A$ in block $(k,\ell)$ specified by labels $\zh$, $n_k(\zh)$ is the number of nodes in community $k$ according to $\zh$ and $m_{k\ell}(\zh) = n_k(\zh) (n_\ell(\zh) - 1\{k = \ell\})$. We note that $\hat B$ is the same as the natural estimate of $B$ in the SBM. 

The LR test computes $\ell(\hat B,\hat \theta, \hat \pi,\zh \given A)$ for two DCSBMs with different number of communities and compares the difference to a threshold. The BIC score is $\ell(\hat B,\hat \theta, \hat \pi,\zh \mid A) - K(K+1) \log n / 2$ which is maximized to select the optimal $K$. For the AS, we consider the adjusted matrix $\At = (\At_{ij})$ where
\begin{align}\label{eq:At:Ph}
 	\At_{ij} = (A_{ij} - \hat P_{ij}) / ( n \hat P_{ij})^{1/2}, \quad \hat P_{ij} = \hat \theta_i \hat \theta_j \hat B_{\zh_i, \zh_j} \cdot 1\{i \neq j\}
 \end{align}
and compute its largest singular value $\sigma_1(\At)$. The difference with~\cite{Lei2016} is that we are using the (estimated) Poisson variance $ \hat P_{ij}$ rather than the Bernoulli variance $ \hat P_{ij}(1-\hat P_{ij})$. Moreover, we use the DCSBM estimate of mean matrix $(P_{ij})$. Using the Poisson variance  significantly improves the computational performance for sparse matrices, since then $\At$ can be written as the sum of a sparse matrix and a term involving the product of diagonal and low-rank matrices. This allows fast computation of $ \At x$ for any vector $x$, hence allows the singular value computation to scale to very large networks. 

In some simulations, we also consider AS-SBM, where we use the SBM estimate for $\hat P_{ij}$, which is obtained by setting $\hat \theta_i = 1$ in~\eqref{eq:At:Ph}. Using the same arguments as in~\cite{Lei2016}, one can show that in the case of AS (SBM), under a Poisson-SBM null, the distribution of $\sigma_1(\At)$ will be close to the Tracy-Widom distribution with index 1. However, the same cannot be said about AS which uses the DCSBM estimate of $(P_{ij})$. Nevertheless AS is the natural version to consider when fitting DCSBMs.

\subsection{Simulations}\label{subsec:sims}

\begin{figure}[t]
    \includegraphics[width=0.49\linewidth]{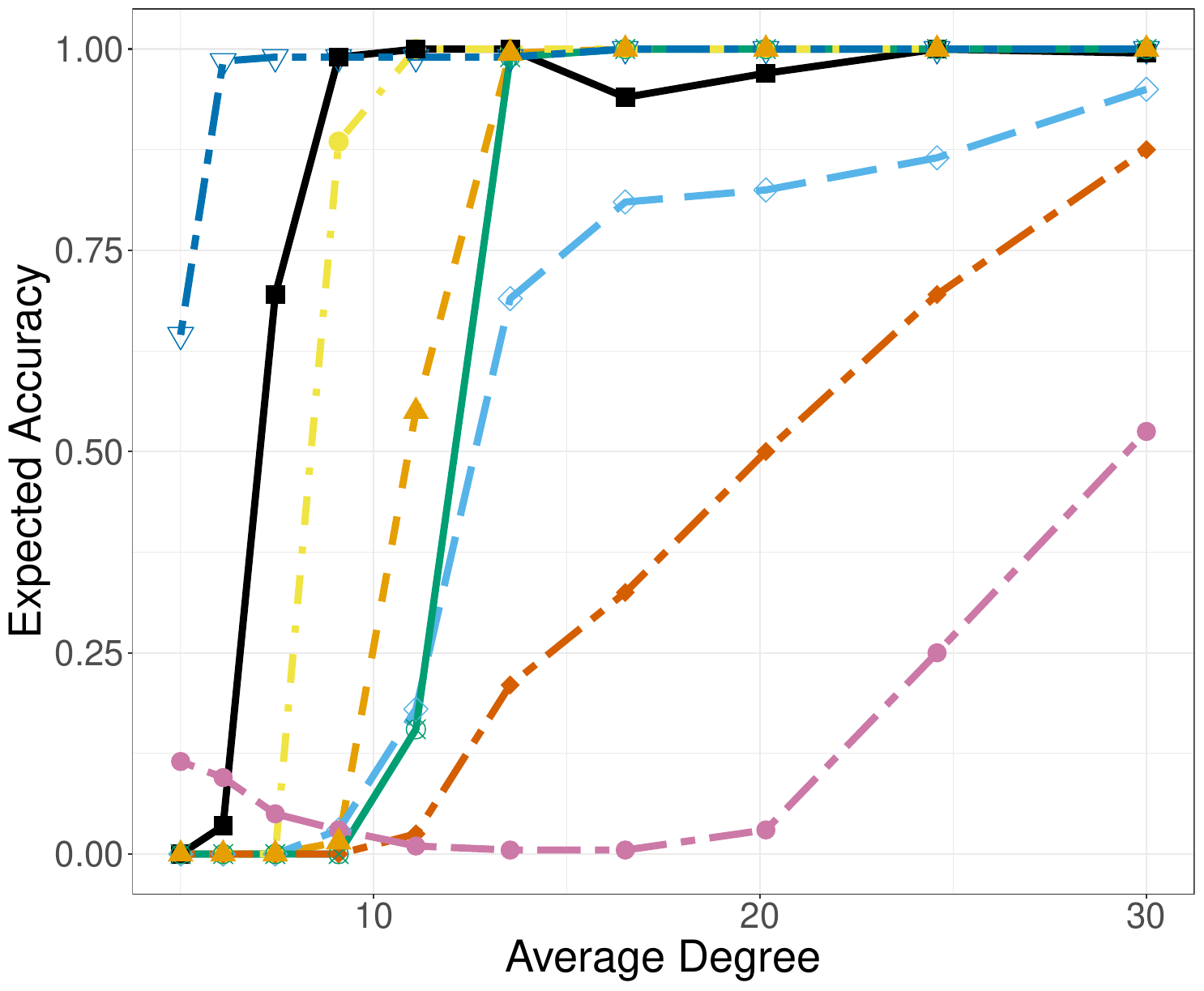}
    \includegraphics[width=0.49\linewidth]{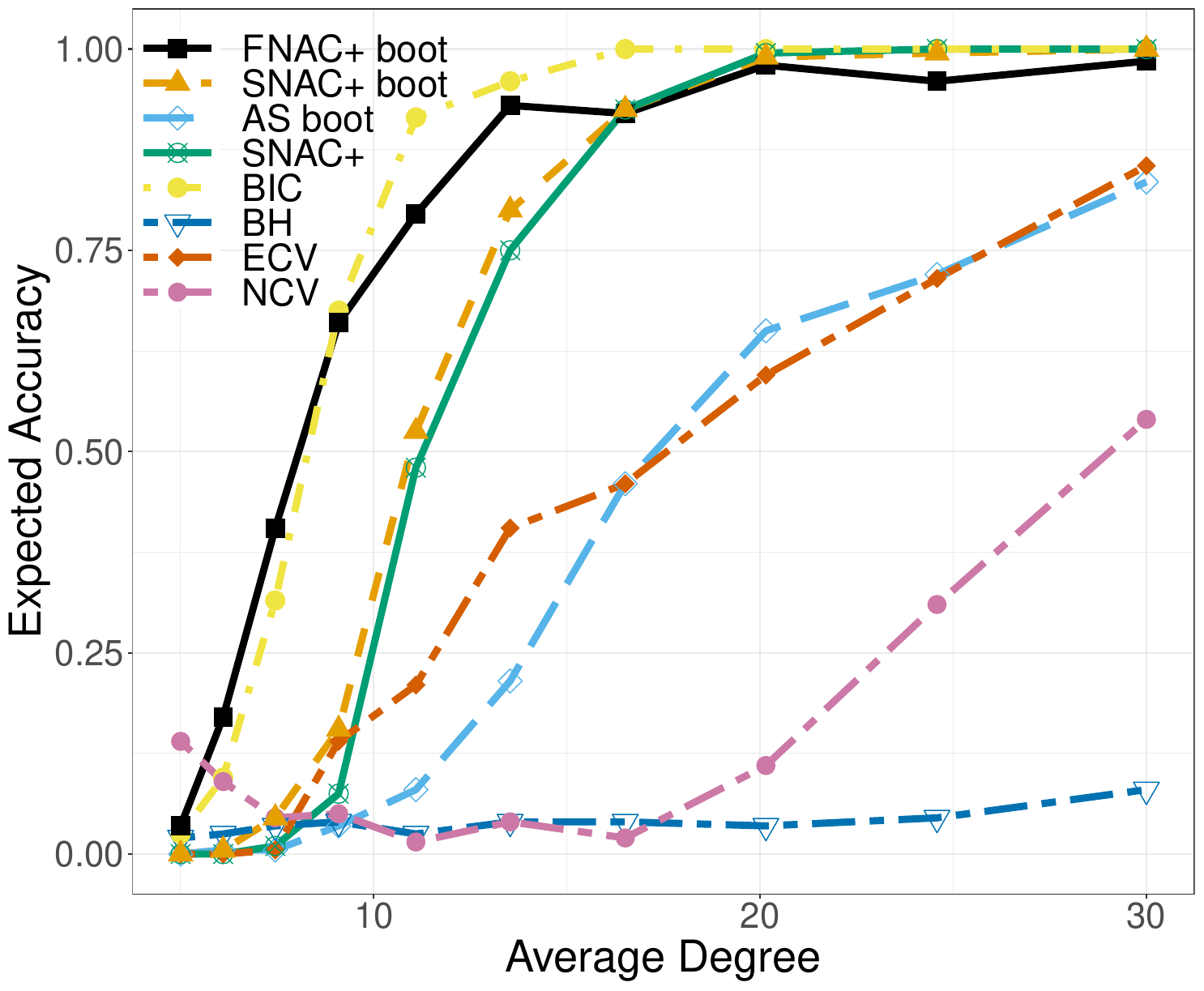}
	\caption{Expected accuracy of selecting the true number of  communities  versus expected average degree of the network. The data follows a DCSBM with $n = 5000$, $K = 4$, $\theta_i \sim \text{Pareto}(3/4, 4)$ and balanced community sizes. The connectivity matrices are $B_1$ (left) and $B_2$ (right), as defined in the text.}
	\label{fig:model_select}
\end{figure}

As discussed earlier, a goodness-of-fit test can be used in a sequential manner to perform model selection. We now provide simulations showing that, when applied sequentially, \cacp and \scacp are consistent, and competitive with other model selection approaches. For \scacp, we set $\sigma = 0$ in Algorithm~\ref{alg:snac}, i.e., quantile filtering is skipped. Here, we report results for samples from Bernoulli DCSBMs. Since we work with sparse networks, the Bernoulli model will be very close to its Poisson version. This was empirically confirmed, as we did not see a significant drop in performance for the \cacp and \scacp tests in our simulations, under a Bernoulli model relative to the Poisson.

\subsubsection{Model selection performance} 
\label{sec:model_select}
Let Pareto$(x_0, \alpha)$ denote a Pareto distribution with scale parameter $x_0$ and shape parameter $\alpha$, so that its mean is $\alpha x_0 / (\alpha-1)$.
We simulate data from a $K$-block DCSBM with connection propensity $\theta_i \sim$ Pareto$(3/4, 4)$, and a connectivity matrix which is one of the following:
\begin{enumerate}
	\item $B_1 \propto (1-\beta)I_{K}+ \beta\mathbf{1}\mathbf{1}^{T}$, that is, a simple planted partition model with out-in-ratio $\beta$,  %
	\item $B_2 \propto \gamma R + (1-\gamma) Q$, where $\gamma \in (0, 1)$, $R$ is a random symmetric permutation matrix, and $Q$ a symmetric matrix with i.i.d. Unif$(0,1)$ entries on and above diagonal.
\end{enumerate}
Here, $ \mathbf{1}$ is the all-ones vector. In both cases, the matrices are normalized to have a given expected average degree $\lambda$. The simple planted partition model $B_1$ generates a very homogeneous assortative network. Model $B_2$ creates a more general model by employing the permutation, allowing a mix of assortative and dissortative communities. Model $B_2$ is in general harder to fit.

Figure~\ref{fig:model_select} illustrates the model selection accuracy of  various methods for the following setup: $n=5000$, true $K=4$ with balanced community sizes, $\beta = 0.2$ and $\gamma = 0.3$.  For the goodness-of-fit tests \cacp, \scacp and AS, we use sequential testing from below to estimate $K$.  In each case, the rejection threshold is set to have a significance level of $10^{-6}$ under null. For tests with bootstrap debiasing, the number of bootstrap simulations is 10. Figure~\ref{fig:model_select} shows the expected model selection accuracy versus the expected average degree $\lambda$ for each method. The accuracy is obtained by averaging over 200 replications. As $\lambda$ increases, the problem gets easier and we expect the performance of consistent methods to improve. 

For both models $B_1$ and $B_2$, the performance of \cacp and BIC are close and they outperform other approaches, except for the BH in the case of the 
$B_1$~model. Note, however, that BH performs extremely poorly under $B_2$, showing that associativity is necessary for its consistency. In fact, as pointed out in~\cite{le2015estimating}, BH requires all the eigenvalues of $\ex[A]$ to be positive, which is violated with positive probability under the  $B_2$ model. The two versions of \scacp perform very close to each other and ranked after the \cacp and BIC pair. That the performance of the bootstrap  \scacp  is very close to that of \scacp with the theoretical threshold, corroborates the accuracy of the null distribution in Theorem~\ref{thm:null:dist:ncac}.  The spectral test (AS) performs reasonably well for model $B_1$, albeit ranked after \scacp, but relatively poorly under $B_2$. The cross-validation approaches generally underperform other approaches for model selection, with ECV significantly outperforming NCV.
 
It is also possible to construct examples where \cacp significantly outperforms BIC. See Figure~\ref{fig:seq:unequal:diag} in Appendix~\ref{app:mod:sel} for one such case.

\subsubsection{ROC curves}\label{sec:roc}

Another way to measure the performance of a test statistic is by means of its Receiver Operating Characteristic (ROC) curve, that is, the power of the test as a function of Type I error; equivalently, the true positive rate (TPR) as a function of the false positive rate (FPR). The ROC curve reveals the best possible performance of a statistic for a given testing problem (one achieved by setting the optimal threshold).  %
Here, we compare the ROC curves of the \cacp and \scacp  tests to the likelihood ratio (LR) and spectral (AS) test, for the problem of testing the null hypothesis of $K=4$ versus the alternative of $K+1 = 5$ communities. This is an example of ``testing from below'' which is encountered in sequential model selection.

\begin{figure}[t!]%
	\centering
	\includegraphics[width=0.47\linewidth]{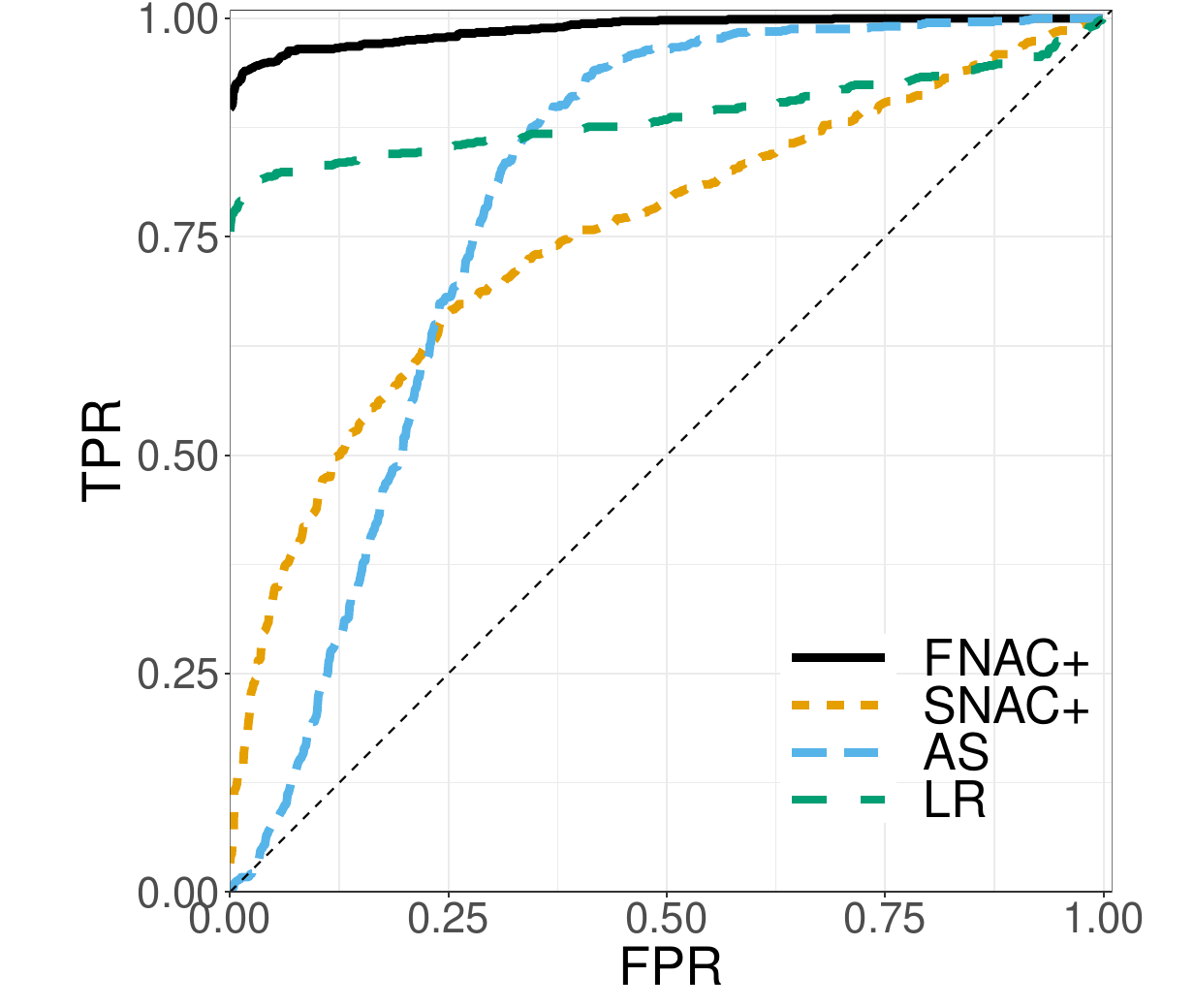}
	\includegraphics[width=.47\linewidth]{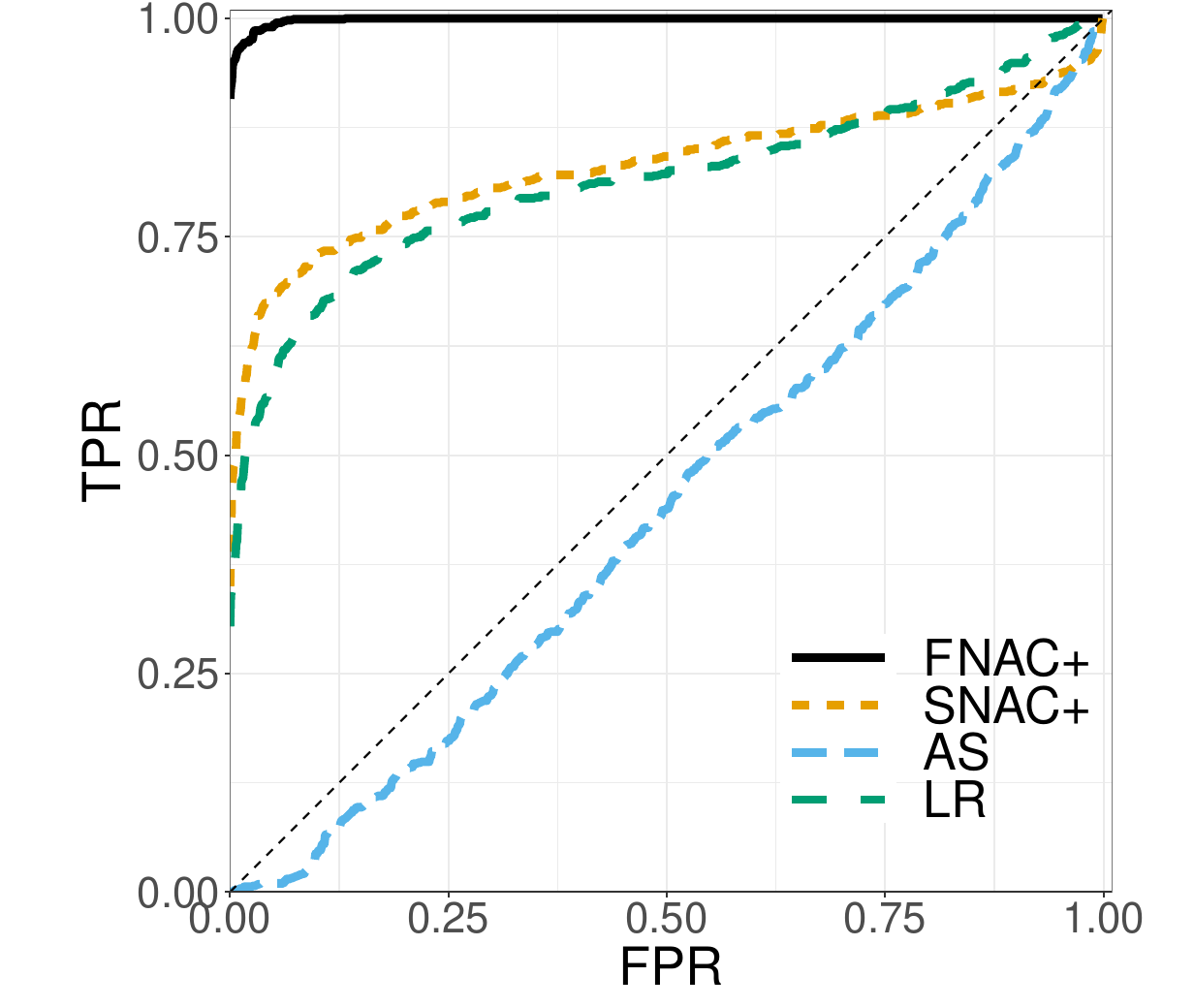}
	\\
	\includegraphics[width=0.47\linewidth]{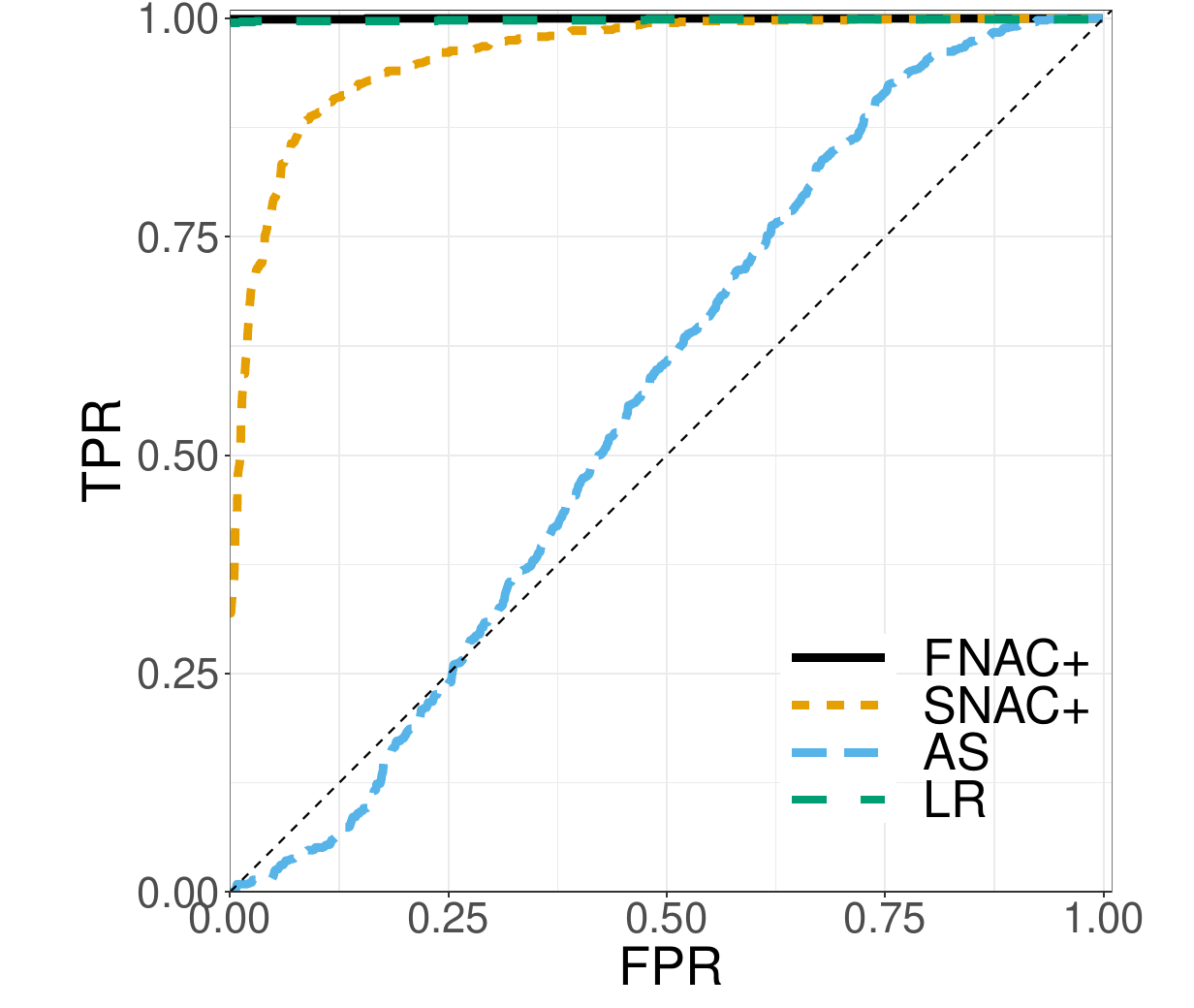}
	\includegraphics[width=.47\linewidth]{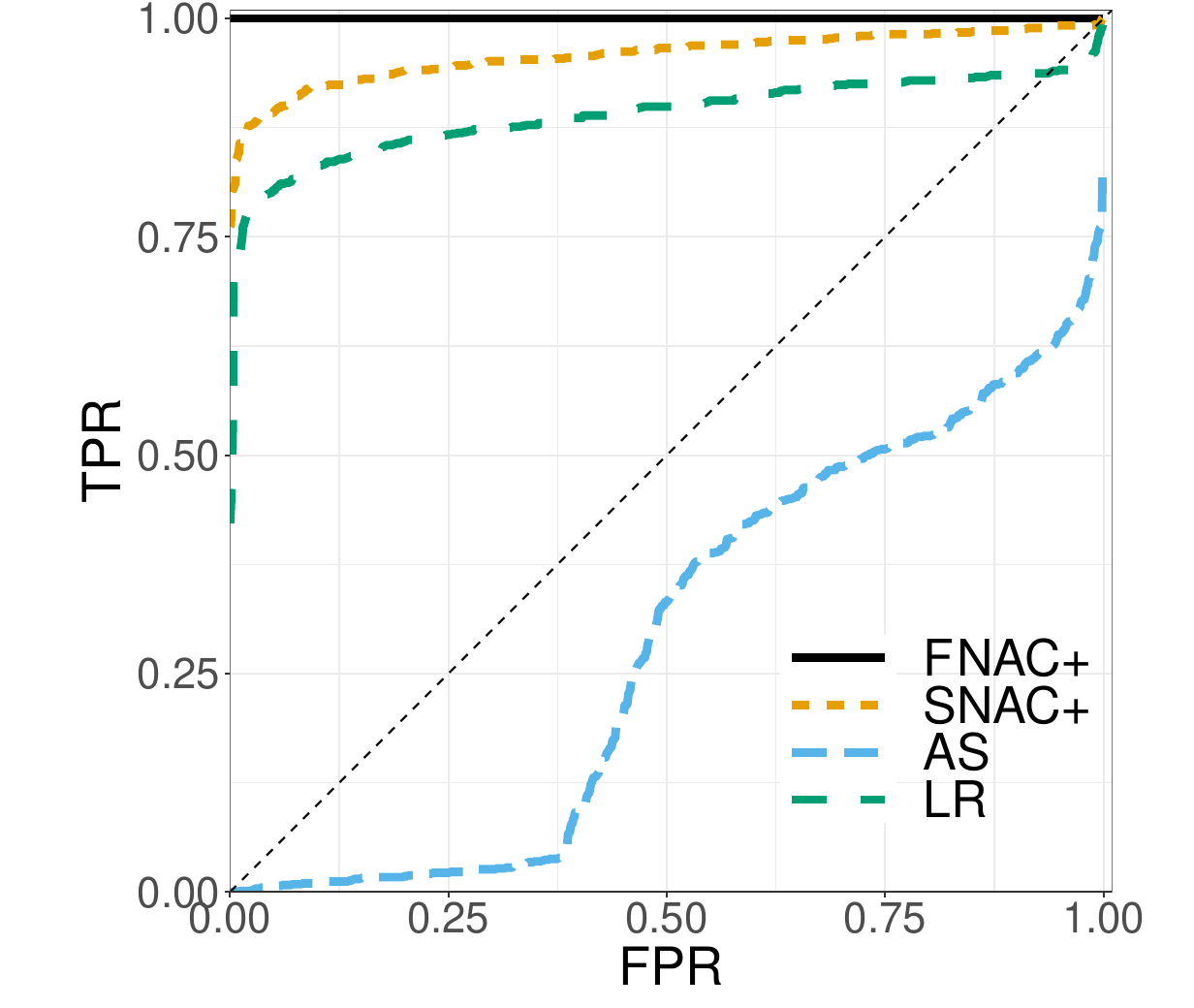}
	\caption{ROC plots for testing 4 versus 5 community models. Top and bottom rows correspond to $n=2000$ and  $n=10000$, respectively. Left and right columns correspond to the DCSBM and DCLVM alternatives, respectively. }
	\label{fig:roc:4vs5}
\end{figure}

For the null hypothesis, we consider a simple DCSBM with $K=4$ communities, having a connectivity matrix of type $B_1$, introduced in Section~\ref{sec:model_select}, with $\beta = 0.1$. For the alternative, we consider two cases: 
\begin{itemize*}
	\item[(a)] a DCSBM with $K+1=5$ and otherwise similar parameters to the null DCSBM, and
	\item[(b)] a degree-corrected latent variable model (DCLVM) with $K+1=5$ communities generated as follows: Given a set of latent node variables $\{x_i\}_{i=1}^n \subset \reals^d$ with $d=K+1$, the adjacency matrix $A = (A_{ij})$ is generated as a symmetric matrix, with independent Bernoulli entries above the diagonal, with %
\end{itemize*}
\begin{align}\label{eq:dclvm:def}
	\ex[\,A_{ij} \mid x, \theta\,] \;\propto\; \theta_i \theta_j e^{- \norm{x_i - x_j}^2} \quad  \text{and} \quad
	x_i = 2 e_{z_i} + w_i
\end{align}
where $e_k$ is the $k$th basis vector of $\reals^d$, $w_i \sim N(0, I_d)$ and $\{z_i\} \subset [K+1]^n$ are multinomial labels.
In other words, the latent positions $\{x_i\}$ are drawn from a Gaussian mixture model with $K+1 = 5$ components, living in $\reals^{K+1}$. The proportionality constant in~\eqref{eq:dclvm:def} is chosen such that the overall network has expected average degree $\lambda$. For all the models, including  the null and the two alternatives, the underlying prior on the labels is taken to be proportional to an arithmetic progression: $\pr(z_i = k) \propto k$ to produce unequal community sizes, and we let $\theta_i \sim \text{Pareto}(3/4, 4)$. 
For the DCSBM and DCLVM, we use average degrees $\lambda = 15$ and $\lambda = 8$, respectively.

Figure~\ref{fig:roc:4vs5} illustrates the resulting ROC curves. As expected,  increasing $n$ generally improves the performance (except for AS). Both \cacp and LR are almost perfect tests for differentiating the two DCSBMs at $n=10^4$. In all cases, the \cacp is more powerful than the sub-sampled version, \scacp. This is expected since \scacp relies on half the data. Note that as $n$ increases, \scacp greatly improves which can be attributed to the label estimation procedure achieving almost exact recovery, even at half the size of the original network. Note that AS generally is much less competitive compared to LR or \cacp. This is  expected since the spectral test relies on a general statistic that is not tailored to the blocked nature of the adjacency matrix of a DCSBM. 

Interestingly, 
\cacp is 
almost perfect 
for DCLVM even at $n = 2000$, whereas LR test underperfroms under the DCLVM. This is also expected, since the LR test incorporates the likelihood of a DCSBM for the alternative, which is mismatched to the actual alternative model. This experiment shows the power of NAC family in rejecting against models outside the family of DCSBM. It highlights the advantage of goodness-of-fit over likelihood-ratio testing  where one does not have to specify explicit alternatives, hence can test against many alternatives simultaneously. 
More ROC results are reported in Appendix~\ref{app:roc}.

\subsection{Goodness-of-fit testing}\label{sec:fb100:got:testing}
The main utility of a goodness-of-fit test is to assess how well real data fits the model. Let us investigate how well a DCSBM fits real networks from the Facebook-100 dataset~\cite{traud2011comparing, TRAUD2012fb}, hereafter referred to as FB-100. This dataset is a collection of 100 social networks, each the entire Facebook network within one university from a date in 2005. The networks vary considerably in size and degree characteristics; some statistics are provided in Table~\ref{tab:fb100}.

\begin{table}[t]
	\centering
	\caption{Statistics on the FB-100 dataset. Qu. is a short-hand for quartile.} %
	\label{tab:fb100}
	\begin{tabular}{r|cccccc}
		\hline
		& Min. & 1st Qu. & Median & Mean & 3rd Qu. & Max. \\ 
		\hline
		$n$ & 769 & 4444 & 9950 & 12083 & 17033 & 41554 \\ 
		Mean deg. & 39 & 65 & 77 & 77 & 88 & 116 \\ 
		3rd Qu. deg. & 54 & 91 & 110 & 108 & 124 & 166 \\ 
		Max. deg. & 248 & 673 & 1202 & 1787 & 2123 & 8246 \\ 
		\hline
	\end{tabular}
\end{table}

Figure~\ref{fig:gof:fb100} shows the violin plots of the \scacp statistic, with degree-filtering threshold $\sigma = 0.2$, versus the number of communities, for the entire FB-100 data. The variation at each $K$ is due to the variability of \scacp over the 100 networks in the dataset. 
For each FB network, we sample a twin network from a synthesized 3-cluster DCSBM that matches the original network in degree distribution.
Violin plots are also shown for these twin networks for comparison. For model parameters, each synthesized DCSBM has its own $\theta$ parameter proportional to the corresponding FB network degree vector, but they all share the same connectivity matrix $B$, which is set to the corresponding MLE based on all the FB networks. To get the shared $B$, we first apply spectral clustering with $K = 3$ to each FB network $A^{(s)}$, $s = 1, \dots, 100$
to get estimated labels $\zh^{(s)}$. Then, for each $\zh^{(s)}$, we compute the corresponding 
block sum and block size matrices, $N^{(s)}$  and $M^{(s)}$, as  in Section~\ref{sec:methods}.
Finally, we set $B = \sum_{s}N^{(s)}/\sum_{s} M^{(s)}$, where the summation and division 
are elementwise. The community sizes for the synthesized networks are taken to be balanced. Kolmogorov–Smirnov test was performed between the degree distributions of each FB network and its twin,
and 84 out of such 100 pairs resulted in $p$-values greater than 0.05, indicating 
close matches.

The results in Figure~\ref{fig:gof:fb100}
show a marked deviation of FB-100 networks from a DCSBM model as measured by \scacp goodness-of-fit test. If the networks were generated from a DCSBM, one would expect the distribution of \scacp to drop to within a narrow band around zero once $K$ surpasses the true number of communities. Only at $K=25$ a small fraction of FB-100 networks have \scacp values within, say, the interval $[-5,5]$, showing that a DCSBM with $K <25$ is not a good model for any of these networks. Even at $K=25$, the majority of FB-100 networks are still ill-fitted. 

On the other hand, 
we observe that \scacp 
is nearly normally distributed for $K=3$, while remaining large for $K=1$ and $K=2$. This corroborates the results of both Theorem~\ref{thm:null:dist:ncac} and Theorem~\ref{thm:consist} that predict exactly this behavior. Note that this conclusion holds despite the variation in the sizes and average degrees of the simulated networks, showing the insensitivity of the null distribution of \scacp to those parameters, as predicted by the theory.

\begin{figure}
	\centering
	\includegraphics[width=.49\textwidth]{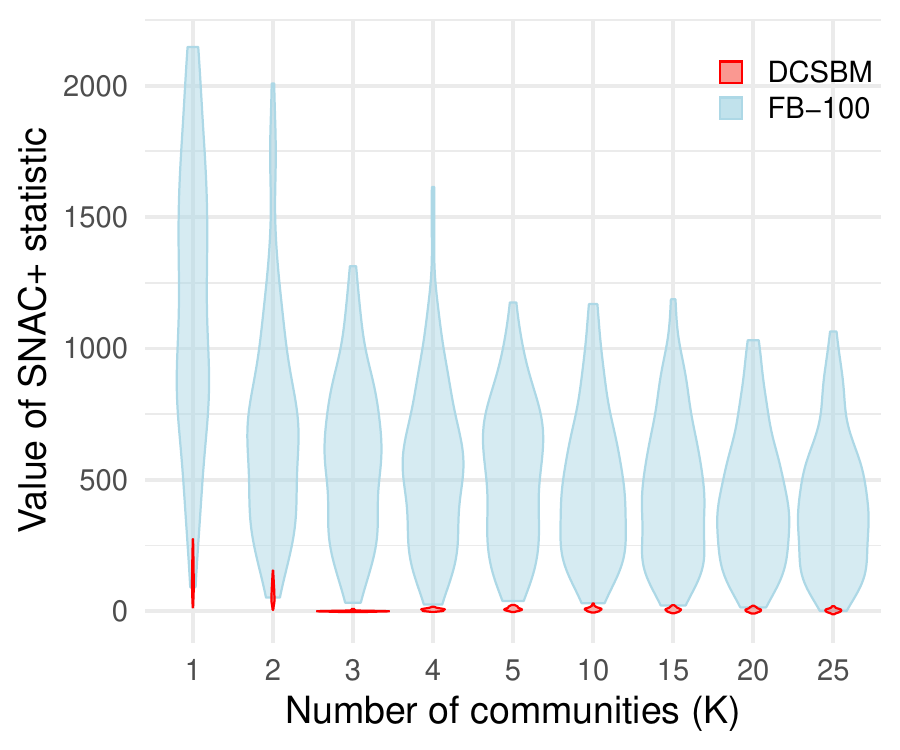}
	\includegraphics[width=.49\textwidth]{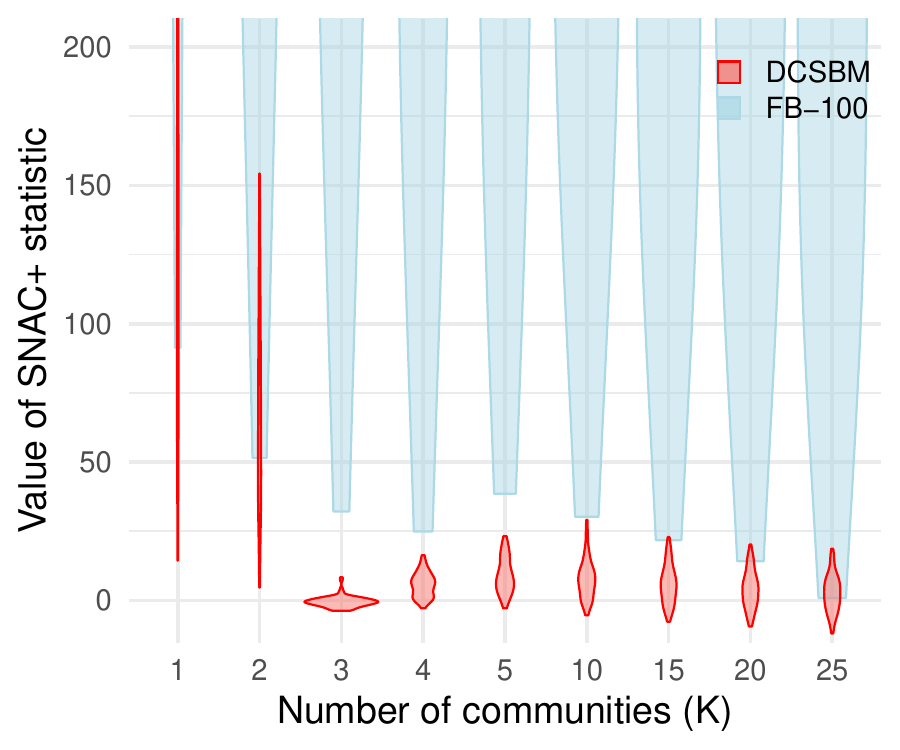}
	\caption{Comparing the goodness-of-fit of DCSBM to the FB-100 dataset versus a dataset simulated from twin DCSBMs with $K=3$ communities, and 
	having the same sizes and 
	degree distributions as those of FB-100. The left plot is the zoomed-in version of the right.}
	\label{fig:gof:fb100}
\end{figure}

Examining the FB-100 data further, one observes that most networks show some very high degree nodes that seem to skew the result of community detection as well as graph drawing algorithms. This can also be inferred from the significant divide between the third quartile and the maximum degree in Table~\ref{tab:fb100}. Le et al. \cite{le2017concentration} have also shown that abnormally high degrees can obstruct community detection. Treating these high-degree nodes as outliers, one could ask what happens if we remove them and refit the model? Figure~\ref{fig:gof:fb100:75pct} shows the result of performing the same experiment, %
but applied to the \emph{reduced} FB-100 networks, obtained  by restricting to the (induced) subnetwork formed by nodes having degrees below the 3rd quartile (i.e., the 75 percentile). Table~\ref{tab:fb100:75pct} shows the statistics on these reduced networks, revealing less skewed degree distributions compared to the original data. Figure~\ref{fig:gof:fb100:75pct} shows that the reduction leads to an overall improvement in the fit: More networks have \scacp values that drop to near zero and this happens for lower values of $K$. This shows the effectiveness of goodness-of-fit testing, in the sense that it allows us to test the hypothesis that removing the high-degree nodes causes a better DCSBM fit. Nevertheless, Figure~\ref{fig:gof:fb100:75pct} shows that the majority of the reduced networks are still far from a DCSBM with few number of communities. 
 
\begin{table}[ht]
	\centering
	\caption{Statistics on the reduced FB-100 dataset.} %
	\label{tab:fb100:75pct}
	\begin{tabular}{r|cccccc}
		\hline
		& Min. & 1st Qu. & Median & Mean & 3rd Qu. & Max. \\ 
		\hline
		$n$ & 544 & 3293 & 7356 & 8930 & 12601 & 30590 \\ 
		Mean deg. & 11 & 20 & 24 & 24 & 28 & 36 \\ 
		3rd Qu. deg. & 16 & 29 & 34 & 34 & 40 & 52 \\ 
		Max. deg. & 38 & 74 & 89 & 87 & 101 & 149 \\ 
		\hline
	\end{tabular}	
\end{table}

\begin{figure}
	\centering
	\includegraphics[width=.49\textwidth]{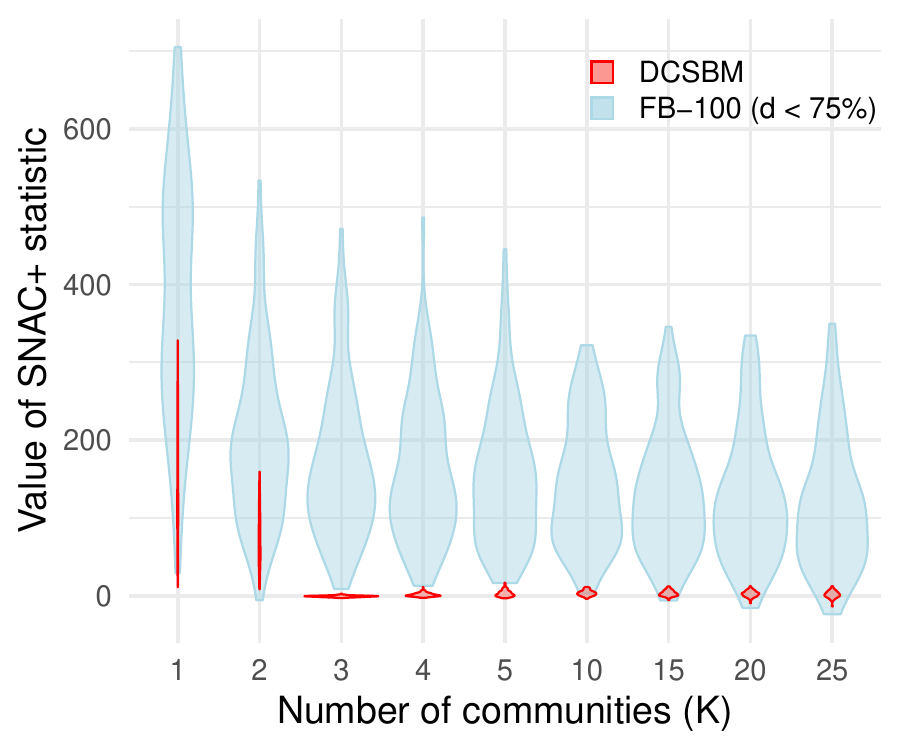}
	\includegraphics[width=.49\textwidth]{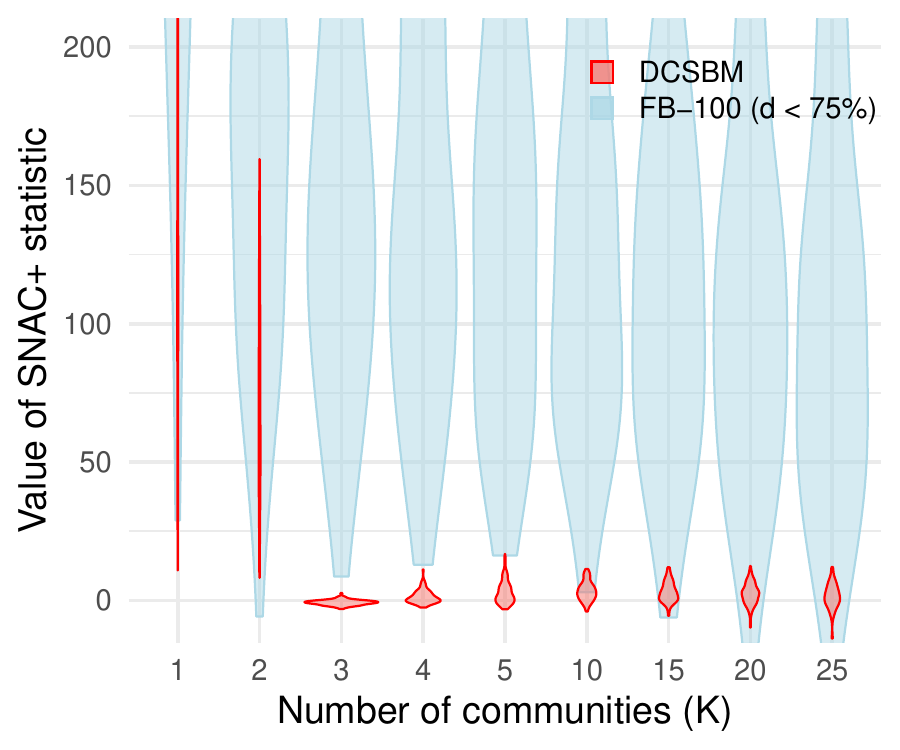}
	\caption{Similar to Figure~\ref{fig:gof:fb100} but with Facebook networks reduced by restricting to nodes with degrees below the 75 percentile.}
	\label{fig:gof:fb100:75pct}
\end{figure}

\subsection{Exploring community structure}\label{sec:comm:profile}

As demonstrated in Section~\ref{sec:fb100:got:testing}, a DCSBM (with small $K$) is not a good fit for most of the networks in FB-100. Even in such cases,
 \scacp has utility beyond testing and can be used to reveal community structure in networks. We demonstrate this by using the reduced FB-100 networks constructed in Section \ref{sec:fb100:got:testing}.
 Recall that \scacp quantifies the similarity within each estimated community, and a smaller value means that the rows in an estimated community share a similar connection pattern to other communities. %
 Therefore, sharp drops in the value of \scacp, as $K$ varies can signal the existence of community structure.
 For a sequence of \scacp statistics with increasing $K$, there could be an \emph{elbow} where continuing to increase $K$ does not bring a significant decrease in the statistic, or a \emph{dip} where \scacp starts to increase. These two types of points signal that it is not worthwhile to continue increasing $K$. Furthermore, these transitions are often much more dramatic for \cacp and \scacp tests than the competing methods and can be easily identified by eyeballing the plots.

\begin{figure}[t]
     \includegraphics[width=0.49\linewidth]{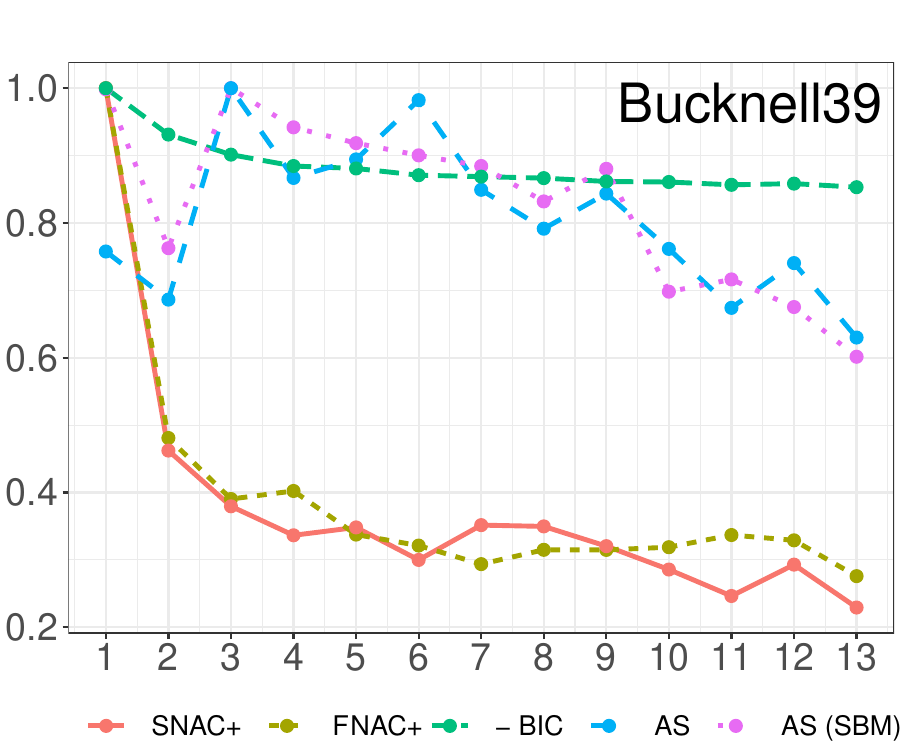}
     \includegraphics[width=0.49\linewidth]{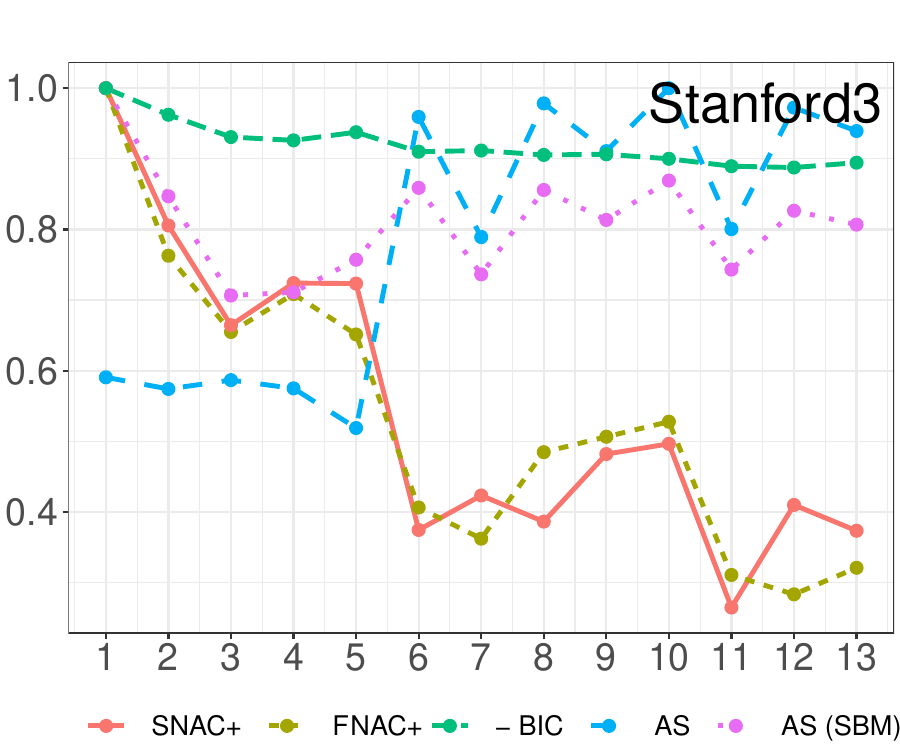}
	\caption{Normalized statistics versus the candidate number of communities ($K$).}
\label{fig:normalized:plots}
\end{figure}

Figure~\ref{fig:normalized:plots} shows the normalized statistic plots for two networks from FB-100. The plots show the normalized value of \scacp, \cacp, AS, AS-SBM and negative BIC statistics for $K = 1,\dots, 13$. The statistics are normalized to fall in the range $[-1,1]$ by dividing by their largest absolute value, for each test, respectively. This allows us to compare the trend of each statistic as $K$ increases among different methods. 

For many of the FB-100 networks, \scacp and \cacp share a similar pattern, with rapid drops followed by the flattening or increase of the statistic, signaling strong community structures. In contrast, AS and AS-SBM generally do not show strong trends, while negative BIC barely fluctuates at all when $K$ increases.
For example, for the Bucknell network (Figure~\ref{fig:normalized:plots}), there is one sharp elbow at $K=2$ for the \cacp tests. The Stanford network shows an elbow/dip at $K = 3$ and a similar elbow/dip at $K= 6$ in \cacp tests. This suggests that the network has two levels of community structure (cf. Figure~\ref{fig:net:plots}), an interesting phenomenon not captured by other statistics. Note that AS-SBM captures the community structure at $K=3$ for the Stanford network (with a dip at $K=3$) while missing the $K=6$ possibility. The AS version (employing degree-correction) behaves contrary to expectation in this case and misses both structures. 

\paragraph{Community profiles} We now consider a more quantitative approach to constructing a community profile based on the value of \scacp. We take advantage of the randomness in \scacp due to subsampling, as a natural measure of the uncertainty of the community structure.
For each $K$, we calculate \scacp several times, each time using a random split of the nodes, and then fit a smooth function to the resulting points, treating the problem as a nonparametric regression. Here, we consider smoothing splines but other approaches such as Gaussian kernel ridge regression can be equally useful. The estimated smooth function provides what we refer to as \emph{a community profile} for the network. This profile can be used for comparing and classifying networks as well as determining possible good choices of the number of communities. The subsampling and smoothing provide a degree of robustness to these profiles as illustrated below.

Instead of eyeballing a plot for its elbows and dips, we can rely on the derivatives of the community profile to guide us. We quantify the elbow as the point where the second derivative has the largest value and the dip as where the first derivative turns positive for the first time. Alternatively, one can use the point with the largest curvature as the elbow point~\cite{hansen1993use}. However, we have found, empirically, that the second derivative, as a proxy for the curvature, is much more accurate in capturing the elbow as determined by a human observer.

\begin{figure}[t!]
	\includegraphics[width=0.325\linewidth]{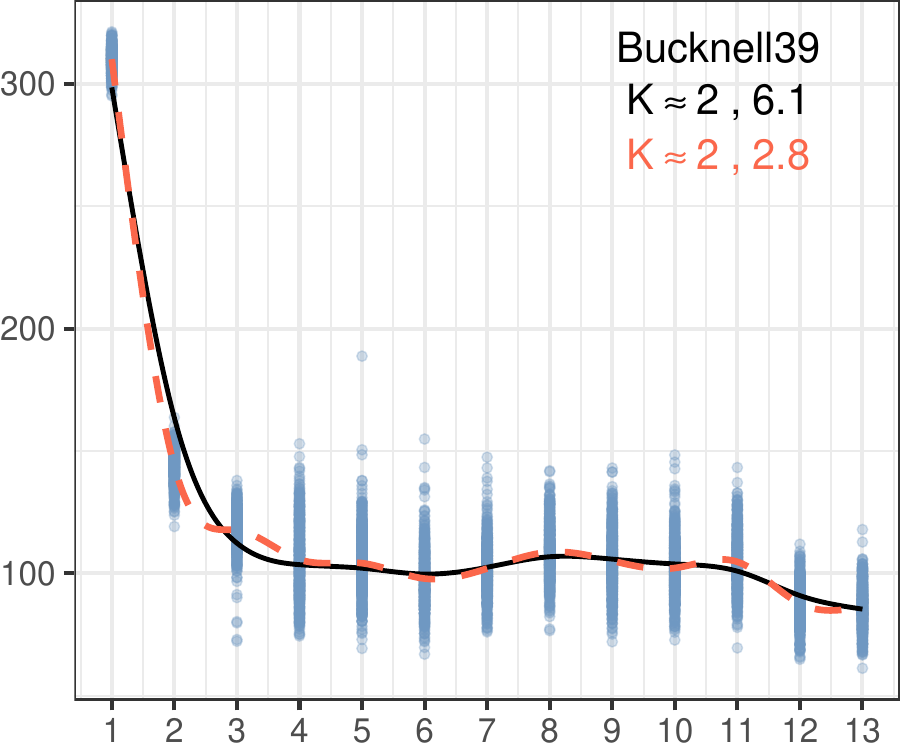}
	\includegraphics[width=0.325\linewidth]{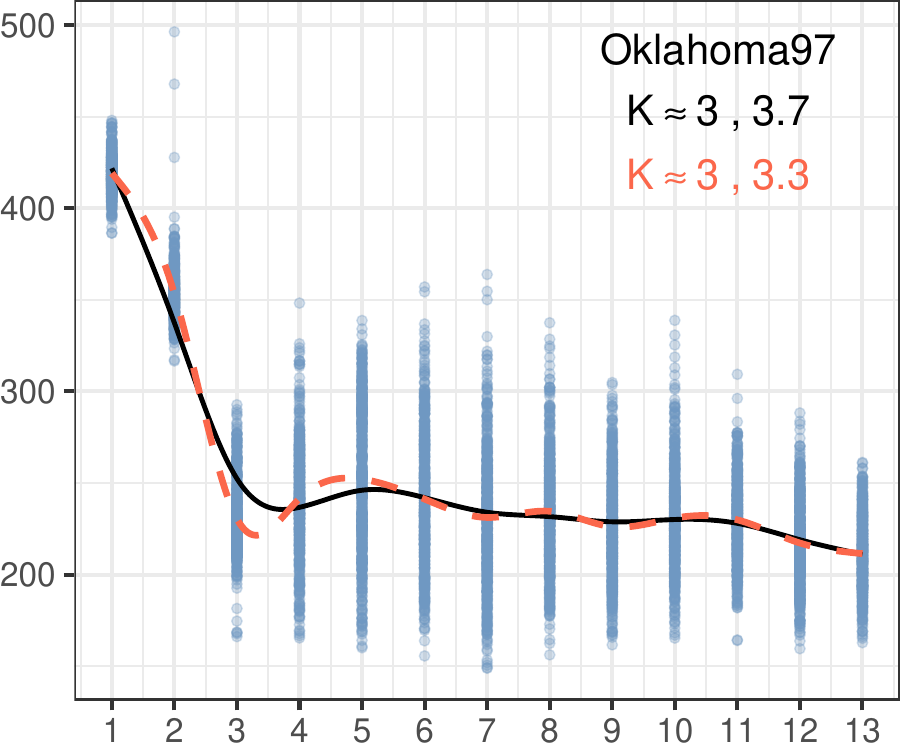}
	\includegraphics[width=0.325\linewidth]{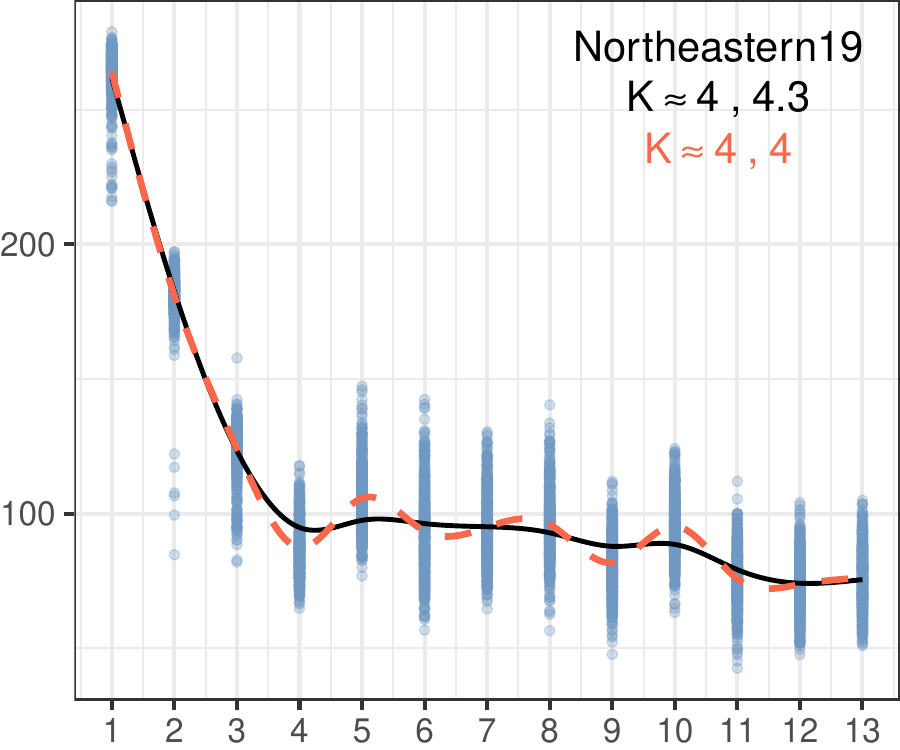}
	\includegraphics[width=0.325\linewidth]{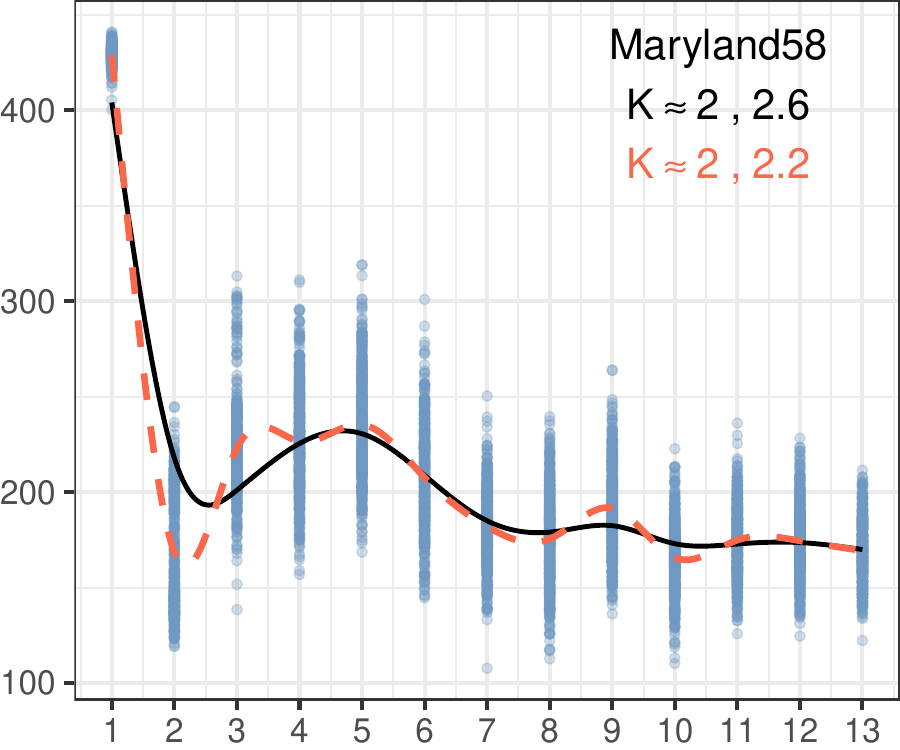}
	\includegraphics[width=0.325\linewidth]{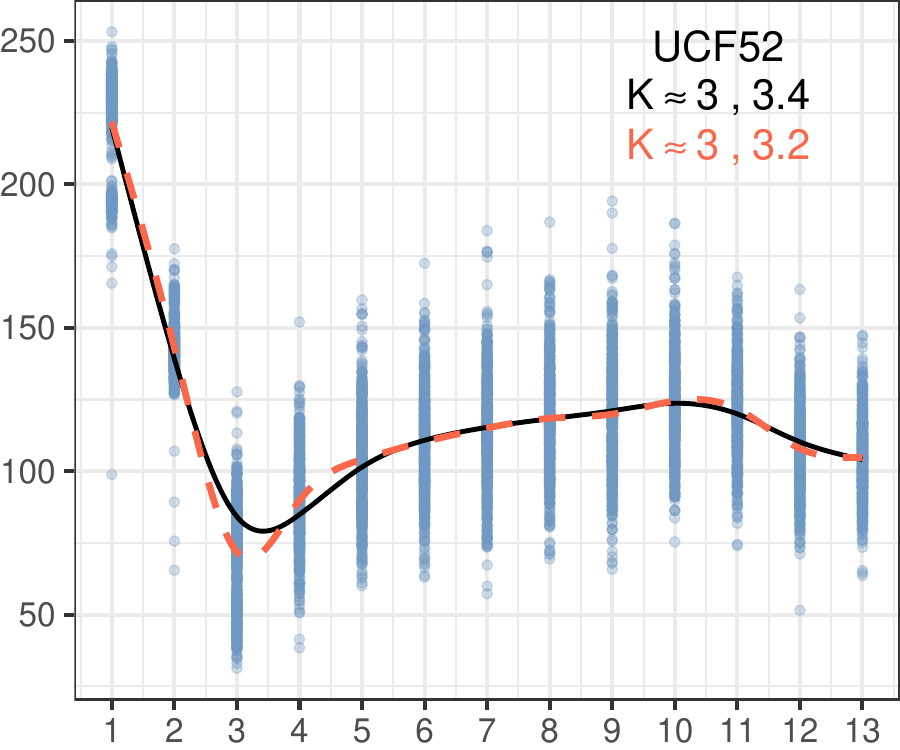}
	\includegraphics[width=0.325\linewidth]{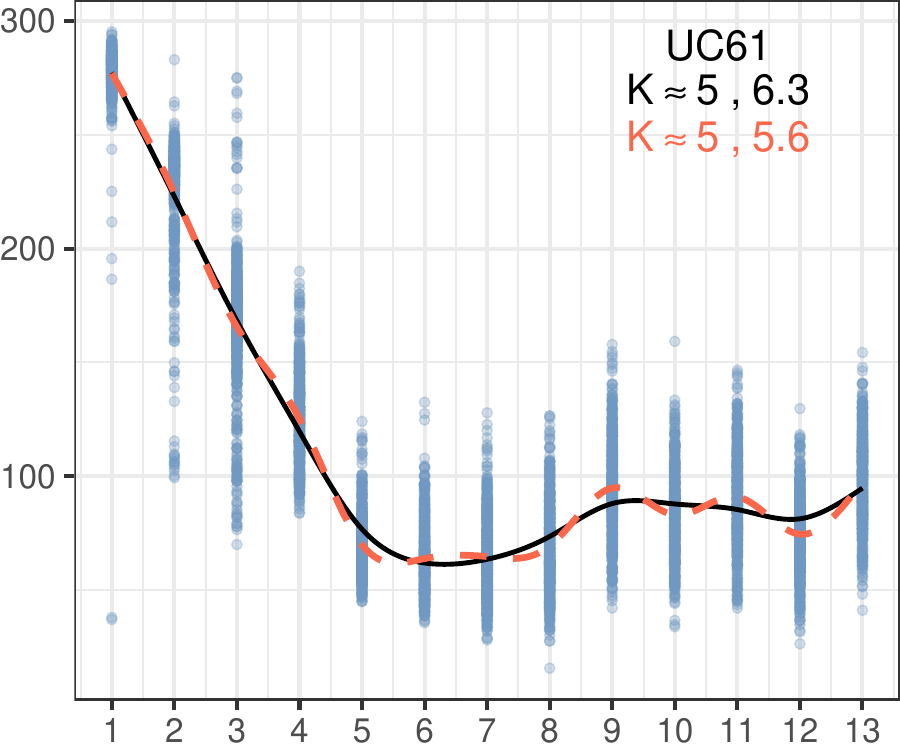}
	\includegraphics[width=0.326\linewidth]{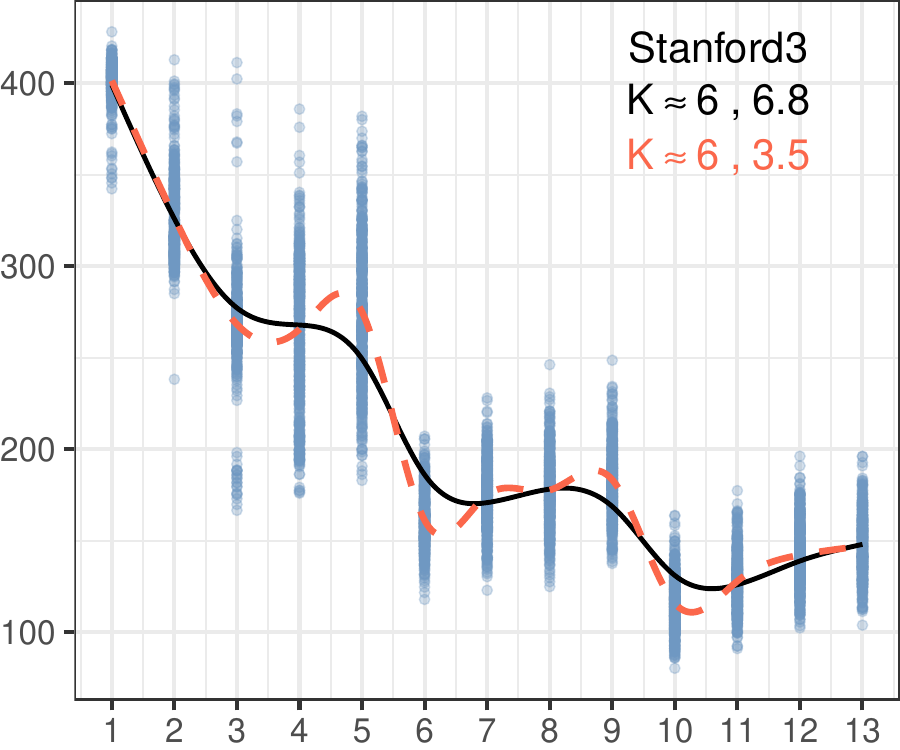}
	\includegraphics[width=0.3275\linewidth]{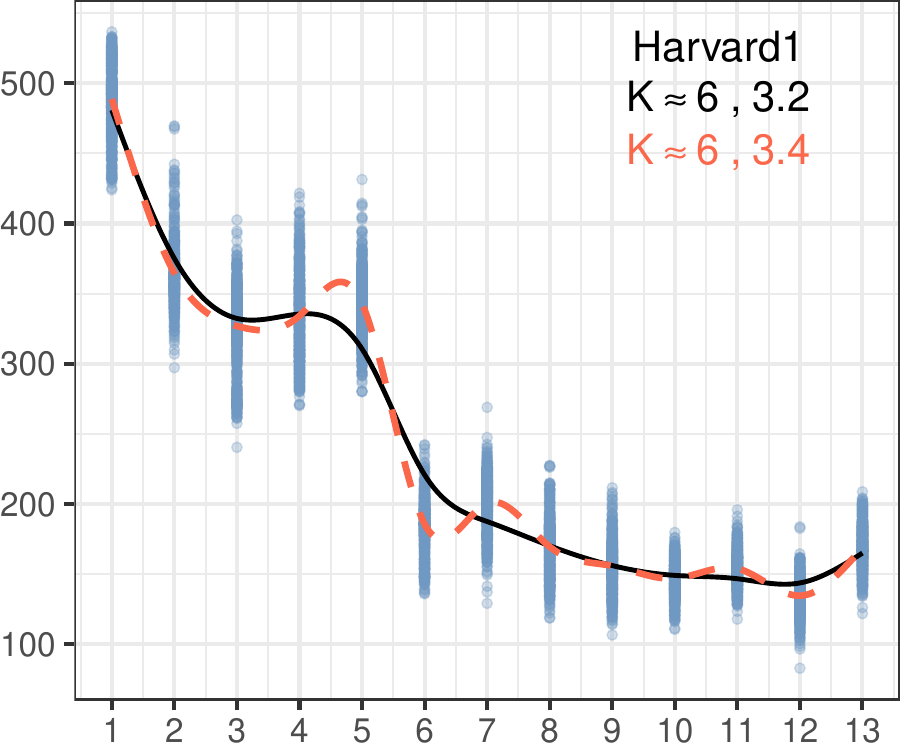}
	\includegraphics[width=0.3275\linewidth]{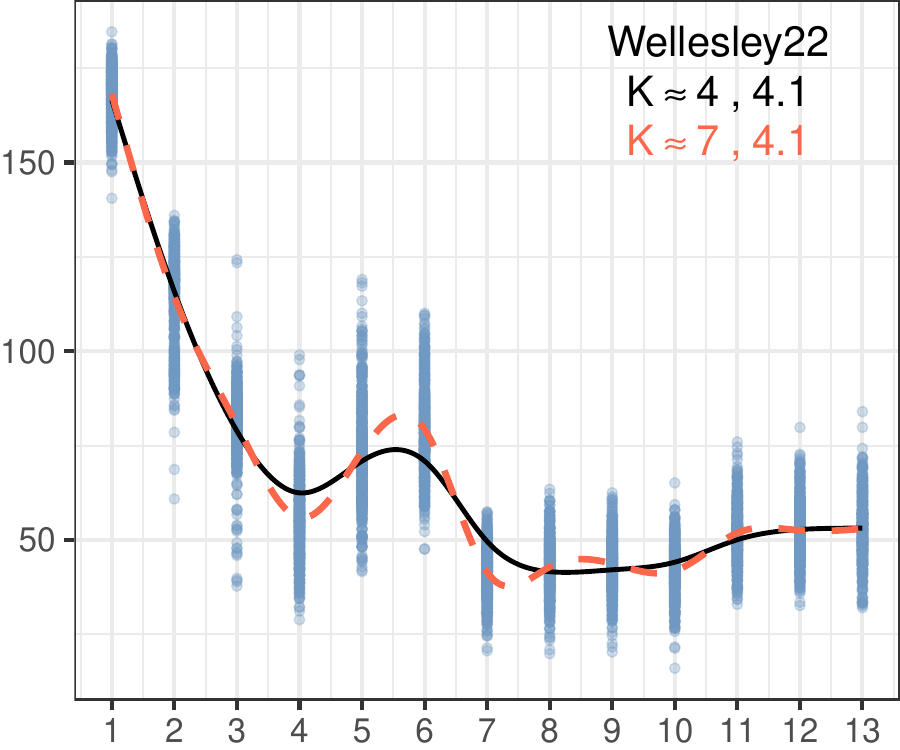}
	\caption{Community profile plots. The solid and dashed lines show the smoothed \scacp statistic versus the candidate number of communities ($K$). The dots each represent the \scacp value for a random split of the network. The difference between the solid and dashed lines is the smoothness level of the fitted smoothing spline.}
	\label{fig:comm:profiles}
\end{figure}

Figure~\ref{fig:comm:profiles} provides instances of three most common patterns of community profiles for the FB-100 networks. For each plot, we show community profiles using two smoothness levels: (1) the dashed red line corresponding to smoothness level set by generalized cross-validation (GCV) \cite{golub1979generalized}, and (2) the solid line providing a smoother fit, corresponding to \texttt{spar} = 0.3, where \texttt{spar} is the smoothness parameter in base \texttt{R}'s implementation of smoothing splines. The GCV version is usually rougher and captures subtle changes, whereas the solid black fit is smoother and more robust.
For each of the two fitted curves, the values of $K$ corresponding to the elbow and dip, as estimated by the derivatives, are given on each plot with the elbow point recorded first. For example, the Harvard network shows an elbow at $K=6$ and a dip at $K \approx 3.2$ according to the smoother profile. Compared with normalized plots (Figure~\ref{fig:normalized:plots}), community profiles show less randomness and the quantified elbows and dips are consistent with those identified by a human observer. It is worth noting that our maximum second derivative criterion for identifying the elbows, surprisingly, almost always returned an integer in these experiments, i.e. no rounding is performed in reporting the elbow points.

The first row in Figure~\ref{fig:comm:profiles} shows a single-elbow pattern, and the second row a single first dip (possibly followed by minor smaller dips later on). The third row illustrates a pattern with more than one significant drop, corresponding to multiple elbow/dips. This interesting multi-stage behavior is exhibited by a few of the FB-100 networks, and suggests the possibility of breaking the networks into communities in multiple (potentially hierarchical) ways. As mentioned earlier, these multi-stage structures are only captured by \scacp among the competing methods. This case illustrates the subtlety of community detection in real networks, showing that insisting on fitting the networks with a single $K$ could lead to missing interesting substructures.
 We also point out that having an elbow/dip at $K=2$ is very common for the FB-100 networks; we refer to the additional profile plots provided in Appendix~\ref{app:more:eg}

Note that in addition to revealing community structure, the absolute value of the profile curves in Figure~\ref{fig:comm:profiles} is also informative and measures the distance of the network form a DCSBM. Since \scacp is guaranteed to be centered around zero under a DCSBM, the networks with a larger absolute value of \scacp are further away from a DCSBM. For example, Figure~\ref{fig:comm:profiles} shows that Wellesley with $K=4$ communities, having an average \scacp value $\approx 60$ is a much better fit to DCSBM than Maryland with $K=2$ communities, showing an average \scacp value $\approx 200$.

\begin{figure}[hbt!]
	\begin{tabular}{ccc}
		\includegraphics[width=0.32\linewidth]{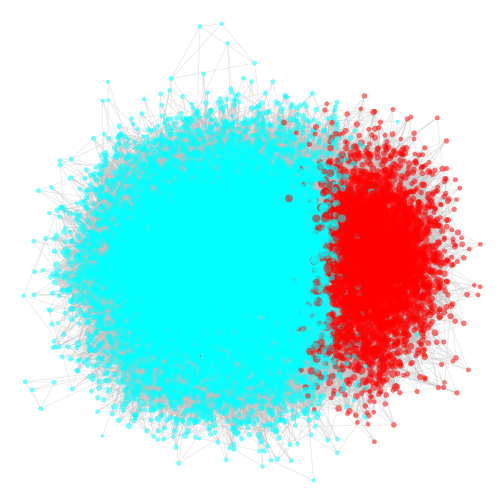}&
		\includegraphics[width=0.32\linewidth]{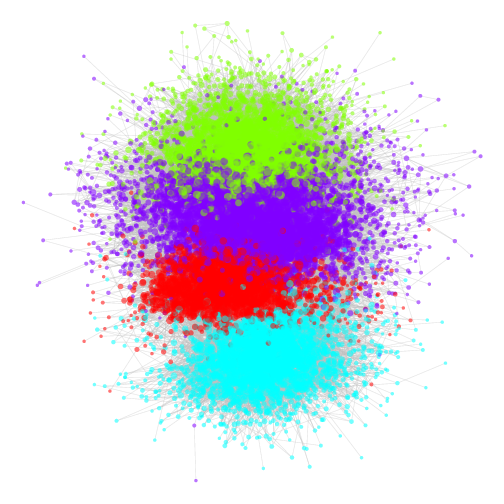} &
		\includegraphics[width=0.32\linewidth]{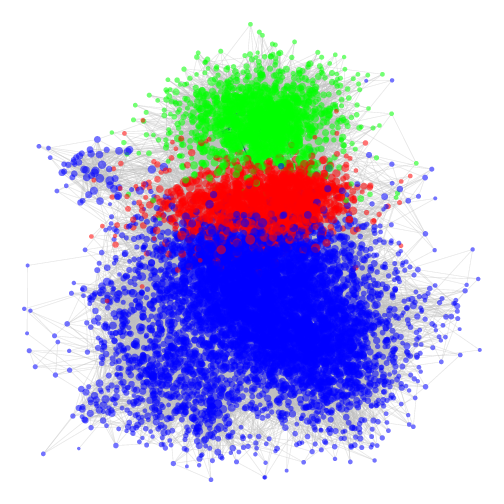} \\[-2ex]
		{\scriptsize Maryland58} & {\scriptsize Northeastern19} & {\scriptsize Stanford3 $(K = 3)$} \\[1ex]
		\includegraphics[width=0.32\linewidth]{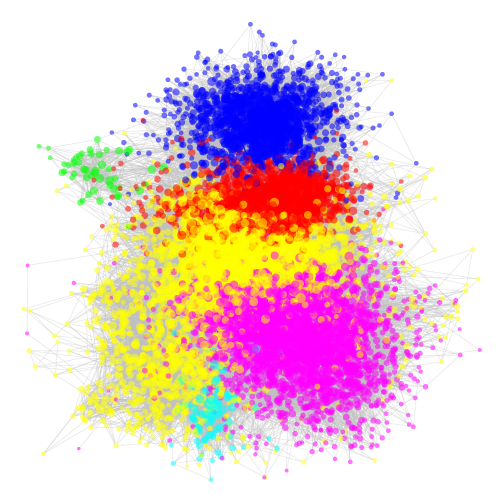}&
		\includegraphics[width=0.32\linewidth]{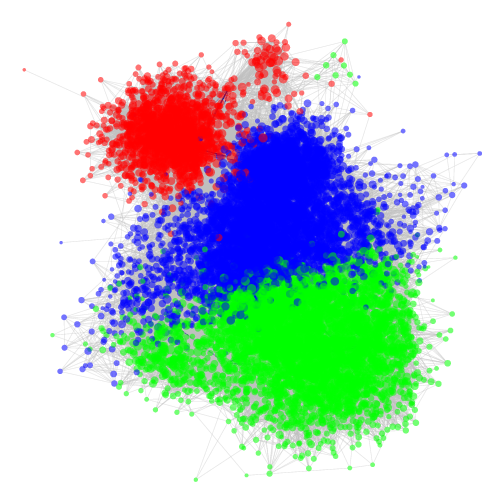} &
		\includegraphics[width=0.32\linewidth]{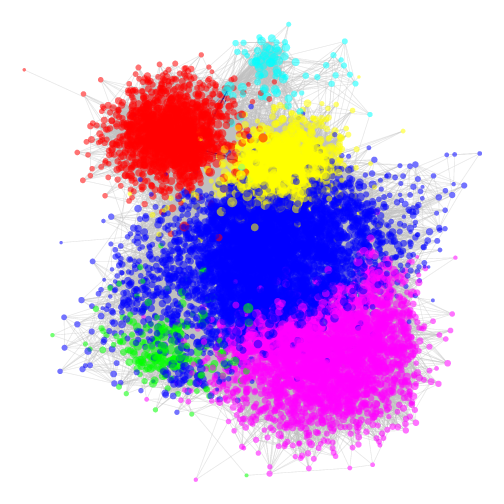} \\[-2ex]
		{\scriptsize Stanford3 $(K = 6)$} & {\scriptsize Harvard1 $(K = 3)$} & {\scriptsize Harvard1 $(K = 6)$} 
	\end{tabular}
	
	\caption{FB-100 network plots. The colors specify the estimated communities. The layouts are generated by the Fruchterman--Reingold algorithm that positions the nodes according to forces exerted along the edges. As a result, spatial proximity in these plots is correlated with network connectivity.}
	\label{fig:net:plots}
\end{figure}

Figure~\ref{fig:net:plots} shows community structure of some of the FB-100 networks with nodes colored according to their estimated community label.
The Stanford and Harvard networks are shown both with $K=3$ and $K=6$ estimated communities, as suggested by the two stages of their community profiles. We note that for both of these networks either of these two divisions into communities is visually sensible, with $K=6$ apparently capturing more refined substructures within the $K=3$ division. It is interesting to note that the $K=6$ partition in each case is not a strict refinement of the $K=3$ partition, but rather close to being a refinement. The community structures shown for Maryland and Northeastern are based on the optimal $K$ predicted by their profile plots, and they too make sense visually.

In Appendix~\ref{app:more:eg}, we also provide normalized and  profile plots (Figure \ref{Fig:polblog_net}) for the political blog network \cite{Adamic2005political_blog} which is widely used as a benchmark for community detection. The profile plot shows an elbow at $K=2$, as identified by the second derivative, matching the expected ground truth of  two communities corresponding to the Democratic and Republican parties.

\section*{Acknowledgement}
This work was supported in part by NSF 
grant DMS-1945667. We thank Mason Porter who provided access to the Facebook-100 dataset.


\printbibliography

\newpage

\appendix

\pagebreak

\begin{center}
    {\LARGE The Supplement to} \\[2ex]
    {\LARGE ``Adjusted  chi-square test for degree-corrected block models''} \\[2ex]
    {\large Linfan Zhang and Arash A. Amini}
    \bigskip
\end{center}

This supplement contains discussion, proofs and additional empirical results.

\section{$K+1$ vs. $K$ column clusters}\label{sec:comp:plus}

We give a simple example to show why the versions of the test with column labels $\yh$ having $L = K+1$ clusters (\cacp and \scacp) are more powerful than the versions with $L = K$ column clusters (\cac and \scac). We compare them using both theoretical calculations and real network simulations.

Consider an SBM with $K_0 = 3$, equal-sized communities and a planted-partition $B$ with $p$ on the diagonal and $q$ on the off-diagonal, that is,
\begin{align*}
    B = \begin{pmatrix}
    p & q & q\\
    q & p & q\\
    q & q & p
    \end{pmatrix}.
\end{align*}
Suppose that we want to test the null hypothesis $K = 2$. For simplicity, let us consider \cacf. The row label $\zh$ is estimated with $K =2$ clusters, potentially merging two of the three clusters. Consider an ideal case where $\zh$ perfectly combines clusters 2 and 3 into one, which we refer to as cluster $2'$, while correctly recovering cluster~1. 

For the \cac, we have column labels $\yh = \zh$, leading to the following confusion matrix $R$---defined in \eqref{eq:R:def}---and $BR$,
\begin{align*}
    R = \begin{pmatrix}
    1/3 & 0\\
    0 & 1/3\\
    0 & 1/3
    \end{pmatrix},\quad
    BR = \begin{pmatrix}
    p/3 & 2q/3 \\
    q/3 & (p+q)/3\\
    q/3 & (p+q)/3
    \end{pmatrix}.
\end{align*}
Recall that the multinomial probability $\rho$---defined in~\eqref{eq:rho:def}---is determined by normalizing rows in $BR$ to make each sum to 1. Therefore $\rho_{2*} = \rho_{3*}$. This means that all the rows in the merged cluster~$2'$ have the same mean vector, and similarly all the rows in true cluster~1. Since  \cac tests the equality of means among rows in 1 and rows in $2'$, it produces a small value and fails to reject the null.


On the other hand, for the \cacp, we fit $\yh$ with $L= K+1 = 3$ clusters, and in the ideal case we recover the true clusters, that is, $\yh = z$. In this case, 
\begin{align*}
    R = \begin{pmatrix}
    1/3 & 0 & 0\\
    0 & 1/3 & 0\\
    0 & 0 & 1/3
    \end{pmatrix},\quad
    BR = \begin{pmatrix}
    p/3 & q/3 & q/3\\
    q/3 & p/3 & q/3\\
    q/3 & q/3 & p/3
    \end{pmatrix}.
\end{align*}
Therefore, the multinomial probability $\rho$ is proportional to $B$, and in particular, $\rho_{2*} \neq \rho_{3*}$. We are still using the same row labels $\zh$ as in the case of \cac,  with the two clusters 1 and $2'$, to compare the equality of means among rows. Since over $2'$, now half the rows have mean $\rho_{2*}$ and half $\rho_{3*}$, and these two are different, the test statistic will be very large. In the notation of Theorem~\ref{thm:consist}, $\omega_2$ in \eqref{eq:omega2:def} is positive and by Theorem~\ref{thm:consist}, we have $\Th_n \gtrsim \nu_n \sqrt n $, and \scacp consistently rejects the null.

\begin{figure}[t]
    \centering
    \begin{tabular}{cc}
    \includegraphics[width=.49\textwidth]{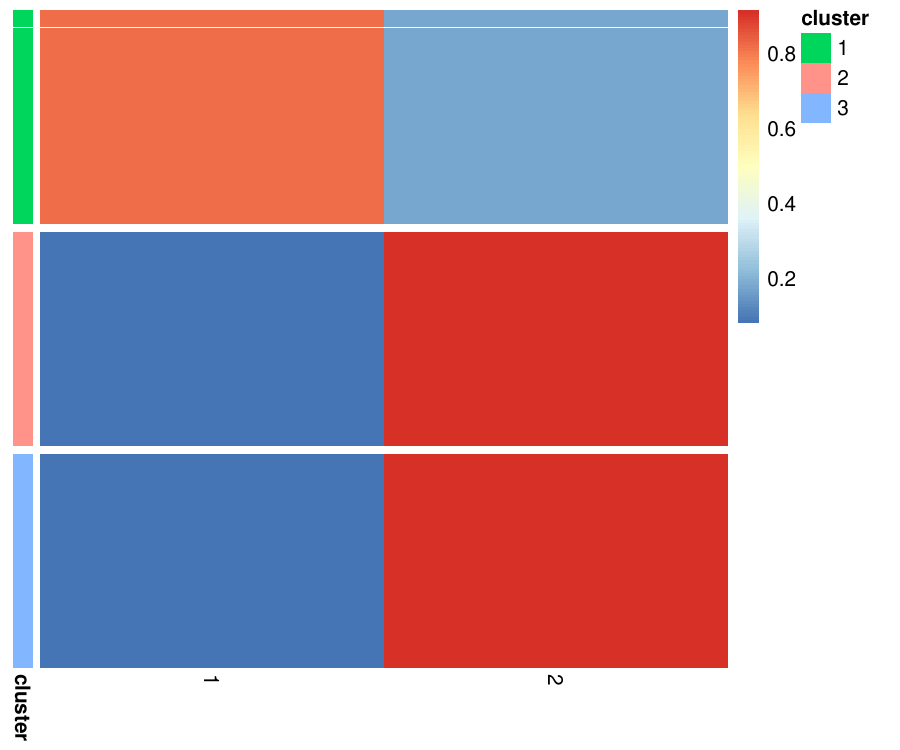} &
    \includegraphics[width=.49\textwidth]{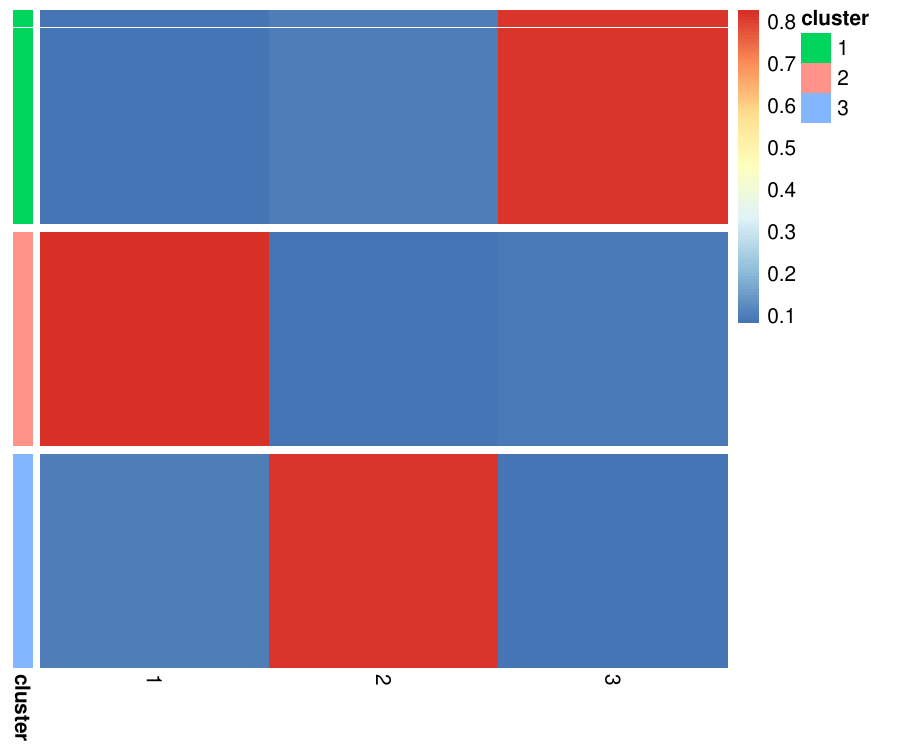}
    \\ 
    (a) \cac with $L = 2$ & (b) \cacp with $L = 3$ 
    \end{tabular}
    \caption{Heatmaps of the multinomial probability matrix, where the $i$-th row equals to $\rho_{z_i*}$. The left panel shows the case for \cac with $L = 2$ and the right, 
    \cacp with $L =3$. The column labels are 
    obtained
    by spectral clustering with $L =2$ and $L =3$, respectively.  
    The rows are ordered and labeled by the true clusters which is indicated by  
    the color bar to the left of the rows.}
    \label{fig:heatmap}
\end{figure}

We can observe the same phenomenon in practice. Consider an SBM on $n = 300$ nodes with $K_0 = 3$ equal-sized clusters, planted partitioned $B$ with out-in-ratio $q/p = 0.1$ and average degree $10$. The null hypothesis is $K = 2$. Applying the spectral clustering with $K=2$ clusters, clusters 2 and 3 are merged as a single cluster, and cluster 1 is mostly correctly recovered. The resulting label vector is set 
as the row label vector $\zh$, for both \cac and \cacp, and also the column label vector $\yh$ in \cac. When applying the spectral clustering with $L=K+1=3$,
the estimated labels are close to the true labels with only one node misclassified, and we set it as the column label vector $\yh$ in \cacp. Figure \ref{fig:heatmap} shows the heatmap of the $n \times L$ matrix $(\rho_{z_i *}, i= 1, \dots, n)$ for the above \cac (left side) and \cacp (right side). Because of the merging in $\yh$ with $L = 2$, the left heatmap shows the same multinomial probabilities for clusters 2 and 3. Whereas, $\yh$ with $L=3$ is close to the true label vector, hence the right heatmap shows distinct multinomial probabilities for the three clusters. This corroborates the discussion above.

\section{Proofs of the main results}

\subsection{Proof of Theorem ~\ref{thm:CCtest:null}}

\label{sec:proof:thm:CCtest:null}

Part~\ref{part:a:thm:CCtest:null} of the theorem
bounds the distance of the AC statistic, computed based on the true clusters and probabilities, to a standard normal. This part is a direct result of Proposition \ref{prop:Sn:grouped} below, whose proof uses the Esseen bound. 
Part~\ref{part:b:thm:CCtest:null} of the theorem follows from Proposition~\ref{prop:Thn:Tn} below by showing that replacing true clusters and probabilities with their estimated counterparts does not change the statistic much.

\begin{prop}\label{prop:Sn:grouped}
	Let $X_i \sim \mult(d_i, p_{k*}),\, i \in \Gc_k, k \in[K]$ be independent $L$-dimensional multinomial variables, with  probability vectors $p_{k*} = (p_{k\ell})$, and let %
	\begin{align*}
		Y_i := \sum_{\ell=1}^L \psi(X_{i\ell}, d_i  p_{g_i,\ell})%
		\quad \text{and}
		\quad S_n = \frac1{v_n} \sum_{i= 1}^{n}(Y_i-\ex[Y_i])
	\end{align*}
	where $v_n^2 := \sum_{i=1}^{n} \var(Y_i)$. Moreover, let $T_n = \frac1{\sqrt 2 \gamma_n} \big(\sum_{i=1}^n Y_i - \gamma_n^2\big)$ where $\gamma_n = \sqrt{n (L-1)}$.
	Let $\pul = \min_{k,\ell} p_{k\ell}$ and assume that $\min\{ h(d), L \} \ge 2$. Then, with $Z\sim N(0,1)$, we have
	\begin{align}
		\dkol\big(S_n, Z\big) &\le %
		\frac{55}{\pul^4\sqrt{L n}}, \label{eq:dkol:Sn} \\
		\dkol(T_n, Z) &\le  \dkol(S_n,Z)  + 
		\frac{\max\{1, \pul^{-1} - L -1\}}{\sqrt{\pi e}} 
		h(d)^{-1}.
		\label{eq:dkol:Tn}
	\end{align}
\end{prop}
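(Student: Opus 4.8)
The plan is to prove this proposition by reducing $\sum_i Y_i$ to a sum of independent summands, applying a Berry--Esseen bound, and then passing from the self-normalized statistic $S_n$ to $T_n$, which uses the deterministic scale $\sqrt 2\,\gamma_n$ in place of the true standard deviation $v_n$.

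The first step is to pin down the first two moments of $Y_i$. Since $X_i\sim\mult(d_i,p_{k*})$ for $i\in\Gc_k$, each coordinate $X_{i\ell}$ is $\bin(d_i,p_{k\ell})$, so $\ex[\psi(X_{i\ell},d_ip_{k\ell})]=\var(X_{i\ell})/(d_ip_{k\ell})=1-p_{k\ell}$ and hence $\ex[Y_i]=\sum_\ell(1-p_{k\ell})=L-1$; summing over $i$ gives $\ex[\sum_iY_i]=n(L-1)=\gamma_n^2$, which is why $T_n$ is centered at $\gamma_n^2$. A longer but routine computation, using the third and fourth (central and mixed) moments of the multinomial together with the identity $\psi(x,y)=x^2/y-2x+y$, yields the exact formula $\var(Y_i)=2(L-1)+d_i^{-1}\big(\sum_\ell p_{k\ell}^{-1}-(L^2+2L-2)\big)$ for $i\in\Gc_k$. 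Writing $r(p_{k*}):=\sum_\ell p_{k\ell}^{-1}-(L^2+2L-2)$, the arithmetic--harmonic mean inequality gives $\sum_\ell p_{k\ell}^{-1}\ge L^2$, so $r(p_{k*})\ge-2(L-1)$, and a short extreme-point argument over the simplex $\{p:\min_\ell p_\ell\ge\pul\}$ gives $|r(p_{k*})|\le 2(L-1)\max\{1,\pul^{-1}-L-1\}$. Summing, $v_n^2=2n(L-1)+\sum_i r(p_{g_i*})/d_i$, so $v_n^2-2\gamma_n^2=\sum_ir(p_{g_i*})/d_i$ with $|v_n^2-2\gamma_n^2|\le 2(L-1)\max\{1,\pul^{-1}-L-1\}\cdot n/h(d)$, while $r(p_{k*})\ge-2(L-1)$ together with $h(d)\ge2$ forces $v_n^2\ge 2n(L-1)(1-1/h(d))\ge n(L-1)=\gamma_n^2$.

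Given these, \eqref{eq:dkol:Sn} follows from the Berry--Esseen theorem for independent, non-identically distributed summands, $\dkol(S_n,Z)\le c_0\,v_n^{-3}\sum_i\ex|Y_i-\ex Y_i|^3$, once the third absolute central moments are bounded: each term $(X_{i\ell}-d_ip_{k\ell})^2/(d_ip_{k\ell})$ is a nonnegative, sub-exponential random variable (a rescaled squared centered binomial) with sub-exponential norm of order $p_{k\ell}^{-1}$, so $Y_i$, a sum of $L$ of these, is sub-exponential with norm controlled by $L$ and $\pul$, and hence $\ex|Y_i-\ex Y_i|^3\lesssim L\pul^{-4}$; combined with $v_n^3\ge(n(L-1))^{3/2}$ this yields the constant $55/(\pul^4\sqrt{Ln})$ after tracking absolute constants. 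For \eqref{eq:dkol:Tn}, note $S_n=v_n^{-1}(\sum_iY_i-\gamma_n^2)$ and $T_n=(\sqrt2\gamma_n)^{-1}(\sum_iY_i-\gamma_n^2)$, so $T_n=c\,S_n$ with $c=v_n/(\sqrt2\gamma_n)\ge 1/\sqrt2$; then $\dkol(T_n,Z)\le\dkol(S_n,Z)+\dkol(cZ,Z)$, and the elementary bound $\dkol(cZ,Z)\lesssim|c-1|$ together with $|c-1|\le|v_n^2-2\gamma_n^2|/(2\sqrt2\,\gamma_n^2)$ and the estimate on $|v_n^2-2\gamma_n^2|$ from the previous paragraph gives the $h(d)^{-1}$ term with constant $\max\{1,\pul^{-1}-L-1\}/\sqrt{\pi e}$.

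I expect the main obstacle to be the variance and third-moment bookkeeping in the middle steps: deriving the exact variance formula requires the fourth-order moment structure of the multinomial, and then the bounds on $|r(p_{k*})|$ and on $\ex|Y_i-\ex Y_i|^3$ must be carried out with fully explicit constants and uniformly over all admissible degree vectors --- individual $d_i$ can be as small as $1$ even though $h(d)\ge2$ --- in order to land exactly on $55$ and $\max\{1,\pul^{-1}-L-1\}$. By contrast, the two probabilistic inputs (Berry--Esseen for non-i.i.d. sums and $\dkol(cZ,Z)\lesssim|c-1|$) are standard and off-the-shelf.
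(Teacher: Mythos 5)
Your outline is the same as the paper's: compute the exact mean and variance of $Y_i$ from the multinomial moment structure, apply the Berry--Esseen bound for non-identically distributed summands with $v_n^2 \ge n(L-1)$, and then pass from $S_n$ to $T_n = (v_n/(\sqrt2\gamma_n))\,S_n$ via an affine-rescaling bound on the Kolmogorov distance. Your exact variance formula, the lower bound $v_n^2\ge 2n(L-1)(1-1/h(d))\ge\gamma_n^2$, and the bound $|r(p_{k*})|\le 2(L-1)\max\{1,\pul^{-1}-L-1\}$ all match the paper (your $r(p_{k*})$ is $2(L-1)b_k$ in the paper's notation), and the rescaling step is carried out in the paper by exactly the case analysis on $\beta_n=\sqrt{1+u}$ that you sketch.

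The one step where your proposed shortcut does not deliver the stated bound is the third absolute moment. You bound each summand $\psi(X_{i\ell},d_ip_{k\ell})$ by a sub-exponential norm of order $p_{k\ell}^{-1}$ and conclude $\ex|Y_i-\ex Y_i|^3\lesssim L\pul^{-4}$; but summing $L$ such terms by the triangle inequality in $\psi_1$ (or in $L^3$) gives $\ex|Y_i-\ex Y_i|^3\lesssim (L\pul^{-1})^3=L^3\pul^{-3}$, which in the near-uniform case $\pul\asymp 1/L$ exceeds $L\pul^{-4}$ by a factor of order $L$, so the resulting Berry--Esseen bound would not be $55/(\pul^4\sqrt{Ln})$. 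To land on $L\pul^{-4}$ you need two ingredients the paper uses: (i) the convex-combination representation $Y_i=\sum_\ell p_{g_i\ell}\,Z_{i\ell}^2/d_i$ with $Z_{i\ell}=(X_{i\ell}-d_ip_{g_i\ell})/p_{g_i\ell}$, so that Jensen with weights $p_{g_i\ell}$ (rather than an unweighted sum over $\ell$) is applied; and (ii) a variance-aware moment bound $\ex|Z_{i\ell}^2-\ex Z_{i\ell}^2|^3\le 34.5\,p^{-6}p(1-p)\,d_i^3$, which the paper obtains from additivity of cumulants (its Lemma on central moment growth) --- a worst-case Hoeffding-type sub-Gaussian norm for the centered Bernoulli would lose the factor $p(1-p)$ and cost an extra $\pul^{-1}$. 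So the strategy is right and you correctly identify the bookkeeping as the crux, but the specific sub-exponential route you propose would have to be replaced by (or sharpened to) the weighted-Jensen-plus-cumulant computation to prove the proposition with the stated power of $\pul$ and the constant $55$.
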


\begin{prop}\label{prop:Thn:Tn}
	Recall that $\omega_n = \min_k \pi_k \dav^{(k)}$. Under the assumptions of Theorem~\ref{thm:CCtest:null}, for any nonnegative $u  \le (\pul/8)^2  n \omega_n$, 
	we have
	\begin{align}\label{eq:Thn:Tn:u:bound}
		\begin{split}
		\dkol(\Th_n, Z) &\le d_K(T_n, Z) +  6 K Le^{-u} + 2 \pr \bigl(\miss(\gh, g) \ge  \alphan \bigr)\\
		&\quad  + \frac{\sqrt L}{\pul} \Big[ \sqrt{\frac{8 u}{\omega_n}}  + 12 K \frac{u}{\sqrt n}
		+ 4 C_{3,p}L^{-1}\dmax \sqrt{Kn} \, \alphan\Big],
		\end{split}
	\end{align}

	where $C_{3,p}$ is as defined in Theorem~\ref{thm:CCtest:null}.
\end{prop}
To obtain~\eqref{eq:Thn:Tn:bound} in Theorem~\ref{thm:CCtest:null}, we take $u = \log (K \omega_n)$. To satisfy the condition of Proposition~\ref{prop:Thn:Tn}, we need $\log (K \omega_n) / \omega_n \le (\pul/8)^2 n$.
Since $\omega_n \ge L \ge 2$ by assumption and thus $2\log (K \omega_n) \ge 1$, we have
\begin{align*}
	6K Le^{-u} = 6 L /\omega_n \le \frac{6}{\pul}\sqrt{2\log(K\omega_n)L/\omega_n} =
	3\frac{\sqrt{L}}{\pul} \sqrt{\frac{8u}{\omega_n}},
\end{align*}
and the result follows.

\subsubsection{Proof of Proposition~\ref{prop:Sn:grouped}}

The proof relies on three lemmas.
Lemma~\ref{lem:mult_mean_var} establishes the mean and variance of the chi-square statistic. Lemma~\ref{lem:third_central_mom} is a general result on the growth rate of the third central moment of the empirical variance of a sum of independent variables. Applying this result to a  chi-square statistic, we can bound its third central moment by some constant. 

Plugging the moment estimates into the Esseen bound, we show that the sum of chi-square statistics, normalized by its mean and standard deviation, has a distribution close to standard normal. Finally, in Lemma~\ref{lem:dkol:affine:trans}, we show that by replacing the exact standard deviation in the normalized sum with a simpler form (to get $T_n$) we pay a small price in terms of the distance to the standard normal distribution.

We start by stating the three lemmas, whose proofs can be found in Appendix~\ref{sec:proof:lemmas:thm:CCtest:null}. Recall that $\psi(x,y) := (x-y)^2/y$.
\begin{lem}[Variance of the chi-square statistic]	\label{lem:mult_mean_var}
	Let $X = (X_1,\dots,X_L) \sim \mult(d, p)$, where $p = (p_1, \dots, p_L )$ is a probability vector and let $Y := \sum_{\ell=1}^L \psi(X_\ell, d p_\ell)$.
	Then, for $L \ge 2$,
	\begin{align*}
		\ex[Y] &= L-1,\\
		\var(Y) &= \Big( 1 - \frac1d \Big) 2(L-1) + \frac{1}{d} \Big( \frac{L}{h(p)} -L^2 \Big).
	\end{align*}
	In particular, $\var(Y) \ge ( 1 - 1/d) 2(L-1)$
\end{lem}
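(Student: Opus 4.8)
\textbf{Proof proposal for Lemma~\ref{lem:mult_mean_var}.}
The plan is a direct moment computation. First I would expand the square in the definition of $Y$: since $\psi(X_\ell, dp_\ell) = X_\ell^2/(dp_\ell) - 2X_\ell + dp_\ell$ and $\sum_\ell X_\ell = d$, $\sum_\ell p_\ell = 1$, summing over $\ell \in [L]$ makes the linear and constant parts simplify and yields the clean identity
\begin{align*}
Y = \frac1d \sum_{\ell=1}^L \frac{X_\ell^2}{p_\ell} - d.
\end{align*}
Hence it suffices to compute the first two moments of $W := \sum_{\ell} X_\ell^2/p_\ell$: then $\ex[Y] = \ex[W]/d - d$ and $\var(Y) = \var(W)/d^2$.

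For the mean, I would use $\ex[X_\ell^2] = \var(X_\ell) + (\ex X_\ell)^2 = dp_\ell(1-p_\ell) + d^2 p_\ell^2$; dividing by $p_\ell$ and summing gives $\ex[W] = d(L-1) + d^2$, so $\ex[Y] = L-1$. For the variance the key tool is the factorial-moment identity for the multinomial, $\ex[(X_\ell)_a (X_m)_b] = (d)_{a+b}\, p_\ell^a p_m^b$ for $\ell \ne m$ and $\ex[(X_\ell)_a] = (d)_a\, p_\ell^a$, where $(d)_k$ denotes the falling factorial. I would convert ordinary powers into falling factorials via the Stirling numbers of the second kind ($x^2 = (x)_2 + (x)_1$ and $x^4 = (x)_4 + 6(x)_3 + 7(x)_2 + (x)_1$), then expand $\ex[W^2] = \sum_{\ell,m} (p_\ell p_m)^{-1}\, \ex[X_\ell^2 X_m^2]$, treating the diagonal $\ell = m$ and the off-diagonal $\ell \ne m$ contributions separately. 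After substituting the factorial moments, dividing by $d^2$, subtracting $(\ex[W]/d)^2$, and using $\sum_\ell p_\ell = 1$ together with $\sum_\ell p_\ell^{-1} = L/h(p)$, the expression should collapse to $2(L-1)(1 - 1/d) + d^{-1}(L/h(p) - L^2)$.

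The final claim is then immediate: by Cauchy--Schwarz (equivalently AM--HM), $\sum_\ell p_\ell^{-1} \ge L^2 / \sum_\ell p_\ell = L^2$, so $d^{-1}(L/h(p) - L^2) = d^{-1}(\sum_\ell p_\ell^{-1} - L^2) \ge 0$ and hence $\var(Y) \ge 2(L-1)(1-1/d)$. I expect the only real obstacle to be bookkeeping: the fourth-moment expansion produces many terms in $(d)_2, (d)_3, (d)_4$ weighted by powers of the $p_\ell$, and one must carefully cancel the pieces that reconstitute $(\ex W)^2$ and regroup the remainder; there is no conceptual difficulty, but this is where sign and coefficient errors are most likely. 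A convenient sanity check along the way is the uniform case $p_\ell \equiv 1/L$, where $L/h(p) = L^2$ and the formula reduces to $2(L-1)(1-1/d)$, matching the variance of the $\chi^2_{L-1}$ limit as $d \to \infty$.
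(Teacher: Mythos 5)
Your proposal is correct, and I verified that the bookkeeping you defer does close: with $W=\sum_\ell X_\ell^2/p_\ell$ one gets $\ex[W^2]=(d)_4+(2L+4)(d)_3+L(L+6)(d)_2+dL/h(p)$, and subtracting $(\ex W)^2=\big(d(L-1)+d^2\big)^2$ and dividing by $d^2$ yields exactly the stated variance. Your route differs from the paper's in the computational machinery. The paper first reduces the mean to $\ex\psi(X_\ell,dp_\ell)=1-p_\ell$, and for the variance writes the centered vector $X-dp$ as a sum of $d$ i.i.d.\ centered single-trial multinomials, then computes $\ex\tilde X_\ell^2\tilde X_{\ell'}^2$ by enumerating coincidence patterns among four trial indices (and uses the cumulant identity $\mu_4=d\mu_4(U)+3d(d-1)\mu_2^2(U)$ for the diagonal terms $\ex\tilde X_\ell^4$). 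You instead work with the uncentered statistic via the clean identity $Y=d^{-1}\sum_\ell X_\ell^2/p_\ell-d$ and the factorial-moment formula $\ex[(X_\ell)_a(X_m)_b]=(d)_{a+b}p_\ell^a p_m^b$, converting powers to falling factorials with Stirling numbers. The factorial-moment route avoids the case analysis over index coincidences entirely and makes the cancellation against $(\ex W)^2$ mechanical (everything is a polynomial in $d$ with coefficients in $L$, $\alpha=\sum_\ell p_\ell^2$ and $\sum_\ell p_\ell^{-1}$, and $\alpha$ drops out); the paper's route is more self-contained in that it only uses elementary independence arguments. Your final step, $\sum_\ell p_\ell^{-1}\ge L^2$ by AM--HM, matches the paper's remark following the lemma.
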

Note that we always have $L /h(p) \ge L^2$ since $\sum_\ell p_\ell = 1$. Hence, the variance of $Y$ is a convex combination of two nonnegative terms.  Furthermore, if $d\ge 2$, $\var(Y) \ge L-1$.

\begin{lem}[Central moment growth]\label{lem:mom:growth}
	Let $\{W_1, \dots, W_n\}$ be a sequence of i.i.d. zero mean random variables with finite moments of order 6, and let $X_n = \sum_{i=1}^n W_i$. Then, the third central moment of $X_n^2$ is $O(n^3)$:

	\begin{equation*}
		\ex\big|X_n^2 - \ex X_n^2\big|^3 \le  C_{W_1} n^3,
	\end{equation*}
	where $C_{W_1}$ is a  constant that only depends on the first 6 moments of $W_1$. For the case where $W_1 = \alpha (Z-p)$ with $Z \sim \ber(p)$ and $\alpha \in \reals$, one can take $C_{W_1} = 34.5\, \alpha^6 p (1-p)$.
	\label{lem:third_central_mom}
\end{lem}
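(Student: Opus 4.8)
The plan is to avoid decomposing $X_n^2 - \ex X_n^2$ into a linear part plus a quadratic ``chaos'' and instead dominate everything by the sixth moment of $X_n$. Since $X_n^2 \ge 0$ and $\ex X_n^2 = n\,\ex W_1^2 \ge 0$ (recall $\ex X_n^2 = \var(X_n) = n\,\ex W_1^2$ because $\ex W_1 = 0$), the elementary bounds $|a-b| \le a+b$ and $(a+b)^3 \le 4(a^3+b^3)$ for $a,b \ge 0$ give
\begin{align*}
	\ex\big|X_n^2 - \ex X_n^2\big|^3 \;\le\; \ex\big(X_n^2 + \ex X_n^2\big)^3 \;\le\; 4\,\ex[X_n^6] + 4\,(\ex X_n^2)^3 .
\end{align*}
The second term equals $4(n\,\ex W_1^2)^3$, already of order $n^3$, so the whole job reduces to showing $\ex[X_n^6] = O(n^3)$ with an explicit constant.

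For that I would expand $\ex[X_n^6] = \ex\big(\sum_{i=1}^n W_i\big)^6$ and use that, since the $W_i$ are i.i.d.\ mean zero, a monomial $\ex[W_{i_1}\cdots W_{i_6}]$ is nonzero only when every index appearing among $i_1,\dots,i_6$ appears at least twice. Grouping the $n^6$ monomials by the set partition of $\{1,\dots,6\}$ they induce, only the four block-size patterns $6$, $4{+}2$, $3{+}3$, $2{+}2{+}2$ contribute, with multiplicities $1$, $15$, $10$, $15$, so that
\begin{align*}
	\ex[X_n^6] \;=\; n\,\mu_6 + 15\,n(n-1)\,\mu_4\mu_2 + 10\,n(n-1)\,\mu_3^2 + 15\,n(n-1)(n-2)\,\mu_2^3 ,
\end{align*}
where $\mu_k := \ex W_1^k$. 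Because $\mu_2,\mu_4,\mu_6 \ge 0$ and $n(n-1),\,n(n-1)(n-2) \le n^3$, this is at most $n^3\big(\mu_6 + 15\mu_4\mu_2 + 10\mu_3^2 + 15\mu_2^3\big)$. Combined with the previous display, this yields $\ex|X_n^2 - \ex X_n^2|^3 \le C_{W_1} n^3$ with $C_{W_1} = 4\big(\mu_6 + 15\mu_4\mu_2 + 10\mu_3^2 + 16\mu_2^3\big)$, a quantity depending only on the first six moments of $W_1$.

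For the Bernoulli specialization $W_1 = \alpha(Z - p)$, $Z \sim \ber(p)$, I would just substitute the central moments $\mu_2 = \alpha^2 pq$, $\mu_3 = \alpha^3 pq(q-p)$, $\mu_4 = \alpha^4 pq(1-3pq)$, $\mu_6 = \alpha^6 pq\big(1 - 5pq + 5(pq)^2\big)$ (with $q = 1-p$) into $C_{W_1}$, factor out $\alpha^6 pq$, and bound the remaining bracket using $pq \le 1/4$ and $(q-p)^2 \le 1$; each term is then a bounded multiple of $1$, which readily gives $C_{W_1} \le 34.5\,\alpha^6 p(1-p)$.

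I do not expect a genuine obstacle. The only mildly delicate point is the bookkeeping of the partition multiplicities $1, 15, 10, 15$ in the sixth-moment expansion, which one can double-check against the Gaussian case (there it must reproduce $\ex X_n^6 = 15(n\,\ex W_1^2)^3$) and against $n=1,2$. The thing to get right is the temptation to route through a martingale/Burkholder estimate or a second-order-chaos bound on $\sum_{i<j} W_iW_j$: both work but need more machinery and produce poorer constants than the crude $L^6$ argument above.
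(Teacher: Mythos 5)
Your proof is correct, but it takes a genuinely different route from the paper's. The paper symmetrizes: it introduces an independent copy $X_n'$, applies Jensen's inequality to get $\ex|X_n^2-\ex X_n^2|^3 \le \ex|X_n^2-(X_n')^2|^3$, then Cauchy--Schwarz to reduce to $\ex|X_n\pm X_n'|^6$, which it evaluates exactly using the additivity of cumulants over independent sums and the moment--cumulant formula $\mu_6=\kappa_6+15\kappa_4\kappa_2+10\kappa_3^2+15\kappa_2^3$; this yields $C_{W_1}=2|\kappa_6|+60|\kappa_4|\kappa_2+40\kappa_3^2+120\kappa_2^3$ and, after bounding the Bernoulli cumulants, the constant $34.5\,\alpha^6 p(1-p)$. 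You instead discard the centering via $|X_n^2-\ex X_n^2|\le X_n^2+\ex X_n^2$ and $(a+b)^3\le 4(a^3+b^3)$, reducing everything to $\ex X_n^6$, which you expand combinatorially; your partition multiplicities $1,15,10,15$ for the patterns $6$, $4{+}2$, $3{+}3$, $2{+}2{+}2$ are correct (and your Gaussian sanity check $\ex X_n^6=15(n\mu_2)^3$ indeed comes out right), giving $C_{W_1}=4(\mu_6+15\mu_4\mu_2+10\mu_3^2+16\mu_2^3)$. In the Bernoulli case this evaluates to $4pq\,\alpha^6\,(1+20pq-64(pq)^2)\le 10.25\,\alpha^6 pq$ exactly, and even your cruder term-by-term bounds give about $33\,\alpha^6 pq$, so the stated constant $34.5$ holds with room to spare. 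Your argument is more elementary and self-contained (no cumulants, no symmetrization) and here gives a slightly better constant; what it loses relative to the paper is only the conceptual point the authors are making in the surrounding discussion, namely that central moments of i.i.d.\ sums grow slowly \emph{because} cumulants are additive --- your route obtains the same $O(n^3)$ scaling because the uncentered sixth moment already has that order, which happens to suffice for this lemma.
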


\begin{lem}\label{lem:dkol:affine:trans}
   Let $T = \beta S + \alpha$ where $S$ is random variable and $\beta,\alpha \in \reals$ are constants, and let $Z \sim N(0,1)$. Then,
   \begin{align*}
   \dkol \big( T, Z \big)  \le \dkol \big( S, Z \big) + \frac{|\beta - 1|}{\sqrt{2 \pi e} \min\{|\beta|,1\}} + \frac{|\alpha|}{\sqrt{2\pi}}.
   \end{align*}
\end{lem}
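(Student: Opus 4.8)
The plan is to decompose the Kolmogorov distance along a chain of affine maps, writing the target as a three-step triangle inequality passing through the distributions of $\beta S+\alpha$, then $\beta Z+\alpha$, then $Z+\alpha$, then $Z$. I may assume $\beta>0$: when $\beta=0$ the right-hand side is infinite and there is nothing to prove, and the case $\beta<0$ reduces to the case $\beta>0$ by replacing $(S,\beta,\alpha)$ with $(S,-\beta,-\alpha)$, using $Z\overset{d}{=}-Z$, the identity $\dkol(-T,-Z)=\dkol(T,Z)$ (valid since $\Phi$ is continuous, so the $\le$-versus-$<$ subtlety is harmless), and $||\beta|-1|\le|\beta-1|$.

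For $\beta>0$ the first and last links are immediate. An increasing affine map applied to \emph{both} arguments leaves $\dkol$ unchanged, so $\dkol(\beta S+\alpha,\beta Z+\alpha)=\dkol(S,Z)$. The last link is the standard translation estimate: with $\Phi,\phi$ denoting the standard normal cdf and pdf, $\dkol(Z+\alpha,Z)=\sup_t|\Phi(t-\alpha)-\Phi(t)|\le|\alpha|\sup_x\phi(x)=|\alpha|/\sqrt{2\pi}$. Thus the whole content is the middle link: after substituting $u=t-\alpha$,
\[
\dkol(\beta Z+\alpha,Z+\alpha)=\sup_{u\in\reals}\bigl|\Phi(u/\beta)-\Phi(u)\bigr|,
\]
and it remains to show this is at most $|\beta-1|/(\sqrt{2\pi e}\,\min\{\beta,1\})$.

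To bound $|\Phi(u/\beta)-\Phi(u)|$, I would apply the mean value theorem: $\Phi(u/\beta)-\Phi(u)=\phi(\xi)\,u\,(1/\beta-1)$ for some $\xi$ between $u$ and $u/\beta$, so $|\Phi(u/\beta)-\Phi(u)|=\phi(\xi)|u|\cdot|\beta-1|/\beta$. The quantity $\phi(\xi)|u|$ is controlled using the elementary extremal facts $\sup_x\phi(x)=1/\sqrt{2\pi}$ and $\sup_{x>0}x\phi(x)=\phi(1)=1/\sqrt{2\pi e}$ (the latter because $\tfrac{d}{dx}\bigl(x\phi(x)\bigr)=\phi(x)(1-x^2)$). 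If $\beta\ge1$, then $u/\beta$ lies between $0$ and $u$, so $|\xi|\ge|u|/\beta$, i.e. $|u|\le\beta|\xi|$, hence $\phi(\xi)|u|\le\beta\,|\xi|\phi(\xi)\le\beta/\sqrt{2\pi e}$, and therefore $|\Phi(u/\beta)-\Phi(u)|\le(\beta-1)/\sqrt{2\pi e}$, matching the claim since $\min\{\beta,1\}=1$. If $0<\beta<1$, then $u/\beta$ has the same sign as $u$ but larger magnitude, so $|\xi|\ge|u|$, hence $\phi(\xi)|u|\le|\xi|\phi(\xi)\le1/\sqrt{2\pi e}$, and therefore $|\Phi(u/\beta)-\Phi(u)|\le(1-\beta)/(\beta\sqrt{2\pi e})$, matching the claim since $\min\{\beta,1\}=\beta$. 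Combining the three links yields the stated inequality.

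The only genuine subtlety — and the step I would handle most carefully — is the sign-and-magnitude bookkeeping that locates $\xi$ relative to $u$ in the two regimes $\beta\ge1$ and $0<\beta<1$; this is precisely what produces the factor $\min\{\beta,1\}$ in the denominator. Everything else (the invariance of $\dkol$ under common increasing affine maps, the translation bound, and the two one-dimensional maximizations of $\phi$ and $x\phi(x)$) is routine.
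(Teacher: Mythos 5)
Your proof is correct and follows essentially the same route as the paper's: both reduce the problem to the translation bound $\sup_s|\Phi(s+\alpha)-\Phi(s)|\le|\alpha|/\sqrt{2\pi}$ and the scaling bound $\sup_t|\Phi(t)-\Phi(\beta t)|\le|\beta-1|/(\sqrt{2\pi e}\min\{|\beta|,1\})$, the latter obtained in the paper by bounding the integral of the density and maximizing $t e^{-at^2/2}$ with $a=\min(\beta^2,1)$, which is the same computation as your mean-value-theorem argument maximizing $x\phi(x)$. The only differences are cosmetic (chaining through the intermediate laws $\beta Z+\alpha$ and $Z+\alpha$ rather than a change of variables inside the supremum), plus your explicit, and correct, handling of $\beta\le 0$, which the paper leaves implicit.
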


\begin{proof}[Proof Proposition~\ref{prop:Sn:grouped}]
	By Esseen's bound for non-identically distributed summands~\cite{Shevtsova2010essen}, 
	\begin{align}\label{eq:Esseen:bound}
		\dkol(S_n, Z) \le \frac{C_0}{(v_n^2)^{3/2}} \sum_{i=1}^n \ex|Y_i-\ex[Y_i]|^3
	\end{align}
	for some constant $C_0 \in [0.41, 0.56]$.
	By Lemma~\ref{lem:mult_mean_var}, $\var(Y_i) \ge ( 1 - d_i^{-1}) 2(L-1)$. Then, using assumption $h(d) \ge 2$,
	\begin{align}\label{eq:vn:lower:bound}
		v_n^2 = \sum_{i=1}^n \var(Y_i) \ge n \big(1-h(d)^{-1}\big)2(L-1) \ge n(L-1).
	\end{align}

	Next, we bound the third central moment of $Y_i$.
	Let $Z_{i\ell} = (X_{i\ell} - d_i p_{g_i\ell}) / p_{g_i\ell}$. We have $Y_i = \sum_\ell p_{g_i\ell} Z_{i\ell}^2 / d_i$. We can write $Z_{i\ell} = \sum_{j=1}^{d_i}(W_j -p_{g_i\ell})/p_{g_i\ell}$, where $W_j \overset{i.i.d.}{\sim} \ber(p_{g_i\ell})$. By Lemma~\ref{lem:mom:growth}, $\ex| Z_{i\ell}^2 - \ex Z_{i\ell}^2|^3 \le  C_{p_{g_i\ell}} d_i^3$,
	for some constant $ C_{p_{g_i\ell}}$ that only depends on $p_{g_i\ell}$.
	Then, 
	\begin{align}\label{eq:3rd:mom:Yi}
		\begin{split}
			\ex|Y_i-\ex[Y_i]|^3 &= \ex\,\Big|\sum_{\ell=1}^{L}p_{g_i\ell} (Z_{i\ell}^2 - \ex Z_{i\ell}^2) / d_i \Big|^3 \\
			&\le \sum_{\ell=1}^{L} \frac{p_{g_i\ell}}{d_i^3} \ex\big| Z_{i\ell}^2 - \ex Z_{i\ell}^2 \big|^3 \le \sum_{\ell=1}^{L} p_{g_i\ell} \Big( \frac{34.5}{p_{g_i\ell}^6}  p_{g_i\ell}(1-p_{g_i\ell})\Big)
		\end{split}
	\end{align}
	where the first inequality is the discrete Jensen's inequality applied to convex function $x \mapsto |x|^3$, that is, $|\sum_\ell q_\ell x_\ell|^3 \le \sum_\ell q_\ell |x_\ell|^3$ for any $\{x_\ell\}$ and probability vector $q = (q_\ell)$. Combining~\eqref{eq:Esseen:bound},~\eqref{eq:vn:lower:bound} and~\eqref{eq:3rd:mom:Yi} gives
	\begin{align*}
		\sqrt{n}\, \dkol(S_n, Z) \le \frac{34.5 C_0}{ (L-1)^{3/2}} \frac1n \sum_{i=1}^n \sum_{\ell=1}^L \frac1{p_{g_i\ell}^{4}} \le 34.5 C_0 2^{3/2} \frac{1}{L^{1/2} \pul^{4}} \le \frac{55}{L^{1/2} \pul^{4}}
	\end{align*}
	using $p_{g_i\ell} \ge \pul$ for all $i$ and $\ell$, $L-1 \ge L/2$ and $C_0 \le 0.56$.

	\medskip
	To prove~\eqref{eq:dkol:Tn}, let $\beta_n = v_n /( \sqrt{2}\gamma_n)$, so that $T_n = \beta_n S_n$. By Lemma~\ref{lem:dkol:affine:trans},
	\begin{align*}
		\dkol \big( T_n, Z \big)  \le \dkol \big( S_n, Z \big) + \frac{\zeta_n}{\sqrt{2 \pi e}}, \quad \zeta_n := \frac{|\beta_n - 1|}{\min\{\beta_n,1\}}.
	\end{align*}
	It remains to bound $\zeta_n$. Let $d_{\Gc_k} = (d_i, i \in \Gc_k)$ and $n_k = |\Gc_k|$. By Lemma~\ref{lem:mult_mean_var},
	\begin{align*}
		v_n^2 &= \sum_{i=1}^n \big( 1 - d_i^{-1}\big) 2(L-1) +  d_i^{-1} \big(L h(p_{g_i*})^{-1} - L^2 \big)
		\\	
		&= \sum_k n_k \Big[ \big( 1 - h(d_{\Gc_k})^{-1}\big) 2(L-1) + h(d_{\Gc_k})^{-1} \big(L h(p_{k*}) - L^2 \big) \Big]
	\end{align*}
	where the second line follows by breaking the sum as $\sum_{i=1}^n(\cdots) = \sum_{k=1}^K \sum_{i \in \Gc_k}(\cdots)$ and using $n_k h(d_{\Gc_k})^{-1} = \sum_{i \in \Gc_k} d_i^{-1}$.
	To simplify, let $\alpha_k = h(d_{\Gc_k})^{-1}$. Then,
	\begin{align*}
		\beta_n = \frac{v_n}{\sqrt2 \gamma_n} = \Big(\sum_k \pi_k( 1 + \alpha_k b_k)\Big)^{1/2}, \quad b_k := \frac{L h(p_{k*})^{-1} - L^2}{2 (L-1)} - 1.
	\end{align*} 
	where $\pi_k = n_k / n$.	
	Since $L \le h(p_{k*})^{-1} \le \pul^{-1}$ and  $L/2 \le L-1$, we have
	\begin{align}\label{eq:bk:bound}
		0 \le b_k + 1 \le \frac{L (\pul^{-1}-  L)}{2 (L-1)} \le  \pul^{-1} - L.
	\end{align}
	Let $u = \sum_k \pi_k \alpha_k b_k$ and note that $\beta_n = \sqrt{ 1 + u}$. We have $ 0 < \sum_k \pi_k \alpha_k = h(d)^{-1} \le 1/2,$ by assumption. Moreover $b_k \ge -1$ for all $k$ from~\eqref{eq:bk:bound}. It follows that $u \ge -1/2$.
	
	If $u \ge 0$, then $\beta_n \ge 1$ and $\zeta_n = \beta_n - 1 \le \frac12 u$, using the inequality $\sqrt{1+x} \le 1 + x/2$ which holds for all $x \ge -1$.
	If $u < 0$, then $\beta_n \in (0,1)$, and
	\begin{align*}
		\zeta_n = \frac{1}{\beta_n} - 1 = \frac{1}{\sqrt{1 - |u|}}-1 \le \sqrt{2} |u|,
	\end{align*}
	using $|u| \le 1/2$ and the inequality $(1-x)^{-1/2} \le1+ \sqrt{2} x$ which holds for $0 \le x \le 0.77$. We have $|b_k| \le \max\{1, \pul^{-1} - L-1\}$, hence $\zeta_n \le  \sqrt{2} \max\{1, \pul^{-1} - L -1\} h(d)^{-1}$. The proof is complete.
\end{proof}

\subsubsection{Proof of Proposition~\ref{prop:Thn:Tn}}
Our strategy for proving Proposition~\ref{prop:Thn:Tn} is to show that $\Th_n$ is close to $T_n$ via a chain of  intermediate counterparts---namely $\Tt_n$ and $\Tt^*_n$---defined by replacing estimated clusters and probabilities with their true versions; see~\eqref{eq:Y:counterparts} and the subsequent paragraph. The fact that the chi-square statistic does not change very much when 
  the probabilities are slightly perturbed
(Lemma \ref{lem:G:expansion}) helps us show that $\Tt_n$ is close to $\Tt^*_n$ and $\Tt^*_n$ is close to $T_n$. 

It remains to show that $\Th_n$ is close to $\Tt_n$. Here, the probabilities defining the underlying chi-square statistics are the same (both estimated), but the clusters are different (estimated versus true). For this step, we use a uniform bound to avoid the dependence of the estimated clusters on the same data used to form the statistic. This is where we need $\dmax \alpha_n \sqrt{n} = o(1)$.

Once we show that $\Th_n$ is close to $T_n$ with high probability, we use the fact that for two random variables close to each other, their Kolmogorv distances to the standard normal distribution are also close (Lemma~\ref{lem:dkol:perturb}).

Throughout the proof, there will be a parameter $u$ and a derived parameter $\delta$ based on $u$. We set $u$ in the end to balance all the terms; see the discussion after the statement of Proposition~\ref{prop:Thn:Tn}. But in reading the proof, it could help to consider the case where all $d_i$ are of the same order say $d_i \asymp d$. Then $u$ will scale like $\log d$ and hence $\delta$ defined in~\eqref{eq:delta:in:prop:2} sclaes as $\delta =  O(\sqrt{\log d /(nd)})$.

We are now ready to give the detailed proof. First, we state the auxiliary lemmas.

\begin{lem}\label{lem:G:expansion}
	Let $x=(x_1,\dots,x_n) \in\reals^d$ and $y, y+v \in \reals \setminus \{0\}$, and  consider the function
	$G(v) = \sum_{i=1}^n d_i \psi(x_i, y+ v)$
	where $\{d_i\}$ are nonnegative and $\psi(s,t) = (s-t)^2/t$. Let $R = \sum_i d_i x_i - d_+ y$ where $d_+ = \sum_{i=1}^n d_i$, and assume further that $|v| \le |y|/2$. Then,
	\begin{align*}
		|G(v) - G(0) | \le \frac{2|v|}{|y|} \big[ G(0) + 2 |R| + |v| d_+ \big].
	\end{align*}
\end{lem}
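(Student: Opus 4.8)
The plan is to derive an \emph{exact} formula for $G(v) - G(0)$ and then bound it term by term, using the triangle inequality together with the elementary lower bound $|y+v| \ge |y|/2$.

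First I would expand $\psi$ pointwise in $v$. Writing $(x_i - y - v)^2 = (x_i-y)^2 - 2v(x_i-y) + v^2$ and putting the two fractions over the common denominator $y(y+v)$ gives, for each $i$,
\[
\psi(x_i, y+v) - \psi(x_i,y) = \frac{-v}{y(y+v)}(x_i-y)^2 + \frac{-2v(x_i-y) + v^2}{y+v}.
\]
Next I would multiply by $d_i$ and sum over $i$, using the three identities $\sum_i d_i (x_i-y)^2 = y\,G(0)$ (which is just the definition of $G$ and $\psi$), $\sum_i d_i(x_i - y) = \sum_i d_i x_i - d_+ y = R$, and $\sum_i d_i = d_+$. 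The first term collapses to $-v\,G(0)/(y+v)$ and the second to $(-2vR + v^2 d_+)/(y+v)$, yielding the closed form
\[
G(v) - G(0) = \frac{1}{y+v}\bigl(-v\,G(0) - 2vR + v^2 d_+\bigr).
\]

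Finally I would estimate numerator and denominator separately. By the triangle inequality the numerator has absolute value at most $|v|\,G(0) + 2|v|\,|R| + v^2 d_+ = |v|\bigl(G(0) + 2|R| + |v|\,d_+\bigr)$ (here $G(0)\ge 0$, since in the relevant regime $y>0$ and the $d_i$ are nonnegative, so $G(0)$ is a sum of nonnegative terms). For the denominator, the hypothesis $|v|\le|y|/2$ gives $|y+v|\ge|y|-|v|\ge|y|/2$, hence $1/|y+v|\le 2/|y|$. Combining the two estimates produces exactly
\[
|G(v) - G(0)| \le \frac{2|v|}{|y|}\bigl(G(0) + 2|R| + |v|\,d_+\bigr),
\]
which is the claim. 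There is no genuine obstacle here: this is a bookkeeping exercise, and the only points that need care are getting the signs right when combining the fractions (so that the $(x_i-y)^2$ terms reassemble into $y\,G(0)$ rather than something messier) and invoking $|v|\le|y|/2$ at the right moment to control $1/|y+v|$.
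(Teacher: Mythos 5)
Your proposal is correct and follows essentially the same route as the paper: both derive the exact identity $G(v)-G(0) = \tfrac{1}{y+v}\left(-v\,G(0) - 2vR + v^2 d_+\right)$ by expanding $(x_i-y-v)^2$, and then apply the triangle inequality together with $|y+v|\ge |y|/2$ (the paper phrases this last step as $|a|/|1+a|\le 2|a|$ for $a=v/y$, which is the same estimate). Your explicit remark that $G(0)\ge 0$ is needed to drop the absolute value is a point the paper leaves implicit, but it holds in the regime where the lemma is applied.
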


\begin{lem}\label{lem:dkol:perturb}
	Let $\delta \in [0,1/2]$ and $\eps > 0$. Then, for any two random variables $\Th_n$ and $T_n$, and $Z \sim N(0,1)$
	\begin{align*}
		\dkol(\Th_n, Z) \le d_K(T_n, Z) + \frac12 (\delta + \eps) + \pr\big( |\Th_n - T_n| \ge \delta T_n + \eps\big).
	\end{align*}
\end{lem}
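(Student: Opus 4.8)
The plan is a standard Slutsky-type perturbation argument for the Kolmogorov distance. Write $\Phi$ and $\phi$ for the standard normal CDF and density, and put $\Bc := \{|\Th_n - T_n| \le \delta T_n + \eps\}$, so that $\pr(\Bc^c) \le \pr(|\Th_n - T_n| \ge \delta T_n + \eps)$. On $\Bc$ the quantity $\delta T_n + \eps$ is necessarily nonnegative, hence $(1-\delta) T_n - \eps \le \Th_n \le (1+\delta) T_n + \eps$ there, regardless of the sign of $T_n$ (using $\delta \le 1/2 < 1$). Consequently, for every $t \in \reals$,
\[
\{\Th_n \le t\} \cap \Bc \subseteq \Big\{ T_n \le \tfrac{t+\eps}{1-\delta}\Big\}
\qquad\text{and}\qquad
\Big\{ T_n \le \tfrac{t-\eps}{1+\delta}\Big\} \cap \Bc \subseteq \{\Th_n \le t\},
\]
which gives $\pr\big(T_n \le \tfrac{t-\eps}{1+\delta}\big) - \pr(\Bc^c) \le \pr(\Th_n \le t) \le \pr\big(T_n \le \tfrac{t+\eps}{1-\delta}\big) + \pr(\Bc^c)$.

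Next I would pass from $T_n$-probabilities to $\Phi$ at the cost of $\dkol(T_n,Z)$. Applying $|\pr(T_n \le s) - \Phi(s)| \le \dkol(T_n,Z)$ at the two points $s = \tfrac{t\pm\eps}{1\mp\delta}$, the bound above yields, for every $t$,
\[
\pr(\Th_n \le t) - \Phi(t) \le \dkol(T_n,Z) + \pr(\Bc^c) + \Big[\Phi\big(\tfrac{t+\eps}{1-\delta}\big) - \Phi(t)\Big],
\]
and symmetrically $\Phi(t) - \pr(\Th_n \le t) \le \dkol(T_n,Z) + \pr(\Bc^c) + \big[\Phi(t) - \Phi\big(\tfrac{t-\eps}{1+\delta}\big)\big]$. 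So it suffices to bound each bracketed distortion of $\Phi$ by $\tfrac12(\delta+\eps)$, uniformly over $t \in \reals$.

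This deterministic estimate is the only real point, and the trick is to decompose each distortion as a \emph{shift followed by a rescaling}, not the reverse. With $v := t+\eps$, write $\Phi\big(\tfrac{t+\eps}{1-\delta}\big) - \Phi(t) = \big[\Phi\big(\tfrac{v}{1-\delta}\big) - \Phi(v)\big] + \big[\Phi(v) - \Phi(t)\big]$. The shift term equals $\Phi(t+\eps) - \Phi(t) \le \sup_s\big[\Phi(s+\eps)-\Phi(s)\big] \le \eps/\sqrt{2\pi}$. The rescaling term is $\le 0$ when $v \le 0$, while for $v > 0$ it equals $\int_v^{v/(1-\delta)}\phi(s)\,ds \le \tfrac{\delta}{1-\delta}\, v\,\phi(v) \le \tfrac{\delta}{(1-\delta)\sqrt{2\pi e}} \le 2\delta/\sqrt{2\pi e}$, using that $\phi$ decreases on $[0,\infty)$, that $\sup_{v\ge 0}v\phi(v) = \phi(1) = 1/\sqrt{2\pi e}$, and $\delta \le 1/2$. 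Since $1/\sqrt{2\pi} < 1/2$ and $2/\sqrt{2\pi e} < 1/2$, the sum is at most $\tfrac12(\delta+\eps)$. The second distortion $\Phi(t) - \Phi\big(\tfrac{t-\eps}{1+\delta}\big)$ is treated identically, splitting at $v := t - \eps$, with the rescaling contribution now at most $\delta/\sqrt{2\pi e}$ because $1/(1+\delta) < 1$. Substituting these bounds and taking the supremum over $t$ gives $\dkol(\Th_n,Z) \le \dkol(T_n,Z) + \tfrac12(\delta+\eps) + \pr(\Bc^c)$, which is the claim. The one subtlety is the ordering ``shift then rescale'': performing the rescaling first would multiply $\eps$ by the factor $1/(1-\delta) \le 2$ and break the clean constant $\tfrac12$; everything else is routine manipulation of $\Phi$ and $\phi$.
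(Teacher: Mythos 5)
Your proof is correct and follows essentially the same route as the paper: the same event decomposition, the same sandwich $(1-\delta)T_n-\eps \le \Th_n \le (1+\delta)T_n+\eps$ on the good event, and the same Gaussian distortion bounds $\eps/\sqrt{2\pi}$ and $2\delta/\sqrt{2\pi e}$, each below $1/2$ times the respective parameter. The only difference is packaging: the paper routes the affine comparison through its Lemma~\ref{lem:dkol:affine:trans} applied to $\dkol\big((1\mp\delta)T_n \mp \eps, Z\big)$ (where the shift by $\eps$ likewise enters unscaled, so the same constant is obtained), whereas you compute the distortion of $\Phi$ inline at the transformed arguments.
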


Next, we introduce the intermediaries between $\Th_n$ and $T_n$. Consider 
\begin{align*}
  Y(\{\Gc_k\}, \{p_{k\ell}\}) := \sum_{k=1}^K 
  	\sum_{i \in \Gc_k} 
  	\sum_{\ell = 1}^L \psi( X_{i\ell}, d_i p_{k\ell})
\end{align*}
and let
\begin{align}
\begin{split}\label{eq:Y:counterparts}
	\Yh &= Y(\{\Gh_k\}, \{\ph_{k\ell}\}), \\
	\Yt &= Y(\{\Gc_k\}, \{\ph_{k\ell}\}), \\
	\Yt^* &= Y(\{\Gc_k\}, \{\pt_{k\ell}\}), \\
	Y &= Y(\{\Gc_k\}, \{p_{k\ell}\})
\end{split}
\end{align}
where, for $k \in [K]$ and $\ell \in [L]$,
\begin{align}\label{eq:def:prob}
	\ph_{k\ell} = \frac{\sum_{i \in \Gh_k} X_{i\ell}}{ \sum_{i \in \Gh_k} d_i}, 
    \quad 	
    \pt_{k\ell} = \frac{\sum_{i \in \Gc_k} X_{i\ell}}{ \sum_{i \in \Gc_k} d_i}.
\end{align}
We define the corresponding $T$-statistics based on $Y$-statistics, via the relation $Y = \sqrt 2 \gamma_n T + \gamma_n^2$. For example,
\[
\Yh = \sqrt{2}\gamma_n \Th_n + \gamma_n^2
\]
and similarly for $\Tt_n$, $\Tt_n^*$ and $T_n$. 
The rest of proof is devoted to showing that $\Tt^*_n$ is close to $T_n$, $ \Tt_n$ is close to $\Tt^*_n$ and $\Th_n$ is close to $\Tt_n$. 

\paragraph{Controlling probability estimates}
We first show that the probabilities in~\eqref{eq:def:prob} are close to their true counterparts, $p_{k\ell}$. Let 
\begin{align*}
    X_{+\ell}^{(k)} = \sum_{i \in \Gc_k} X_{i\ell}, \quad d_{+}^{(k)} = \sum_{i \in \Gc_k} d_i,
\end{align*}
and
\begin{align}\label{eq:omega:half}
    \omhalf =  \Big(\sum_{k} (\pi_k \dav^{(k)})^{1/2}\Big)^2, \quad  \omone =  \sum_{k} \pi_k \dav^{(k)},
\end{align}   
and $\Delh_{k\ell} = \ph_{k\ell} - \pt_{k\ell}$ and $\Delt_{k\ell} = \pt_{k\ell} - p_{k\ell}$.

\medskip
First, we control $\Delt_{k\ell}$. Let
\begin{align}\label{eq:delta:in:prop:2}
	\delta_k := 2 (u/ d_{+}^{(k)})^{1/2}, \quad 
	\delta := \max_k \delta_k,
\end{align}
for $u \ge 0$ in the statement of the proposition, and consider the event
\begin{align}\label{eq:max:Delta:event}
	\mathcal B := \Big\{ \max_{\ell} |\Delt_{k\ell}| \le \delta_k, \; \forall k \in [K] \Big\}.
\end{align}
\begin{lem}\label{lem:bound:Delt}
	$\pr(\Bc^c) \le 2K Le^{-u}$ whenever  $u \le \min_k d_+^{(k)}$.
\end{lem}

Recalling the defintion of $\omega_n$ in~\eqref{eq:omegan:dmax:taud}, we note that $\min_k d_+^{(k)} = n\omega_n$. Then, $u \le (\pul/8)^2 n \omega_n \le \min_k d_+^{(k)}$ where the first inequality is by assumption. Hence, the condition of Lemma~\ref{lem:bound:Delt} holds and $\Bc$ is a high probability event. For the rest of the proof, we work on $\Bc$. 
Moreoever, $u \le (\pul/8)^2 n\omega_n \le (\pul/8)^2 d_+^{(k)}$ for all $k$, from which it follows that $\delta \le \pul/4$. Since on $\Bc$, we have $\max_{k,\ell} |\Delt_{k\ell}| \le \delta$, then for all $k, \ell$,
\begin{align}\label{eq:pt:lower:bound}
	\pt_{k\ell} \ge p_{k\ell} - \delta \ge \pul/2. 
\end{align}
Next, we control $\Delh_{k\ell}$. Recall that $\tau_d = \omega_n / \dmax$  as defined in~\eqref{eq:omegan:dmax:taud}. Let
\[
	\Mc_n := \{\miss(\zh, z) \le  \alphan\}.
\] 

\begin{lem}\label{lem:control:Delh}
	Assume that $\alphan \le \tau_d \,\pul /2$ and $\delta \le \pul/2$ and let
	\begin{align}
		 \delh := \frac{6}{\pul\, \tau_d} \alphan.
	\end{align}
	 Then, on $\Bc \cap \Mc_n$, we have $ |\Delh_{k\ell}| \le \delh \cdot \pt_{k\ell}$ for all $k$ and $\ell$.
\end{lem}

Since by assumption in Theorem~\ref{thm:CCtest:null}, $\alphan \le \pul/(8C_{3,p})$, which implies the assumption $\alphan \le \tau_d \,\pul /2$ in  Lemma~\ref{lem:control:Delh}. And since we established $\delta \le \pul/4$ previously, we can apply Lemma~\ref{lem:control:Delh} with $C_{3, p} = 6/(\pul \tau_d)$. Then we have $\delh \le \pul/8$, and furthermore 
\begin{align*}
    \ph_{k\ell}\ge \pt_{k\ell} - \delh \ge \pul/2 - \pul/8 \ge \pul/4.
\end{align*}

\paragraph{Controlling $\Tt_n^*$ in terms of $T_n$} 
Apply Lemma~\ref{lem:G:expansion} with $x_i = X_{i\ell}/ d_i$, $y = p_{k \ell}$ and $v = \pt_{k\ell} - p_{k \ell} = \Delt_{k \ell}$. The condition $|v| \le |y|/2$ of the lemma is satisfied on $\mathcal B$, as long as $\delta \le \pul/2$, which is the case as established earlier. Let
\begin{align*}
	G_{k \ell}(\Delt_{k\ell})= \sum_{i \in \Gc_k} d_i \psi(X_{i\ell} / d_i, p_{k \ell} + \Delt_{k\ell}) .
\end{align*}
We have $\Yt^* = \sum_{k,\ell} G_{k \ell}(\Delt_{k\ell})$ and $Y = \sum_{k,\ell} G_{k\ell}(0)$, hence
\begin{align*}
	|\Yt^* - Y| &\le  \sum_{k,\ell} |G_{k \ell}(\Delt_{k\ell})- G_{k\ell}(0)|  \\
	&\le 2 \sum_{k,\ell} \frac{|\Delt_{k\ell}|}{p_{k\ell}} \Big[ G_{k\ell}(0) + 2 |X_{+\ell}^{(k)} - d_{+}^{(k)} p_{k\ell}| + |\Delt_{k\ell}| d_{+}^{(k)} \Big] \\
	&= 2\sum_{k,\ell} \frac{|\Delt_{k\ell}|}{p_{k\ell}} \Big[ G_{k\ell}(0) + 3 |\Delt_{k\ell}| d_{+}^{(k)} \Big]
\end{align*}
where we have used $X_{+\ell}^{(k)} - d_{+}^{(k)} p_{k\ell} =  d_{+}^{(k)} \Delt_{k\ell}$ since $\pt_{k\ell} = X_{+\ell}^{(k)} / d_{+}^{(k)}$.  
By assumption $p_{k\ell} \ge \pul$ for all $k$ and $\ell$. Hence,
\begin{align*}
	\sqrt{2} \gamma_n |\Tt^*_n - T_n| = |\Yt^* - Y| &\le \frac{2}{\pul} \Big[\delta  \sum_{k,\ell} G_{k\ell}(0) +  3 L  \sum_k \delta_k^2 d_{+}^{(k)}\Big] \\
	&=\frac{2}{\pul} \Big[ \delta(\sqrt{2}\gamma_n T_n + \gamma_n^2) +  12 L K u \Big].
\end{align*}
Then, on $\Bc$, we have
\begin{align*}
	|\Tt^*_n - T_n|
	&\le \frac{2}{\pul} \Big[ \delta (T_n + \sqrt{nL/2}) + 12 K u \sqrt{L/n} \Big]
\end{align*}
using $\sqrt{n L /2} \le \gamma_n \le \sqrt{n L}$ which holds for $L\ge 2$. Since $2 \delta / \pul  \le 1/2$, we can apply Lemma~\ref{lem:dkol:perturb} to get
\begin{align}\label{eq:Tts:T}
	\dkol(\Tt^*_n, Z) &\le d_K(T_n, Z) + \frac1{\pul} \Big[ \delta(1+\sqrt{nL/2}) + 12 K u \sqrt{L/n} \Big] + \pr(\Bc^c). 
\end{align}

\paragraph{Controlling $\Tt_n$ in terms of $\Tt_n^*$}
We consider the event $\Bc \cap \Mc_n$ from now on. We apply Lemma~\ref{lem:G:expansion} with $x_i = X_{i\ell}/ d_i$, $y = \pt_{k \ell}$ and $v = \ph_{k\ell} - \pt_{k \ell} = \Delh_{k \ell}$. Condition $|v| \le |y|/2$ of the lemma is satisfied, as long as $\delh \le \pul/2$, which is the case as established earlier.
 Letting 
 \[F_{k\ell}(\Delta) := \sum_{i \in \Gc_k} d_i\psi(X_{i\ell}/d_i, \pt_{k\ell} + \Delta),\]  
 Lemma~\ref{lem:G:expansion} implies
\begin{align*}
	| F_{k\ell}(\Delh_{k\ell}) - F_{k\ell}(0) | &\le 
	\frac{2|\Delh_{k\ell}|}{ \pt_{k\ell}} \Bigl( F_{k\ell}(0) + 2 | X_{+\ell}^{(k)} - d_{+}^{(k)} \pt_{k\ell}| + |\Delh_{k\ell}| d_{+}^{(k)} \Bigr) \\
	&\le 
	\frac{4}{\pul} |\Delh_{k\ell}| \Bigl( F_{k\ell}(0) + \bigl[2 |\Delt_{k\ell}|  + |\Delh_{k\ell}| \bigr] d_{+}^{(k)} \Bigr), 
\end{align*}
where we have used $d_+^{(k)} \Delt_{k\ell} = X_{+\ell}^{(k)} - d_{+}^{(k)} \pt_{k\ell}$ and $\pt_{k\ell} \ge \pul/2$ on event $\Bc$; see~\eqref{eq:pt:lower:bound}. We have $\Yt = \sum_{k,\ell} F_{k\ell}(\Delh_{k\ell})$ and $\Yt_* =  \sum_{k,\ell} F_{k\ell}(0)$.
It follows that
\begin{align*}
	\sqrt 2 \gamma_n|\Tt_n - \Tt_n^*| = | \Yt - \Yt^*|
	& \le \sum_{k,\ell}  | F_{k\ell}(\Delh_{k\ell}) - F_{k\ell}(0) | \\
	&\le \frac{4}{\pul} \delh  \Bigl( \sum_{k,\ell} F_{k\ell}(0) + L  \sum_k \bigl[2  \delta_k  + \delh \bigr] d_{+}^{(k)} \Bigr) \\
	&= \frac{4}{\pul} \delh \Bigl( \Yt_* +  2L \sum_k  \delta_k d_{+}^{(k)} + L \delh d_+ \Bigr).
\end{align*}
Using $d_+^{(k)} = \dav^{(k)} \pi_k n$, and the defintions~\eqref{eq:omega:half} and~\eqref{eq:delta:in:prop:2}, we obtain
\begin{align*}
	\sum_k \delta_k d_+^{(k)} = 2 \sum_k ( u d_+^{(k)})^{1/2}  = 2 \sqrt{ n u \omhalf} .
\end{align*}
Noting that $d_+ = n\omone$, we have
\begin{align*}
		\sqrt 2 \gamma_n|\Tt_n - \Tt_n^*| \le  \frac{4}{\pul} \delh \Bigl(\sqrt 2 \gamma_n \Tt_n^* + \gamma_n^2 +  4 L\sqrt{ n u \omhalf}+ L \omone \delh  n \Bigr).
\end{align*}
Using $\sqrt{n L /2} \le \gamma_n \le \sqrt{Ln}$, we obtain, on $\Bc \cap \Mc_n$,
\begin{align*}
	|\Tt_n - \Tt_n^*| \le 
	\frac{4\delh}{\pul} 
	 \Bigl(   \Tt_n^* + \sqrt{n L /2} + 4 \sqrt{ L u \omhalf}  +  \omone \delh  \sqrt{L n} \Bigr).
\end{align*}
Recalling that $\delh/\pul \le 1/8$, Lemma~\ref{lem:dkol:perturb} gives
\begin{align}\label{eq:Tt:Tts}
\begin{split}
	\dkol(\Tt_n, Z) &\le \dkol(\Tt_n^*, Z) \;+ \pr(\Bc^c \cup \Mc_n^c)  \\
	&+ \frac{2\delh}{\pul} 
	\Bigl(  1+\sqrt{n L /2} + 4 \sqrt{ L u \omhalf}  +  \omone \delh  \sqrt{L n} \Bigr) .
\end{split}
\end{align}

\paragraph{Controlling $\Th_n$ in terms of $\Tt_n$}
Working $\Bc \cap \Mc_n$ and recalling $\ph_{k\ell}  \ge \pul/4$, 
\begin{align*}
	\sum_{\ell} \psi(X_{i\ell},  d_i \ph_{k\ell}) =  \frac{d_i}{\ph_{k\ell}} \sum_\ell (X_{i\ell}/d_i -  \ph_{k\ell})^2 \le 8 d_i / \pul,
\end{align*}
where we have used the following result:
\begin{lem}\label{lem:simplex:diam}
	$\max_{x,y \in \psim_L} \norm{x-y}^2 = 2$, where $\psim_L$ is the probability simplex in $\reals^L$. 
\end{lem}
Letting $\Hc_k = \Gc_k \Delta \Gh_k := (\Gc_k \setminus \Gh_k) \cup (\Gh_k \setminus \Gc_k)$,
\begin{align*}
	|\Yh  - \Yt| \le \sum_{k, \ell} 
	\sum_{i \in \Hc_k} 
	\psi( X_{i\ell}, d_i \ph_{k\ell}) \le 
	\frac{8}{\pul} \sum_k \sum_{i \in \Hc_k} d_i \le 	\frac{8 \dmax}{\pul} \alphan n
\end{align*}
using $\sum_{k} |\Hc_k| \le \alphan n$. Hence, on $\Bc \cap \Mc_n$, we have
\begin{align*}
	|\Th_n - \Tt| \le \frac{1}{\sqrt{2}\gamma_n } |\Yh - \Yt| \le   
	 \frac{8 \dmax \alphan \sqrt n}{\pul \sqrt{L}} 
\end{align*}
using $\sqrt 2 \gamma_n \ge \sqrt{n L}$. Applying Lemma~\ref{lem:dkol:perturb}, 
\begin{align}\label{eq:Th:Tt}
		\dkol(\Th_n, Z) &\le \dkol(\Tt_n, Z) +  \frac{4 \dmax  \alphan \sqrt n}{\pul \sqrt{L}}  + \pr(\Bc^c \cup \Mc_n^c).
\end{align}

\paragraph{Putting the pieces together}

Combining~\eqref{eq:Tts:T}, ~\eqref{eq:Tt:Tts} and \eqref{eq:Th:Tt}, we have
\begin{align*}
		\dkol(\Th_n, Z) &\le 
		 d_K(T_n, Z) \;+  \\
	 	&\qquad \frac1{\pul} \Big[ \delta(1+\sqrt{nL/2}) + 12 K u \sqrt{L/n} \Big] + \pr(\Bc^c) \;+ \\
	 	&\qquad \frac{2\delh}{\pul} 
		 \Bigl(  1+\sqrt{n L /2} + 4  \sqrt{L u \omhalf}  +  \omone \delh  \sqrt{L n} \Bigr) + \pr(\Bc^c \cup \Mc_n^c) \;+ \\
		&\qquad  \frac{4 \dmax  \alphan \sqrt n}{\pul \sqrt{L}}  + \pr(\Bc^c \cup \Mc_n^c)
		. 
\end{align*}
Using $1+\sqrt{nL/2} \le 2 \sqrt{nL/2}$ and the union bound,
\begin{align*}
	 &\dkol(\Th_n, Z)  - 	d_K(T_n, Z) \le \\
	 &\qquad 
	  \frac{\sqrt L}{\pul} \cdot \Bigl[ 2 \delh
	  \Bigl(  \sqrt{2n} + 4 \sqrt{u \omhalf}  +  \omone \delh  \sqrt{n} \Bigr) + \frac{4 }{L}\dmax  \alphan \sqrt n \,+ \\
		&\qquad \delta\sqrt{2n} + 12 K \frac{u}{\sqrt n} \Bigr] +  3 \pr(\Bc^c) + 2 \pr(\Mc_n^c).
\end{align*}
Substituting $\delh = C_{3,p}\alphan$, where
$C_{3,p} := 6 /(\pul \tau_d)$,
and $\delta \sqrt{n}  = 2 \sqrt{u / \omega_n}$, and the upper bound on $\pr(\Bc^c)$ from Lemma~\ref{lem:bound:Delt}, we obtain after some rearranging,
\begin{align*}
    &\dkol(\Th_n, Z)  - 	d_K(T_n, Z) \le \\
	 &\qquad \frac{\sqrt L}{\pul} \cdot  \Bigl[\sqrt\frac{8u}{\omega_n} + 12K\frac{u}{\sqrt{n}} + 2\alpha_n\zeta_n\Bigr] + 6 K Le^{-u} + 2 \pr \bigl(\miss(\gh, g) \ge  \alphan \bigr)
\end{align*}
where
\begin{align}\label{eq:zetan:1}
	\zeta_n =  \bigr(\sqrt2 C_{3,p} + 2L^{-1} \dmax + \omone C_{3,p}^2 \alphan\bigl) \sqrt n  + 4	C_{3,p}  \sqrt{u\omhalf}.
\end{align}
Note that by the Cauchy--Schwarz inequality, 
\begin{align*}
    \sum_{k =1}^K  (\pi_k \dav^{(k)})^{1/2} \le \Bigl(\sum_{k =1}^K  \pi_k\Bigr)^{1/2}  \Bigl(\sum_{k =1}^K   \dav^{(k)}\Bigr)^{1/2} \le \sqrt{K\dmax}
\end{align*}
Therefore, $\omhalf \le K \dmax$. Moreover, $\omega_n \vee \omega_{n,1} \le \dmax$. By the assumptions 
$\sqrt{2} C_{3,p} \le 2 L^{-1} \dmax$, 
$\alphan \le 2/(C_{3,p}^2L)$ and  $u  \le (\pul/8)^2  n \omega_n$. Plugging these bounds into~\eqref{eq:zetan:1}, we obtain
\begin{align*}
    \zeta_n &\le  6\dmax\sqrt{n}/L + \pul C_{3,p} \dmax \sqrt{nK}/2 \\
    &\le 2 C_{3,p}L^{-1}\dmax \sqrt{Kn}
\end{align*}
where the last inequality is due to $K \ge 1$, $C_{3, p} \ge 6$ and $\pul \le L^{-1}$. The result follows.

\subsection{Proofs of Theorems \ref{thm:null:dist:ncac} and \ref{thm:consist}}

We start by setting up notation and deriving some preliminary results that are common to both proofs. Throughout, $K_0$ denotes the true number of communities. Recall that $S_1 \subset [n]$ is determined by including any element of $[n]$ with probability $1/2$, and $S_2 = [n]\setminus S_1$. Then, 
\begin{align}\label{eq:di:U}
	d_i := \sum_{j \in S_1} A_{ij} = \sum_{j=1}^n A_{ij} U_j
\end{align}
where $U_j = 1\{j \in S_1\}, j \in [n]$ is an independent $ \text{Ber} (1/2)$ sequence. We often work conditioned on $S_1$, $A_{S_1 S_1}$ and $(d_i, i \in S_2)$. Let $\Fc$ be the $\sigma$-field generated by these variables:
 \begin{align}\label{eq:Fc:def}
 	\Fc = \sigma\big(S_1, A_{S_1,S_1}, (d_i, i \in S_2)\big),
 \end{align}
and let $\ex^{\Fc}$ and $\pr^{\Fc}$ denote the expectation and probability operators, conditioned on $\Fc$. We assume without loss of generality that the community detection algorithm is nonrandomized, so that conditioned on $\Fc$,  $\yh$ is fixed. (Otherwise, we add the independent source of randomness used by the algorithm to $\Fc$.) The idea in both proofs is to first condition on $\Fc$ and derive bounds given that the parameters are all fixed. Then, we can leave out $\Fc$ thanks to the fact that the parameters are all bounded with high probability as we show below.

\paragraph{Controlling $\rho_{k\ell}$, $d_i$ and $|\Gc_k|$} 
Recall the definition of $X_{i\ell}(\yh)$ in \eqref{eq:X:def}. Then, conditioned on $\Fc$, for $i \in S_2$, $X_{i*}(\yh) \sim \mult(d_i, \rho_{z_i*})$ independently and thus $\ex^\Fc[X_{i\ell}(\yh)] = d_i\rho_{z_i\ell}$ where $z_i \in [K_0]$.  We can obtain a lower bound on $\rho_{k\ell}$ as follows:
\begin{align}\label{eq:rho:min:bound}
   \rho_{k\ell} = \frac{\sum_{h = 1}^{K_0}B_{kh}^0 R_{h\ell}}{\sum_{\ell'=1}^{L}\sum_{h = 1}^{K_0}B_{kh}^0 R_{h\ell'}} \ge \tau_{B} \tau_\theta \frac{\sum_{j \in S_1}1\{\yh_j =\ell\}}{|S_1|} \ge 
\tau_{B} \tau_\theta\tau_{0} = \tau_\rho,
\end{align}
where the last inequality is due to the stability Assumption~\ref{assum:DCSBM}\ref{assume:stability}. We have
\begin{align}\label{eq:rhou:taur}
	\rhou :=  \min_{k, \ell} \rho_{k\ell} \ge \tau_\rho.
\end{align}
Letting $\ds_i = \ex[d_i]$, we can write
\begin{align}\label{eq:ah:def}
\ds_i = \frac12 \sum_{j=1}^n \ex[A_{ij}] = \frac12 \theta_i a_{z_i}, \quad 
\text{where} \quad a_h := \sum_{k=1}^{K_0} B_{hk} \sum_{j \in \Cc_k} \theta_j
\end{align}
for all $h \in [K_0]$. Let us derive some bounds on $\ds_i$. Recalling that $\thetamax = 1$, 
\begin{align}\label{eq:ah:lower:bound}
a_h \ge \tau_\theta \thetamax n_k \sum_k B_{hk} = \tau_\theta  n_k \frac{\nu_n}{n} \norm{B^0_{h*}}_1 \ge \tau_\theta \nu_n \tau_\Cc \min_{h'}\norm{B^0_{h'*}}_1.
\end{align}
Using the definition of $C_1$ in~\eqref{eq:c1:taua:taurho:def}, we obtain
\begin{align}\label{eq:ds:lower:bound}
\ds_i \ge \frac12 C_1  \nu_n, \quad \forall i.
\end{align}
Let $\amax = \max_h a_h$. Since $|\Cc_k| \le n$, we have $\amax \le \nu_n \cdot\max_h \norm{B^0_{h*}}_1$. 
Combining with assumption \eqref{eq:ah:lower:bound}, we obtain
\begin{align}\label{eq:ah:amax:ineq}
a_h \ge \tau_a \amax, \quad \tau_a :=  \tau_\theta \tau_B \tau_\Cc.
\end{align}
Since $\infnorm{B^0} = 1$ by assumption, we have %
\begin{align}\label{eq:ds:upper:bound}
 \ds_i \le \frac12 K_0 \,\nu_n, \quad \forall i.
\end{align}

Let $\Cc_k = \{i \in [n]:\; z_i = k\}$ be the true community $k$, and $n_k = |\Cc_k|$. We also let $\Gc_k := \{i \in S_2:\; z_i = k\} = \Cc_k \cap S_2$. 
Consider the event:
\begin{align}
\Ac  = \Big\{|\Gc_k| \in [0.4 n_k, 0.6n_k],\ \forall k \in [K_0]\Big\} \cap \Big\{ d_i \in \Big[\frac{\ds_i}2, \frac{3\ds_i}2 \Big], \; \forall i \in [n] \Big\}.
\label{eq:A}
\end{align}
Note that $\Ac$ is deterministic conditioned on $\Fc$. The next lemma guarantees that this event holds with high probability: 
\begin{lem}\label{lem:A:event:prob}
$\pr(\Ac^c) \le 7 n^{-1}$ if $\log n / n \le \frac{3C_1}{400} \wedge \frac{\tauc}{300}$ and $\log n / \nu_n  \le 10^{-3} C_1$. 
\end{lem}
Combining~\eqref{eq:ds:lower:bound},~\eqref{eq:ds:upper:bound} and the definition~\eqref{eq:A}, we have on $\Ac$,
\begin{align}\label{eq:di:interval}
	\frac14 C_1 \nu_n \le d_i \le \frac{3}{4}  K_0 \, \nu_n,\quad \forall i \in [n].
\end{align}

\paragraph{From $S_2$ to $S_2'$}
Recall that in \scacf, we first use random sampling to get $S_2$ and then use quantile filtering 
 in each estimated cluster to get $S_2'$. 
Recall that in each $\Gh_k = \{i\in S_2: \zh_i = k\}$ we keep nodes with degrees at least that of the $\sigma$-th quantile of $\{d_i: i \in \Gh_k\}$ to form $\Gh'_k$.
It follows that $|\Gh'_k| \ge (1-\sigma) |\Gh_k|$ and $| S_2'| \ge (1-\sigma)|S_2|$, where $S'_2 = \bigcup_{k = 1}^ {K}\Gh'_k \subset S_2$.

\smallskip
Now let us get a lower bound on the size of $\Gc_k' := \Cc_k \cap S_2' = \Gc_k \cap S_2'$, which is used in the proofs of Theorem~\ref{thm:null:dist:ncac} and~\ref{thm:consist}. Given an estimated label vector $\zh$ and a true label vector $z$, consider the event
\begin{align*}
    \Mc_n := \{\miss(\zh, z) \le  \alphan\}.
\end{align*}
On $\Mc_n$, $|\Gh_k \,\Delta\, \Gc_k| \le \alphan n$, from which we have
\begin{align*}
	|\Gc_k \cap \Gh_k| \ge |\Gc_k| - \alphan n, \quad |\Gh_k| \le |\Gc_k| + \alphan n.
\end{align*}
Furthermore, on $\Ac$, $|\Gc_k| \ge 0.4 \tau_\Cc n$.
Therefore, on $\Mc_n \cap \Ac$,
\begin{align}
	|\Gc_k'| 
	\ge |\Gc_k \cap \Gh_k'| 
	&= |\Gc_k \cap \Gh_k| - |\Gh_k \setminus \Gh_k'| \notag \\
	&\ge |\Gc_k| - \alphan n  - \sigma|\Gh_k| \notag \\
	&\ge |\Gc_k| - \alphan n  - \sigma(|\Gc_k| + \alphan n) \notag \\
	&\ge (1-\sigma)0.4 \tau_\Cc n - (1+\sigma)\alphan n \notag \\
	&\ge 0.2(1-\sigma)\tau_\Cc n = c_1 K_0 n \label{eq:Gc:prime:lower:bound}
\end{align}
using the assumption 
$\alphan \le \frac{\tau_\Cc }{5}\frac{1-\sigma}{1+\sigma}$  and the definition of $c_1$ from~\eqref{eq:const:defs}.

\subsubsection{Proof of Theorem~\ref{thm:null:dist:ncac}}\label{app:proof:2}

The proof has two parts. In the first part, we get an upper bound on the Kolmogorov distance conditional on the $\sigma$-field $\Fc$ defined in~\eqref{eq:Fc:def},
resulting from combining Lemma~\ref{lem:dkol:cond} and Theorem~\ref{thm:CCtest:null}. The second part is to show that on event $\Ac \cap \Mc_n$, the random quantities are bounded by constants.

For a random variable $Y$, let $\law(Y \mid \Fc)$ be the law of $Y$ conditioned on $\Fc$ and let
\begin{align}
	\dkol\big(\law(Y \mid \Fc), Z\big) = \sup_{t \in \reals} | \pr(Y \le t \mid \Fc) - \pr(Z \le t) |.
\end{align}
\begin{lem}\label{lem:dkol:cond}
	For any random variables, $Y$ and $Z$, any event $\Bc$ and any $\sigma$-field $\Fc$, we have
	\begin{align*}
		\dkol(Y, Z) &\le \ex \big[ \dkol\big(\law(Y  \mid \Fc), Z\big) \big], \\
		 |\dkol(Y,Z) - \dkol(Y 1_\Bc,Z)| &\le \pr(\Bc^c).
	\end{align*}
\end{lem}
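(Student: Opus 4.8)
The plan is to prove both inequalities directly from the definition of the Kolmogorov distance, using nothing beyond the tower property of conditional expectation and a triangle inequality applied to CDFs.

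For the first inequality, I would fix $t \in \reals$ and write, using that $\pr(Z \le t)$ is a deterministic constant, $\pr(Y \le t) - \pr(Z \le t) = \ex\big[\pr(Y \le t \mid \Fc) - \pr(Z \le t)\big]$ by the tower property. Then $|\pr(Y \le t) - \pr(Z \le t)| \le \ex\big[\,|\pr(Y \le t \mid \Fc) - \pr(Z \le t)|\,\big] \le \ex\big[\dkol(\law(Y \mid \Fc), Z)\big]$, where the last step bounds the integrand pointwise by the supremum over thresholds appearing in the definition of $\dkol(\law(Y\mid\Fc),Z)$. Taking the supremum over $t$ on the left-hand side gives the claim; nothing is needed here besides Jensen's inequality for $|\cdot|$.

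For the second inequality, the key observation is that $Y 1_\Bc$ equals $Y$ on $\Bc$ and $0$ on $\Bc^c$, so for every $t$,
\begin{align*}
\pr(Y 1_\Bc \le t) = \pr(\{Y \le t\} \cap \Bc) + 1\{0 \le t\}\, \pr(\Bc^c),
\qquad
\pr(Y \le t) = \pr(\{Y \le t\} \cap \Bc) + \pr(\{Y \le t\} \cap \Bc^c).
\end{align*}
Subtracting, $|\pr(Y \le t) - \pr(Y 1_\Bc \le t)| = |\pr(\{Y \le t\}\cap\Bc^c) - 1\{0 \le t\}\,\pr(\Bc^c)|$, and since both terms lie in $[0, \pr(\Bc^c)]$, this difference is at most $\pr(\Bc^c)$ for every $t$. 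I would then combine this with the triangle inequality $|\pr(Y \le t) - \pr(Z \le t)| \le |\pr(Y1_\Bc \le t) - \pr(Z \le t)| + \pr(\Bc^c)$, together with the symmetric bound obtained by swapping the roles of $Y$ and $Y1_\Bc$, and take the supremum over $t$ to conclude $|\dkol(Y,Z) - \dkol(Y1_\Bc,Z)| \le \pr(\Bc^c)$.

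I do not expect a genuine obstacle here: the statement is elementary and both parts hold for arbitrary $Y$, $Z$, $\Bc$ and $\sigma$-field $\Fc$, with no integrability or independence assumptions. The only point requiring a moment's care is the case split on the sign of $t$ in the second part, which arises because $Y 1_\Bc$ carries an atom at $0$ whose contribution to the CDF switches on at $t = 0$; this atom has mass at most $\pr(\Bc^c)$, which is exactly the bound being claimed.
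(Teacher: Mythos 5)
Your proof is correct and follows essentially the same route as the paper: the first inequality is the tower property plus Jensen (you apply it pointwise in $t$ and then take the supremum, while the paper applies Jensen directly to the convex functional $U \mapsto \sup_t |U_t - b_t|$, but these are the same argument), and the second is the same uniform bound $|\pr(Y \le t) - \pr(Y 1_\Bc \le t)| \le \pr(\Bc^c)$ followed by the triangle inequality. Your exact computation of $\pr(Y 1_\Bc \le t)$ with the atom at $0$ is a slightly more explicit version of the paper's two one-sided estimates, but there is no substantive difference.
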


Applying Lemma~\ref{lem:dkol:cond} and since the Kolmogorov distance is bounded above by 1, we obtain
\begin{align}
\label{eq:dkol:decom}
\begin{split}
    \dkol(\Th_n, Z) &\le 
	\ex\big[ \dkol\big(\law(\Th_n \mid \Fc), Z\big) \big] \\
    &\le \ex
	[\dkol\big(\law(\Th_n \mid \Fc), Z\big) \cdot 1_{\Ac \cap \Mc_n} ] + \pr(\Ac^c) + \pr(\Mc_n^c).
\end{split}
\end{align}
Conditioned on $\Fc$, $X_{i\ell}(\yh) \sim \mult(d_i, \rho_{z_i*})$  for $i \in S_2$, as discussed in~\eqref{eq:Xis:distn}. Recall that $\Gc_k' = \Gc_k \cap S_2'$. Then, we can apply Theorem~\ref{thm:CCtest:null}, assuming its conditions hold, to the submatrix $(X_{i\ell}(\yh): i \in S'_2, \ell \in [L])$, with estimated and true labels $\gh = (\zh_i, i \in S'_2)$, $g = (z_i, i \in S'_2)$. It follows that the conditional law of $\Th_n$ given $\Fc$ satisfies 
\begin{align*}
    \begin{split}
	\dkol\big(\law(\Th_n \mid \Fc), Z\big) &\le \frac{C_{1,\rho}}{\sqrt{L |S'_2|}} + \frac{ C_{2,\rho}}{h(d)} \\
	&+ 12\frac{\sqrt{L}}{\rhou}\left( \sqrt{\frac{\log (K_0\omega_n)}{\omega_n}} + \frac{K_0 \log (K_0\omega_n) }{\sqrt{|S'_2|}}\right.\\
	&+\left.\frac{C_{3,\rho}}{3L}\dmax \sqrt{K_0|S'_2|\, }\atn \right) + 2\pr\big(\miss(\gh, g) > \atn \given \Fc\big)
	\end{split}
\end{align*}
for any $\atn \in [0,1]$, 
where $h(d)$ is the harmonic mean of $(d_i, i \in S'_2)$, $\omega_n = \min_{k} \pi_k \dav^{(k)}$ with $\dav^{(k)}$ the arithmetic mean of $(d_i, i \in \Gc'_k)$, $\pi_k = |\Gc'_k| / |S'_2|$, $d_{\max} = \max_{i \in S_2'} d_i$  and $\tau_d = \omega_n/\dmax$. The constants $C_{1,\rho}$, $C_{2,\rho}$ and $C_{3,\rho}$ depend on the $\rho$ matrix defined in~\eqref{eq:rho:def}.  Note that $h(d)$, $\omega_n$ and $\dmax$ although in general random, are deterministic conditioned on $\Fc$. 

Now we bound the above distance on $\Ac$, and without further specification the following results are all stated on $\Ac$. 
Recall from~\eqref{eq:Gc:prime:lower:bound} that $|\Gc'_k| \ge c_1K_0n$ on event $\Mc_n \cap \Ac$. We also have $|S'_2| \le 0.6 n$. It follows that 
$\pi_k \in [c_1 K_0/0.6, 1]$. On $\Ac$, $\dav^{(k)}$, $\dmax$ and $h(d)$ satisfy the same upper and lower bounds as $d_i$ in~\eqref{eq:di:interval}. 
Combined with the bounds on $\pi_k$, we have 
\begin{align*}
    \frac{5}{12}c_1C_1K_0 \nu_n &\le \omega_n \le \frac{3}{4}  K_0 \, \nu_n, \quad
    \tau_d \ge  \frac{5}{9}c_1 C_1, \\
	 h(d) &\ge \frac{C_1 \nu_n}{4}, \quad 
	\dmax \le \frac{3}{4}  K_0 \, \nu_n.
\end{align*}
Recalling that $\rhou \ge \tau_\rho$ from~\eqref{eq:rhou:taur}, 
\begin{align}\label{eq:C3rho:C2}
    C_{3, \rho} =\frac{6}{\rhou \, \tau_d}  \le \frac{54}{5c_1 C_1 \tau_\rho}=: C_2.
\end{align}
Next we bound $\miss(\gh, g)$ in terms of $ \miss(\zh, z)$.
To bound the probability of the missclassification rate, we first note that 
\begin{align*}
    |S'_2| \miss(\gh, g) \le  \sum_{i \in S'_2} 1\{g_i \ne \gh_i\} &\le 
	\sum_{i =1}^{n} 1\{z_i \ne \zh_i \} = n \miss(\zh, z).
\end{align*}
Let $\sigb = 1-\sigma$.
Furthermore, $|S'_2|/n \ge 0.4\sigb$ on $\Ac$. Set $\atn = \alphan/ (0.4\sigb )$. Then, 
\begin{align*}
    \pr\big(\miss(\gh, g)  > \atn \given \Fc \big) \cdot 1_{\Ac \cap \Mc_n} &\le \pr\Big(\frac{n}{|S'_2|}\miss(\zh, z)  >  \atn  \given \Fc\Big) \cdot 1_{\Ac}\\
    & = \pr\Big(\Big\{\miss(\zh, z)  > \frac{|S'_2|}{n} \atn \Big\} \cap \Ac \given \Fc\Big)\\
    &\le \pr(\miss(\zh,z) > \alphan \given \Fc).
\end{align*}
Note that the equality is due to $\Ac$ being deterministic on $\Fc$. Using $\log (K_0 \omega_n) \le \log  \big((3/4)K_0^2 \nu_n\big) =: \beta_n$ and $|S'_2| \in [0.4\sigb, 0.6]n$, we obtain
\begin{align*}
	\ex\Big[\dkol\big(\law(\Th_n \mid \Fc), Z\big) 
	\cdot 1_{\Ac \cap \Mc_n} \Big] &\le 
	\frac{55 \taur^{-4}}{\sqrt{L \cdot 0.4\sigb n}} + \frac{ C_{2,\rho}}{C_1 \nu_n /4} \\
	&+ 12\frac{\sqrt{L}}{\rhou}\left( \sqrt{\frac{ \beta_n}{5 c_1C_1K_0 \nu_n /12}} + \frac{K_0 \beta_n }{\sqrt{ 0.4\sigb n}} \right.\\ 
	&\left. +\frac{C_2}{3L} \cdot\frac{3}{4}  K_0 \, \nu_n \cdot \sqrt{K_0 \cdot 0.6n}\cdot \frac{\alpha_n}{0.4 \sigb}\right)\\
	&+ 2\pr(\miss(\zh, z) > \alphan).
\end{align*}
Simplifying the above and plugging into~\eqref{eq:dkol:decom}, we have
\begin{align*}
    \dkol(\Th_n, Z) &\le  \frac{C_3}{\sqrt{\sigb L n}} + \frac{ C_4 }{C_1 \nu_n}\\
    &+ \frac{19 \sqrt{L}}{\tau_\rho}\left(\frac{1}{\sqrt{c_1C_1}} \sqrt{\frac{\beta_n }{K_0 \nu_n}} + \frac{K_0 \beta_n}{\sqrt{\sigb n}} + C_2 \frac{K_0^{3/2}}{\sigb L}\nu_n \sqrt{n}\, \alphan\right) \\
    &+ 3\pr(\miss(\zh, z) > \alphan)
\end{align*}
where $C_3 = 94\taur^{-4}$ and $C_4 = 4(\pi e)^{-1/2} \max\{1, \tau_\rho^{-1} - L -1\}$. Note that we have absorbed $\pr(\Ac^c) \le 7n^{-1} \le 7 \taur^{-4}/\sqrt{\sigb L n}$ into the first term above. The above is the desired bound.

Now let us simplify all assumptions. We  need to consider the assumptions in Lemma~\ref{lem:A:event:prob}, assumption $\alphan \le \frac{\tau_\Cc }{5}\frac{1-\sigma}{1+\sigma}$ and the conditions of Theorem~\ref{thm:CCtest:null} which  hold on $\Ac$ if 
\begin{align}\label{eq:cond:list:thm:null}
\begin{array}{lcl}
    \frac{1}{4} C_1 \nu_n \ge \max\{2, \frac{C_2 L}{\sqrt{2}}\} & \quad &
    \frac{5}{12}c_1C_1K_0\nu_n \ge  L \ge 2
    \\
     \beta_n/ (\frac{5}{12}c_1C_1K_0\nu_n) \le (\taur/8)^2 n & \quad &
     \alphan \le \frac{\tau_\rho}{8C_2} \wedge \frac{2}{LC_2^2}.
\end{array}
\end{align}
The first condition in~\eqref{eq:cond:list:thm:null} can be simplified since $\frac{C_2 L}{\sqrt{2}} \ge 2$. 
The assumptions on $\nu_n$ in~\eqref{eq:cond:list:thm:null} and in Lemma~\ref{lem:A:event:prob} can be summarized as
\begin{align*}
    \nu_n &\ge \frac{1}{C_1} \max\Bigl\{\frac{12L}{5c_1K_0},  \,
		2\sqrt{2}C_2L, \,
		10^3 \log n,\,
		\frac1{(\frac{\tau_\rho}{8})^2 \frac{5}{12}c_1K_0} \frac{\beta_n}{n}\Bigr\}.
\end{align*}
This can be further simplified to~\eqref{assum:scaling:b}, since
\begin{align*}
    2\sqrt{2}C_2L \Big/\frac{12L}{5c_1K_0} =
    9\sqrt{2}\frac{K_0}{C_1\taur} \ge 1.
\end{align*}
The assumptions on $\alphan$ can be simplified to~\eqref{assum:scaling:c}, since
\begin{align*}
    \frac{2}{LC_2^2} \Big/ \frac{\tau_\rho}{8C_2} = \frac{16}{C_2 L \taur} = \frac{8\sigb \tauc C_1}{27LK_0} \le 1. 
\end{align*}

The assumptions on $\log n / n$ in Lemma~\ref{lem:A:event:prob} are satisfied under~\eqref{assum:scaling:a} since $C_1 \le \tauc$. Finally, we note that $C_2$, defined in~\eqref{eq:C3rho:C2}, can be replaced with its upper bound $11/(c_1 C_1 \taur)$.

\subsubsection{Proof of Theorem~\ref{thm:consist}}\label{app:proof:cons}
    The proof has two main components. 
    First, we show that when $K < K_0$, there exists a mixed estimated community, say $\Ch_k$, that contains large pieces of two distinct true communities, say $\Cc_1$ and $\Cc_2$. This holds for any underfitted set of labels (i.e., with $K < K_0$ communities) regardless of what community detection algorithm is used.
    
    Second, we show that the chi-square statistic is large over that mixed cluster ($\Ch_k$). This is done by first showing that the estimated parameter vector $\rhoh_{k*}$ is close to a mixture of the true parameter vectors $\rho_{r*}, r \in [K_0]$, which we refer to as $\rhob_*$. Since $\Ch_k$ contains large pieces of true communities $\Cc_1$ and $\Cc_2$, the weights of $\rho_{1*}$ and $\rho_{2*}$ in the mixture forming $\rhob_{*}$ will be bounded away from zero.
    On the other hand, in forming $\Th_n$ over $\Ch_k$, we effectively compare the row $X_{i*}$ to $d_i \rhob_*$. However, $X_{i*}$ is close to either $d_i \rho_{1*}$ or $d_i \rho_{2*}$ depending on which of the two true chunks, $i$ belongs to. Since $\|\rho_{r*} - \rhob_{*}\|^2$ is bounded away from zero for $r = 1,2$, this leads to the chi-square statistic being large over $\Ch_k$. 
    
    
    \smallskip
    We prove the result first, assuming $\sigma = 0$ in Algorithm~\ref{alg:snac} (i.e., no quantile filtering), so that $S'_2 = S_2.$. At the end, we will show how the result can be extended to include $\sigma > 0$.
    %
    %
    %
    Recall that $\Ch_k = \{i: \zh_i = k\}$ and $\nh_k = |\Ch_k|$. For $r \in [K_0]$, a true community $\Cc_{r}$ is partitioned into $\Ch_{k,r} = \{i:\zh_i = k,  z_i = r\}$, $k \in [K]$. With such partition, for each $r \in [K_0]$, there exists $k_{r} \in [K]$  such that $|\Ch_{k_r,r} \cap S_2| \ge |\Cc_{r} \cap S_2|/K$. Since $K < K_0$, there are $r_1, r_2 \in [K_0]$ such that $r_1 \neq r_2$ and $k_{r_1} = k_{r_2} =: \kh$. Note that $\kh$ is random and potentially dependent on $A$. Without loss of generosity, assume that $r_1 = 1$ and $r_2 = 2$. Therefore, $\Ch_{\kh}$ contains ``large'' pieces $\Ch_{\kh,1}$ and $\Ch_{\kh,2}$ of two different true communities $1$ and $2$. 
    We will show below that this further guarantees that $\Gh_{\kh} = \Ch_{\kh} \cap S_2$ has a substantial size. 
    First, recalling that $\Gc_{r} = \Cc_{r} \cap S_2$, on event $\Ac$, we have
    \begin{align} \label{eq:Gk:bound}
    |\Ch_{\kh,1}\cap S_2| \ge |\Cc_{1} \cap S_2|/K \ge |\Gc_{1}|/K_0 \ge (0.4\tau_\Cc /K_0) n \ge c_1 n , 
    \end{align}
    where $c_1 = (1-\sigma)\frac{\tau_\Cc}{5K_0}$ as in \eqref{eq:const:defs}.  The same bound holds for {$|\Ch_{\kh,2}\cap S_2|$. 
    Therefore, we have $|\Gh_{\kh}| \ge |\Ch_{\kh,1}\cap S_2| + |\Ch_{\kh,2}\cap S_2| \ge 2c_1 n$. We will focus on $\Gh_{\kh}$ in the rest of the argument. 
    
    \medskip
    Let $\bigcup_{r=1}^{K_0} \Tch_{r}$ be the disjoint partition of $\Gh_{\kh}$ into the true communities, 
    with $\Tch_r = \{i \in S_2:\zh_i = k, z_i = r\} = \Ch_{k,r}\cap S_2$. Some $\Tch_r$ might be empty, but we can safely ignore them 
    and focus on the two big pieces $\Tch_1$ and $\Tch_2$, that are guaranteed by the earlier argument. Since $\Tch_r \subset \Cc_r \cap S_2$, for any $i \in \Tch_r$, we have $\ex^\Fc[\xi_{i\ell}] = \rho_{r \ell}$, where $\rho_{r\ell}$ is defined based on \eqref{eq:Xis:distn} and \eqref{eq:rho:def}. Let us define
    \begin{align*}
    	\alh_r := \sum_{i \in \Tch_r} d_i, \quad \beh_{r} := \frac{\alh_r}{ \alh_+ }, \quad \rhob_\ell := \sum_{r=1}^{K_0} \beh_r \rho_{r\ell},
    \end{align*}
    where $\alh_+ = \sum_r \alh_r = \sum_{i \in   \Gh_\kh} d_i$. Note that $\alh_r = 0$ if $\Tch_r$ is empty. We also note that on $\Ac$, we have $\alh_+ > 0$, hence the division by $\alh_+$ is valid. In fact, using $d_i \ge  C_1 \nu_n /4$ from~\eqref{eq:di:interval} and $|\Gh_{\kh}| \ge 2 c_1 n$, we have
    \begin{align}\label{eq:alh:plus:lower:bound}
        \alh_+ \ge  c_1 C_1 n  \nu_n /2.
    \end{align}
    Consider the event
    \begin{align}
    	 \Ec:= \bigl\{ \max_{r, \ell}\, \max_{i \in \Tch_r } |\xi_{i\ell} - \rho_{r\ell} | \le \eps_n
    	    \bigr\}, \quad \eps_n := 4 \sqrt{\frac{\log n}{C_1 \nu_n}}.
    \end{align}
    The following lemma shows that $\Ec$ holds with high probability and we work on $\Ec$ for the rest of the proof. 
   	\begin{lem}\label{lem:E:event:prob}
   		$\pr(\Ec^c \cap \Ac) \le 2 L n^{-1}$ whenever $\frac{\log n}{\nu_n} \le C_1/4$.
    \end{lem}
    The assumption of Lemma~\ref{lem:E:event:prob} holds under the stronger assumption $\frac{\log n}{\nu_n} \le C_1\tau_\rho^2/ 64$ that we made in the statement of the theorem.
    We next show that $\rhoh_{\kh\ell}$ is close to $\rhob_\ell$ for all $\ell \in [L]$. We have
    \begin{align*}
    	\Big| \sum_{i \in \Gh_\kh} X_{i\ell}(\yh) - \sum_r \alh_r \rho_{r\ell}\Big| &=
    	\Big| \sum_r\sum_{i \in \Tch_r} d_i \xi_{i\ell} - \sum_r  \sum_{i \in \Tch_r} d_i \rho_{r\ell}\Big| \\
    	&\le  \sum_r\sum_{i \in \Tch_r} d_i \big| \xi_{i\ell} - \rho_{r\ell}\big| \le \eps_n \sum_r \alh_r = \eps_n \alh_+.
    \end{align*}
    Dividing by $\alh_+$ and recalling the definition of $\rhoh_{k\ell}$ in \eqref{eq:rhoh:def}, we get 
    \begin{align} \label{eq:rho:diff}
    	\big|\rhoh_{\kh\ell} - \rhob_\ell \big| = 
    	\Big| \frac{\sum_{i \in  \Gh_\kh} X_{i\ell}(\yh)}{ \sum_{i \in  \Gh_\kh} d_i} - \frac{\sum_r \alh_r \rho_{r\ell}}{\alh_+}\Big| \le \eps_n,  \quad \forall \ell \in [L].
    \end{align}
    We now apply the following lemma:
    \begin{lem}\label{lem:psi:dev:1}
    	Let $\psi(x,y) = (x-y)^2/y$.
    	Consider $(x,y)$ and $(x',y')$ in $[0,1] \times [1/c_1, 1]$, where $c_1 > 1$, such that $\max\{|x-x'|, |y-y'|\} \le \eps \le 1$. Then,
    	\begin{align}
    	\big|\psi(x',y') - \psi(x,y)\big| \le 12 c_1^3 \,\eps. %
    	\end{align}
    \end{lem}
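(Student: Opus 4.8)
The plan is to prove this as a Lipschitz estimate for $\psi$ on the box $[0,1]\times[1/c_1,1]$, and I would carry it out by a direct algebraic telescoping (rather than calculus) so that the constants stay transparent. The starting point is the identity
\[
\psi(x',y') - \psi(x,y) = \frac{(x'-y')^2}{y'} - \frac{(x-y)^2}{y} = (x'-y')^2\,\frac{y-y'}{y y'} + \frac{(x'-y')^2 - (x-y)^2}{y},
\]
obtained by adding and subtracting $(x'-y')^2/y$. The first term on the right will be controlled using the lower bound $y,y'\ge 1/c_1$, and the second using $y\ge 1/c_1$ together with a difference-of-squares factorization.

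For the first term, since $x',y'\in[0,1]$ we have $|x'-y'|\le 1$; by hypothesis $|y-y'|\le\eps$; and $1/(yy')\le c_1^2$. Hence this term is at most $c_1^2\eps$. For the second term, factor $(x'-y')^2-(x-y)^2 = \big((x'-y')-(x-y)\big)\big((x'-y')+(x-y)\big)$. The first factor equals $(x'-x)-(y'-y)$, so its absolute value is at most $|x'-x|+|y'-y|\le 2\eps$; the second factor has absolute value at most $2$ because both $x-y$ and $x'-y'$ lie in $[-1,1]$. Dividing by $y\ge 1/c_1$ bounds the second term by $4c_1\eps$. Adding the two estimates gives $|\psi(x',y')-\psi(x,y)|\le (c_1^2+4c_1)\eps$, and since $c_1>1$ we have $c_1^2\le c_1^3$ and $4c_1\le 4c_1^3$, so the right-hand side is at most $5c_1^3\eps\le 12c_1^3\eps$, as claimed. (An equivalent route is to bound $\|\nabla\psi\|_1$ on the box: $|\partial_x\psi| = 2|x-y|/y\le 2c_1$ and $|\partial_y\psi|\le 2|x-y|/y + (x-y)^2/y^2\le 2c_1+c_1^2$, then integrate along the segment joining $(x,y)$ to $(x',y')$, which stays in the box by convexity; this yields the same constant with the same slack.)

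There is no substantive obstacle in this lemma; it is a routine estimate. The only points needing care are bookkeeping: tracking precisely where the lower bound $y\ge 1/c_1$ is used (once as $1/y\le c_1$, once as $1/(yy')\le c_1^2$), keeping $|x-y|,|x'-y'|\le 1$ and $|x'-y'|+|x-y|\le 2$ straight, and finally invoking $c_1>1$ to absorb the lower-order powers of $c_1$ into the stated bound $12c_1^3$.
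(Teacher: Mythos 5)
Your proof is correct, and it takes a genuinely different route from the paper's. You telescope algebraically: the identity
\[
\psi(x',y')-\psi(x,y)=(x'-y')^2\,\frac{y-y'}{yy'}+\frac{(x'-y')^2-(x-y)^2}{y}
\]
is exact, the bound $c_1^2\eps$ on the first term and $4c_1\eps$ on the second (via the difference-of-squares factorization) are both right, and $(c_1^2+4c_1)\eps\le 5c_1^3\eps\le 12c_1^3\eps$ follows from $c_1>1$. The paper instead proves a more refined intermediate result (its Lemma~\ref{lem:psi:dev:2}) by a second-order Taylor expansion: it computes $\grad\psi$ and $\hess\psi$ explicitly, bounds the Hessian's largest eigenvalue by $2(\tilde x^2+\tilde y^2)/\tilde y^3\le 4c_1^3$ on the box, and obtains $|\psi(x',y')-\psi(x,y)|\le c_2\,|x-y|\cdot\norm{\delta}+c_3\norm{\delta}^2$ before crudely specializing to the stated Lipschitz form. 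What the paper's route buys is the retained factor $|x-y|$ in the first-order term, which would be sharper when $x\approx y$ (i.e., when the chi-square contribution itself is small) — though in fact only the crude form is used downstream, so your argument would serve equally well in the application. What your route buys is elementariness (no calculus, no Lagrange remainder point $(\tilde x,\tilde y)$ to track) and a strictly better constant, $c_1^2+4c_1$ in place of $12c_1^3$. Your parenthetical gradient-integration alternative is essentially a first-order version of the paper's argument and also checks out.
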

	 
	 Note that $\rhob_\ell$ is a 
	 convex combination of $(\rho_{r\ell})$ over $r$, hence using \eqref{eq:rhou:taur},
	 \begin{align}\label{eq:rhob:ell:lower:bound}
	     \rhob_\ell \ge \rhou \ge \tau_\rho.
	 \end{align}
	 Furthermore, by assumption $\eps_n \le \tau_\rho/2$, combined with~\eqref{eq:rho:diff}, we have $\min\{\rhoh_{\kh\ell}, \rhob_\ell\} \ge \tau_\rho/2$ for all $\ell \in [L]$. Therefore, we can apply Lemma~\ref{lem:psi:dev:1} with $c_1 = 2/\tau_\rho$ to obtain 
   \begin{align*}
	   \big| \psi(\xi_{i\ell}, \rhoh_{\kh\ell}) -  \psi(\rho_{r\ell}, \rhob_\ell)
	   \big| \le 96 \tau_\rho^{-3} \eps_n,  \quad \forall i \in \Tch_r, \;  \forall \ell \in [L]. 
   \end{align*}
  	For two vectors $x,y \in \reals^L$, let us write $\Psi(x,y) = \sum_\ell \psi(x_\ell, y_\ell)$. Let $\xi_i = (\xi_{i\ell})$,  and  $\rhoh_{u*} = (\rhoh_{u\ell})$,
  	and set $\Yh^{(u)}_+ = \sum_{i \in \Gh_u} d_i \Psi(\xi_i, \rhoh_{u*})$ for any $u \in [K]$.
    By the triangle inequality,
  	\begin{align*}
  		\Yh^{(\kh)}_+ &\ge \sum_r \sum_{i \in \Tch_r} d_i \big( \Psi(\rho_{r*}, \rhob_*) - 96\tau_\rho^{-3}\eps_n L\big)\\ 
  		&= \sum_r \alh_r \big( \Psi(\rho_{r*}, \rhob_*)  - 96\tau_\rho^{-3}\eps_n L\big)
  	\end{align*}
  	where $\rho_{r*} = (\rhob_{r\ell})$ and $\rhob_* = (\rhob_\ell)$.
  	Dividing by $\alh_+$, we have
  	\begin{align}\label{eq:Yh:k:lower:bound}
  		\frac1{\alh_+ } \Yh^{(\kh)}_+ 
  		&\ge \omega_1  - 96\tau_\rho^{-3} \eps_n L 
  	\end{align}
  	where we have defined $\omega_1 := \sum_r \beh_r \Psi(\rho_{r*}, \rhob_*)$.
  	
  	\paragraph{Controlling $\omega_1$}
  	Recall that  $|\Tch_1|, |\Tch_2| \ge c_1n$, as argued in~\eqref{eq:Gk:bound}. We also recall the definition of $a_h$ in~\eqref{eq:ah:def}. 
  	 Then, on the event $\Ac$, for $u=1,2$, we have
  	 \begin{align*}
  	 \beh_u := \frac{\sum_{i \in \Tch_u} d_i}{ 
  	 	\sum_{r} \sum_{i \in \Tch_{r}} d_i} \ge \frac13 \frac{\sum_{i \in \Tch_u} \ds_i}{ 
   		\sum_{r} \sum_{i \in \Tch_{r}} \ds_i} &= \frac13 \frac{\sum_{i \in \Tch_u} \theta_i a_{u}}{\sum_{r} \sum_{i \in \Tch_{r}}  \theta_i a_{r}} \\
   		&\ge \frac13 \frac{ \tau_\theta \thetamax a_{u}|\Tch_u| }{\thetamax \amax  |\Gh_{\kh}|}  \ge \frac13 \tau_\theta \tau_a  c_1
  	\end{align*}
  	using $a_h \ge \tau_a \amax$ from~\eqref{eq:ah:amax:ineq}, $\theta_i \ge \tau_\theta \thetamax$ and $|\Gh_{\kh}| \le n$. We have
  	\begin{align*}
  		\omega_1 = \sum_r  \beh_r  \sum_\ell \frac{(\rho_{r\ell}- \rhob_\ell)^2}{\rhob_\ell} = \sum_\ell \frac1{\rhob_\ell} \sum_r \beh_r (\rho_{r\ell}- \rhob_\ell)^2.
  	\end{align*}
  	The inner summation is the variance of a random variable taking values  $(\rho_{r\ell})$ with probabilities $(\beh_r)$. Applying Lemma~\ref{lem:var:low} in Appendix~\ref{app:tech}
	and recalling the definition of $\omega_2$ from~\eqref{eq:omega2:def}, we have
	\begin{align}\label{eq:omega2}
		\omega_1 \ge \frac1{\max_\ell \rhob_\ell} \frac12 \beh_1 \beh_2 \sum_\ell (\rho_{1\ell} - \rho_{2\ell})^2 \ge \frac1{18} \tau_\theta^2 \tau_a^2 c_1^2 \norm{\rho_{1*} - \rho_{2*}}^2 \ge L \omega_2
	\end{align}
	since $\max_{\ell} \rhob_{\ell} \le \max_{k,\ell} \rho_{k \ell} \le 1$.
	
\paragraph{Putting the pieces together} 
  	On $\Omega_n$, by definition
	$96 \tau_\rho^{-3} \eps_n \le  \frac{1}{2}\omega_2$, which combined with~\eqref{eq:Yh:k:lower:bound} and~\eqref{eq:omega2}, gives $\frac1{\alh_+ }  \Yh^{(\kh)}_+ \ge \frac12 L \omega_2$.
Combined with~\eqref{eq:alh:plus:lower:bound}, on $\Omega_n \cap \Ec \cap \Ac$, we have
  	\begin{align*}
  		 \Yh^{(\kh)}_+ \ge \frac14 c_1C_1 L  \omega_2 \,n \nu_n.
  	\end{align*}
  	Furthermore, $\nt = |S_2| \le 0.6 n$ on $\Ac$, hence $\gamma_{\nt} = \sqrt{\nt(L-1)} \le  \sqrt{0.6 n L}$
    and
  	\begin{align*}
  	    \Th_n= \frac1{\sqrt{2}} \Big(
  	    \frac1{\gamma_{\nt}}\sum_{u=1}^K \Yh_+^{(u)} - \gamma_{\nt}\Big) &\ge 
  	    \frac1{\sqrt{2}} \Big(\frac1{\gamma_{\nt}} \Yh^{(\kh)}_+  - \gamma_{\nt}\Big) \\
  	    &\ge \sqrt{\frac{n}2}\left(\frac{c_1 C_1 L \omega_2 \nu_n /4}{ \sqrt{0.6 L}} -  \sqrt{0.6 L}\right).
  	\end{align*}
  	On $\Omega_n$, we have $\frac12(c_1 C_1 L \omega_2 \nu_n /4) \ge 0.6 L$, hence on $\Omega_n \cap \Ec \cap \Ac$, we obtain
  	\begin{align}\label{eq:Tn:low}
	  	 \Th_n \ge  \sqrt{\frac{n}2}\left(\frac{c_1 C_1 L \omega_2 \nu_n /8}{ \sqrt{0.6 L}}\right) \ge \frac{c_1C_1}{9} \omega_2 \nu_n \sqrt{L n}.
  	\end{align}
  	Furthermore, we note that 
  	\begin{align*}
  	\pr((\Omega_n \cap \Ec \cap \Ac)^c) \le \pr(\Omega_n^c) + 2Ln^{-1} + 2(7n^{-1}) \le  \pr(\Omega_n^c) + 9Ln^{-1} 
  	\end{align*}
  using Lemmas~\ref{lem:A:event:prob} and~\ref{lem:E:event:prob} and $L \ge 2$. The proof for the case $\sigma = 0$ is complete.
  
  \smallskip
  To extend the proof to the case $\sigma > 0$, we replace
   inequality \eqref{eq:Gk:bound} with
    \begin{align*}
        |\Ch_{\kh,1}\cap S'_2|\ge|\Cc_{1} \cap S'_2|/K &\ge |\Gc_1\cap S'_2|/K_0 
        \ge c_1 n,
    \end{align*}
    which holds on 
    on event $\Ac \cap \Mc_n$ according to~\eqref{eq:Gc:prime:lower:bound}. Then, \eqref{eq:Tn:low} is true under event $\Omega_n \cap \Ec \cap \Ac \cap \Mc_n$ with probability at least $1- \pr(\Omega_n^c) - \pr(\Mc_n^c) - 9Ln^{-1}$. 
}The proof of Theorem~\ref{thm:consist} is complete.
  
\subsection{Proof of Theorem \ref{thm:consist:others}} \label{app:proof:4}
The proof has six steps as outlined below:
\begin{enumerate}
    \item 
    Showing that multinomial probabilities of the $i$-th node are close to $H_\ell(x_i)$ defined in \eqref{eq:H:def} with high probability. 
    
    \item Showing that the (partial) degrees are proportional to $\nu_n$.
    
    \item Showing that the estimated probabilities are close to those based on the \emph{limiting} row labels $z$. 
    
    \item Controlling the chi-square statistics by $\fvarmin$.
    
    \item Showing that the chi-square statistic with estimated column labels $\yh$ is close to the one with the limiting column labels $y$.
    
    \item Simplifying the assumptions.
\end{enumerate}
Steps 1--4 are carried out assuming that $\yh = y$ and then the result is extended, in step 5, to $\yh$ approaching $y$ in the limit.

Let $\Cc_k = \{i \in [n]:\; z_i = k\}$ be the  community $k$ defined by label vector $z$, and  $n_k = |\Cc_k|$. We also let $\Gc_k := \{i \in S_2:\; z_i = k\} = \Cc_k \cap S_2$. Consider event 
\begin{align}\label{eq:event:A1}
    \Ac_1 = \bigl\{|\Gc_k| \in [0.4 n_k, 0.6n_k],\  \forall k \in [K] \bigr\}.
\end{align}
Since $|\Cc_k \cap S_1| = n_k - |\Gc_k|$, on $\Ac_1$, we also have
\begin{align}\label{eq:Ck:cap:S1:bound}
    |\Cc_k \cap S_1| \in [0.4 n_k, 0.6 n_k], \quad \forall k \in [K].
\end{align}
Under the assumption $0.4\tau_\Cc n \ge 2$, we have,  
\begin{align}\label{eq:Gsize:lower}
    |\Gc_k| \ge 2, \;\forall k \in [K], \quad \text{on $\Ac_1$}.
\end{align}
From the proof of Lemma~\ref{lem:A:event:prob}, we obtain:
\begin{lem}\label{lem:A1:event:prob}
    $\pr(\Ac_1^c) \le n^{-1}$ if $\frac{\log n}{n} \le \tau_c/300$.
\end{lem}

For two $\sigma$-fields $\Fc$ and $\Hc$, we write $\Fc \vee \Hc = \sigma(\Fc \cup \Hc)$ for the $\sigma$-field generated by their union. Recall that with subsampling, the set $S_1 \subset [n]$ is determined by including any element of $[n]$, indepenently with probability $1/2$, and $S_2 = [n] \setminus S_1$. Let $d_i = \sum_{j \in S_1} A_{ij}$ and consider the following $\sigma$-fields 
\begin{align}
\begin{split}\label{eq:Fc:def2}
    \Fc_0 &= \sigma\bigl(S_1\bigr), \\
    \Fc_1 &= \Fc_0 \vee \sigma(x_{S_2}),\\
    \Fc_2 &= \Fc_1 \vee \sigma(x_{S_1}) = \Fc_0 \vee \sigma(x_{[n]}), \\ 
    \Fc &= \Fc_2 \vee \sigma\bigl( (d_i, i \in S_2) \bigr),
\end{split}
\end{align} 
where $x_{S_2} = (x_i, i \in S_2)$, and similarly for $x_{S_1}$, and $x_{[n]} = (x_1,\dots,x_n)$.  Note that conditioned on $\Fc_0$, $\yh$ is fixed, and conditioned on $\Fc_2$, $(p_{ij})$ is fixed.

We first consider the case where  $\yh_{S_1} = y_{S_1}$. In this case, we drop the dependence of $X_{i\ell}(\yh)$ (defined in~\eqref{eq:X:def}) on $\yh$, and write
\begin{align}\label{eq:X:il}
    X_{i\ell} := \sum_{j \in S_1} A_{ij} 1\{y_j = \ell\}.
\end{align}
Since conditioned on $\Fc_2$, $x_{[n]}$ are fixed, it follows that
\begin{align}\label{eq:qil:def}
    X_{i\ell}
    \,\given\, \Fc_2 \sim \poi (q_{i\ell}), \quad \text{where} \quad q_{i\ell} := \sum_{j \in S_1} p_{ij} 1\{y_j = \ell\},
\end{align}
independently across $\ell$. Since for $i \in S_2$, the sum of $X_{i\ell}$ over $\ell$ is $d_i$, and when we condition on $\Fc$, we are also conditioning on $d_i, i \in S_2$, we obtain 
\begin{align}\label{eq:X:il:mult}
    (X_{i\ell})_{\ell=1}^L \,\given\, \Fc \sim \mult\bigl(d_i, (\rho_{i\ell})_{\ell=1}^L\bigr) 
    \quad \text{where} \quad \rho_{i\ell} :=  \frac{q_{i\ell}}{\sum_{\ell'} q_{i\ell'}}.
\end{align} 
independently across $i \in S_2$.

\medskip
\paragraph{Controlling conditional probabilities}
Let us set
\begin{align}\label{eq:tau:rho:def}
    \tau_\rho := \frac{C_8}{2 L\tau_\theta} = \frac{ \tau_\Cc \tau_h \tau_{\theta}}{2L}.
\end{align}
As the first step in the proof, we show that $\rho_{i\ell}$ is close to $H_{\ell}(x_i)$.
More specifically, the following event
\begin{align}\label{eq:R:event}
    \Rc := \Big\{|\rho_{i\ell} - H_{\ell}(x_i)| \le 
    \frac{4K}{\tau_\rho}\sqrt{\frac{\log n}{n}},
    \ \forall i\in S_2, \ \forall \ell \in [L] \Big\}
\end{align}
holds with high probability:

\begin{lem}
    \label{lem:controlling:Rc}
    There is an event $\Wc$ such that 
    \[
        \Rc \supseteq \Gamma \cap \Wc, \quad \text{and} \quad \pr(\Wc^c) \le
        4LK n^{-1}
    \]
    whenever
    $
        \frac{\log n}{n} < 
        (\frac{\tau_\rho}{4K})^2.
    $
\end{lem}

\paragraph{Controlling the degrees}
Consider the event
\begin{align}\label{eq:event:A2}
    \Ac_2 := \bigl\{d_i \in [0.16C_8\nu_n, 0.96\nu_n], \; \forall i \in S_2 \bigr\}
\end{align}
The next lemma guarantees that $\Ac_2$ holds with high probability: 
\begin{lem}
    \label{lem:A2:event:prob}
    There is an event $\Dc$ such that 
    \[
        \Ac_2 \supseteq \Ac_1 \cap \Gamma \cap \Dc \quad \text{and} \quad \pr(\Dc^c \cap \Ac_1) \le 2.2n^{-1}
    \]
    wherever $\frac{\log n}n \le 0.04 C_8^2$ and $\frac{\log n}{\nu_n} \le 0.001 C_8$.
\end{lem}
From now on, let $\Ac := \Ac_1 \cap \Ac_2$. Let $\dpk = \sum_{i \in \Gc_k} d_i$ and $\omega_n = \min_k  \dpk/|S_2|$. On $\Ac$, we have
\begin{align}
    \min_{k} \dpk &\ge (0.16 C_8 \nu_n) (0.4 n_k) \ge  
    0.064 \tau_\Cc C_8\, n \nu_n, \label{eq:min:dpk} \\
    \quad  \omega_n &\ge  (8/75) \tau_\Cc C_8\nu_n \label{eq:omegan:lower:bound}
\end{align}
using $0.4n \le |S_2| \le 0.6 n$. 

\paragraph{Controlling probability estimates} 


Recall $\Gh_k = \{ i \in S_2: \zh_i = k \}$, and let
\begin{align}\label{eq:three:rhos}
    \rhoh_{k\ell} = \frac{\sum_{i \in \Gh_k} X_{i\ell}}{\sum_{i \in \Gh_k} d_i}, \quad 
    \rhot_{k\ell} = \frac{1}{\dpk} \sum_{i \in  \Gc_k} X_{i\ell}, 
    \quad \rhob_{k\ell} = \frac{1}{\dpk} \sum_{i \in  \Gc_k} d_i \rho_{i\ell},
\end{align}
 $\Delh_{k\ell} = \rhoh_{k\ell} - \rhot_{k\ell}$ and  $\Delt_{k\ell} = \rhot_{k\ell} - \rhob_{k\ell}$. To control these deviations, we first show that $\rhob_{k \ell_k}$ is lower-bounded, where $\ell_k$ is as in~\eqref{eq:r:k:def}:
\begin{lem}\label{lem:rho:lb}
    Assume 
    $\frac{\log n}{n} \le 
    (\frac{\tau_\rho^2}{4K})^2$.
    Then, with $\{\ell_k\}_{k=1}^K$ as defined in~\eqref{eq:ell:k:def},
    on $\Gamma \cap \Rc$,
    we have
    \begin{align*}
        \rho_{i \ell_{z_i}} \ge
        \tau_\rho
       \quad 
        \forall i \in S_2.
    \end{align*}
\end{lem}
Combined with definition of $\rhob_{k\ell}$ in~\eqref{eq:three:rhos}, Lemma~\ref{lem:rho:lb} immediately implies that under the same condition, on $\Gamma \cap \Rc$,
\begin{align}\label{eq:rhob:lower:bound}
    \rhob_{k \ell_k} \ge \tau_\rho, \quad \forall k \in [K].
\end{align}
Note that $2 \tau_\rho = \tau_\Cc \tau_h \tau_{\theta}/L$ hence $2 \tau_\rho \le 1$.

Next, we show that $\Delt_{k\ell}$ is small by considering the following event
%
\begin{align}\label{eq:def:B}
	\mathcal B := \Big\{ \max_{\ell} |\Delt_{k\ell}| \le 
       \frac{8}{\sqrt{\tau_\Cc C_8}} \sqrt{\frac{\log n}{n \nu_n}}   =: \delta, \; \forall k \in [K] \Big\}.
\end{align}

\begin{lem}\label{lem:bound:Delt2}
    $\pr(\Bc^c \cap \Ac) \le 2L n^{-1}$ whenever $\frac{\log n}{n \nu_n} \le 0.064 \tau_\Cc C_8$.
\end{lem}
To simplify the notation, let us define
\begin{align}
    \Nc_n := \Ac \cap \Bc \cap \Mc_n \cap \Rc \cap \Gamma.
\end{align}
The next step is to control $\Delh_{k \ell_k}$:
\begin{lem}\label{lem:Delh:k:ell:k}
    Assume that $\alphan \le \tau_\Cc \tau_\rho C_8 /18$. Then, on 
    $\Nc_n$, 
    \begin{align}\label{eq:delh:bound}
        |\Delh_{k\ell_k}| \le  \frac{54}{\tau_\rho \tau_\Cc C_8}  \, \alpha_n \, \rhot_{k\ell_k}, \quad \forall k \in [K].
    \end{align}
\end{lem}

Combining \eqref{eq:def:B} and \eqref{eq:delh:bound}, on $\Nc_n$, we have
\begin{align} 
    |\rhoh_{k\ell_k} - \rhob_{k\ell_k}|  
    &\le  \frac{54}{\tau_\rho \tau_\Cc C_8} \alphan +  \frac{8}{\sqrt{\tau_\Cc C_8}} \sqrt{\frac{\log n}{n \nu_n}} \notag \\
    &\le  \frac{58}{\tau_\rho \tau_\Cc C_8} \sqrt{\frac{\log n}{\nu_n}}.
    \label{eq:rhoh:to:rohb}
\end{align}
The second inequality uses $\tau_\Cc C_8 \le 1$ and $2 \tau_\rho \le 1$ to replace the prefactor of the second term with $4 /(\tau_\rho \tau_\Cc C_8)$ and then uses the assumption $\alphan \le \sqrt{(\log n) / \nu_n}$ to combine the two terms. We note that the fast rate $\sqrt{(\log n) / (n \nu_n)}$ of the second term is not helpful since it will be dominated later in the argument by the slow rate $\sqrt{(\log n) / \nu_n}$  needed to control~\eqref{eq:def:Ec}.

Let $\xi_{i\ell} = X_{i\ell}/d_i$ for $i \in [S_2]$. Then $\ex^{\Fc}[\xi_{i\ell}] = \rho _{i\ell}$. Consider the event 
\begin{align} \label{eq:def:Ec}
	\Ec := \Big\{ \max_{i \in S_2, \ \ell \in [L]} |\xi_{i\ell} - \rho_{i\ell} | 
    \le 5\sqrt{\frac{\log n}{C_8 \nu _n}} \Big\}.
\end{align}
Then, we have:

\begin{lem}\label{lem:Ec:bound}
    $\pr(\Ec^c \cap \Ac) \le 2Ln^{-1}$ whenever $\frac{\log n}{\nu_n} \le 0.16 C_8$.
\end{lem}

\paragraph{Controlling chi-square statistics}
Now, let us define
\begin{align}\label{eq:epsn}
    \eps_n := \frac{58}{\tau_\rho \tau_\Cc C_8} \sqrt{\frac{\log n}{\nu_n}}.
\end{align}
 Then, on $\Nc_n \cap \Ec$, we have 
 \begin{align}\label{eq:two:epsn:deviations}
    |\rhoh_{k\ell_k} - \rhob_{k\ell_k}| \le \eps_n, \quad 
    |\xi_{i\ell} - \rho_{i\ell}| \le \eps_n
 \end{align}
 for all $k, \ell$ and $i \in S_2$. This follows by recalling that $\tau_{\Cc}, \tau_\rho, C_8 \le 1$. Combining~\eqref{eq:rhob:lower:bound},~\eqref{eq:two:epsn:deviations} and the assumption $\eps_n \le \tau_\rho/2$, we obtain 
\begin{align}\label{eq:rhoh:rhob:lower}
    \min\{\rhoh_{k\ell_k}, \rhob_{k \ell_k}\} \ge \tau_\rho/2, \quad \text{on}\; \Nc_n.
\end{align}
Hence, we can apply Lemma~\ref{lem:psi:dev:1} with $c_1 = 2/\tau_\rho$, using~\eqref{eq:two:epsn:deviations} to obtain that,  on $\Nc_n \cap \Ec$, 
\begin{align*}
   \big| \psi(\xi_{i\ell_k}, \rhoh_{k\ell_k}) - \psi(\rho_{i\ell_k}, \rhob_{k\ell_k}) \big| \le 96 \tau_\rho^{-3} \eps_n,  \quad \forall i \in \Gc_k,\; \forall k \in [K_0]. 
\end{align*}
Define 
$
    \Yt := \sum_{k=1}^{K} \sum_{i \in \Gc_k} d_i\psi(\xi_{i\ell_k}, \rhoh_{k\ell_k}).
$
Then, on $\Nc_n \cap \Ec$,
\begin{align*}
    \Yt &\ge \sum_{k=1}^{K} \sum_{i \in  \Gc_k}  d_i\big[(\rho_{i\ell_k} - \rhob_{k\ell_k})^2 - 96 \tau_\rho^{-3} \eps_n\big]\\
    &=
     \sum_{k=1}^{K} \dpk \Big[\sum_{i \in  \Gc_k} \frac{d_i}{\dpk}(\rho_{i\ell_k} - \rhob_{k\ell_k})^2 - 96 \tau_\rho^{-3} \eps_n \Big]
\end{align*}
where the first inequality also uses $\psi(x,y) \ge (x-y)^2$ for $x, y \in [0,1]$. Let 
\begin{align}
   \varpi_k = \sum_{i \in  \Gc_k}\frac{d_i}{\dpk}(\rho_{i\ell_k} - \rhob_{k\ell_k})^2, \quad k \in [K].
\end{align}

 Note that $\varpi_k$ is the variance of a random variable taking value $\rho_{i\ell_k}$ with probability $d_i/\dpk$ for $i \in \Gc_k$.   Recalling that $\fvar_{k\ell} := \var(H_\ell(x))$ when $x \sim \Qb_k$, we have the following:
 \begin{lem}\label{lem:varpi}
    Assume $\frac{\log n}{n} \le \min\{ 
        \frac{\tau_\rho^2}{4},
        \tau_\Cc\}$. Then, there is an event $\Hc$ on which 
    \begin{align}
        \varpi_k  \ge \frac{C_8^2}{144} \fvar_{k \ell_k} - \frac{C_8 }{8} \tau_\theta L \sqrt{\frac{\log n}{n}}, \quad k \in [K]
    \end{align}
    and we have $\pr(\Hc^c \cap \Ac \cap \Rc) \le K n^{-c}$.
\end{lem}

Let 
$\mu_n :=  \max\{1, L \sqrt{\nu_n / n}\}$ and
\begin{align}
	\tilde\eps_n = \frac{58}{\tau_\rho \tau_\Cc C_8} \mu_n \sqrt{\frac{ \log n}{\nu_n}}.
\end{align}
so that $\eps_n \le \tilde\eps_n$. We have 
\begin{align*}
	\frac{C_8 }{8} \tau_\theta  L \sqrt{\frac{\log n}{n}}  \le \mu_n\sqrt{\frac{\log n}{\nu_n}} \le \tilde\eps_n.
\end{align*}
It follows that on $\Nc_n \cap \Ec \cap \Hc$, we have
\begin{align*}
	\Yt \ge \sum_{k=1}^{K} \dpk( \varpi_k -  96 \tau_\rho^{-3} \eps_n) 
	\ge \sum_{k=1}^{K}   \dpk\Bigl( \frac{C_8^2}{144} \fvar_{k \ell_k} -  97 \tau_\rho^{-3} \tilde\eps_n\Bigr) 
\end{align*}
Recalling $\fvarmin := \min_k \fvar_{k \ell_k} $ and by assumption 
$
	 \frac{C_8^2}{144} \fvarmin \ge 2\cdot 97 \tau_\rho^{-3} \tilde\eps_n,
$
we get
\begin{align}\label{eq:Yt:lower}
	\Yt \ge \frac{C_8^2}{288} \fvarmin \sum_{k=1}^{K} \dpk \ge \frac{C_8^3}{1800} \fvarmin \, n \nu_n
\end{align}
using $d_i \ge 0.16C_8 \nu_n$ on $\Ac_2$; see~\eqref{eq:event:A2}.

\medskip
Let $\Yh = \sum_{k=1}^{K} \sum_{i \in \Gh_k} d_i\psi(\xi_{i\ell_k}, \rhoh_{k\ell_k})$ and $\Hc_k = \Gc_k \Delta \Gh_k := (\Gc_k \setminus \Gh_k) \cup (\Gh_k \setminus \Gc_k)$. Note that $\sum_k |\Hc_k| \le \alphan n$ on event $\Mc_n$. Therefore, on $\Nc_n \cap \Ec$  
\begin{align} 
	|\Yh  - \Yt| &= \sum_{k = 1}^{K} 
	\sum_{i \in \Hc_k} 
	d_i\psi( \xi_{i\ell}, \rhoh_{k\ell_k})  \notag \\ 
	&\le \frac{2}{\tau_\rho} \sum_{k = 1}^{K} \sum_{i \in \Hc_k} d_i
	 \le	\frac{1.92}{\tau_\rho} \alphan n  \nu_n.  \label{eq:Yh:Yt:diff}
\end{align}
The second inequality follows from~\eqref{eq:rhoh:rhob:lower} and noting that $(\xi_{i\ell} - \rhoh_{k\ell_k})^2 \le 1$. The third inequality is by~\eqref{eq:event:A2}.

The assumption $\frac{C_8^3}{1800} \fvarmin \ge 2 \cdot \frac{1.92}{\tau_\rho} \alphan$ combined with~\eqref{eq:Yt:lower} and~\eqref{eq:Yh:Yt:diff} gives
\begin{align}\label{eq:Yh:lower}
	\Yh \ge  \frac{C_8^3}{3600} \fvarmin \, n \nu_n.
\end{align}

\newcommand\rhohp{\widehat \rho'}
\newcommand\Yhp{\widehat Y'}

\paragraph{Estimated column labels}
Finally, we consider the case where the column labels $\yh$ are estimated using the community detection algorithm.
Let 
$X'_{i\ell} = \sum_{j \in S_1} A_{ij} 1\{\yh_j = \ell\}$ and $\xip_{i\ell} = X'_{i\ell}/d_i$, 
\begin{align*}
    \Yhp &= \sum_{k=1}^{K}\sum_{i \in \Gh_k} d_i \psi(\xip_{i\ell}, \rhohp_{k\ell_k}), \quad \text{where}\;\;
        \rhohp_{k\ell} = \frac{\sum_{i \in \Gh_k} X'_{i\ell}}{\sum_{i \in \Gh_k} d_i}.
\end{align*}
On $\Mc_n$, we have $|X_{i\ell} - X'_{i\ell}| \le n\kappa_n$. 
Letting 
\begin{align*}
    \epsnp := \frac{n\kappa_n}{0.16 C_8 \nu_n},
\end{align*}
it follows that on $\Ac \cap \Mc_n$,
\begin{align*}
    |\xi_{i\ell} - \xip_{i\ell}| \le n\kappa_n/ d_i \le \epsnp. 
\end{align*}
Assuming $\alphan \le 0.2 \tauc$, we have $|\Gh_k| \ge (0.4\tauc - \alphan)n \ge 0.2 \tauc n$ on $\Ac$. Then, on $\Ac \cap \Mc_n$, 
\begin{align*}
    |\rhoh_{k\ell_k} - \rhohp_{k\ell_k}| \le \frac{n\kappa_n}{0.16 C_8\nu_n \cdot  0.2 \tauc n } = 
    \frac{\epsnp}{0.2\tauc n} \le \epsnp
\end{align*}
where we have used the assumption $n \ge 5/\tauc$.

Recall that $\rhoh_{k\ell_k} \ge \tau_\rho/2$ on event $\Nc_n$. Then, by the assumption that $\epsnp \le \tau_\rho/4$, we have $\min\{\rhoh_{k\ell_k}, \rhohp_{k\ell_k} \} \ge \tau_\rho/4$. We can, then, apply Lemma~\ref{lem:psi:dev:1} with $c_1 = 4/\tau_\rho$ to obtain 
\begin{align*}
   \big| \psi(\xi_{i\ell_k}, \rhoh_{k\ell_k}) - \psi(\xip_{i\ell_k}, \rhohp_{k\ell_k}) \big| \le 768 \tau_\rho^{-3} \epsnp,  \quad \forall i \in \Gc_k,\; \forall k \in [K]. 
\end{align*}
Furthermore, on $\Nc_n \cap \Ac$,  
\begin{align}\label{eq:Yh:Yp:diff}
    |\Yh' -\Yh| \le 768 \tau_\rho^{-3} \epsnp \cdot (0.96 \nu_n) \cdot (0.6n) \le  \frac{2765}{C_8\tau_\rho^3}\kappa_n n^2
\end{align}
Combining~\eqref{eq:Yh:lower} and \eqref{eq:Yh:Yp:diff}, on event $\Nc_n \cap \Ec \cap \Hc$
\begin{align*}
    \Yh' \ge  \Big(\frac{C_8^3}{3600} \fvarmin - \frac{2765}{C_8\tau_\rho^3}\kappa_n n /\nu_n\Big)n\nu_n \ge  \frac{C_8^3}{7200} \fvarmin n \nu_n,
\end{align*}
assuming $\fvarmin C_8^3/7200 \ge \frac{2765}{C_8\tau_\rho^3}\kappa_n n /\nu_n$. 

Furthermore, $\nt = |S_2| \le n$, hence $\gamma_{\nt} = \sqrt{\nt(L-1)} \le  \sqrt{nL}$ and we have
\begin{align*}
  \Th_n \ge  \frac1{\sqrt{2}} \Big(
  \frac{\Yh'}{\gamma_{\nt}} - \gamma_{\nt}\Big)
  &\ge \sqrt{\frac{Ln}2}\left( \frac{C_8^3}{7200L}\fvarmin\, \nu_n  - 1\right) \ge \frac{C_8^3}{14400\sqrt{2L}}\fvarmin\sqrt{n}\nu_n.
\end{align*}
where the last inequality is by assumption $\fvarmin \ge 14400L/(C_8^3\nu_n)$.

\medskip
Finally, we put together the probabilities. From Lemma~\ref{lem:A1:event:prob} and \ref{lem:A2:event:prob}, 
\begin{align*}
    \pr(\Ac) \ge \pr(\Ac_1 \cap \Dc) = \pr(\Ac_1 ) - \pr(\Ac_1 \cap \Dc^c) \ge 1 - 3.2n^{-1}.  
\end{align*}
Furthermore, with Lemma \ref{lem:controlling:Rc}, \ref{lem:bound:Delt}, \ref{lem:Ec:bound} and \ref{lem:varpi}, 
\begin{align*}
    \pr(\Nc_n \cap \Ec \cap \Hc) &= \pr\Big(\Ac \cap \Bc \cap \Mc_n \cap \Rc \cap \Ec \cap \Hc\Big)\\
    &= \pr(\Ac \cap \Rc \cap \Mc_n) - \pr\Big(\Ac \cap \Rc \cap \Mc_n \cap (\Bc \cap \Ec \cap \Hc)^c\Big)\\
    &\ge \pr(\Ac \cap \Rc \cap \Mc_n)  - \pr(\Ac \cap \Bc^c) - \pr(\Ac \cap \Ec^c) - \pr(\Ac \cap \Rc \cap \Hc^c) \\
    &\ge 1- 3.2 n^{-1} - 4LKn^{-1} - \pr(\Mc_n^c) - 2Ln^{-1} - 2Ln^{-1} - Kn^{-c}\\
    &\ge 1- 12KLn^{-1} - Kn^{-c}- \pr(\Mc_n^c).
\end{align*}

\paragraph{Simplifying the assumptions}

The following is a list of all the assumptions we used in the proof:
\begin{align*}
    \renewcommand{\arraystretch}{1.5}
    \begin{array}[]{lll}
        n \ge 5/\tau_\Cc & 
        \frac{\log n}{n} \le \tau_c/300 &
        \frac{\log n}{n} < 
        (\frac{\taur}{4 K})^2
        \\
        \frac{\log n}n \le 0.04 C_8^2  &
        \frac{\log n}{\nu_n} \le 0.001 C_8 &
        \frac{\log n}{n} \le 
        (\frac{\taur^2}{4K})^2
        \\
        \frac{\log n}{\nu_n} \le 0.064 \tau_\Cc C_8 n &
        \alphan \le \tau_\Cc \taur C_8 /18 &
        \alphan \le \sqrt{(\log n) / \nu_n} \\
        \frac{\log n}{\nu_n} \le 0.16 C_8 &
        \eps_n = \frac{58}{\taur \tau_\Cc C_8} \sqrt{\frac{\log n}{\nu_n}} \le \taur /2 &
        \frac{\log n}{n} \le \min\{ 
            \frac{\taur^2}{4},
            \tau_\Cc\} \\
        \frac{C_8^2}{144} \fvarmin \ge 2\cdot 97 \taur^{-3} 
        \mu_n \eps_n
         &
        \frac{C_8^3}{1800} \fvarmin \ge 2 \cdot \frac{1.92}{\taur} \alphan &
        \epsnp = n\kappa_n/(0.16 C_8 \nu_n) \le \taur/4\\ 
        \frac{C_8^3}{7200 } \fvarmin  \ge \frac{2765}{C_8\taur^3}\kappa_n n /\nu_n &
        \fvarmin \ge 14400L/(C_8^3\nu_n) &
        \alpha_n \le 0.2 \tauc
    \end{array}
\end{align*}
We recall that
\[
    c_2 = \frac{C_8}{100} = 
    \frac{\tauc \tauh \taut^2}{100}, 
    \quad \taur = \frac{C_8}{2\taut L} = 
    \frac{50 c_2}{\taut L} = \frac{\tauc \tauh \taut}{2L}.
\]
The conditions on $\frac{\log n}n$ can summarized as follows:
\begin{align}
    \label{assume:init:logn:over:n}
    \sqrt{\frac{\log n}{n}} \le \min\left\{\sqrt{\frac{\tau_\Cc}{300}}, \
    \frac{\taur}{2}, \
    20 c_2, \
    \frac{\taur^2}{4K}
    \right\}. 
\end{align}
We also note that if $n \ge 2$, then $\frac{\log n}n \le \tau_{\Cc} / 300$ implies $n \ge 5 / \tau_\Cc$. Since $\taur^2 \le \taur$, we can drop 
$\frac{\taur}{2}$
from~\eqref{assume:init:logn:over:n}. Similarly, 
\begin{align*}
    \frac{\taur^2}{4K} /(20 c_2) = 
    \frac{\taur^2}{4K} \frac{50}{20 L\taur \taut} = 
    \frac{5}{8} \frac{\taur}{K L \taut} = 
    \frac{5}{16} \frac{\tauc \tauh}{KL^2} \le 1,
\end{align*}
hence we can also drop $20c_2$ from~\eqref{assume:init:logn:over:n}. Since $\taur^2/(4K) \ge \tau_\Cc^2 \tau_h^2 \tau_\theta^2 / (18KL^2) = \frac{2}{9}\frac{\taur^2}{K}$
and $\sqrt{\tau_\Cc/ 300} \ge \tau_\Cc^2 / 18$,  condition~\eqref{assume:init:logn:over:n} holds under assumption~\eqref{assume:final:logn:over:n}.

The condition $\eps_n \le \taur/2$ is
\[
    \frac{0.58}{\tau_\Cc c_2} \sqrt{\frac{\log n}{\nu_n}} \le \frac{\taur^2}2 = \frac{1250 c_2^2}{\tau_\theta^2 L^2}
\]  
which is equivalent to
\[
    \sqrt{\frac{\log n}{\nu_n}} \le \frac{1250}{0.58} \frac{\tauc c_2^3}{\taut^2L^2}    = 
    \frac{1250}{0.58} \frac{\tauc^2 \tauh c_2^3}{\tauc \tauh \taut^2L^2} =\frac{1250}{58} \frac{\tauc^2 \tauh c_2^3}{c_2L^2}.  
\]
The condition is satisfied if 
\begin{align}\label{eq:logn:nun}
    \sqrt{\frac{\log n}{\nu_n}}  \le 21 \frac{\tauc^2 \tauh c_2^2}{L^2}
\end{align}
which is what is assumed in~\eqref{assume:mis:rate:bound}.

\smallskip
The three upper bounds on $\frac{\log n}{\nu_n}$ can be combined into
\begin{align*}
    \frac{\log n}{\nu_n} \le c_2 \min \{ 0.1,\, 6.4 \tau_\Cc n \}
\end{align*}
Since $n \ge 5 / \tau_\Cc$, we have $6.4 \tau_\Cc n \ge 0.1$, hence it is enough that $\frac{\log n}{\nu_n} \le 0.1 c_2$.
Next, since $c_2 \le 0.01$, we have
\begin{align*}
    21 \frac{\tauc^2 \tauh c_2^2}{L^2} \le 21 \tauc^2 \tauh c_2^2 \le 21 c_2^2 \le 0.21  c_2 \le \sqrt{0.1 c_2}
\end{align*}
showing that~\eqref{eq:logn:nun} is already enough to guarantee this condition.
    
The three assumptions on $\alpha_n$ can be combined into
\begin{align*}
    \alpha_n \le \min\Big\{
        \frac{100}{36} \tauc^2 \tauh \taut c_2,
        \ \sqrt{\frac{\log n}
        {\nu_n}}
        \Big\}
\end{align*}
Since $c_2 \le 0.01 \taut$, 
we have
\begin{align*}
    21 \tauc^2 \tauh c_2^2 \le 0.21 \tauc^2 \tauh \taut c_2 
\end{align*}
showing that $\alphan \le \sqrt{\frac{\log n}{\nu_n}}$ together with~\eqref{eq:logn:nun} is enough to guarantee both upper bounds on $\alphan$.

The conditions involving $\fvarmin$ are implied by
\begin{align}\label{assume:initial:fvarmin:bound}
    \fvarmin \ge \max &\left\{ \frac{3 L^3 \mu_n \eps_n}{c_2^2\taur^3}, \
    \frac{0.007 L}{\taur c_2^3}\alphan, \ 
    \frac{L^3}{5\taur^3 c_2^4} \frac{\kappa_n n}{\nu_n},\ 
    \frac{0.0144 L}{c_2^3 \nu_n}
    \right\}
\end{align}
where we recall
\[
    \mu_n := \max\{1, L \sqrt{\nu_n / n}\}, \quad  \eps_n = \frac{0.58}{\taur \tau_\Cc c_2} \sqrt{\frac{\log n}{\nu_n}}.
\]
We have
\[
   \frac{3 L^3\mu_n \eps_n}{c_2^2\taur^3}  \le 
   \frac{2L^3}{\taur^4 \tau_\Cc c_2^3} \zeta_n \sqrt{\frac{\log n}{\nu_n}}
\]
Similarly, using $\taur, \tau_\Cc \le 1$,
\begin{align*}
    \frac{0.0144L}{c_2^3 \nu_n} \le  
    \frac{0.0144}{c_2^3} \zeta_n \sqrt{\frac{\log n}{\nu_n}}\le  \frac{2L^3}{\taur^4 \tau_\Cc c_2^3} \zeta_n \sqrt{\frac{\log n}{\nu_n}}
\end{align*}
and using assumption  $\alphan \le \sqrt{\frac{\log n}{\nu_n}}$
\begin{align*}
    \frac{0.007}{\taur c_2^3}\alphan \le \frac{0.007}{\taur c_2^3}\sqrt{\frac{\log n}{\nu_n}} \le \frac{2L^3}{\taur^4 \tau_\Cc c_2^3} \zeta_n \sqrt{\frac{\log n}{\nu_n}}.
\end{align*}
It follows that the assumption~\eqref{assume:final:fvarmin:bound} in the statement of theorem is enough to guarantee~\eqref{assume:initial:fvarmin:bound}.

Finally, condition $n\kappa_n/(16c_2 \nu_n) \le \taur/4$ is equivalent to what is stated in~\eqref{assume:mis:rate:bound}. The proof is complete.

\section{Proofs of Auxiliary Lemmas}\label{sec:aux:proofs}

\subsection{Lemmas in the proof of Theorem~\ref{thm:CCtest:null}}\label{sec:proof:lemmas:thm:CCtest:null}

\subsubsection{Lemmas in the proof of Proposition~\ref{prop:Sn:grouped}}
We first derive some useful relations between the moments and cumulants of a random variable that are used in the proofs of Lemma~\ref{lem:mult_mean_var} and \ref{lem:mom:growth}.
In particular, for Lemma \ref{lem:mom:growth}, we use the following observation: The central moments of sums of i.i.d. random variables grow ``slowly''. To develop an intuition for this observation, recall that
\begin{align}\label{eq:4th:mom:cumul}
	\mu_4(X) =  \kappa_4(X) + 3 \kappa_2^2(X)
\end{align}
where $X$ is any random variable, $\mu_r(X)$ is its $r$th order central moment, and $\kappa_r(X)$ is the corresponding $r$th order cumulant. Assume that $X$ can be written as a sum of i.i.d. variables $\{Y_1,\dots,Y_n\}$, that is, $X = \sum_{i=1}^n Y_i$. Cumulants are additive over independent sums, hence $\kappa_r(X) = \sum_{i=1}^n \kappa_r(Y_i) = n \kappa_r(Y_1)$. It follows that
\begin{align}\label{eq:4th:mom:cumul:v2}
	\mu_4(X) = n \kappa_4(Y_1) + 3 n^2 \kappa_2^2(Y_1) = O(n^2)
\end{align}
assuming $\kappa_r(Y_1) = O(1)$. In other words, $\mu_4(X)$ scales at half the rate of the worst-case scaling of the 4th power of a sum of $n$ deterministic terms (i.e., $O(n^2)$ instead of $O(n^4)$). 
By using $\kappa_4(Y_1) = \mu_4(Y_1) - 3 \kappa_2^2(Y_1)$ and $\kappa_2(Y_1) = \mu_2(Y_1)$, we can express the constants in~\eqref{eq:4th:mom:cumul:v2} in terms of the central moments of $Y_1$,
\begin{align}\label{eq:4th:mom:2}
\mu_4(X) = n \mu_4(Y_1) +3n (n - 1) \mu_2^2(Y_1) \sim 3  \mu_2^2(Y_1) n^2 .
\end{align}
A similar idea holds for higher-order central moments, an example of which is Lemma~\ref{lem:mom:growth}.

\begin{proof}[Proof of Lemma~\ref{lem:mult_mean_var}]
	For the expectation, we note that $\ex(X_{\ell} - d p_\ell)^2 = p_\ell ( 1-p_\ell)$, hence $\ex \psi(X_i, dp_\ell) = 1-p_\ell$ and the result follows since $\sum_\ell(1-p_\ell) = L-1$. We now turn to the variance.
	Let $\Xt  = X - d p = \sum_{i=1}^d \Ut_{i}$, where $\Ut_i = U_i -p$ and $U_i \sim \mult(1, p)$, independently. We have 
	\begin{equation*}
	d^2 \ex Y^2 = \sum_{\ell=1}^{L}\frac{\ex \Xt_{\ell}^4}{p_\ell^2} + 
		\sum_{\ell\ne \ell'}^{L}\frac{\ex \Xt_{\ell}^2\Xt_{\ell'}^2}{p_\ell p_{\ell'}}.
	\end{equation*}
	Noting that $\Xt_\ell = \sum_i \Ut_{i\ell}$, we obtain
	\begin{equation*}
	\ex ( \Xt_{\ell}^2\Xt_{\ell'}^2 ) = 
	\ex \Big(\sum_{i_1, i_2} \Ut_{i_1 \ell} \Ut_{i_2 \ell'} \Big)^2  = 
	\sum_{i_1, \,i_2, \, i_3, \,i_4} \ex[\Ut_{i_1\ell}\Ut_{i_2\ell}\Ut_{i_3\ell'}\Ut_{i_4\ell'}],
	\end{equation*}
	where all four indices running from 1 to $d$. 
	We can categorize the general term $\ex[\Ut_{i_1\ell}\Ut_{i_2\ell}\Ut_{i_3\ell'}\Ut_{i_4\ell'}]$ based on how many different values $i_1, i_2, i_3$ and $ i_4$ take.
	If $i_1, i_2, i_3$ and $ i_4$ take 3 or 4 different values, the term is zero by independence. 
	The remaining three cases are summarized below:
	\begin{align*}
	\ex[\Ut_{i_1\ell}\Ut_{i_2\ell}\Ut_{i_3\ell'}\Ut_{i_4\ell'}] = 
	\begin{cases}
	\ex[\Ut_{1\ell}^2] \cdot \ex[\Ut_{1\ell'}^2], & i_1 = i_2 \ne i_3 = i_4, \\
	\big(\ex[\Ut_{1\ell}\Ut_{1\ell'}] \big)^2, & i_1 = i_3 \ne i_2 = i_4 \;\; \\ & \quad \text{or}\;\; i_1 = i_4 \ne i_2 = i_3, \\
	\ex[\Ut_{1\ell}^2\Ut_{1\ell'}^2], & i_1 = i_3 = i_2 = i_4,
	\end{cases}
	\end{align*}
	which simplifies to
	\begin{align*}
		\ex[\Ut_{i_1\ell}\Ut_{i_2\ell}\Ut_{i_3\ell'}\Ut_{i_4\ell'}] = 
		\begin{cases}
		 p_\ell(1-p_\ell)p_{\ell'}(1-p_{\ell'}), & i_1 = i_2 \ne i_3 = i_4, \\
		 p_\ell^2 p_{\ell'}^2, & i_1 = i_3 \ne i_2 = i_4\;\;  \\ & \quad \text{or}\;\; i_1 = i_4 \ne i_2 = i_3, \\
		 p_{\ell}p_{\ell'}(p_{\ell}+p_{\ell'}- 3p_{\ell}p_{\ell'}), & i_1 = i_3 = i_2 = i_4.
		\end{cases}
	\end{align*}
	The first two cases follow easily from independence. $\ex[\Ut_{1\ell}^2]  = \var(U_{1\ell}) = p_\ell (1-p_\ell)$ and $\ex[\Ut_{1\ell}\Ut_{1\ell'}] = \cov(U_{1\ell}, U_{1,\ell'}) = - p_\ell p_{\ell'}$.
	The third case follows, after some algebra, from the following observation:
	\begin{equation*}
	(\Ut_{1 \ell}, \Ut_{1 \ell'}) =
	\begin{cases}
	(-p_{\ell}, -p_{\ell'}) & \text{w.p.}\ 1-(p_{\ell}+p_{\ell'})\\
	(-p_{\ell}, 1-p_{\ell'}) & \text{w.p.}\ p_{\ell'}\\
	(1-p_{\ell}, -p_{\ell'}) & \text{w.p.}\ p_{\ell}\\
	\end{cases}.
	\end{equation*}
	To sum up, for $\ell \ne \ell'$, we have
	\begin{align*}
		\ex[\Xt_{\ell}^2\Xt_{\ell'}^2] &= (d^2-d)p_{\ell}p_{\ell'}[(1-p_\ell)(1-p_{\ell'}) + 2 p_\ell p_{\ell'}] + dp_{\ell}p_{\ell'}(p_{\ell}+p_{\ell'}- 3p_{\ell}p_{\ell'})\\
		&= d p_{\ell}p_{\ell'} \big[ (d-1) + (2-d)(p_\ell+p_{\ell'}) + (3d-6)p_\ell p_{\ell'}) \big].
	\end{align*}
	Let  $\alpha := \sum_\ell p_\ell^2$.
	Using $\sum_{\ell \neq \ell'} p_{\ell} p_{\ell'} = 1 - \alpha$ and $\sum_{\ell\neq \ell'} p_\ell = \sum_{\ell\neq \ell'} p_{\ell'} = L-1$, we have
	\begin{align*}
		\frac1 d \sum_{\ell \neq \ell'}\frac{\ex[\Xt_{\ell}^2\Xt_{\ell'}^2]}{p_\ell p_{\ell'}} = 
		(d-1)(L^2-L) + 2(2-d)(L-1) + (3d-6)(1-\alpha),
	\end{align*}
	for $\ell \neq \ell'$. 	Next, we consider the case $\ell = \ell'$.
	Let $\kappa_n$ and $\mu_n$ denote $n$th order cumulants and central moments of  $\Ut_{1\ell}$. 
	By~\eqref{eq:4th:mom:2},
	\begin{align*}
	\ex[\Xt_\ell^4] 
	&= d\mu_4 + 3d(d-1)\mu_2^2\\
	&= d[p_\ell(1-p_\ell)^4 + p_\ell^4(1-p_\ell)]+ 3d(d-1)p_\ell^2(1-p_\ell)^2\\
	&= dp_{\ell}^2[1/p_\ell +(3d -7) +(12-6d)p_{\ell} + (3d-6)p_{\ell}^2].
	\end{align*}
	We obtain
	\begin{align*}
		\frac1d \sum_{\ell=1}^{L}\frac{\ex \Xt_{\ell}^4}{p_\ell^2} = \frac{L}{h(p)} + L(3d-7) + (12-6d) +(3d-6)\alpha.
	\end{align*}
	Putting the pieces together, we have
	\begin{align*}
		d \,\ex Y^2 = d(L^2 -1) + \frac{L}{h(p)} - L(L+2) + 2.
	\end{align*}
	Combining with $\var(Y) = \ex Y^2 - (L-1)^2$ and some algebra finishes the proof.	
\end{proof}

\begin{proof}[Proof of Lemma~\ref{lem:mom:growth}]
	Let $\{W_i'\}$ be an independent copy of $\{W_i\}$, and let $X_n' = \sum_{i=1}^n W_i'$. The function $x \mapsto |x|^3$ is convex on $\mathbb{R}$. Applying Jensen's inequality with respect to $X'$ and the Cauchy-Schwartz inequality in probability,
	\begin{align*}
	\ex\big|X_n^2 - \ex X_n^2\big|^3 \le \ex\big|X_n^2 - (X_n')^2\big|^3
	\le \big[\ex \big| X_n+X'_n\big|^6\big]^{1/2} \big[ \ex \big|X_n-X_n'\big|^6\big]^{1/2}.
	\end{align*}
	For a random variable $U$, write $\kappa_i(U)$ for its $i$th cumulant Then,
	\begin{align*}
	\kappa_i({X_n + X_n'}) &= \kappa_i({X_n}) + \kappa_i({X_n'}) = 2n\kappa_i(W_1),\\
	\kappa_i({X_n - X_n'}) &= \kappa_i({X_n}) + (-1)^{i}\kappa_i({X_n'}) = 2n\kappa_i(W_1) \cdot 1\{i \ \text{is even}\}.
	\end{align*}
	
	Recall that the 6th central moment $\mu_6$ of any random variable can be written in terms of its cumulants $\{\kappa_i\}$ as follows: $\mu_6 = \kappa_6 + 15 \kappa_4 \kappa_2 + 10 \kappa_3^2 + 15\kappa_2^3$. Writing $\kapt_i = \kappa_i(W_1)$, and applying this relation to $X_n+X_n'$ and $X_n - X_n'$, we have
	\begin{align*}
	\ex \big| X_n+X'_n\big|^6 &= \mu_6(X_n+X'_n\big)
	= 2n\kapt_6 + 60n^2\kapt_4\kapt_2 + 40n^2\kapt_3^2 + 120n^3\kapt_2^3, \\
	\ex \big| X_n-X'_n\big|^6 &= \mu_6(X_n - X_n') 
	= 2n\kapt_6 + 60n^2\kapt_4\kapt_2 + 120n^3\kapt_2^3.
	\end{align*}
	Let $C_{W_1} = 2|\kapt_6| + 60|\kapt_4|\kapt_2 + 40\kapt_3^2 + 120\kapt_2^3$. Then, $\ex \big| X_n \pm X'_n\big|^6 \le C_{W_1} n^3$ and the result follows.
	
	\medskip
	For the case of where $W_1 = \alpha(Z-p)$ where $Z \sim \ber(p)$, let $\kappa_i = \kappa_i(Z)$ and note that $\kapt_i = \alpha^i \kappa_i$. It follows that 
	\begin{align*}
		C_{W_1} = \alpha^6 \big(2|\kappa_6| + 60|\kappa_4|\kappa_2 + 40\kappa_3^2 + 120\kappa_2^3 \big).
	\end{align*}
	 Next, we have $\kappa_2 = p(1-p)$, $\kappa_3 = \kappa_2(1-2p)$, $\kappa_{4} =\kappa_{2}(1-6 \kappa_{2})$, $\kappa_{6} =\kappa_{2}\big(1-30 \kappa_{2}(1-4 \kappa_{2}) \big)$. We have $\kappa_2 \in [0,1/4]$, hence $\kappa_3 /\kappa_2 \in [-1,1]$, $\kappa_4 / \kappa_2 \in [-\frac12, 1]$ and $\kappa_6 / \kappa_2 \in [-\frac{7}{8}, 1]$. It follows that $|\kappa_r| \le \kappa_2 \le 1/4$ for all $r=3,4,6$. Then,
	$$C_{W_1} / \alpha^6 \le 2 \kappa_2 + 15 \kappa_2 + 10 \kappa_2 + 7.5 \kappa_2 = 34.5 \kappa_2$$ and the proof is complete.
\end{proof}

\begin{proof}[Proof of Lemma~\ref{lem:dkol:affine:trans}]
	We have 
	\begin{align*}
	d_K \big( T, Z ) \big) 
	&= \sup_{t \in \reals} | \pr( \beta S +\alpha \le t) - \Phi(t) | \\
	&= \sup_{t \in \reals} | \pr( S \le t) - \Phi(\beta t + \alpha) | \\
	&\le \sup_{t \in \reals} \big( | \pr( S \le t) - \Phi(t) | +   |\Phi(t) - \Phi(\beta t+\alpha) | \big) \\
	&= d_K \big(S, Z \big) + \sup_{t \in \reals} |\Phi(t) - \Phi(\beta t + \alpha) |.
	\end{align*}
	Then,
	\begin{align*}
	\big|\Phi(t) - \Phi(\beta t)\big| 
	&=  \Big|\int_{\beta t}^t \frac1{\sqrt{2\pi}} e^{-x^2/2}dx \Big| \\
	&\le | \beta t - t | \frac1{\sqrt{2\pi}} e^{-\min(t,\beta t)^2/2}
	= \frac1{\sqrt{2\pi}} |\beta - 1| \cdot |t| e^{-a t^2/2},
	\end{align*}
	where $a = \min(\beta^2,1)$.
	Note that $t \mapsto t e^{-a t^2/2}$ achieves its maximum of $1/\sqrt{a e}$ over $[0,\infty)$ at $t = 1/\sqrt{a}$. We also have $\big|\Phi(s) - \Phi(s+\alpha)\big| \le |\alpha| \sup_{\tilde s} \Phi'(\tilde s)   = \frac1{\sqrt{2\pi}}|\alpha|$. Putting the pieces together finishes the proof.
\end{proof}

\subsubsection{Lemmas in the proof of Proposition~\ref{prop:Thn:Tn}}
\begin{proof}[Proof of Lemma~\ref{lem:G:expansion}]
	We have
	$\sum_i d_i(x_i - y - v)^2 = \sum_i d_i(x_i-y)^2 - 2 v R + d_+ v^2$.
	Hence,
	\begin{align*}
	\sum_i d_i \psi(x_i, y+v) = \frac{\sum_i d_i(x_i-y)^2}{y+v} - \frac{2v}{y+v}R + \frac{v^2}{y+v} d_+.
	\end{align*}
	It follows, after some algebra, that
	\begin{align*}
	\sum_i d_i [\psi(x_i, y+v)  - \psi(x_i, y)] = -\frac{v}{y+v} \Big[ 
	\sum_i d_i \psi(x_i, y)
	+2 R - v d_+\Big].
	\end{align*}
	We obtain
	\begin{align*}
	|G(v) - G(0) | \le \frac{|v|}{|y+v|} \big[ G(0) + 2 |R| + |v| d_+ \big].
	\end{align*}
	Applying the inequality $|a|/|1+a| \le 2|a|$ which holds for any $|a| \le 1/2$, with $a = v / y$ finishes the proof.
\end{proof}


\begin{proof}[Proof of Lemma~\ref{lem:dkol:perturb}]
	Let $\Ac = \{ |\Th_n - T_n| \ge \delta T_n + \eps\}$ and $q = \pr(\Ac)$. For any $t \in \reals$, we have
	\begin{align*}
	\pr(\Th_n \le t) &\le \pr(\{\Th_n \le t\} \cap \Ac^c) + \pr(\Ac) \\
	&\le \pr((1-\delta)T_n -\eps \le t) +  q.
	\end{align*}
	Subtracting $\Phi(t) = \pr(Z \le t)$ from both sides, we get
	\begin{align*}
	\pr(\Th_n \le t) - \Phi(t) &\le \dkol\big((1-\delta)T_n -\eps,Z\big) + q \\
	&\le d_K(T_n, Z) + \frac{2 \delta}{\sqrt{2 \pi e}}  + \frac{\eps}{\sqrt{2\pi}} + q \\ &\le  d_K(T_n, Z) + \frac12 (\delta + \eps) + q,
	\end{align*}
	by Lemma~\ref{lem:dkol:affine:trans} and noting that $\min\{|1-\delta|,1\} \ge 1/2$ by assumption. Similarly, for any $s \in \reals$,
	\begin{align*}
	\pr(T_n \le s) &\le \pr(\{T_n \le s\} \cap \Ac^c) + \pr(\Ac) \\
	&\le \pr(\Th_n\le (1+\delta) s + \eps) +  q.
	\end{align*}
	Applying the change of variable $t = (1+\delta)s +\eps$, adding $\Phi$ and rearranging, we obtain
	\begin{align*}
	\Phi(t)-\pr(\Th_n\le t) 
	&\le \Phi(t) -\pr((1+\delta) T_n +\eps \le t)+  q,
	\end{align*}
	and the rest of the argument follows as in the previous case. Putting the pieces together finishes the proof. 
\end{proof}

\begin{proof}[Proof of Lemma~\ref{lem:bound:Delt}]
    Note that $d_{+}^{(k)} \Delh_{k\ell}$ is a centered $\bin(d_{+}^{(k)}, p_{k\ell})$ variable. Applying Proposition~\ref{prop:gen:tail:bound} (Section~\ref{app:tech}), we have
    \begin{align*}
    	\pr\Big(| \Delh_{k\ell} | \ge \sqrt{\frac{2 u}{d_{+}^{(k)}}} + \frac{u}{3d_{+}^{(k)}}\Big) \le 2e^{-u}.
    \end{align*}
    Then the result follows by using union bound when $u \le \min_k d_+^{(k)}$.
\end{proof}

\begin{proof}[Proof of Lemma~\ref{lem:control:Delh}]
	Fix $k$ and $\ell$ and consider $i \in \Gc_k$. Define
	\begin{align*}
		a := \frac{\sum_{i \in \Gh_k} X_{i\ell}}{\sum_{i \in \Gc_k} X_{i\ell} } - 1 = 
		\frac{ \sum_{i \in \Gh_k \setminus \Gc_k} X_{i\ell} 
			- \sum_{i \in \Gc_k \setminus \Gh_k} X_{i\ell} }{\sum_{i \in \Gc_k} X_{i\ell} }.
	\end{align*}
	On event $\Mc_n$, we have
	\begin{align*}
		\Big|\sum_{i \in \Gh_k \setminus \Gc_k} X_{i\ell} - \sum_{i \in \Gc_k \setminus \Gh_k} X_{i\ell}\Big| \le d_{\max} (|\Gh_k \setminus \Gc_k| + |\Gc_k \setminus \Gh_k|) \le d_{\max}(\alphan n).
	\end{align*}
	 Recall that we have $|X_{+\ell}^{(k)} - d_{+}^{(k)} p_{k\ell} | \le  \delta d_{+}^{(k)}$ on event $\Bc$. Furthermore, by assumption $\delta \le \pul/2$, we obtain 
	\begin{align*}
		X_{+\ell}^{(k)}  \ge d_+^{(k)}\bigl(  p_{k\ell} - \delta \bigr) \ge d_+^{(k)}\pul/2.
	\end{align*}
	It follows that
	\begin{align*}
		|a| \le \frac{2(\alphan n)d_{\max}}{d_+^{(k)}\pul} \le \frac{2 d_{\max}}{\omega_n \pul}\alphan = \frac{2 \alphan}{\tau_d \,\pul}.
	\end{align*}    
	Similarly, letting $b := (\sum_{i \in \Gh_k} d_i)/(\sum_{i \in \Gc_k} d_i) - 1$, we have
	\begin{align*}
	    |b| \le \frac{d_{\max}(\alphan n)}{d_+^{(k)}} \le \frac{d_{\max}}{\omega_n}\alphan =  \frac{\alphan}{\tau_d}.
	\end{align*}
	Then 
	\begin{align*}
		\ph_{k\ell} = \frac{\sum_{i \in \Gh_k} X_{i\ell}}{\sum_{i \in \Gh_k} d_i } =  
		\frac{(1+a)\sum_{i \in \Gc_k} X_{i\ell} }{(1+b)\sum_{i \in \Gc_k} d_{i}} 
		= \frac{1+a}{1+b} \cdot \pt_{k\ell}.
	\end{align*}
	By assumption~$\alpha_n \le \tau_d \pul /2$, we have $|a|\le1$ and $b\le 1/2$. Hence,
	\begin{align*}
		|\ph_{k\ell} - \pt_{k\ell}| = \frac{|a - b|}{|1+b|} \cdot \pt_{k\ell} \le \frac{|a| + |b|}{1-|b|} \cdot\pt_{k\ell} \le 2(|a| + |b|) \cdot \pt_{k\ell}.
	\end{align*}
	Note that $|a| + |b| = (2\pul^{-1} + 1)\frac{\alphan}{\tau_d} \le (3\alphan)/(\tau_d \pul)$. Then the result follows.
\end{proof}

\begin{proof}[Proof of Lemma~\ref{lem:simplex:diam}]
	Let $E = \{e_\ell, \ell \in [L]\}$ be the standard basis of $\reals^L$. Then, $E$ is the set of extreme points of $\psim_L$ and $\psim_L$ is the (closed) convex hull of $E$. The function $x \mapsto \norm{x-y}$ is a continuous convex function, hence achieves its maximum over $\psim_L$ at the set of extreme points. Then,
	\begin{align*}
		\max_{y \in \psim_L} \max_{x \in \psim_L} \norm{x-y} &= 
			\max_{y \in \psim_L} \max_{x \in E} \norm{x-y} = \max_{y \in E} \max_{x \in E} \norm{x-y} 
	\end{align*}
	where the last equality applies the same idea to the function $y \mapsto \norm{x-y}$. The result follows since $\norm{e_\ell - e_k} = \sqrt{2}$ for any $k \neq \ell$.
\end{proof}

\subsection{Lemmas in the proofs of Theorems \ref{thm:null:dist:ncac} and \ref{thm:consist}}
	The following proposition, controlling the tail probability of a randomly-selected Poisson sum, is used in the proof of Lemma~\ref{lem:A:event:prob}:
	\begin{prop}\label{prop:poi:bern:tail}
	Let $A_j \sim \poi(\lambda_j)$ and $U_j \sim \ber(1/2)$ for $ j=1,\dots,n$, and assume that $\{A_j, U_j, j=1,\dots,n\}$ are independent. Let $d = \sum_{j=1}^n A_j U_j$ and $\ds = \ex[d]$. Then,
	\begin{align*}
	\pr \big( |d - \ds| \ge \ds/2\big) \le 2 e^{- 0.008\, \ds}  + 4 e^{-0.03\, \ds/\lamax}
	\end{align*}
	where $\lamax = \max_j \lambda_j$. %
	\end{prop}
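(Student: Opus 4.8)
The plan is to condition on the Bernoulli selectors $U=(U_1,\dots,U_n)$. Since $A$ and $U$ are independent, given $U$ the $A_j$ are still independent $\poi(\lambda_j)$, and $A_jU_j$ equals $A_j$ when $U_j=1$ and $0$ otherwise; hence $d\mid U\sim\poi(\Lambda_U)$ with $\Lambda_U:=\sum_j\lambda_j U_j$ and $\ex[\Lambda_U]=\tfrac12\sum_j\lambda_j=\ds$. The idea is then a two-layer concentration: $\Lambda_U$ concentrates around $\ds$ (a bounded independent sum), and, conditionally, $d$ concentrates around $\Lambda_U$ (a Poisson). Concretely, I would fix the window $I:=[\tfrac34\ds,\tfrac54\ds]$ and split
\begin{align*}
\pr\big(|d-\ds|\ge\ds/2\big)\;\le\;\sup_{\mu\in I}\pr\big(|\poi(\mu)-\ds|\ge\ds/2\big)\;+\;\pr\big(\Lambda_U\notin I\big).
\end{align*}

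For the first term, fix $\mu\in I$, so $|\mu-\ds|\le\ds/4$; then $|\poi(\mu)-\ds|\ge\ds/2$ forces $\poi(\mu)$ to deviate from its own mean by at least $\ds/4$. Plugging $t=\ds/4$ into the standard Bernstein-type Poisson tails $\pr(\poi(\mu)\ge\mu+t)\le e^{-t^2/(2(\mu+t/3))}$ and $\pr(\poi(\mu)\le\mu-t)\le e^{-t^2/(2\mu)}$, together with $\mu\le\tfrac54\ds$, yields a bound of the form $2e^{-c_1\ds}$; carrying the constants through gives $c_1$ comfortably above $0.008$. For the second term, $\Lambda_U=\sum_j\lambda_j U_j$ is a sum of independent variables in $[0,\lamax]$ with $\sum_j\var(\lambda_j U_j)=\tfrac14\sum_j\lambda_j^2\le\tfrac14\lamax\sum_j\lambda_j=\tfrac12\lamax\ds$, so Bernstein's inequality with deviation $\ds/4$ gives $\pr(\Lambda_U\notin I)\le 2e^{-c_2\ds/\lamax}$ with $c_2$ comfortably above $0.03$. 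Summing the two bounds and loosening prefactors yields $2e^{-0.008\ds}+4e^{-0.03\ds/\lamax}$, as claimed.

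There is no genuine obstacle here; the only delicate point is the bookkeeping of constants, namely choosing the window half-width (I take $\ds/4$) so that the resulting Poisson deviation $\ds/2-\ds/4=\ds/4$ and the $\Lambda_U$ deviation $\ds/4$ are both large enough to produce the advertised exponents simultaneously, which is why the stated bound is written with slack. As an alternative that avoids conditioning, one can run a direct Chernoff bound on $d$ using $\ex[e^{td}]=\prod_j\big(\tfrac12+\tfrac12 e^{\lambda_j(e^t-1)}\big)$ and the elementary inequality $\tfrac12+\tfrac12 e^{x}\le\exp(x/2+x^2/8)$, giving $\log\ex[e^{td}]\le\ds(e^t-1)+\tfrac14\lamax\ds(e^t-1)^2$; optimizing $t$ (taking $t$ of order $1/\lamax$ when $\lamax$ is large and of order one otherwise) reproduces the same two-term bound.
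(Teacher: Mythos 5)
Your proposal is correct and follows essentially the same two-layer argument as the paper: condition on $U$ so that $d\mid U\sim\poi(\Lambda_U)$, control the Poisson fluctuation around its conditional mean, and control $\Lambda_U$ around $\ds$ via Bernstein's inequality for the bounded sum $\sum_j\lambda_j U_j$. The only differences are bookkeeping (you use an additive window $[\tfrac34\ds,\tfrac54\ds]$ where the paper uses multiplicative $0.2$-relative deviations composed as $(0.8)^2$ and $(1.2)^2$, and a Bennett-type Poisson tail where the paper uses $2e^{-\lambda t^2/4}$), and your constants indeed come out with room to spare ($\approx 0.023$ vs.\ $0.008$ and $\approx 0.054$ vs.\ $0.03$).
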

	\begin{proof}[Proof of Propoisition~\ref{prop:poi:bern:tail}]
	Let $\dt = \sum_j \lambda_j U_j$ and $\ds = \frac12 \sum_j \lambda_j$, so that $\ds = \ex[\dt]$. Conditioned on $U = (U_1,\dots,U_n)$, $d$ is a Poisson variable with mean $\dt$. If $X \sim \poi(\lambda)$, then for any $t \in (0,1]$, we have $\pr(|X - \lambda | \ge t \lambda) \le 2 \exp( - \lambda t^2 / 4)$; see Lemma~\ref{lem:poi:tail} (Section~\ref{app:tech}).
	Then,
	\begin{align*}
	\pr\big( |d - \dt| \ge 0.2 \dt \mid U \big) \le 2 \exp (-  0.01\dt).
	\end{align*}
	Next, we apply Proposition~\ref{prop:gen:tail:bound} (Section~\ref{app:tech}) to $\dt - \ds = \sum_j \lambda_j(U_j - 1/2)$. Since
	$|\lambda_j(U_j - 1/2)|\le \lamax$ and $\var(\dt - \ds) =\sum_j \lambda_j^2 /4 \le \lamax (\ds / 2)$, we have
	\begin{align*}
	\pr\big(\, |\dt - \ds| \ge \sqrt{\lamax \ds u} + \lamax u /3\, \big) \le 2 e^{-u}.
	\end{align*}
	Taking $u = 0.03 \ds / \lamax$, we obtain $\pr(|\dt - \ds| \ge 0.2 \ds ) \le 2 \exp(-0.03\ds/\lamax)$.
	
	\medskip
	Let $\Ac =  \{ |d - \dt| \ge 0.2 \dt\}$ and $\Bc = \{|\dt - \ds| \ge 0.2\ds\}$. Note that $\Bc$ is completely determined by $U$. On $\Ac^c \cap \Bc^c$, we have $(0.8)^2 \ds < d < (1.2)^2 \ds$, implying $|d - \ds| < \ds/2$. It follows that
	\begin{align*}
	\pr \big( |d - \ds| \ge \ds/2\big) \le \pr(\Ac \cup \Bc) \le \pr(\Ac) + \pr(\Bc).
	\end{align*}
	We have $\pr(\Ac) = \ex[\pr(\Ac \mid U) 1_{\Bc^c} + \pr(\Ac \mid U)  1_{\Bc}]$, hence
	\begin{align*}
	\pr(\Ac) &\le \ex[\pr(\Ac \mid U) 1_{\Bc^c} +   1_{\Bc}] \\
	&\le 2 \ex[e^{- 0.01 \dt} 1_{\Bc^c}] + \pr(\Bc) \\
	&\le 2 e^{- 0.008 \ds} \ex[ 1_{\Bc^c}] + \pr(\Bc)
	\end{align*}
	using $\dt \ge 0.8 \ds$ on $\Bc^c$. We further bound $\ex[ 1_{\Bc^c}] \le 1$. Putting the pieces together finishes the proof.
\end{proof}

\begin{proof}[Proof of Lemma~\ref{lem:A:event:prob}]
Recall that $d_i  = \sum_{j=1}^n A_{ij} U_j$ where $\{U_j = 1\{j \in S_1\}\}$ is an independent $ \ber(1/2)$ sequence, and $\ds_i = \ex[d_i]$. We also recall from~\eqref{eq:ds:lower:bound} that $\ds_i \ge \frac12 C_1 \nu_n$ for all $i \in [n]$. Fix $i \in [n]$.
	We apply Proposition~\ref{prop:poi:bern:tail} to $d_i$ with $\lambda_j = \ex[A_{ij}] = (\nu_n / n)\theta_i \theta_j B^{0}_{z_i z_j}$. Since $\infnorm{B^0} = 1$ and $\thetamax = 1$, we have $\max_j \lambda_j \le \nu_n / n$, and thus
	\begin{align*}
	\frac{\ds_i}{ \max_j \lambda_j} \ge \frac{C_1}{2} n \ge \frac{200}{3} \log n,
	\end{align*}
	where the first inequality is by~\eqref{eq:ds:lower:bound} and the second by the assumption that $\log n / n \le (3/400)C_1$. Proposition~\ref{prop:poi:bern:tail} gives
	\begin{align*}
	\pr \big( |d_i - \ds_i| \ge \ds_i/2\big) \le 2 e^{- 0.004 C_1 \nu_n /2}  + 4 e^{-2\log n} \le 6 n^{-2}
	\end{align*}
	since $0.004 C_1 \nu_n /2 \ge 2\log n$ by assumption. By union bound, 
	\begin{align}
	\pr\big( d_i \notin [\tfrac12 \ds_i, \tfrac32 \ds_i]\;\; \text{for some} \; i \in [n] \big) \le 6 n^{-1}.
	\label{eq:bound_d}
	\end{align}
	
	Furthermore, $\nt_k := |\Gc_k| = n_k - |\Cc_k \cap S_1| = n_k  - \sum_{i \in \Cc_k} U_i$ for all $k$. %
	Applying Proposition~\ref{prop:gen:tail:bound}  (Section~\ref{app:tech}) with $u= 0.01n_k$, we obtain
	\begin{equation*}
    \Big|	\frac{\nt_k}{n_k}  - \frac12 \Big|= \Big|\frac1{n_k}\sum_{i \in \Cc_k} U_i- \frac12 \Big| \ge \sqrt{0.01} + \frac{0.01}{3} \ge 0.1
	\end{equation*}
	with probability $\le 2e^{-0.01n_k}$.
	By union bound 
	\begin{align}
	\pr\big(\nt_k \notin[0.4 n_k, 0.6n_k]\;\;\text{for some} \;k \in [K_0] \big) 
	&\le 2\sum_{k=1}^{K_0}e^{-0.01n_k}\\ 
	&\le 2K_0e^{-0.01\tau_{\Cc}n} \le n^{-1}.
	\label{eq:bound_n}
	\end{align}
	The last inequality is implied by $0.01 \tau_\Cc n \ge\log(n^3) \ge \log (2K_0 n)$ and it holds under  the assumption $\log n / n \le \tauc/300$.
	The result follows by combining \eqref{eq:bound_d} and \eqref{eq:bound_n}.
\end{proof}

\subsubsection{Lemmas in the proof of Theorem~\ref{thm:null:dist:ncac}}

\begin{proof}[Proof of Lemma~\ref{lem:dkol:cond}]
	Let $U_t := \pr(Y \le t\mid \Fc)$ and set $U = (U_t, t \in \reals)$ and $b_t = \pr(Z \le t)$. The function $f(U) = \sup_{t \in \reals} |U_t - b_t|$ is convex, hence by Jensen's inequality
	\begin{align*}
	\dkol(Y,Z) = f(\ex U) \le \ex f(U) = \ex \big[ \dkol\big(\law(Y \mid \Fc), Z\big)\big].
	\end{align*}
	Next, letting $Y' := Y 1_\Bc$, we have 
	\begin{align*}
	\pr(Y' \le t) &\le \pr(\{Y' \le t\} \cap \Bc) + \pr(\Bc^c)  \\
	&= \pr(\{Y \le t\} \cap \Bc) + \pr(\Bc^c) \le \pr(Y \le t) +  \pr(\Bc^c)
	\end{align*}
	and
	\begin{align*}
	\pr(Y' \le t) &\ge \pr(\{Y' \le t\} \cap \Bc) = \pr(\{Y \le t\} \cap \Bc) \ge \pr(Y \le t) -  \pr(\Bc^c).
	\end{align*}
	It follows that $|\pr(Y' \le t) - \pr(Y \le t)| \le \pr(\Bc^c)$ for all $t \in \reals$. An application of the triangle inequality gives $|\dkol(Y,Z) - \dkol(Y',Z) | \le \pr(\Bc^c)$ finishing the proof.
\end{proof}

\subsubsection{Lemmas in the proof of Theorem~\ref{thm:consist}}

\begin{proof}[Proof of Lemma~\ref{lem:E:event:prob}]
	Recall that $\Tch_r \subset \Cc_r \cap S_2 = \Gc_r$ and for any $i \in \Gc_r$, we have $d_i \xi_{i\ell} \sim \bin(d_i,q_{r\ell})$, conditioned on $\Fc$. Thus, we can write $d_i (\xi_{i\ell} - q_{r\ell}) = \sum_{j=1}^{d_i} Z_j$ where $Z_j$ are centered Bernoulli variables with parameter $q_{r\ell}$. Applying Proposition~\ref{prop:gen:tail:bound} (Appendix~\ref{app:tech}),
	we have
	\begin{align*}
	\pr^\Fc \Big( |\sum_j Z_j | \ge \sqrt{2 v u} + \frac{u}{3}\Big) \le2 e^{-u}, \quad u \ge 0,
	\end{align*}
	where $v = \sum_j \var(Z_j)$. Since, $v = d_i q_{r\ell}(1-q_{r\ell}) \le  d_i/4$, taking $u = 2 \log n$, we have
	\begin{align*}
	\pr^{\Fc} \Big(  |\xi_{i\ell} - q_{r\ell} | \ge \sqrt{\frac{\log n}{d_i}} + \frac{2\log n}{3d_i}\Big) \le2 n^{-2}.
	\end{align*}
	On event $\Ac$, we have $d_i \ge \ds_i/ 2 \ge C_1 \nu_n /4$ for all $i$, by~\eqref{eq:ds:lower:bound}. By assumption, $4\log n \le C_1 \nu_n$, hence on $\Ac$,
	\begin{align*}
	\sqrt{\frac{\log n}{d_i}} + \frac{2\log n}{3d_i} %
	\le  4 \sqrt{\frac{\log n}{C_1 \nu_n}} = \eps_n.
	\end{align*}
	We have
	\begin{align*}
		\Ec^c = \Bigl\{ \max_{r, \,\ell}\max_{i \in \Tch_r } |\xi_{i\ell} - q_{r\ell} | \ge \eps_n \Bigr\} 
		&\subset \Bigl\{ \max_{r, \,\ell}\max_{i \in \Gc_r } |\xi_{i\ell} - q_{r\ell} | \ge \eps_n \Bigr\}.
	\end{align*}
	Using  
	$|\bigcup_r \Cc_{h_r}| = n$
	 and the union bound, we obtain $\pr^{\Fc} ( \Ec^c \cap \Ac ) \le 2 (nL) \cdot n^{-2} = 2 Ln^{-1}$. The lemma follows by taking the expectation of both sides and using the smoothing property of conditional expectation.
\end{proof}

    Lemma~\ref{lem:psi:dev:1} follows from the following more refined result:
\begin{lem}\label{lem:psi:dev:2}
	Let $\psi(x,y) = (x-y)^2/y$.
	For all $(x,y)$ and $(x',y')$ in $[0,1] \times [1/c_1,1]$, where $c_1 > 1$, we have
	\begin{align}
	\big|\psi(x',y') - \psi(x,y)\big| \le c_2 |x-y| \cdot \norm{\delta} + c_3 \norm{\delta}^2
	\end{align}
	where $\delta = (x-x',y-y')$, $c_2 = c_1 \sqrt{4 + (1+c_1)^2}$ and $c_3 = 4c_1^3$. 
\end{lem}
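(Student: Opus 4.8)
The plan is to prove Lemma~\ref{lem:psi:dev:2} by integrating $\grad\psi$ along the straight segment joining $(x,y)$ and $(x',y')$, after first establishing a sharp pointwise bound on $\grad\psi$ over the rectangle $D:=[0,1]\times[1/c_1,1]$. First I would rewrite $\psi(x,y)=(x-y)^2/y$ and compute
\[
	\grad\psi(x,y)=\Big(\tfrac{2(x-y)}{y},\;-\tfrac{2(x-y)}{y}-\tfrac{(x-y)^2}{y^2}\Big)=(x-y)\Big(\tfrac{2}{y},\;-\tfrac{x+y}{y^2}\Big),
\]
the crucial observation being that the factor $x-y$ comes out of \emph{both} partial derivatives. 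Hence $\norm{\grad\psi(x,y)}^2=(x-y)^2\big(\tfrac{4}{y^2}+\tfrac{(x+y)^2}{y^4}\big)$, and on $D$ we have $1/y\le c_1$ and $x/y\le c_1$, so $\tfrac{4}{y^2}\le 4c_1^2$ and $\tfrac{x+y}{y^2}=\tfrac{x}{y}\cdot\tfrac1y+\tfrac1y\le c_1^2+c_1$. Combining, $\norm{\grad\psi(x,y)}\le c_1\sqrt{4+(1+c_1)^2}\,|x-y|=c_2\,|x-y|$ uniformly on $D$.

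Next, since $D$ is convex, the segment $p_t:=(x,y)+t\,(x'-x,\,y'-y)$, $t\in[0,1]$, stays in $D$, where $\psi$ is $C^1$ (as $y\ge 1/c_1>0$ there), so the fundamental theorem of calculus gives $\psi(x',y')-\psi(x,y)=\int_0^1\grad\psi(p_t)^\top(x'-x,y'-y)\,dt$. Using the gradient bound and Cauchy--Schwarz,
\[
	\big|\psi(x',y')-\psi(x,y)\big|\le \norm{\delta}\int_0^1\norm{\grad\psi(p_t)}\,dt\le c_2\,\norm{\delta}\int_0^1\big|(p_t)_1-(p_t)_2\big|\,dt .
\]
Since $(p_t)_1-(p_t)_2=(x-y)+t(\delta_2-\delta_1)$, we have $|(p_t)_1-(p_t)_2|\le |x-y|+t(|\delta_1|+|\delta_2|)\le |x-y|+t\sqrt2\,\norm{\delta}$, and integrating in $t$ yields $\big|\psi(x',y')-\psi(x,y)\big|\le c_2\,|x-y|\,\norm{\delta}+\tfrac{c_2}{\sqrt2}\,\norm{\delta}^2$. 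It then remains only to check $c_2/\sqrt2\le 4c_1^3=c_3$, which is immediate from $c_1>1$: indeed $1+c_1<2c_1$ gives $4+(1+c_1)^2<8c_1^2$, hence $c_2<2\sqrt2\,c_1^2$ and $c_2/\sqrt2<2c_1^2\le 2c_1^3<c_3$. Finally, Lemma~\ref{lem:psi:dev:1} follows by plugging in $|x-y|\le 1$, $\norm{\delta}\le\sqrt2\,\eps\le\sqrt2$ and absorbing constants into $12c_1^3$.

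The only step requiring genuine care is the gradient estimate: one must retain the factor $x-y$ outside the norm — a naive uniform bound $\norm{\grad\psi}\le\text{const}$ on $D$ would produce a term of order $\norm{\delta}$ rather than $|x-y|\,\norm{\delta}$, which is the whole point of the refinement — and one must exploit that $x/y$ (not merely $x$) is bounded by $c_1$ on $D$ to land exactly the stated constant $c_2$. Everything else — the convexity of $D$ ensuring the segment remains where the gradient bound is valid, the integration, and the final comparison of constants — is routine.
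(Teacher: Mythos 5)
Your proof is correct, but it takes a genuinely different route from the paper's. The paper proves Lemma~\ref{lem:psi:dev:2} by a second-order Taylor expansion with Lagrange remainder: it bounds the first-order term $\ip{\grad\psi(x,y),\delta}$ exactly as you bound the gradient (using $|x/y-1|\le c_1|x-y|$ and $x/y\le c_1$, which is where $c_2$ comes from), and it obtains the quadratic term by computing the Hessian, observing it is positive semidefinite of rank one with nonzero eigenvalue $2(\tilde x^2+\tilde y^2)/\tilde y^3\le 4c_1^3$ at any intermediate point, whence the $c_3\norm{\delta}^2$ term. You instead avoid the Hessian entirely: you use the integral form of the mean value theorem along the segment, keep the factor $x-y$ inside the gradient bound pointwise along the path, and recover the quadratic term from the linear drift of the coordinate difference $(p_t)_1-(p_t)_2$, which after integration yields the coefficient $c_2/\sqrt{2}<2c_1^2$ --- in fact a sharper quadratic constant than the paper's $4c_1^3$, which you then simply verify is dominated by the stated $c_3$. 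Both arguments rely on the same two structural facts (the common factor $x-y$ in both partials of $\psi$, and convexity of the domain $[0,1]\times[1/c_1,1]$ keeping the intermediate points where the bounds hold), so the approaches are close in spirit; yours trades the Hessian computation for a path integral and buys a marginally better constant, while the paper's Taylor decomposition isolates the linear and quadratic contributions more transparently. All your individual estimates check out, including the final comparison $c_2/\sqrt{2}<4c_1^3$ via $4+(1+c_1)^2<8c_1^2$ for $c_1>1$.
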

Assuming that $|x-x'|\le \eps$ and $|y-y'| \le \eps$, so that $\norm{\delta}\le \sqrt2 \eps$, and using $|x-y|\le 1$,  
\begin{align}
\big|\psi(x',y') - \psi(x,y)\big| \le \sqrt{2}c_2 \eps + 2 c_3 \eps^2 \le c_4 \max(\eps, \eps^2)
\end{align}
where $c_4 = \sqrt 2 c_2 + 2 c_3$. Since $c_2 \le \sqrt{8} c_1^2$, we have $c_4 \le 12 c_1^3$ and Lemma~\ref{lem:psi:dev:1} follows.
\begin{proof}[Proof of Lemma~\ref{lem:psi:dev:2}]
	The function $\psi$ is continuously differentiable of all orders, on $\reals \times \reals_{++}$, with the gradient and Hessian given by
	\begin{align*}
	\grad \psi(x,y) = \big(x/y - 1\big)\begin{bmatrix}
	2 \\ - (1+x/y)
	\end{bmatrix}, 
	\quad 
	\hess \psi(x,y) = (2/y)
	\begin{bmatrix}
	1 & -x/y \\ -x/y & x^2/y^2
	\end{bmatrix}.
	\end{align*}
	The Hessian has eigenvalues $0$ and $2(x^2+y^2)/y^3$. 	%
	By Taylor expansion,
	\begin{align*}
	\psi(x',y') - \psi(x,y) = \ip{\grad \psi(x,y), \delta} + \frac12 \ip{\delta, \hess \psi(\tilde x, \tilde y), \delta}
	\end{align*}
	where $(\tilde x, \tilde y)$ is a point between $(x,y)$ and $(x',y')$. Since $0 \preceq \hess \psi(\tilde x, \tilde y) \preceq 2(\tilde x^2+ \tilde y^2)/\tilde y^3 I_2$ and $\tilde y \ge \min\{y,y'\} \ge 1/c_1$, we obtain
	\begin{align*}
	\big|\ip{\delta, \hess \psi(\tilde x, \tilde y), \delta}\big| \le \frac{2(\tilde x^2+ \tilde y^2)}{\tilde y^3}\norm{\delta}^2 \le 4 c_1^3 \norm{\delta}^2.
	\end{align*}
	We also have
	\begin{align*}
	\big|\ip{\grad \psi(x,y), \delta}\big| & \le |x/y - 1| \sqrt{4 + (1+x/y)^2} \norm{\delta} \le c_2 \norm{\delta}
	\end{align*}
	using the assumption on the ranges of $x$ and $y$. The result follows.
\end{proof}

\subsection{Lemmas in the Proof of Theorem \ref{thm:consist:others}}

\begin{proof}[Proof of Lemma~\ref{lem:controlling:Rc} ]
    For any $x \in \reals^d$, let
\[
    W_\ell(x) := \sum_{j \in S_1} \theta_j g(x,x_j) 1\{y_j = \ell\}.
\]
From the definition of $q_{i\ell}$ in~\eqref{eq:qil:def}, we have
\[
    q_{i\ell} = \frac{\nu_n}{n} \theta_i W_\ell(x_i).
\]
To control $q_{i\ell}$, it is enough to control $W_\ell(x_i)$.
Recall that 
$\Fc_1 = \Fc_0 \vee \sigma(x_{S_2}) = \sigma(S_1, x_{S_2})$.
Note that on $\Fc_1$, both $S_1$ and $S_2$ are fixed.
Then, for $i \in S_2$ and $j \in S_1$, we have
\begin{align*}
    \ex[g(x_i, x_j) \given \Fc_1] &= \ex[g(x_i, x_j) \given x_{S_2}, S_2]  \\
    &=\ex[g(x_i, x_j) \given  x_i]  \\ &= h_{z_j}(x_i)  
\end{align*}
where we have used the independence of $x_i$ and $x_j$. It follows that 
\begin{align}
    \ex[W_\ell(x_i) \given \Fc_1] &=  \sum_{j \in S_1} \theta_j h_{z_j}(x_i) 1\{y_j = \ell\} \notag \\
    &= \sum_{k = 1}^K h_k(x_i) \Rt_{k\ell}
    \label{eq:ex:W:cond:F1}
\end{align}
where $\Rt_{k\ell} := \sum_{j \in S_1}\theta_j 1\{z_j = k, \, y_j =\ell\}$. Furthermore, let $\mt_\ell := \sum_{k=1}^{K} \Rt_{k\ell} = \sum_{j \in S_1} \theta_j 1\{y_j = \ell\}$. 
The next lemma shows that 
$W_\ell(x_i)$ concentrates near its conditional mean.
\begin{lem}\label{lem:McDiarmid:W}
Assume that $\max_{j \in S_1} \theta_j \le 1$ and $g(\cdot,\cdot)$ is bounded above by $1$. Then, for any fixed $x \in \reals^d$, with $\Fc_1$-probability at least $1-2e^{-t}$, 
\begin{align}
     |W_\ell(x) - \ex[W_{\ell}(x) \given \Fc_1]| \le \sqrt{\mt_\ell t / 2}
\end{align}
\end{lem}
\begin{proof}[Proof of Lemma~\ref{lem:McDiarmid:W}]
Conditional on $\Fc_1$, $S_1$ is fixed. We note that $W_\ell(x) = F(x_{S_1})$ where $F(\cdot)$ is a function with the bounded difference property, that is, if $x_{S_1}$ and $x'_{S_1}$ differ only in their $j$th coordinate, then 
$|F(x_{S_1}) - F(x'_{S_1})| \le \theta_j 1\{y_j = \ell\}$ 
since the range of $g$ is in $[0,1]$. By the McDiarmid's inequality, with $\Fc_1$-probability at least $1 - 2 e^{-2u^2/L^2}$, we have $|W_\ell(x) - \ex[W_{\ell}(x)\given \Fc_1]| \le u$, where 
$L^2 :=  \sum_{j\in S_1}\theta_j^2 1\{y_j = \ell\} \le \mt_\ell$. Taking $u^2 = t L^2/2 \le t \mt_\ell/2$ finishes the proof.
\end{proof}

Applying the union bound over $(i,\ell) \in S_2 \times [L]$, we have with $\Fc_1$-conditional probability at least $1- 2 n L e^{-t}$,
\begin{align*}
    \Big|W_\ell(x_i) - \sum_{k = 1}^K h_k(x_i) \Rt_{k\ell}\Big| \le \sqrt{\mt_\ell} t/2, \quad \forall i \in S_2, \; \ell \in [L] .
\end{align*}
where we have used $x_{S_2}$ being fixed given $\Fc_1$. Taking $t = 2 \log n$ and noting that $\mt_\ell \le n$, we can integrate out the conditional probability to get
\begin{align} \label{eq:W:ell:i:concent}
    \pr\Big(\Big|W_\ell(x_i) - \sum_{k = 1}^K h_k(x_i) \Rt_{k\ell} \Big| \le \sqrt{n\log n}, \quad \forall i \in S_2, \; \ell \in [L] \Big) \ge 1- 2Ln^{-1}
\end{align}

We can write $\Rt_{k\ell} = \sum_{j =1}^n \theta_j U_j 1\{z_j = k, \, y_j = \ell\}$, for some i.i.d. $\text{Ber}(1/2)$ sequence $\{U_j\}_{j=1}^n$. 
Recalling the definition of $R_{k\ell}$ from~\eqref{eq:H:def}, we have 
\[
\ex[\Rt_{k\ell}] = \frac12 \sum_{j =1}^n 
\theta_i 1\{z_j = k, \, y_j = \ell\} = 
R_{k\ell}.
\]
 Applying Proposition~\ref{prop:gen:tail:bound} with $v  = n/4 \ge \var(\Rt_{k\ell})$ and $u = \log n$, we have 
\begin{align*}
    \pr\Big(|\Rt_{k\ell} - R_{k\ell}| \ge \sqrt{\frac{n \log n}{2}} + \frac{\log n}{3} \Big) \le 2n^{-1}.
\end{align*}
By union bound, with probability at least $1 - 2LKn^{-1}$,
\begin{align} \label{eq:R:bound}
    |\Rt_{k\ell} - R_{k\ell}| \le \sqrt{n\log n}, \quad \forall k \in [K], \; \forall \ell \in [L].
\end{align}

Let $\Delta_{i\ell} =  W_\ell(x_i) -  \sum_{k = 1}^K h_k(x_i) R_{k\ell}$, and consider the event,
\begin{align*}
    \Wc = \Big\{|\Delta_{i\ell}| \le 2K\sqrt{n \log n}, \quad \forall i \in S_2, \; \ell \in [L] \Big\}.
\end{align*}
Combining~\eqref{eq:W:ell:i:concent} and~\eqref{eq:R:bound}, using $h_k(x)\le 1$, the triangle inequality, and $K+1 \le 2K$, we have $\pr(\Wc^c) \le 4KL n^{-1}$.

Next we note that 
\begin{align*}
    \rho_{i\ell} = 
    \frac{W_{\ell}(x_i)}{ \sum_\ell' W_{\ell'}(x_i)} =  
    \frac{\sum_k h_k(x_i) R_{k\ell} +  \Delta_{i\ell}}{\sum_{\ell'} \bigl(\sum_k h_k(x_i) R_{k\ell'} + \Delta_{i\ell'} \bigr)}.
\end{align*}
Furthermore, on $\Gamma$,
\begin{align*}
    \sum_{\ell'} \sum_k h_k(x_i) R_{k\ell'} &= \frac12 \sum_k h_k(x_i) \sum_{j \in S_1} \theta_j 1\{z_j = k\} \\
    & \ge \frac12\tau_\theta h_{r_{z_i}}(x_i) n_{r_{z_i}} \\
    &\ge \frac12\tau_\theta \tau_\Cc \tau_h n =  \tau_\rho Ln
\end{align*}
where we have used~\eqref{eq:r:k:def} and the definition of $\taur$ in~\eqref{eq:tau:rho:def}.
By the assumption that $\tau_\rho L n > 4K L \sqrt{n \log n}$, on event $ \Gamma \cap \Wc$, applying Lemma \ref{lem:norm:diff} below, we have for all $i \in S_2$ and $\ell \in [L]$,
\begin{align*}
    \Bigl| \rho_{i\ell} - \frac{\sum_k h_k(x_i) R_{k\ell}}{\sum_{\ell'} \sum_k h_k(x_i) R_{k\ell'} } \Bigr| \le \frac{ 4K \sqrt{n \log n }}{\tau_\rho n} = \frac{4K}{\tau_\rho} \sqrt{\frac{\log n}{n}}
\end{align*}
which is the event $\Rc$. That is, we have shown $\Rc \supseteq \Gamma \cap \Wc$, and the claim follows.

\begin{lem}\label{lem:norm:diff}
	For $a = (a_\ell) \in \reals_{+}^L \setminus \{0\}$, let $a_+ = \sum_{\ell=1}^L a_\ell$ and consider the function $U(a) = a_1 / a_+$.
     Let $\delta \in \reals^L$ and  $\infnorm{\delta} = \max_{\ell} |\delta_\ell|$. If $a_+ > L\infnorm{\delta}$, then 
    \begin{align*}
    	|U(a + \delta) - U(a)| \le  \frac{(L-1)\infnorm{\delta}.}{a_+ - L\infnorm{\delta}} 
    \end{align*}    
	In particular, $	|U(a + \delta) - U(a)| \le (2L/a_+) \infnorm{\delta}$ if $a_+ > 2L\infnorm{\delta}$.
\end{lem}
\begin{proof}[Proof of Lemma~\ref{lem:norm:diff}]
	The gradient of $U$ at $c \in \reals_{++}^L$ is given by 
	\[
	\nabla U(c) = \frac{1}{c_+^2} (c_+ - c_1, -c_1,\dots,-c_1).
	\]
	For $a, a+\delta \in  \reals_{+}^L$, there exist $c$ in the line-segment connecting $a$ and $a+\delta$ such that
	$
	U(a+\delta) - U(a) = \ip{\nabla U(c), \delta}.
	$
	From H\"{o}lder's inequality, we have
	\begin{align*}
		|U(a+\delta) - U(a)| \le \norm{\nabla U(c)}_1 \infnorm{\delta}
	\end{align*}
	where
	\[
	\norm{\nabla U(c)}_1 = \frac{1}{c_+^2} \bigl( c_+ - c_1 + (L-1)c_1\bigr) \le \frac{L-1}{c_+}.
	\]
	Noting that $c_+ \ge a_+ - L\infnorm{\delta}$ finishes the proof.
\end{proof}
\end{proof}

\begin{proof}[Proof of Lemma~\ref{lem:A2:event:prob}]
    For $x \in \reals^d$, let $V(x) := \sum_{j \in S_1} \theta_j g(x, x_j)$.
Recall that $d_i = \sum_{j \in S_1} A_{ij}$ and for $i \in S_2$, consider
\begin{align*}
    \dt_i := \ex\big[d_i\given\Fc_2\big] = 
    \sum_{j \in S_1} p_{ij} = \theta_i \frac{\nu_n}{n} V(x_i). 
\end{align*}
We refer to~\eqref{eq:Fc:def2} for the defintion of $\Fc_1, \Fc_2$, etc. Note that $\Fc_1 \subseteq \Fc_2$.
Let $m = \sum_{j \in S_1} \theta_j$. 
Applying the  same idea as in Lemma~\ref{lem:McDiarmid:W}, we have with $\Fc_1$-conditional probability at least $1 - 2e^{-t}$,
\begin{align*}
    \bigl|V(x) - \ex[V(x) \given \Fc_1]\bigr| \le \sqrt{t m /2} \le \sqrt{t n /2}
\end{align*}
where the second inequality uses $\theta_i \le 1$ and $|S_1| \le n$.
Since conditional on $\Fc_1$, $x_i, i \in S_2$ are fixed, it follows that $\Fc_1$-conditional probability at least $1-2ne^{-t}$,
\begin{align*}
    \bigl|V(x_i) - \ex\big[V(x_i) \given \Fc_1\big]\bigr| \le \sqrt{t n /2}, \quad \forall i \in S_2,
\end{align*}
from which we get, multiplying both sides by $\theta_i \nu_n / n$ and using $\theta_i \le 1$,
\begin{align}\label{eq:dti:concent}
    \bigl| \dt_i - \ex[\dt_i \given \Fc_1] \bigr| \le \nu_n \sqrt{t / (2n)}, \quad \forall i \in S_2.
\end{align}
Consider the event
\begin{align}\label{eq:dt:bound}
    \Dc_1 = \Bigl\{\bigl|\dt_i - \ex\big[\dt_i \given \Fc_1\big]\bigr| 
        \le  \nu_n \sqrt{\frac{\log n}{n}}, \ \forall\,  i \in S_2\Bigr\}.
\end{align}
Taking $t = 2\log n$ in~\eqref{eq:dt:bound}, we obtain $\pr(\Dc_1^c \given \Fc_1) \le n^{-1}$, hence $\pr(\Dc_1^{c}) \le n^{-1}$ by taking the expecation of both sides. 

\medskip
Now let us control $\ex\big[\dt_i \given \Fc_1\big]$.
For $i \in S_2$, we have
\[
    \ex[V(x_i) \given \Fc_1] = \ex[V(x_i) \given x_i] =\sum_{j \in S_1} \theta_j h_{z_j}(x_i) =
        \sum_{r} \sum_{j \in S_1 \,\cap \,\Cc_r}  \theta_j h_{r}(x_i)
\] 
On $\Gamma$, by~\eqref{eq:r:k:def}, we have $h_{r_{z_i}}(x_i) \ge \tau_h$ for all $i \in [n]$.
This gives, 
\begin{align*}
    \tau_\theta \tau_h |S_1 \cap \Cc_{r_{z_i}} | \;\le\;
     \ex[V(x_i) \given \Fc_1]    \; \le\; |S_1|
\end{align*}
where we have also used $0 \le h_k(\cdot) \le 1$ and $\tau_\theta \le \theta_j \le 1$. On $\Ac_1$, we have 
$|S_1 \cap \Cc_{r_{z_i}}| \ge 0.4 n_{r_{z_i}} \ge 0.4 \tau_{\Cc} n$ and $|S_1| \le 0.6n$. It follows that on $\Gamma \cap \Ac_1$,
\begin{align*}
    0.4 \tau_\theta^2 \tau_h \tau_\Cc  \nu_n \;\le\;
     \ex[\dt_i \given \Fc_1] \;\le\; 0.6\nu_n 
\end{align*}
for all $i \in S_2$. Recall that $C_8 = \tau_\theta^2 \tau_h \tau_\Cc$. Since by assumption $\sqrt{(\log n) / n} \le 0.2 C_8 \le 0.2$, it follows that on $\Gamma \cap \Ac_1 \cap \Dc_1$, we have
\begin{align}\label{eq:dti:two:sided}
    \dt_i / \nu_n   \in [0.2 C_8, 0.8], \quad \forall i \in S_2.
\end{align}
Next we show that $d_i$ has the same growth rate as $\dt_i$. We have
$
    d_i \given \Fc_2 \sim \poi(\dt_i)
$ for  all $i \in S_2$.
Consider the event
\begin{align}
    \Dc_2 := \{ |d_i - \dt_i| \le 0.2\dt_i,\; \forall i \in S_2\}.
\end{align}
Applying Lemma~\ref{lem:poi:tail}, we have 
$
    \pr(\Dc_2^c \given \Fc_2) \le 2\sum_{i\in S_2}\exp(-0.01\dt_i), 
$
hence
\begin{align*}
    \pr(\Dc_2^c \cap \Ac_1 \cap \Dc_1) &= 
    \ex\bigl[ \pr(\Dc_2^c \cap \Ac_1 \cap \Dc_1 \given \Fc_2) \bigr] \\
    &= \ex[ \pr(\Dc_2^c \given \Fc_2 )1_{\Ac_1 \cap \Dc_1}] \\
    &=\ex[ \pr(\Dc_2^c \given \Fc_2 )1_{\Ac_1 \cap \Dc_1 \cap \Gamma}] \\
    &\le 2\ex \Bigl[ \sum_{i\in S_2} e^{-0.01\dt_i} 1_{\Ac_1 \cap \Dc_1 \cap \Gamma}\Bigr] 
    \le 1.2 n e^{-0.002C_8 \nu_n}.
\end{align*}
where the second equality is since $\Ac_1 \cap\Dc_1$ is deterministic given $\Fc_2$, the third equality is by $\pr(\Gamma) = 1$, and the final inequality uses~\eqref{eq:dti:two:sided} and that $|S_1| \le 0.6 n$ on $\Ac_1$; see~\eqref{eq:event:A1}. The LHS above is also equal to $ \pr(\Dc_2^c \cap \Ac_1 \cap \Dc_1 \cap \Gamma)$. Hence,
\begin{align*}
    \pr(\Dc_2^c \cap \Ac_1 \cap \Dc_1 \cap \Gamma) \le 1.2 n^{-1}
\end{align*}
using the assumption $(\log n) / \nu_n \le C_8 /1000$. We note that on $\Gamma \cap \Ac_1 \cap \Dc_1 \cap \Dc_2$, we have~\eqref{eq:dti:two:sided} and $d_i / \dt_i \in [0.8,1.2]$, which imply 
$d_i  / \nu_n \in [0.16C_8, 0.96]$, that is, $\Ac_2$ hold. Let $\Dc = \Dc_1 \cap \Dc_2$. 
    We have $\Dc^c = \Dc_1^c \uplus ( \Dc_2^c \cap \Dc_1)$ where $\uplus$ denotes the disjoint union. Then,
\begin{align*}
    \pr(\Dc^c \cap \Ac_1) &= \pr(\Dc_1^c \cap \Ac_1) + \pr(\Dc_2^c \cap \Dc_1 \cap \Ac_1)  \\ 
    &\le \pr(\Dc_1^c) + \pr(\Dc_2^c \cap \Ac_1 \cap \Dc_1 \cap \Gamma) 
    \le 2.2 n^{-1}
\end{align*}
and the result follows.
\end{proof}

\begin{proof}[Proof of Lemma~\ref{lem:rho:lb}]
    We first develop a lower bound for $H_{\ell_{z_i}}(x_i)$. Using the $\ell_k$ defined in~\eqref{eq:ell:k:def}, 
\begin{align*}
    R_{k\ell_k} \ge \frac{1}{L}\sum_\ell R_{k \ell} \ge \frac{\tau_\theta n_k}{2L} = \frac{\tau_\theta \tau_\Cc}{2L} n
\end{align*}
Recall that on event $\Gamma$, $h_{r_{z_i}}(x_i) \ge \tau_h$. Then we can control the numerator of $H_{\ell_{z_i}}(x_i)$ by 
\begin{align}\label{eq:H:num:lower:bound}
    \sum_k h_k(x_i)R_{k\ell_{z_i}} \ge h_{r_{z_i}}(x_i) R_{r_{z_i} \ell_{z_i}} \ge \frac{\tau_\theta \tau_\Cc \tau_h }{2L} n
\end{align}
To control its denominator, using $\theta_j \le 1$ and $h_k(x_i) \le 1$, we have 
\begin{align}\label{eq:H:denum:upper:bound}
    \sum_{\ell'} \sum_k h_k(x_i) R_{k\ell'} \le \sum_{\ell'} \sum_k  R_{k\ell} = \frac12 \sum_{j =1}^n \theta_j \le \frac12 n.
\end{align}

Combining~\eqref{eq:H:num:lower:bound} and~\eqref{eq:H:denum:upper:bound} and the definition of $H_\ell(x_i)$, we obtain
$H_{\ell_{z_i}}(x_i) \ge  2\tau_\rho$.
Finally, by definition~\eqref{eq:R:event}, on $\Rc$, we have
\[
\rho_{i\ell_{z_i}} \ge  H_{\ell_{z_i}}(x_i) - \frac{4K }{\tau_\rho} \sqrt{\frac{\log n}{n}},
\] 
which together with the assumption on $\frac{\log n }n$ gives the desired result.
\end{proof}

\begin{proof}[Proof of Lemma~\ref{lem:bound:Delt2}]
    Conditioning on $\Fc$, the quantities $\Gc_k$, $\rhob_{k\ell}$, $(d_i, i \in S_2)$ and $\dpk$ are fixed. Moreover,  by~\eqref{eq:X:il:mult} we have $X_{i\ell} \given \Fc \sim \text{Bin}(d_i, \rho_{i\ell})$. Then, by Proposition 4, 
    \begin{align*}
        \pr^{\Fc}\Bigl(\Big|\sum_{i\in \Gc_k} (X_{i\ell} - d_i\rho_{i\ell})\Big| \ge \sqrt{2vu} + \frac{u}{3}\Bigr) \le 2e^{-u}
    \end{align*}
    for any $v \ge \var(\sum_{i\in \Gc_k} X_{i\ell})$  and $\pr^\Fc$ denote the probability conditional on $\Fc$.  We have $\var(\sum_{i\in \Gc_k} X_{i\ell}) = \sum_{i\in \Gc_k} d_i\rho_{i\ell}(1-\rho_{i\ell})\le \dpk/4$ .
    Taking $v = \dpk/4$, $u = 2\log n$, we have 
    \begin{align*}
        \pr^{\Fc}\Biggl(|\Delt_{k\ell}| \ge \sqrt{\frac{\log n}{\dpk}} + \frac{2\log n}{3\dpk}\Biggr) \le 2n^{-2}.
    \end{align*}
    From~\eqref{eq:min:dpk}, on event $\Ac$, we have $\dpk \ge 0.064 \tau_\Cc C_8 n \nu_n \ge \log n$ for all $k \in [K]$, where the second inequality is by assumption. It follows that
    \begin{align*}
        \sqrt{\frac{\log n}{\dpk}} + \frac{2\log n}{3\dpk}\le 2 \sqrt{\frac{\log n}{\dpk}} \le
        \frac{8}{\sqrt{\tau_\Cc C_8}}\sqrt{\frac{\log n}{n \nu_n}}.
    \end{align*}
    Therefore, $\pr^\Fc(\Bc^c \cap \Ac) = \pr^{\Fc}(\Bc^c) 1_{\Ac} \le 2KLn^{-2} \le 2Ln^{-1}$ by union bound and since on $\Fc$, the event $\Ac$ is deterministic. The result follows by taking the expectation to both sides. 
\end{proof}

\begin{proof}[Proof of Lemma~\ref{lem:Delh:k:ell:k}]
    We use an idea similar to the one used in~Lemma~\ref{lem:control:Delh} in the proof of Proposition~\ref{prop:Thn:Tn}. We note that 
$\delta$ plays a similar role in both proofs, and $\rhob_{k\ell}$ and $\rhot_{k\ell}$ here play the role of $p_{k\ell}$ and $\pt_{k\ell}$ there.
Let
\begin{align}
    \tau_d := \frac{\omega_n}{\max_{i \in S_2}d_i}.
\end{align}
Combining~\eqref{eq:event:A2} and~\eqref{eq:omegan:lower:bound}, on $\Ac$, we have
\begin{align}\label{eq:tau:d:lower:bound}
    \tau_d \ge \tau_\Cc C_8/9.
\end{align}
By Lemma \ref{lem:rho:lb}, on $\Gamma \cap \Rc$, we have $\rhob_{k \ell_k} \ge \tau_\rho$ for all $k \in [K]$. Hence, $\tau_\rho$ plays the role of $\pul$ in  Proposition~\ref{prop:Thn:Tn}. 

Then, to apply Lemma~\ref{lem:control:Delh}, we need $\alphan \le  \tau_d \tau_\rho /2$ and $\delta \le \tau_\rho/2$, with $\tau_\rho$. By~\eqref{eq:tau:d:lower:bound}, the first condition is satisfied on $\Ac$, if $\alphan \le \tau_\Cc \tau_\rho C_8 /18$, which holds by assumption. Then, the equivalent of Lemma~\ref{lem:control:Delh} in this proof implies that on $\Bc \cap \Mc_n \cap (\Gamma \cap \Rc \cap \Ac)$, we have $|\Delh_{k\ell_k}| \le \delh \cdot \rhot_{k\ell_k}$ for all $k \in [K]$, where

\begin{align*} 
	 \delh := \frac{6}{\rhou\, \tau_d} \alphan  \le \frac{54}{\tau_\rho \tau_\Cc C_8} \alphan.
\end{align*}
\end{proof}

\begin{proof}[Proof of Lemma~\ref{lem:Ec:bound}]
        The proof is similar to that of Lemma~\ref{lem:bound:Delt2} to which we refer for more details. We have $X_{i\ell} \given \Fc \sim \bin(d_i, \rho_{i\ell})$. Hence, by Proposition~\ref{prop:gen:tail:bound},
    \begin{align*}
        \pr^{\Fc}\Big(|\xi_{i\ell} - \rho_{i\ell}| \ge \sqrt{\frac{\log n }{d_i}} + \frac{2\log n}{3d_i}\Big) \le 2n^{-2}.
    \end{align*}
    Recalling~\eqref{eq:event:A2}, on $\Ac$, we have $d_i \ge 0.16 C_8\nu_n$ for $i \in S_2$ and by assumption $\log n \le 0.16 C_8\nu_n$. Hence, on $\Ac$, 
    \begin{align*}
        \sqrt{\frac{\log n }{d_i}} + \frac{2\log n}{3d_i} \le 5 \sqrt{\frac{\log n}{C_8 \nu_n}}
    \end{align*}
    By union bound over $\ell \in [L]$, we obtain $\pr^{\Fc}(\Ec^c \cap \Ac) \le 2L n^{-1}$. The result then follows by taking the expectation to both sides.
\end{proof}

\begin{proof}[Proof of Lemma~\ref{lem:varpi}]
Let $\evar((a_i),(p_i))$ be the variance of a random variable that takes values $a_i$ with probability $p_i$, that is,
\begin{align*}
	\evar((a_i),(p_i)) &= \sum_{i} p_i\Bigr(a_i - \sum_j p_j a_j\Bigl)^2 = \frac12 \sum_{i,j} p_i p_j (a_i - a_j)^2.
\end{align*}
Note that $\varpi_k = \evar((\rho_{i\ell_k}),(d_i/\dpk))$. 
The next step is to show that in the definition of $\varpi_k$, we can replace $\rho_{i \ell_k}$ with $H_{\ell_k}(x_i)$ and $d_i/\dpk$ with deterministic quantities. 

\begin{lem}\label{lem:a:b:diff}
	For $a_i, b_i \in \reals$, with $\max_i |a_i-b_i| \le \eps$, we have
	$|a_i-a_j| \ge |b_i - b_j| - 2\eps$. 
\end{lem}
\begin{proof}[Proof of Lemma~\ref{lem:a:b:diff}]
Assume $|a_1 - b_1| \le \eps$ and $|a_2 - b_2| \le \eps$. Then,
\begin{align*}
    |a_1 -  a_2| &= |(b_1 - b_2) + (a_1 - b_1) - (a_2 - b_2)|\\
    &\ge |b_1 - b_2| - |(a_1 - b_1) - (a_2 - b_2)|\\
    &\ge |b_1 - b_2| - |a_1 - b_1| -|a_2 - b_2|\\
    &\ge |b_1 - b_2| - 2\eps.
\end{align*}
\end{proof}

Let us define
\begin{align}\label{eq:zetan}
	\zeta_n := \frac{4\tau_\theta L}{C_8} \sqrt{\frac{\log n}{n}} =
	\frac{4 L}{\tau_\Cc \tau_h \tau_\theta} \sqrt{\frac{\log n}{n}}.
\end{align}
so that on $\Rc$, we have (see~\eqref{eq:R:event})
\[
|\rho_{i\ell} - H_{\ell}(x_i)| \le \zeta_n \quad \forall i \in S_2, \; \forall\ell \in [L].
\]
Then, by Lemma~\ref{lem:a:b:diff}, for all $k \in [K]$,
\begin{align*}
	|\rho_{i\ell_k} - \rho_{j\ell_k}| \ge |H_{\ell_k}(x_i) - H_{\ell_k}(x_j)| - 2\zeta_n.
\end{align*}
Using the fact that $a \ge b -c$ implies $a^2 \ge \frac12 b^2 - c^2$ for $b \ge 0$, we have
\begin{align*}
	(\rho_{i\ell_k} - \rho_{j\ell_k})^2 \ge \frac12 [H_{\ell_k}(x_i) - H_{\ell_k}(x_j)]^2 - 4 \zeta_n^2
\end{align*}
Let 
$s_k := |\Gc_k|$
Recall that $\dpk = \sum_{i \in \Gc_k} d_i$. Then, on $\Ac$, we have 

\begin{align*}
	\frac{d_i}{\dpk} \ge \frac{0.16 C_8 \nu_n}{0.96 \nu_n s_k} = \frac{C_8}{6s_k}.
\end{align*}
It follows that on $\Ac \cap \Rc$, we have
\begin{align}
	\varpi_k &= \frac12 \sum_{i, j \in \Gc_k}\frac{d_i}{\dpk}\frac{d_j}{\dpk}(\rho_{i\ell_k} - \rho_{j\ell_k})^2 \notag \\
	&\ge 
	\frac12 \frac{C_8^2}{36 s_k^2}  \sum_{i, j \in \Gc_k}\Bigl(  \frac12 [H_{\ell_k}(x_i) - H_{\ell_k}(x_j)]^2 -  
	4\zeta_n^2 1\{i \neq j\} \Bigr) \notag \\
	&\ge \frac14 \frac{C_8^2}{36} \Big[\frac{1}{4\binom{s_k}{2}}\sum_{i, j \in \Gc_k} [H_{\ell_k}(x_i) - H_{\ell_k}(x_j)]^2 - 8\zeta_n^21\{i \neq j\}\Big] \label{eq:varpi:lower:1}
\end{align}
since by~\eqref{eq:Gsize:lower} $s_k \ge 2$ and hence $4 \binom{s_k}{2} \ge s_k^2$. The first term above is proportinal to a $U$-statistic providing an estimate of the variance of $H_{\ell_k}(x), x \sim \Qb_k$ based on an i.i.d. sample $x_i \sim \Qb_k, i \in \Gc_k$ (assuming that $S_2$ is fixed).  An argument using the Hansen--Wright inequality shows that such a quantity is concentrated around its mean, which is the population variance. We use the following result from~\cite{kazemitabar2017variable}, with  slight modifications:
\begin{prop}[Corollary 3 in~\cite{kazemitabar2017variable}]\label{prop:var:concent}
	Let $w = (w_1,\dots,w_m) \in \reals^m$ be a random vector with independent components $w_i$ which satisfy $\norm{w_i-\ex w_i}_{\psi_2} \le K$. Let \[\impu(w) := 
	\frac1{\binom{m}{2}} \sum_{1 \le i , j \le m}  \frac14(w_i - w_j)^2\] be the empirical variance of $w$. Then, there is a universal constant $c > 0$ such that for $u \ge 0$,
	\begin{align}
		\pr \Big(  \impu(w) - \ex \impu(w) <- K^2 u \Big) \le 
		\exp\big\{ {-c}\, (m-1) \min(u,u^2) \big\}.
	\end{align}
\end{prop}
We note the alternative expression $\impu(w) =  \binom{m}{2}^{-1} \sum_{1 \le i < j \le m}  \frac12(w_i - w_j)^2$. In the context of Proposition~\ref{prop:var:concent}, if $w_1,\dots,w_m$ are i.i.d., then 
\[
\ex \impu(w) = \frac12 \ex(w_1 - w_2)^2 = \var(w_1).
\]
Since $H_{\ell_k}(\cdot)$ is bounded in $[0,1]$, we have $\norm{H_{\ell_k}(x_i)-\ex H_{\ell_k}(x_i)}_{\psi_2} \le 1$. Recall that $\fvar_{k\ell} = \var(H_\ell(x))$ when $x \sim \Qb_k$. Then, conditional on $\Fc_0 = \sigma(S_1)$ so that $\Gc_k$ is fixed, we have for $i,j \in \Gc_k$ and $i\neq j$,
\begin{align*}
	\frac12 \ex[H_{\ell_k}(x_i) - H_{\ell_k}(x_j)]^2 = \fvar_{k \ell_k}
\end{align*}
Applying the Proposition~\ref{prop:var:concent}, we obtain, for $u \in [0,1]$,
\begin{align*}
	\pr^{\Fc_0} \Bigl( 
	\frac{1}{4 \binom{s_k}{2}} \sum_{i, j \in \Gc_k} [H_{\ell_k}(x_i) - H_{\ell_k}(x_j)]^2 < \fvar_{k \ell_k} - u
	\Bigr) \le e^{-c(s_k-1) u^2}
\end{align*}
On $\Ac$, $s_k -1 \ge s_k/2 \ge 0.2 \tau_\Cc n$. Take $u = u_n := \sqrt{\log n / (\tau_\Cc n)}$. By the scaling assumption $ \log n / n \le \tau_\Cc$, we have $ u _n \le 1$, hence
\begin{align*}
	\pr^{\Fc_0} \Bigl( 
	\frac{1}{4 \binom{s_k}{2}} \sum_{i, j \in \Gc_k} [H_{\ell_k}(x_i) - H_{\ell_k}(x_j)]^2 < \fvar_{k \ell_k} -  u_n
	\Bigr) 1_{\Ac \cap \Rc} \le n^{- c_1}
\end{align*}
where $c_1 = 0.2 c$. Combining with~\eqref{eq:varpi:lower:1}
\begin{align*}
	\pr^{\Fc_0} \bigl( \frac{144}{C_8^2} \varpi_k + 4\zeta_n^2 <\fvar_{k \ell_k} -  u_n \bigr)  1_{\Ac \cap \Rc} \le n^{- c_1}
\end{align*}
Taking the union bound and removing the conditioning, we get
\begin{align*}
	\pr \bigl(\bigl\{ \exists k \in [K],\; \frac{144}{C_8^2} \varpi_k + 4\zeta_n^2 <\fvar_{k \ell_k} -  u_n \bigr\} \cap \Ac \cap \Rc \bigr)   \le K n^{- c_1}
\end{align*}
Let us call the first event above $\Hc^c$. Then, on $\Hc$, we have
\begin{align}
	\frac{144}{C_8^2}\varpi_k \ge \fvar_{k \ell_k} - (4\zeta_n^2 + u_n), \quad \forall k \in [K].
\end{align}
We have, using assumption $\zeta_n \le 1$,
\begin{align}
	4\zeta_n^2 + u_n \le 4\zeta_n + u_n \le 
	\frac{16 L}{\tau_\Cc \tau_h \tau_\theta} \sqrt{\frac{\log n}{n}} + 
	\sqrt{\frac{\log n}{\tau_\Cc n}} \le   
	\frac{18 L}{\tau_\Cc \tau_h \tau_\theta} \sqrt{\frac{\log n}{n}} 
\end{align}
since $\tau_\Cc \le 1$. 
\end{proof}

\section{Other technical results}\label{app:tech}

\begin{lem}\label{lem:var:low}
  	Assume that $Z$ is a random variable taking values $z_1,\dots,z_R$ with probabilities $\beh_1,\dots,\beh_R$ respectively. Then, $\var(Z) \ge \frac12 \beh_1 \beh_2 (z_1 - z_2)^2$.
\end{lem}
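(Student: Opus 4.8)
The plan is to reduce the claim to a two-point comparison. Write $\mu = \ex[Z]$, so that $\var(Z) = \sum_{r=1}^R \beh_r (z_r - \mu)^2$ exactly. Since every term in this sum is nonnegative, the first step is to discard all but the $r=1$ and $r=2$ terms, giving
\[
\var(Z) \;\ge\; \beh_1 (z_1 - \mu)^2 + \beh_2 (z_2 - \mu)^2 .
\]

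Next, because the $\beh_r$ are probabilities we have $\beh_1,\beh_2 \in [0,1]$, so $\beh_1 \ge \beh_1\beh_2$ and $\beh_2 \ge \beh_1\beh_2$, and hence the right-hand side is at least $\beh_1\beh_2\big[(z_1-\mu)^2 + (z_2-\mu)^2\big]$. Finally, applying the elementary inequality $u^2 + v^2 \ge \tfrac12 (u-v)^2$ (which is just $2u^2 + 2v^2 - (u-v)^2 = (u+v)^2 \ge 0$) with $u = z_1 - \mu$ and $v = z_2 - \mu$ yields $(z_1-\mu)^2 + (z_2-\mu)^2 \ge \tfrac12 (z_1 - z_2)^2$. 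Chaining the three bounds gives $\var(Z) \ge \tfrac12 \beh_1 \beh_2 (z_1 - z_2)^2$, as desired.

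There is essentially no obstacle in this argument; the only points requiring (minor) care are that $\mu$ must be the true mean of $Z$ so that the variance identity is exact before any terms are dropped, and that the step passing from $\beh_1(z_1-\mu)^2 + \beh_2(z_2-\mu)^2$ to $\beh_1\beh_2[\cdots]$ uses $\beh_1,\beh_2 \le 1$. As an alternative route, one may instead use the symmetrization identity $\var(Z) = \tfrac12 \sum_{r,s} \beh_r\beh_s (z_r - z_s)^2$ (obtained from an independent copy $Z'$ of $Z$ via $\var(Z) = \tfrac12 \ex[(Z-Z')^2]$) and retain only the $(r,s)\in\{(1,2),(2,1)\}$ terms; this in fact produces the slightly stronger constant $\beh_1\beh_2$ rather than $\tfrac12\beh_1\beh_2$, which is more than enough for the use made of the lemma in the proof of Theorem~\ref{thm:consist}.
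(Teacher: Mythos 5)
Your main argument is correct, but it takes a different route from the paper. The paper proves the lemma in one line via symmetrization: introducing an independent copy $Z'$ of $Z$, writing $\var(Z) = \frac12 \ex(Z-Z')^2$, and noting that the pair $(Z,Z')$ hits $(z_1,z_2)$ with probability $\beh_1\beh_2$, which immediately yields $\var(Z) \ge \frac12\beh_1\beh_2(z_1-z_2)^2$. Your primary argument instead works directly from $\var(Z)=\sum_r \beh_r(z_r-\mu)^2$, dropping all but the first two terms, using $\beh_1,\beh_2\le 1$ to insert the product $\beh_1\beh_2$, and finishing with $u^2+v^2\ge\frac12(u-v)^2$; every step checks out. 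The trade-off is minor: the symmetrization identity is shorter and, as you correctly observe in your closing remark (which is essentially the paper's proof), gives the sharper bound $\beh_1\beh_2(z_1-z_2)^2$ if one keeps both ordered pairs $(1,2)$ and $(2,1)$, while your direct computation is more elementary in that it never leaves the original probability space and makes the role of each inequality explicit. Either argument more than suffices for the application in the proof of Theorem~\ref{thm:consist}.
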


\begin{proof}
  Let $Z'$ be an independent copy of $Z$. Then $\var(Z) = \frac12 \ex(Z-Z')^2$, and $(Z,Z')$ takes value  $(z_1,z_2)$ with probability $\beh_1\beh_2$. The result follows.
\end{proof}
  
	\begin{lem}\label{lem:poi:tail}
	Let $X \sim \poi(\lambda)$. Then, for any $t \in (0,1]$, 
	\[\pr(|X - \lambda | \ge t \lambda) \le 2 \exp( - \lambda t^2 / 4).\]
\end{lem}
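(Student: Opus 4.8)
The plan is to use the Chernoff/exponential-moment method, exploiting the explicit Poisson moment generating function $\ex[e^{sX}] = \exp(\lambda(e^s-1))$, valid for every $s \in \reals$. I would split $\pr(|X-\lambda| \ge t\lambda)$ into the upper-tail event $\{X \ge (1+t)\lambda\}$ and the lower-tail event $\{X \le (1-t)\lambda\}$, bound each by $\exp(-\lambda t^2/4)$, and finish with a union bound.

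For the upper tail, Markov's inequality applied to $e^{sX}$ with $s>0$ gives
\[
\pr(X \ge (1+t)\lambda) \;\le\; \exp\!\big(\lambda(e^s - 1) - s(1+t)\lambda\big),
\]
and optimizing over $s$ (the minimizer is $s = \log(1+t)$) yields $\exp(-\lambda h(t))$ with $h(t) := (1+t)\log(1+t) - t$. For the lower tail, applying Markov to $e^{-rX}$ with $r>0$ gives
\[
\pr(X \le (1-t)\lambda) \;\le\; \exp\!\big(\lambda(e^{-r} - 1) + r(1-t)\lambda\big),
\]
and the choice $r = -\log(1-t)$ (legitimate for $t<1$; the case $t=1$ is trivial since the lower-tail event is then $\{X=0\}$, of probability $e^{-\lambda} \le e^{-\lambda/4}$) yields $\exp\!\big(\lambda(-t - (1-t)\log(1-t))\big)$.

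It then remains to verify the two elementary inequalities on $t \in (0,1]$, namely $(1+t)\log(1+t) - t \ge t^2/4$ and $-t - (1-t)\log(1-t) \le -t^2/4$. Each is handled identically: write $\phi$ for the difference of the two sides arranged so the claim reads $\phi \ge 0$; then $\phi(0) = \phi'(0) = 0$, and $\phi''(t) = \tfrac{1}{1+t} - \tfrac12$ in the first case and $\phi''(t) = \tfrac{1}{1-t} - \tfrac12$ in the second. Both second derivatives are nonnegative on the relevant range ($t \le 1$, respectively $t \in [0,1)$), so $\phi'$ is nondecreasing, hence $\phi' \ge 0$, hence $\phi$ is nondecreasing, hence $\phi \ge 0$. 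Combining the two tail bounds by a union bound gives $\pr(|X-\lambda| \ge t\lambda) \le 2\exp(-\lambda t^2/4)$.

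The argument is routine; the only point that is not automatic is that the hypothesis $t \le 1$ is genuinely used — it is exactly what makes $\tfrac{1}{1+t} - \tfrac12 \ge 0$ in the upper-tail inequality, and for $t>1$ the clean constant $1/4$ would eventually fail. So the "hard" part is merely to notice where the restriction $t\in(0,1]$ enters; everything else is standard Chernoff bookkeeping.
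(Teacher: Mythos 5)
Your proof is correct and uses essentially the same approach as the paper: a Chernoff bound on each tail separately, combined by a union bound, with the restriction $t\le 1$ entering exactly where you say it does. The only cosmetic difference is that the paper bounds the Poisson cumulant generating function by a quadratic first (via $e^\theta - 1 - \theta \le \theta^2$ for $\theta \le 1.79$, resp.\ $e^{-\theta}-1+\theta \le \theta^2/2$) and then optimizes, whereas you optimize exactly and then lower-bound the resulting rate functions by $t^2/4$; both routes are standard and land in the same place.
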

\begin{proof}[Proof of Lemma~\ref{lem:poi:tail}]
	Fix $t \in (0,1]$. For $\theta \in (0,1.79]$, by the Chernoff bound,
	\begin{align*}
	\pr(X-\lambda \ge t\lambda) \le  e^{-\theta t \lambda } \ex[e^{(X-\lambda)\theta}] = e^{-\theta t \lambda}\exp(\lambda(e^\theta -1 -\theta)) \le e^{\lambda \theta^2 - \theta t \lambda}
	\end{align*}
	using $e^\theta -1 -\theta \le \theta^2$ when $\theta \le 1.79$. Since $\lambda\theta^2 - \theta t \lambda$ attains its minimum at  $\theta = t/2\le 1$, we obtain $ \pr(X-\lambda \ge t\lambda) \le \exp(-\lambda t^2/4)$.
	On the other hand, 
	\begin{align*}
	\pr(\lambda -X \ge t\lambda) 
	\le e^{-\theta t \lambda} \ex[e^{(\lambda-X)\theta}]
	=e^{-\theta t \lambda}\exp(\lambda(e^{-\theta} -1 +\theta)) \le e^{\lambda\theta^2/2 - \theta t \lambda}
	\end{align*}
	using $e^{-\theta} -1 +\theta \le \theta^2/2$ for $\theta \ge 0$. Since $\lambda\theta^2/2 - \theta t \lambda$ attains its smallest value at  $\theta = t\le 1$, we get
	$
	\pr(\lambda -X \ge t\lambda)  \le \exp(-\lambda t^2/2),
	$ finishing the proof.
\end{proof}

\begin{prop}[Giné and Nickl~\cite{gine_nickl_2015}~Theorem~3.1.7]
\label{prop:gen:tail:bound}
	Let $S = \sum_{i=1}^n X_i$ where $\{X_i\}$ are independent random variables with $|X_i - \ex X_i| \le c$ for all $i$. Let $v \ge \var(S)$. Then, for all $u \ge 0$,
	\begin{align*}
	\pr \Big( |S - \ex S| \ge \sqrt{2 v u} + \frac{c u}{3} \Big)  \le 2 e^{-u}.
	\end{align*}
\end{prop}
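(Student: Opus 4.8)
The statement in question is Proposition~\ref{prop:gen:tail:bound}, the classical Bernstein inequality. The paper invokes it as a known result (\textcite{giné_nickl_2015}, Theorem~3.1.7); were one to prove it, the plan is the standard exponential‑moment (Chernoff) argument. First I would reduce to a centered, one‑sided bound: replacing each $X_i$ by $X_i-\ex X_i$, we may assume $\ex X_i = 0$, so that $|X_i|\le c$, $\ex S = 0$, and $\sigma^2 := \var(S) = \sum_{i=1}^n \ex[X_i^2] \le v$. Since $-S = \sum_i(-X_i)$ obeys the same hypotheses with the same $v$ and $c$, a union bound over $S$ and $-S$ reduces the two‑sided estimate (with its factor $2$) to showing $\pr(S \ge \sqrt{2vu}+cu/3)\le e^{-u}$ for $u>0$; the case $u=0$ is trivial.

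The second step is to control the moment generating function of $S$. For $0<\lambda<3/c$, expanding $\ex[e^{\lambda X_i}] = 1 + \sum_{k\ge 2}\lambda^k \ex[X_i^k]/k!$, bounding $|\ex[X_i^k]| \le c^{k-2}\ex[X_i^2]$, and using the elementary fact $k! \ge 2\cdot 3^{k-2}$ for $k\ge 2$, one obtains
\begin{align*}
	\ex[e^{\lambda X_i}] \le 1 + \frac{\lambda^2 \ex[X_i^2]}{2}\sum_{j\ge 0}\Big(\frac{\lambda c}{3}\Big)^{j} = 1 + \frac{\lambda^2 \ex[X_i^2]/2}{1-\lambda c/3} \le \exp\!\Big(\frac{\lambda^2 \ex[X_i^2]/2}{1-\lambda c/3}\Big).
\end{align*}
Multiplying over $i$ and using $\sigma^2 \le v$ gives $\ex[e^{\lambda S}] \le \exp\big(\tfrac{\lambda^2 v/2}{1-\lambda c/3}\big)$ for all $0\le\lambda<3/c$.

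The third step is the Chernoff bound with a well‑chosen $\lambda$. Set $t = \sqrt{2vu}+cu/3$ and $a := \sqrt{2u/v}$ (so $\sqrt{2vu}=va$ and $u=va^2/2$), and take $\lambda^* := a/(1+ca/3)$, which lies in $(0,3/c)$. Then $1-\lambda^* c/3 = (1+ca/3)^{-1}$, and a direct computation gives the Chernoff exponent
\begin{align*}
	\lambda^* t - \frac{(\lambda^*)^2 v/2}{1-\lambda^* c/3} = \frac{2u(1+ca/6)}{1+ca/3} - \frac{u}{1+ca/3} = u,
\end{align*}
so that $\pr(S\ge t)\le \exp(-u)$, and hence $\pr(S\ge t)\le e^{-u}$ for every $t\ge \sqrt{2vu}+cu/3$ by monotonicity in $t$. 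Applying this to $S$ and to $-S$ and adding the two bounds yields the stated inequality.

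None of the steps is a genuine obstacle. The only points demanding a little care are pinning down the constant $\tfrac13$ in the moment‑generating‑function bound — which rests on the combinatorial inequality $k!\ge 2\cdot 3^{k-2}$ — and exhibiting the value $\lambda^*$; the latter looks like a rabbit from a hat, but once written down it turns the inversion ``$\pr\le e^{-u}$ at $t=\sqrt{2vu}+cu/3$'' into the one‑line computation above. Alternatively, as the paper does, one may simply cite \textcite{giné_nickl_2015}, Theorem~3.1.7.
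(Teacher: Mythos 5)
Your proof is correct. Note that the paper itself offers no proof of this proposition --- it is imported verbatim as Theorem~3.1.7 of \textcite{giné_nickl_2015} --- so there is nothing internal to compare against; what you have written is the standard Chernoff/Bernstein argument, and it is exactly the kind of proof that reference gives. I checked the two delicate points: the moment-generating-function bound is valid (using $|\ex[X_i^k]|\le c^{k-2}\ex[X_i^2]$ and $k!\ge 2\cdot 3^{k-2}$), and with $a=\sqrt{2u/v}$ and $\lambda^*=a/(1+ca/3)$ one indeed gets $\lambda^* t - \tfrac{(\lambda^*)^2 v/2}{1-\lambda^* c/3} = \tfrac{2u(1+ca/6)-u}{1+ca/3}=u$, so the one-sided bound $\pr(S\ge \sqrt{2vu}+cu/3)\le e^{-u}$ follows, and symmetrization gives the factor $2$.
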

In particular, if $S \sim \bin(n,p)$, then we can take $v = \ex[S] \ge \var(S)$. Letting $\ph = S/n$, the result gives 
$
\pr\big(| \ph - p| \ge \sqrt{\frac{2 p u}{n}} + \frac{u}{3n}\big) \le 2e^{-u}.
$

\section{Extra simulations}

\subsection{Degree growth rate}\label{app:deg:grow}
Figure~\ref{fig:deg:growth} shows that $h(d)$ grows linearly with $n$ in FB networks. To generate the plot, we first selected networks of similar size ($\sim$ 9k nodes) from the FB-100 data set. From each of those original networks, we randomly sampled 100 nodes as the initial seed set. We then kept adding more nodes randomly to the seed sets, recoding the induced subnetwork within the original network, to create networks of increasing size $n$. 
The plot shows the $h(d)$ of the seed set as the network size grows. 
	
\begin{figure}[t!]
    \centering
    \includegraphics[width=.6\textwidth]{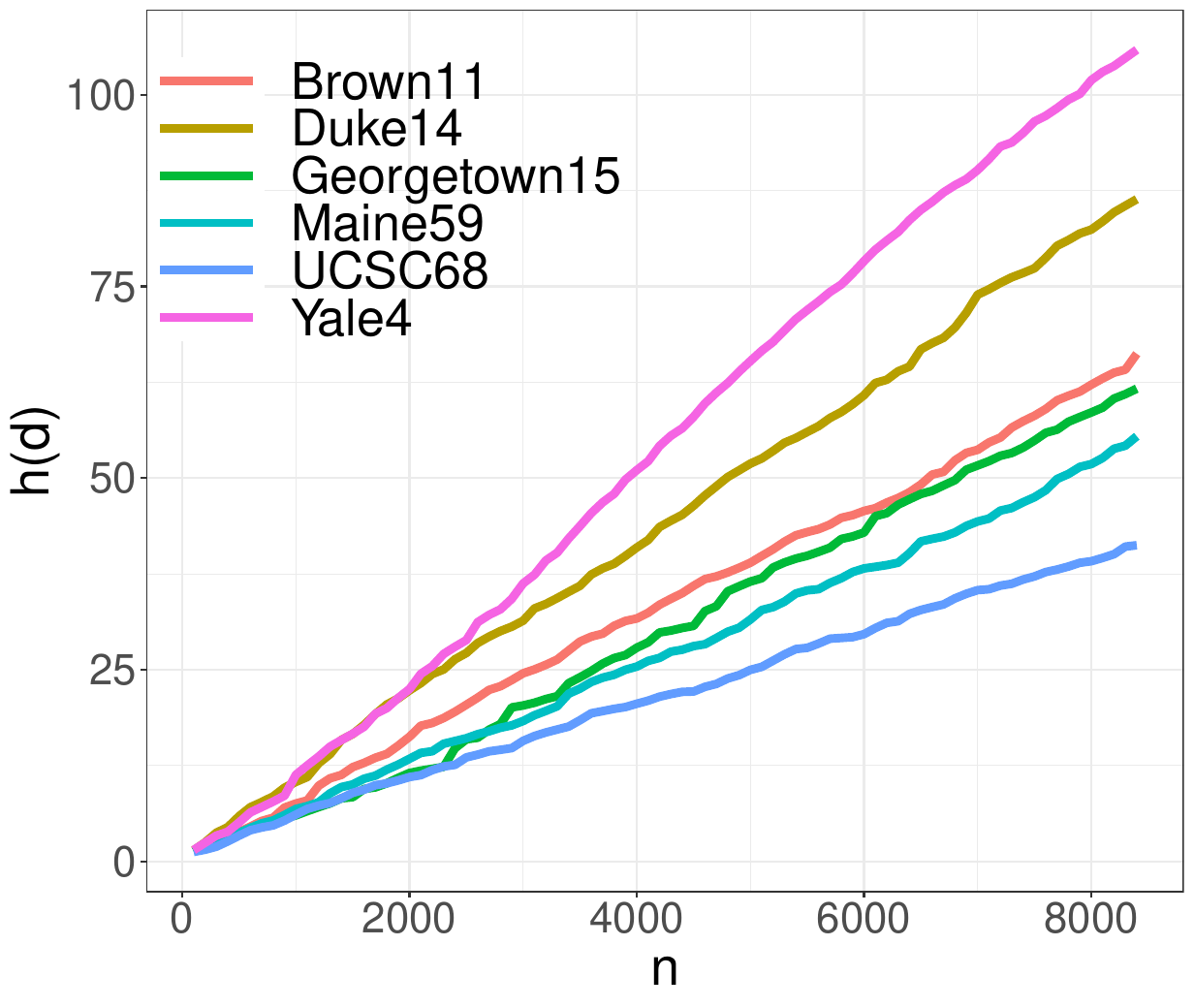}
    \caption{For a fixed seed set of nodes, its $h(d)$ ($y$ axis) grows as the subnetwork, which the seed set belongs to, increases in size ($x$ axis).}
    \label{fig:deg:growth}
\end{figure}

\subsection{Bootstrap comparison}
\label{sec:boot:comp}
 
\begin{figure}[h!]
    \centering
    \begin{tabular}{cc}
    \includegraphics[width=.49\textwidth]{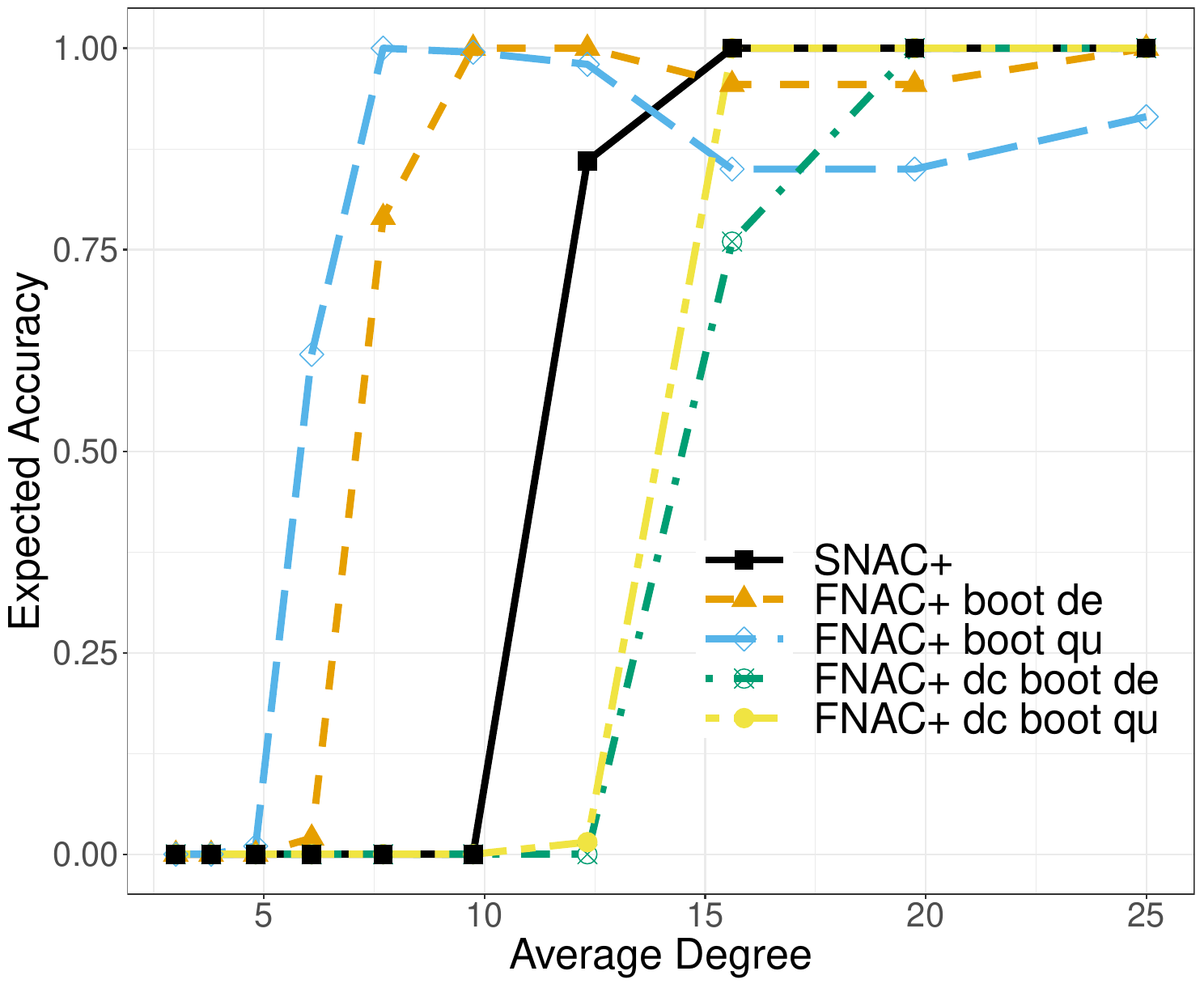} &
    \includegraphics[width=.49\textwidth]{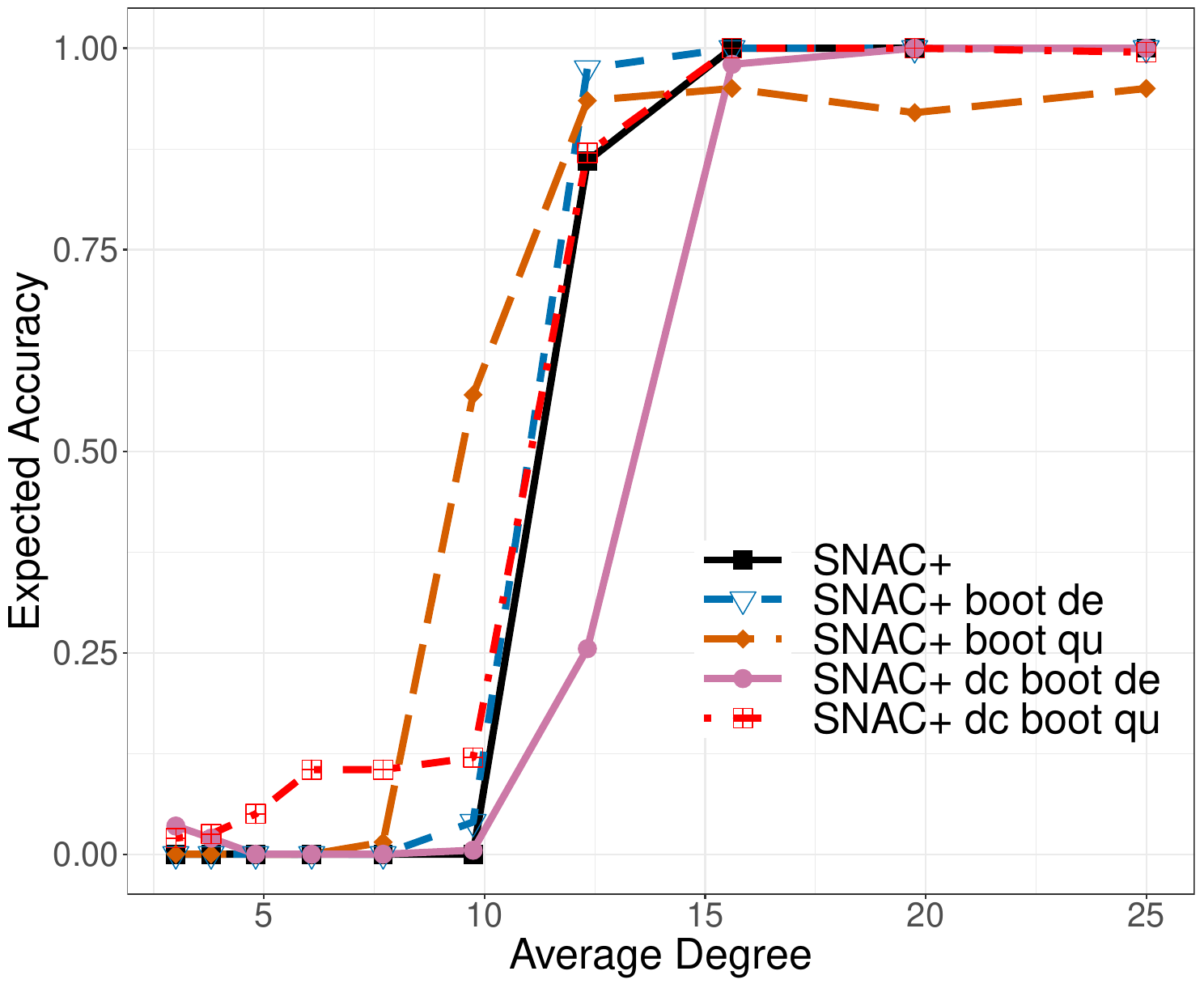}
    \\ 
    (a) Variants of \cacp 
    & (b) Variants of \scacp 
    \end{tabular}
     \caption{Comparing different  bootstrap approaches using expected accuracy of selecting the true number of communities versus expected average degree of the network. \scacp is shown in both plots as a benchmark. Details of each method in the legend is explained in the text. }
    \label{fig:boot:comparison}
\end{figure}
Based on Section~\ref{sec:boot},
we consider four versions of the bootstrap approach to determine the optimal rejection rules for \cacf and \scacf 
empirically:
\begin{enumerate}
    \item \textbf{boot de}: Generate SBM bootstrap samples and obtain their mean and standard deviation to standardize the original statistic.  Reject the null hypothesis with level-$\alpha$ critical threshold from the standard normal.
    \item \textbf{boot qu}: Generate SBM bootstrap samples and use their $\alpha$-quantile as the rejection threshold.
    \item \textbf{dc boot de}: Same as \textbf{boot de} except for generating DCSBM bootstrap samples instead.
    \item \textbf{dc boot qu}: Same as \textbf{boot qu} except for generating DCSBM bootstrap samples instead.
\end{enumerate}

In Figure \ref{fig:boot:comparison}, we compare the above four approaches for bootstrapping 
\scacp and \cacp.
In both cases, we include \scacp without bootstrap as the comparison baseline. The simulation data follows a DCSBM with $n = 5000$, $K = 4$, $\theta_i \sim \text{Pareto}(3/4, 4)$, connectivity matrix as $B_1$ defined in the paper and balanced community sizes. For both \scacp and \cacp, the \textbf{boot~de} approach has the most stable performance and that is why we use it in simulations of Section~\ref{sec:model_select}. However, there is no absolutely superior choice among all, and in practice, one can try different bootstrap approaches and compare results to make a conclusion about hypothesis testing.  


\subsection{Model selection}\label{app:mod:sel}

\begin{figure}[t]
	\centering
	\includegraphics[width=0.49\linewidth]{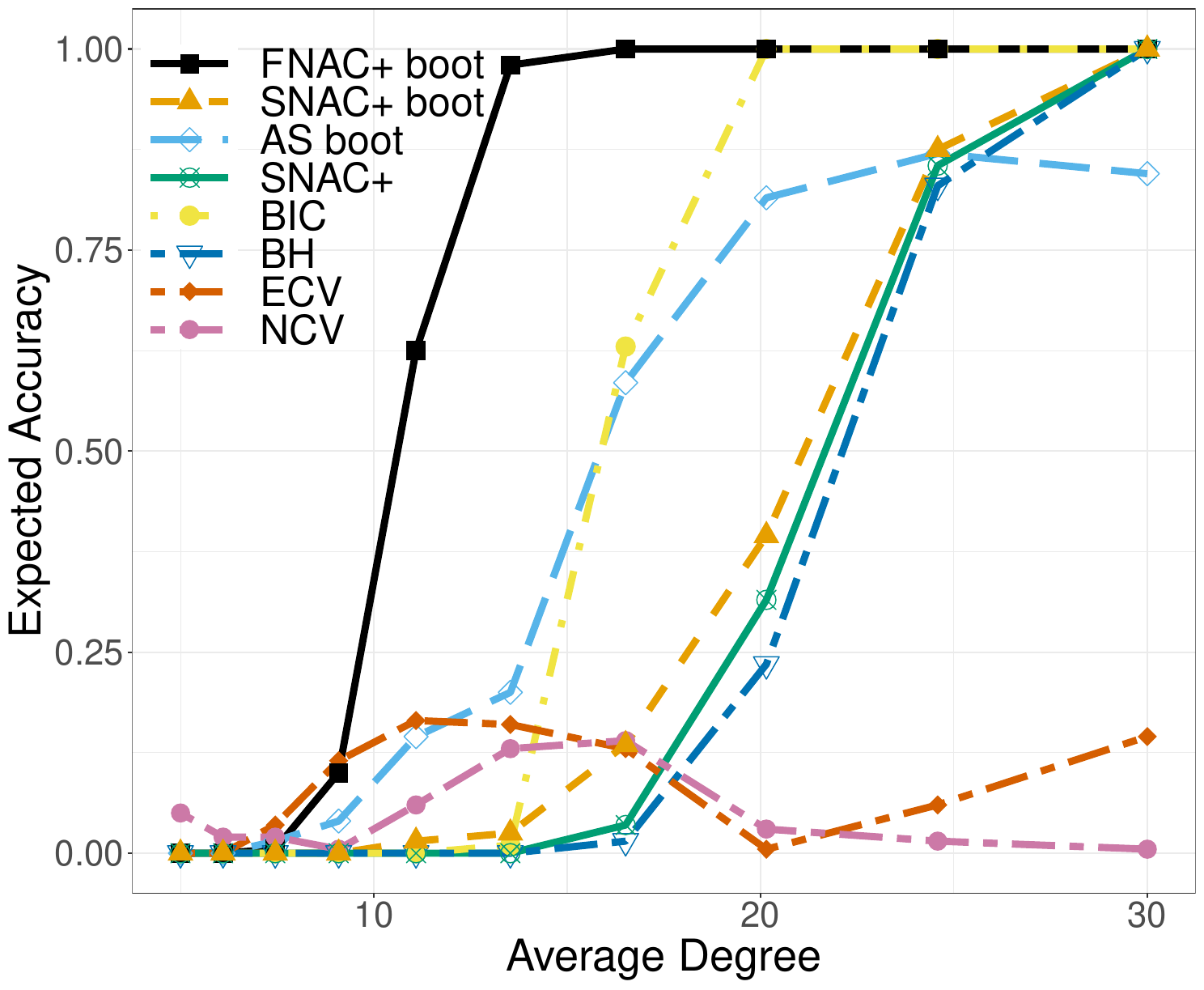}
	\includegraphics[width=0.49\linewidth]{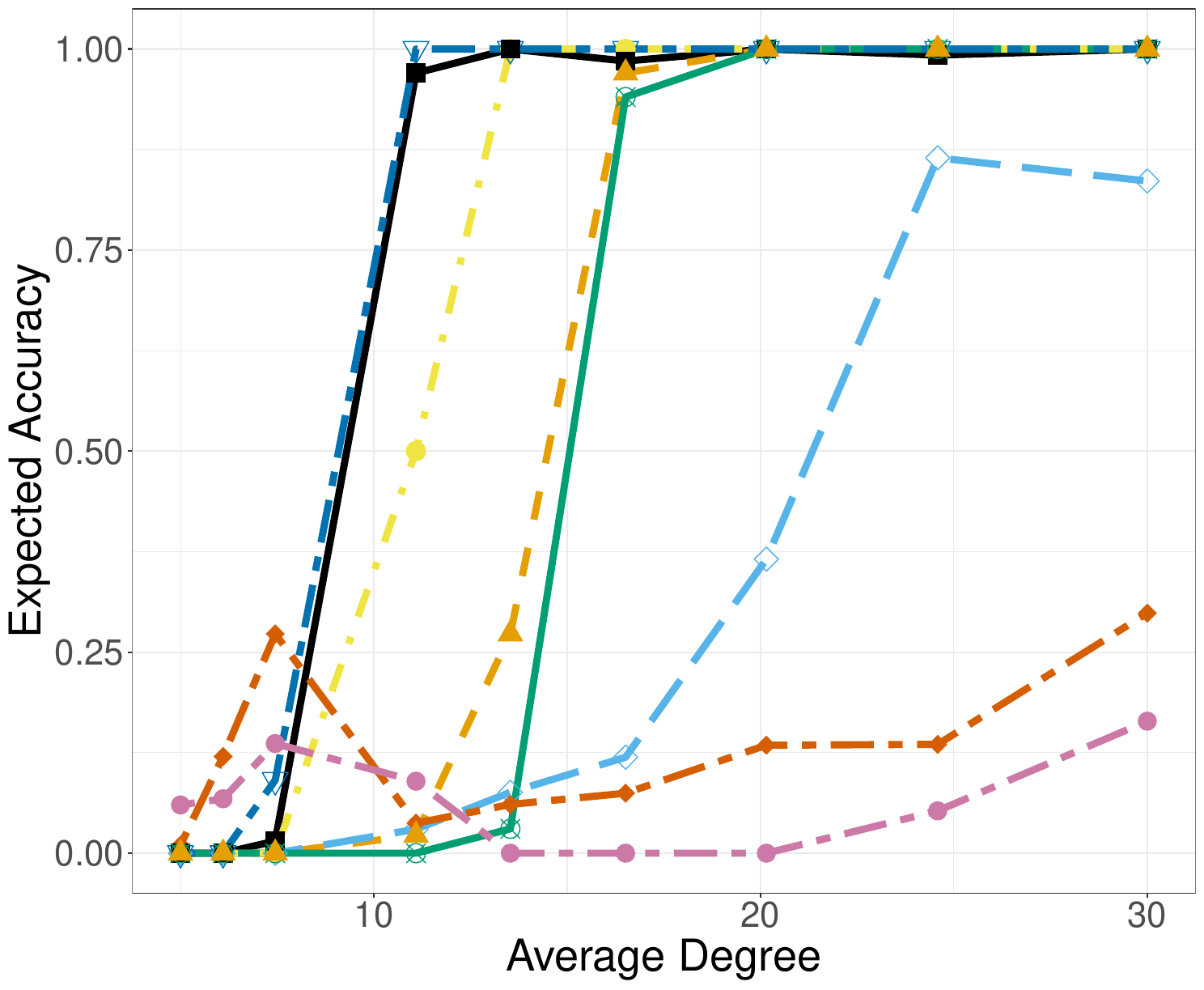}
	\includegraphics[width=.49\textwidth]{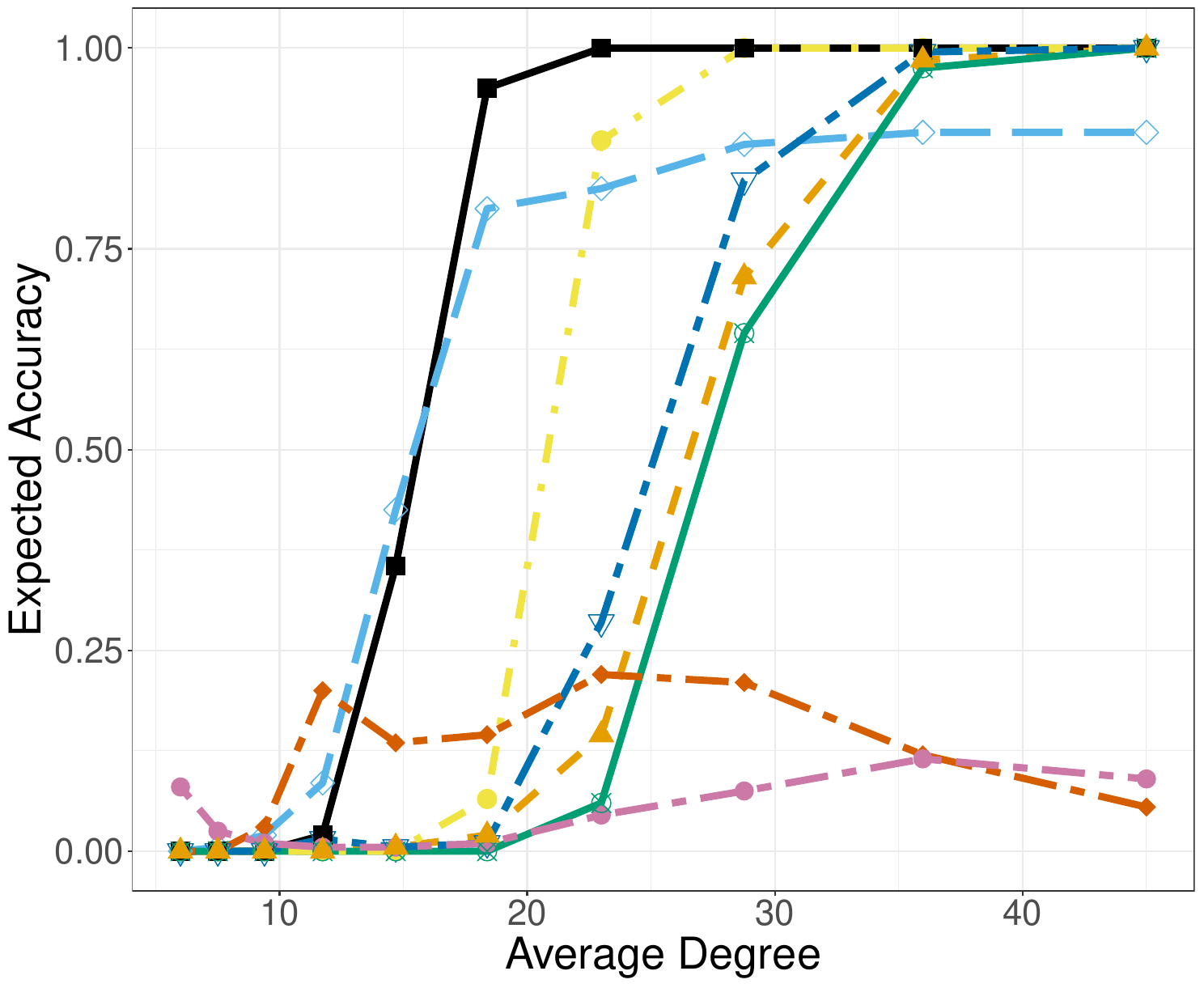}
	\includegraphics[width=.49\textwidth]{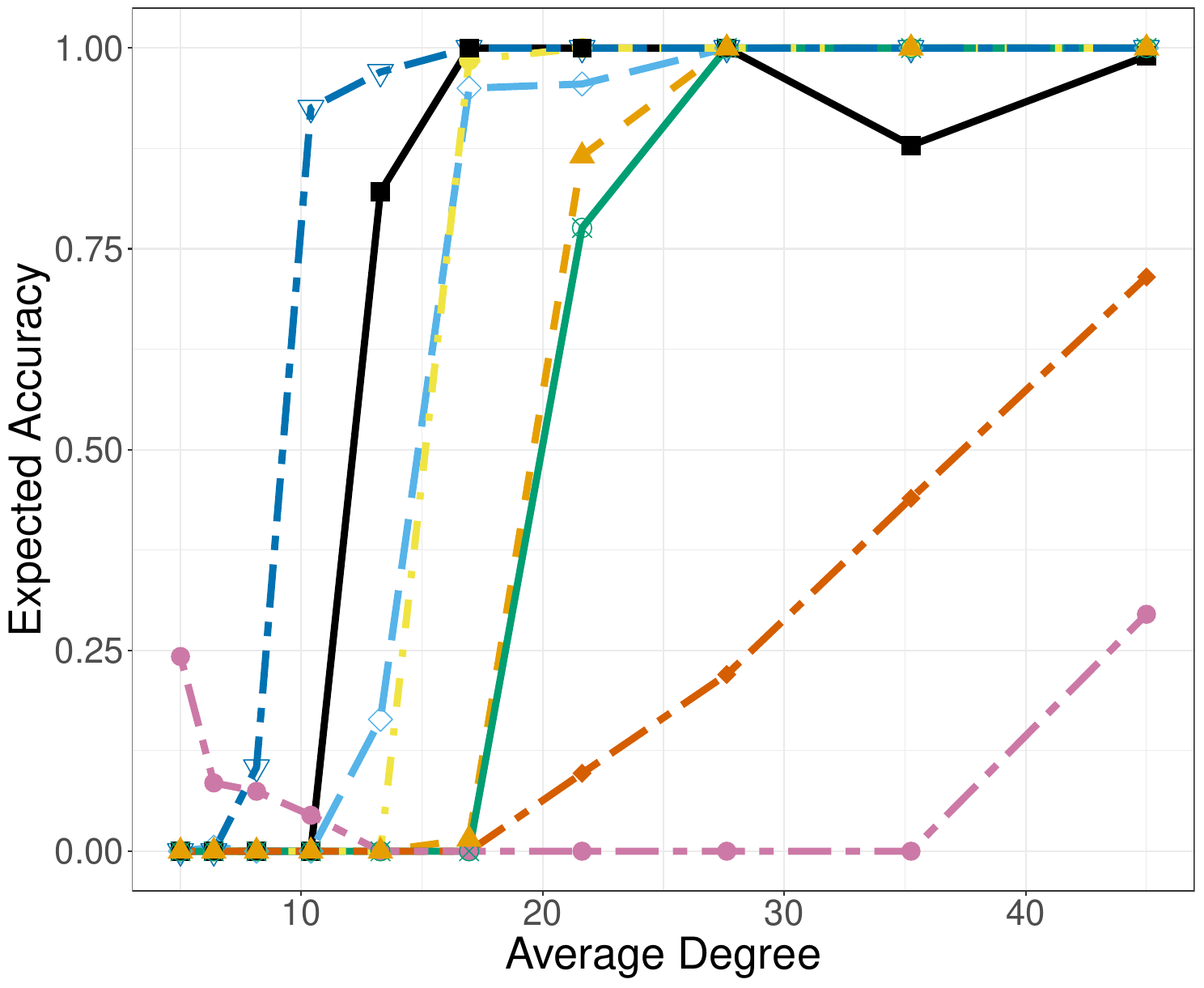}
	\caption{Expected accuracy of selecting the true number of communities versus expected average degree of the network. The data follows a DCSBM with $n = 5000$, $\theta_i \sim \text{Pareto}(3/4,4)$. The first row is generated with $\beta=0.2$, connectivity matrix $B_3$. The top left plot has unbalanced community sizes proportional to $(1,1,2,3)$ and the right plot has balanced community sizes. The second row is generated with connectivity matrix $B_1$. The bottom left plot has community sizes proportional to $(1,2,3,4)$ and out-in-ratio $\beta = 0.2$. The bottom right plot has balanced community sizes and out-in-ratio $\beta = 0.3$.}
	\label{fig:seq:unequal:diag}
\end{figure}

Figure~\ref{fig:seq:unequal:diag} shows model selection accuracy with four variants of DCSBM parameters. All plots have DCSBM with parameters $n = 5000$, $\theta_i \sim \text{Pareto}(3/4, 4)$. The top row is generated with a generalized version of $B_1$ as the connectivity matrix, given by 
\begin{align*}
	B_3 \propto (1-\beta)\diag(w) + \beta \mathbf{1} \mathbf{1}^{T}.
\end{align*}
Note that $B_1$ is a special case of $B_3$ where $w$ is the an all-ones vector. Here, we set $w=(1,2,3,1)$ under $K = 4$. The top left plot shows the case where the DCSBM has unbalanced community sizes proportional to $(1,1,2,3)$ and the right plot shows balanced community sizes.

The bottom row is generated based on the planted partition model, but with different community sizes and out-in-ratio than that in Figure~\ref{fig:model_select}. The bottom left plot has unbalanced community sizes proportional to $(1,2,3,4)$ and out-in-ratio $\beta = 0.2$ and the bottom right plot has balanced community sizes and out-in-ratio $\beta = 0.3$.
All methods have lower accuracy in the unbalanced setting except for the AS. BH is affected the most while \cacp the least. The robustness of \cacp  could be because its performance mainly relies on the full version of $\rho$  and unbalanced sizes retain its rows' distinction. However, the \scacp is still affected by the unbalanced community sizes because of the increased difficulty in recovering the correct labels and the increased variance in $\rho$ due to subsampling.

\subsection{ROC curves}\label{app:roc}

\begin{figure}[t]
	\centering
	\includegraphics[width=0.47\linewidth]{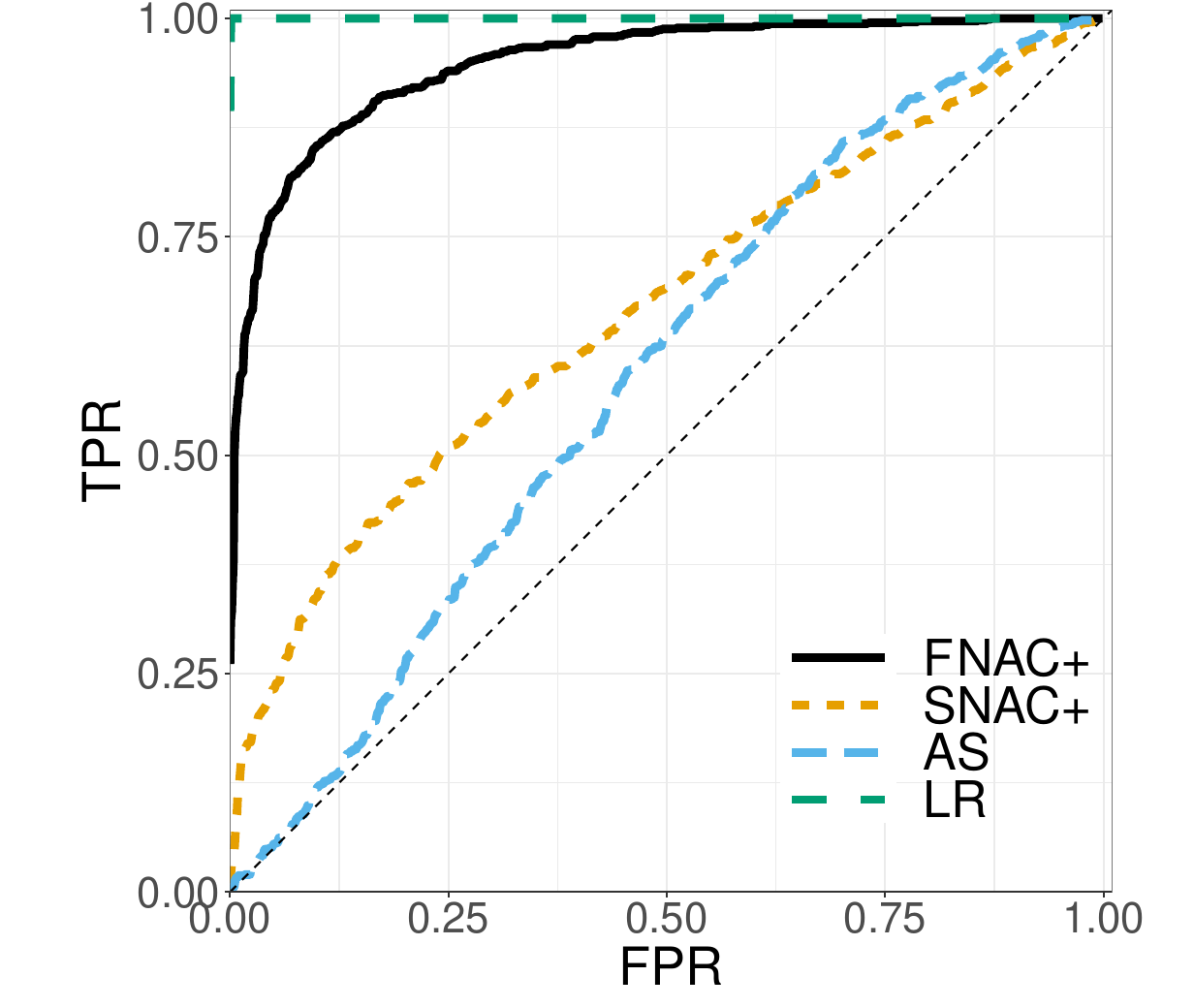} 
	\includegraphics[width=0.47\linewidth]{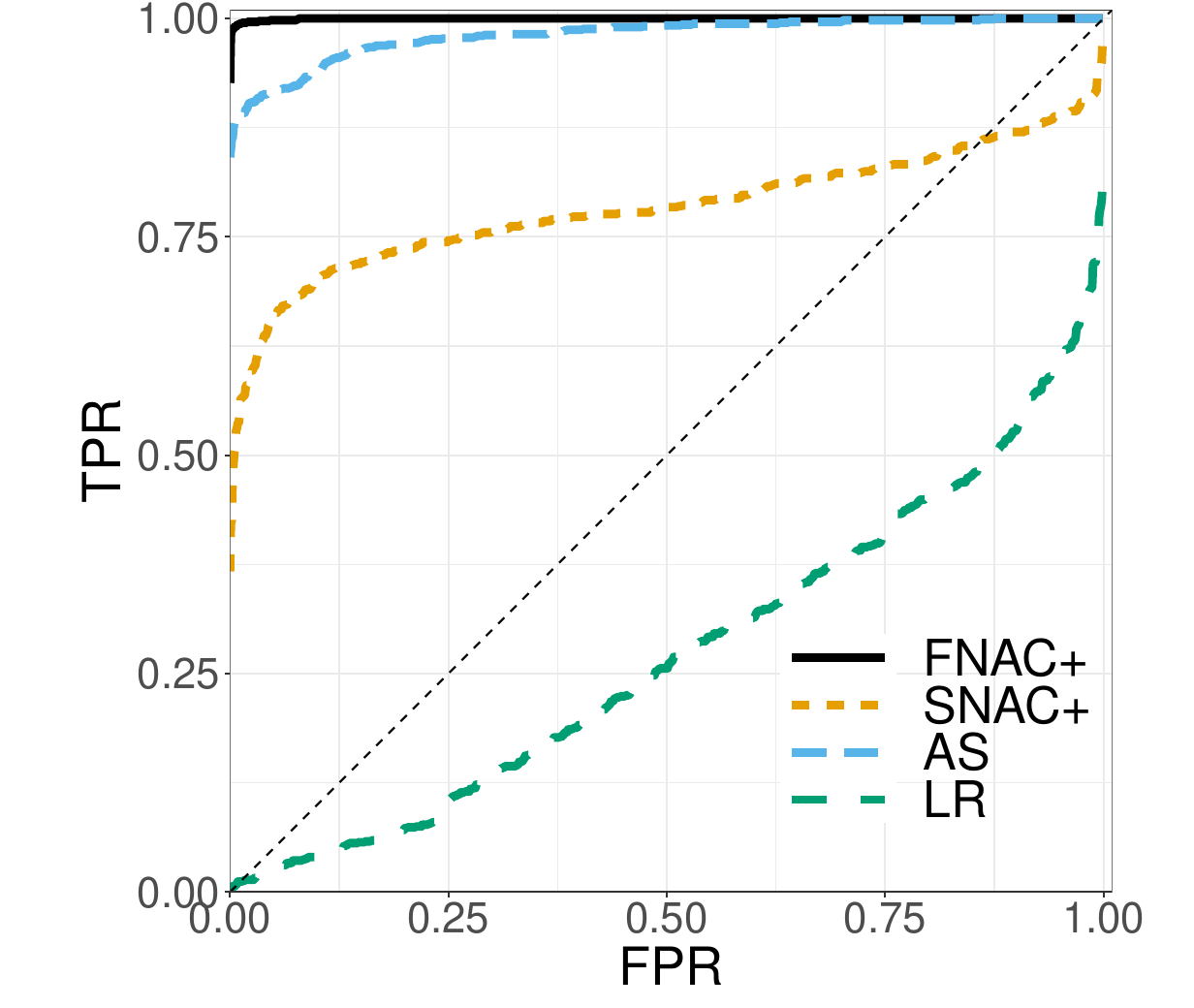} 
	\\
	\includegraphics[width=0.47\linewidth]{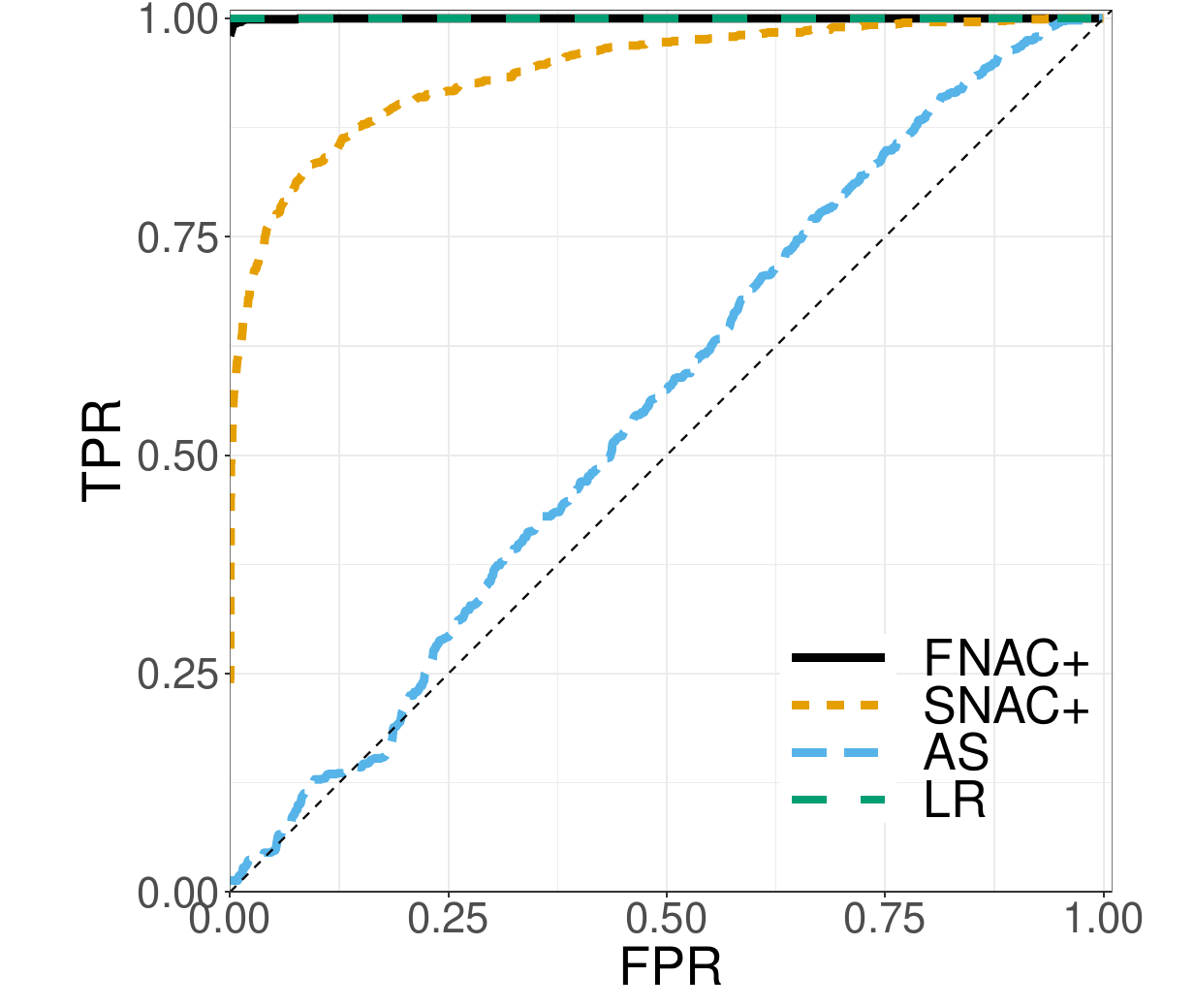}
\includegraphics[width=0.47\linewidth]{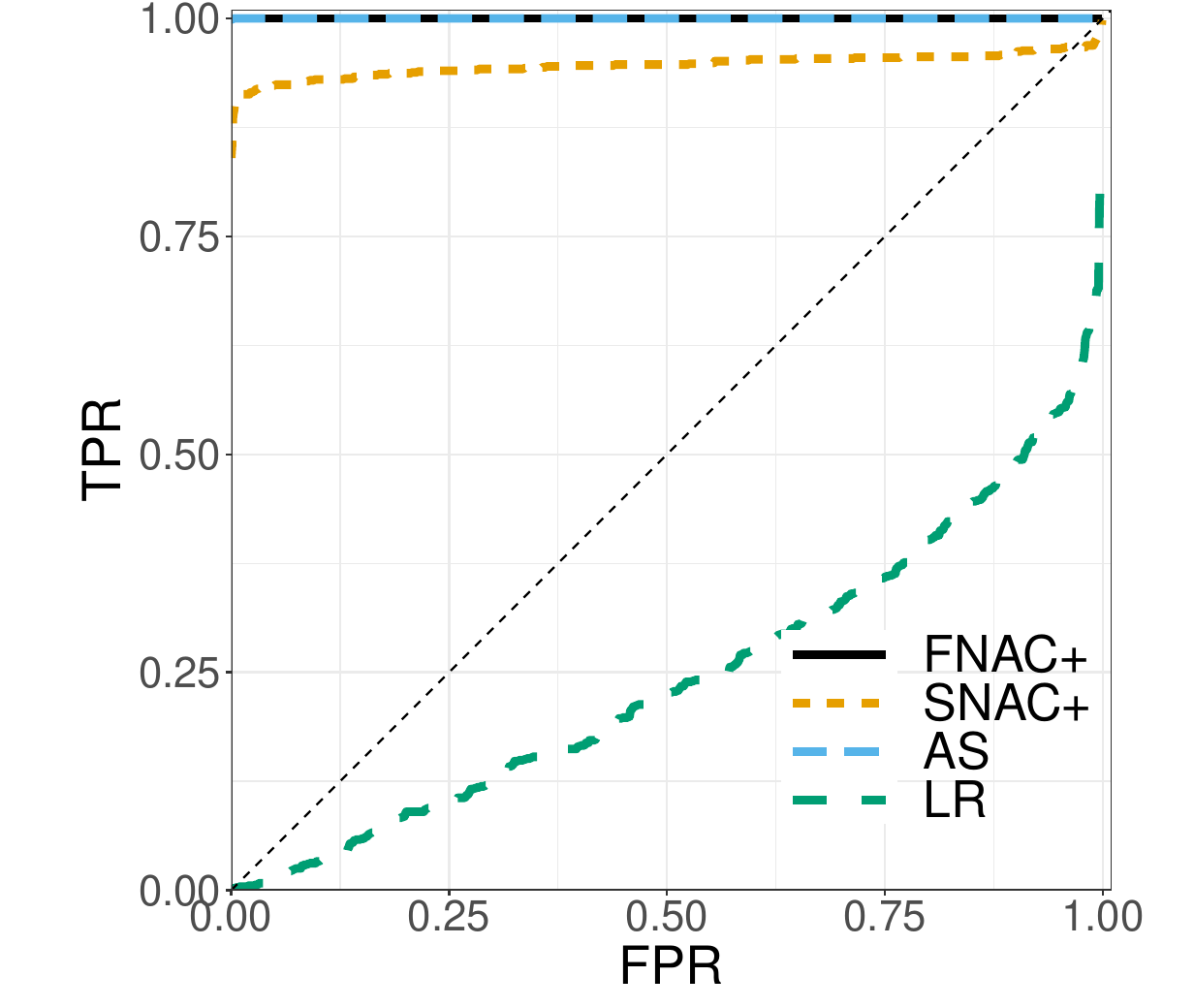}
	\caption{ROC plots for testing 4- versus 3-community models. Top and bottom rows correspond to $n=2000$ and  $n=10000$, respectively. Left and right columns correspond to the DCSBM and DCLVM alternatives, respectively. }
	\label{fig:roc:4vs3}
\end{figure}

We consider additional testing with $H_0: K =4$ vs. $H_a: K = 3$. Other DCSBM simulating parameters are the same as in Section \ref{sec:roc}. Figure \ref{fig:roc:4vs3} shows ROC curves for the null being DCSBM with $K=4$ and two alternatives: a DCSBM with $K = 3$ (left) and a DCLVM with $K = 3$ (right). In addition, we also have $n = 2000$ for the upper row and $n = 10000$ for the lower. Similar to Figure~\ref{fig:roc:4vs5}, the performance of the tests get better as $n$ increases. 
\cacp
and AS tests are nearly perfect  (achieve $100\%$ recovery for very small type I error) when the alternative is DCLVM. The LR test is almost perfect in distinguishing two DCSBMs but has very poor power when the alternative is a DCLVM.

\begin{figure}[t]
	\centering
    \includegraphics[width=0.47\linewidth]{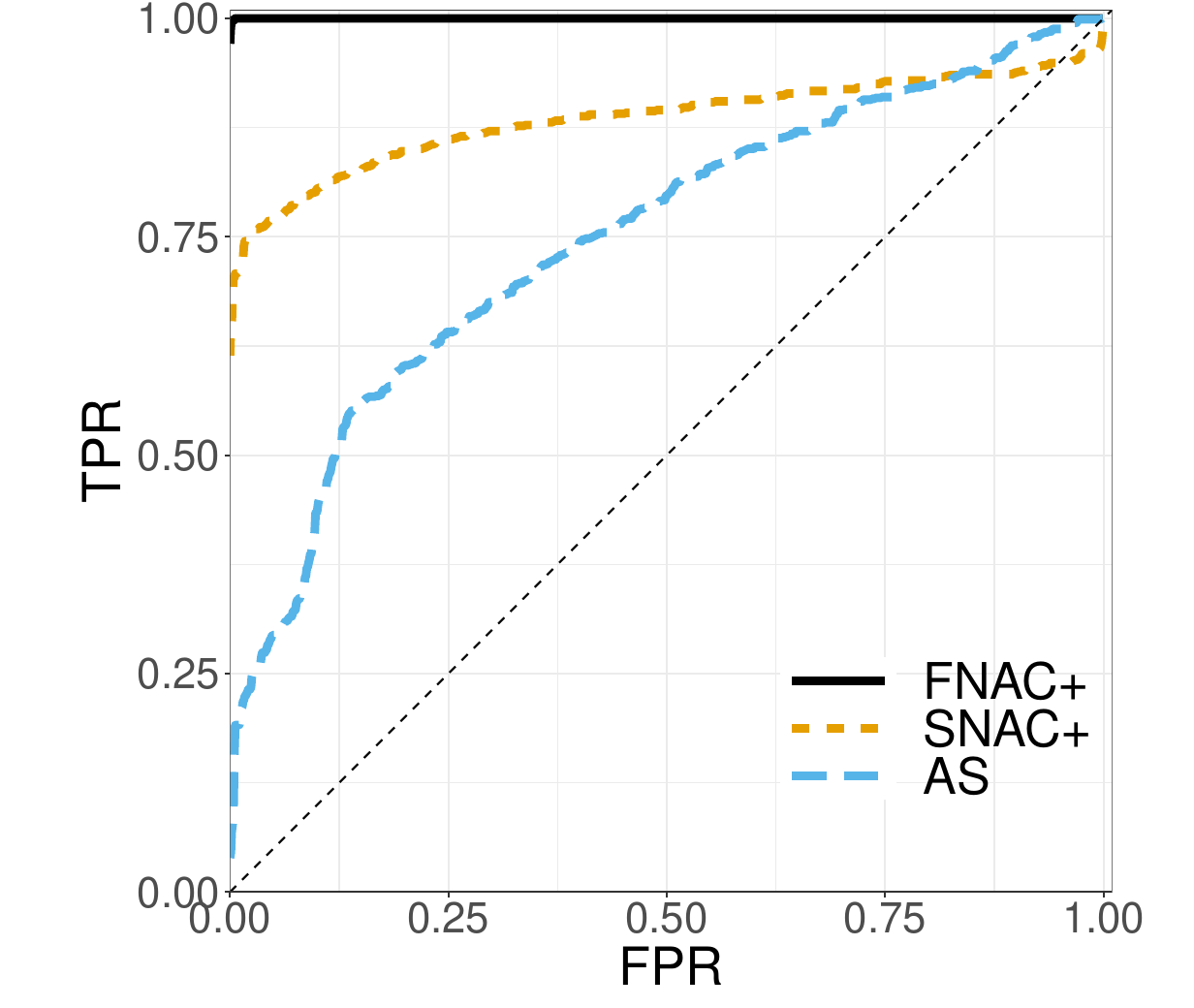} 
	\includegraphics[width=0.47\linewidth]{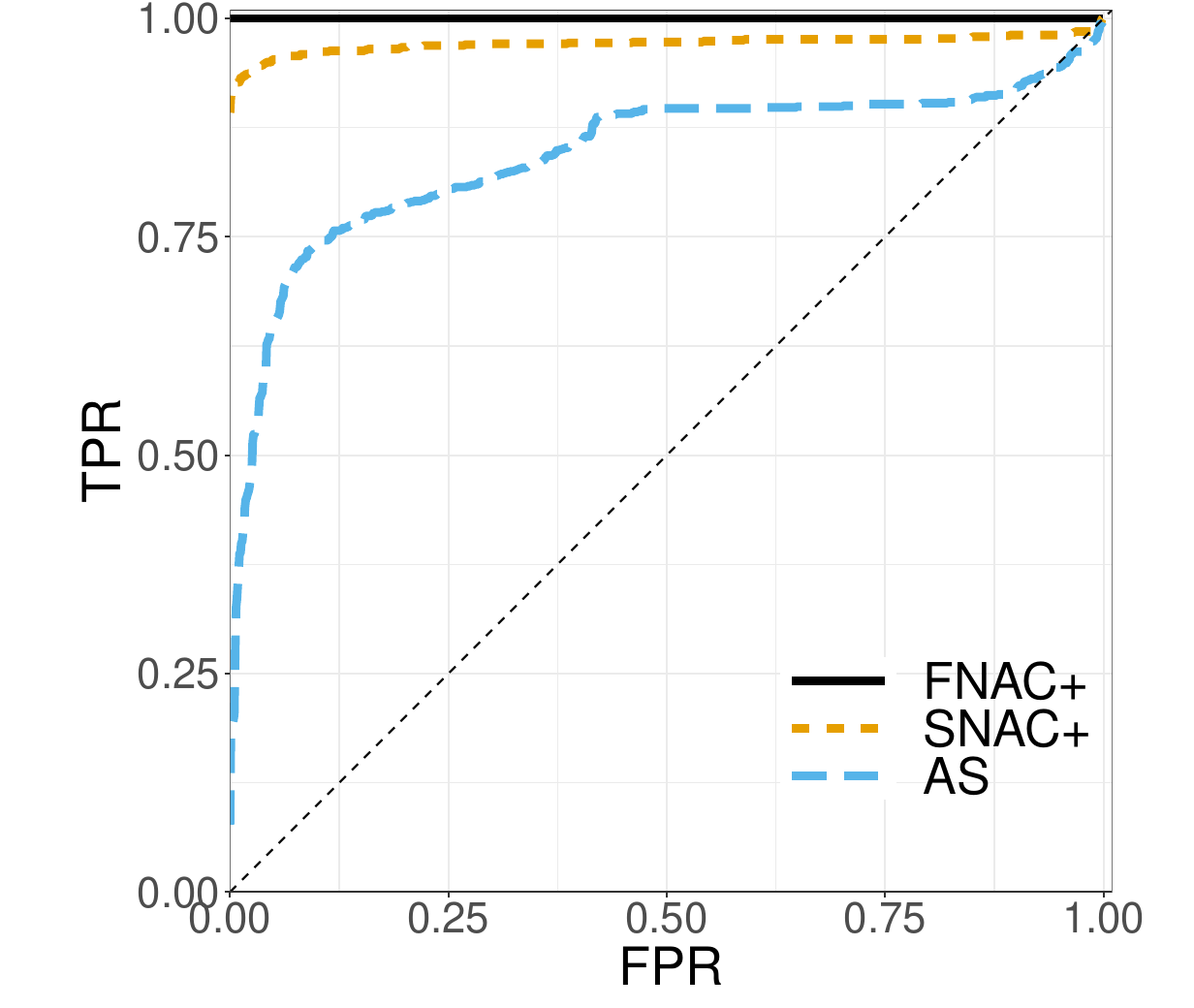}
	\caption{ROC plots for testing $H_0: K =4$ DCSBM vs. $H_a: K = 4$ DCLVM. Left has $n=2000$ and right $n=10000$.}
	\label{fig:roc:4vs4}
\end{figure}

We also include the test $H_0: K =4$, DCSBM vs. $H_a: K = 4$, DCLVM with similar parameters as in Figure~\ref{fig:roc:4vs4}. It shows that 
\cacp tests are still able to reject when the true model is a DCLVM with the same number of communties as the DCSBM. Note that we have excluded the LR test in this case, since it is the likelihood ratio of two fitted DCSBMs with different number of communities, but here we have models with the same number of communities. 

\subsection{More real network examples}\label{app:more:eg}

\begin{figure}[t!]
    \includegraphics[width=0.329\linewidth]{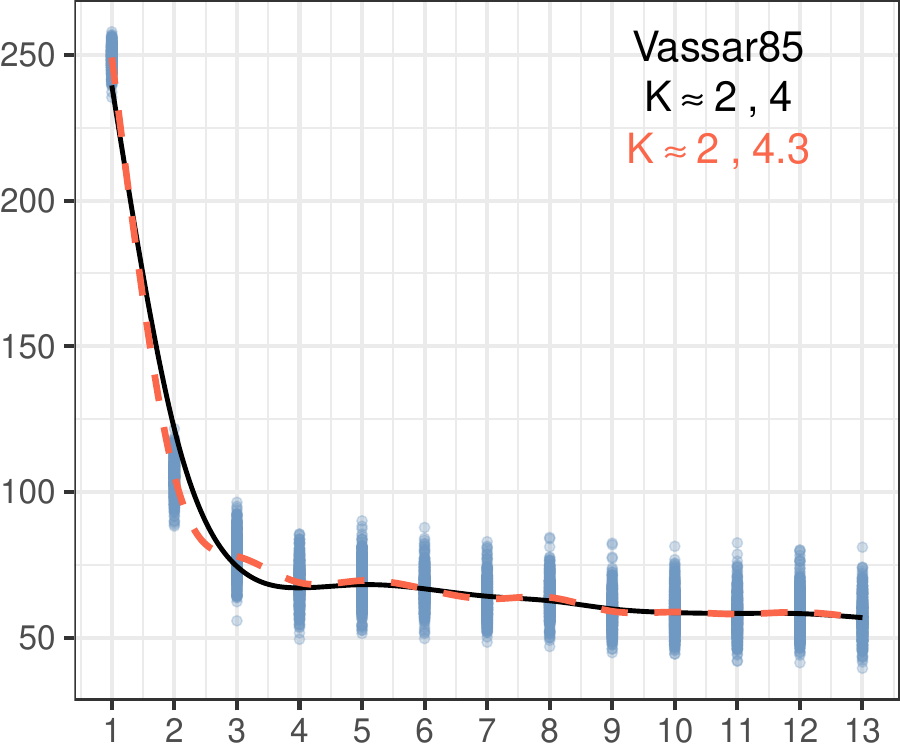}
    \includegraphics[width=0.329\linewidth]{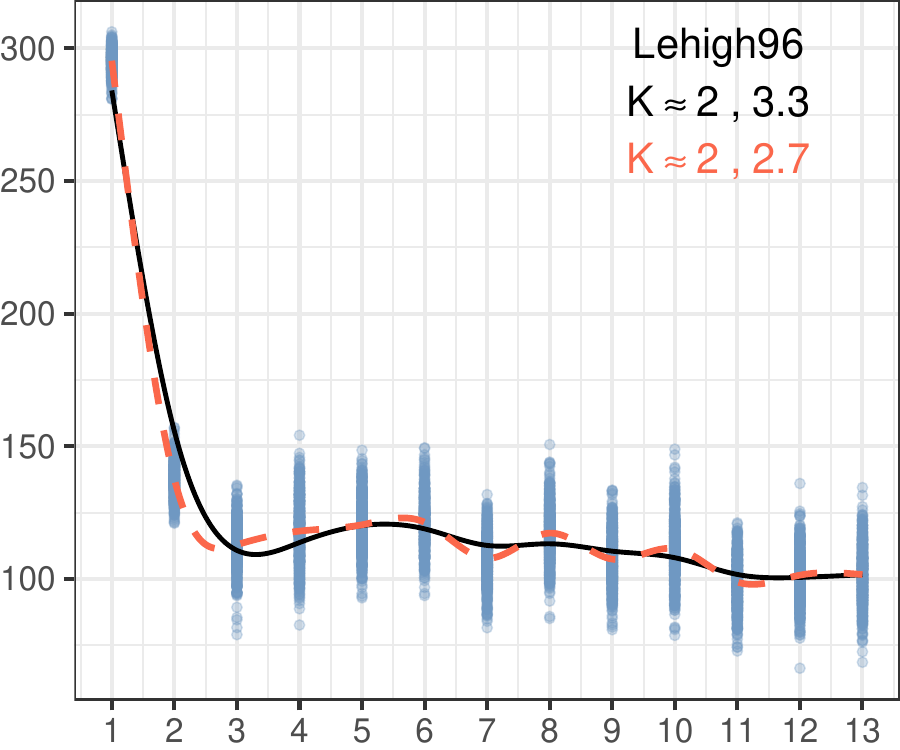}
    \includegraphics[width=0.329\linewidth]{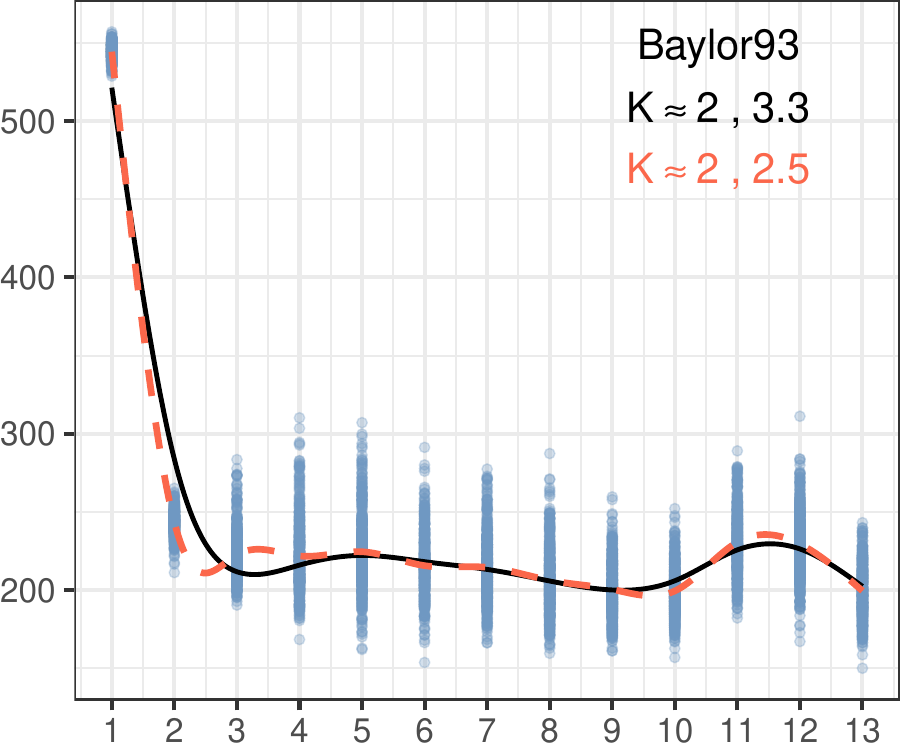}
    \includegraphics[width=0.329\linewidth]{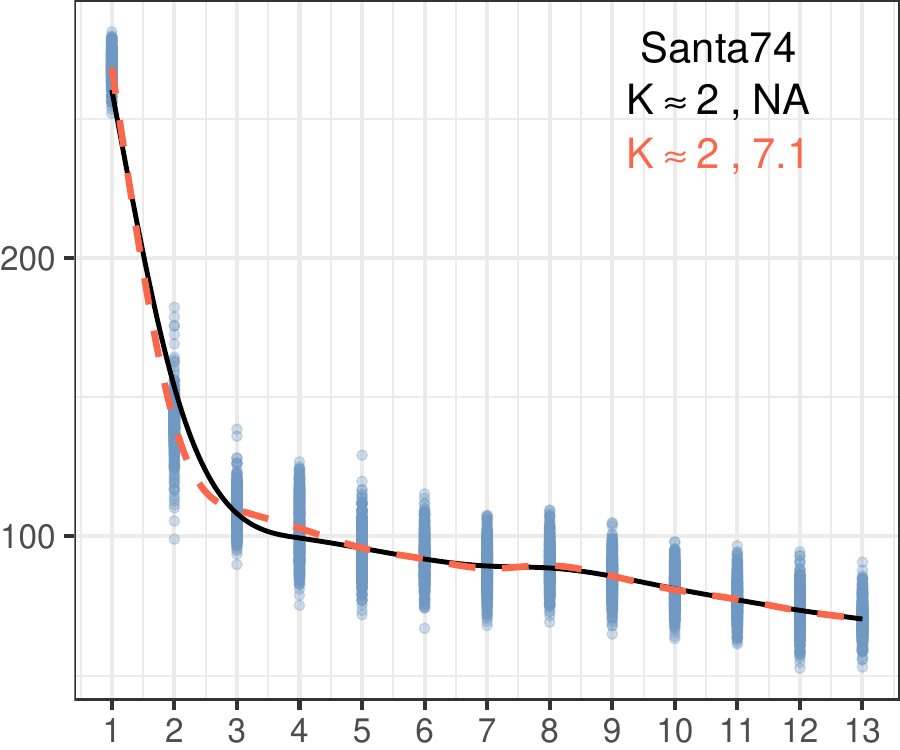}
    \includegraphics[width=0.329\linewidth]{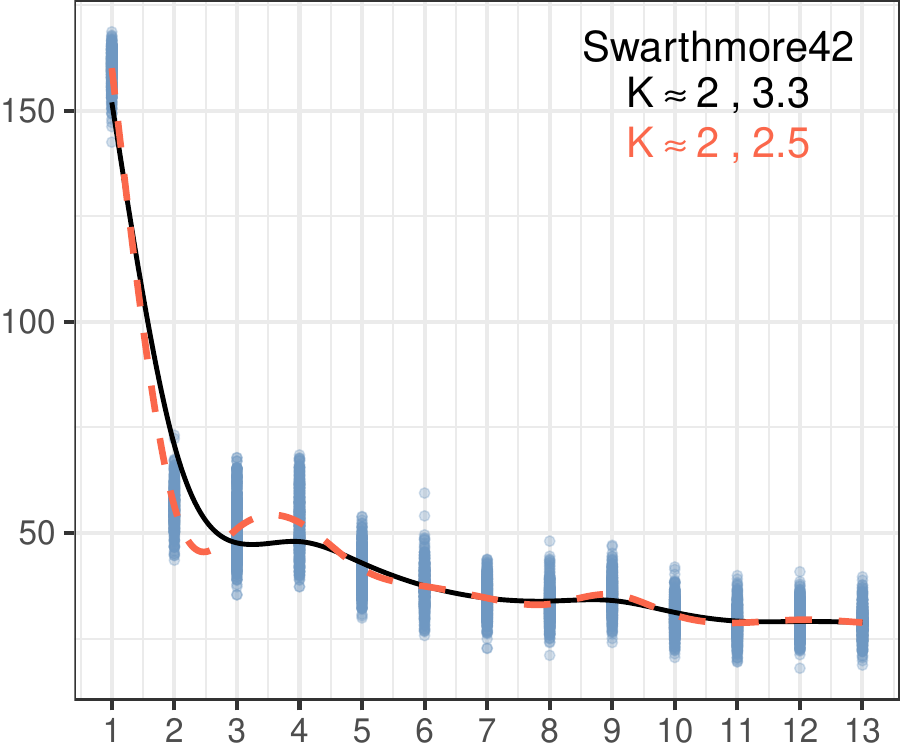}
    \includegraphics[width=0.329\linewidth]{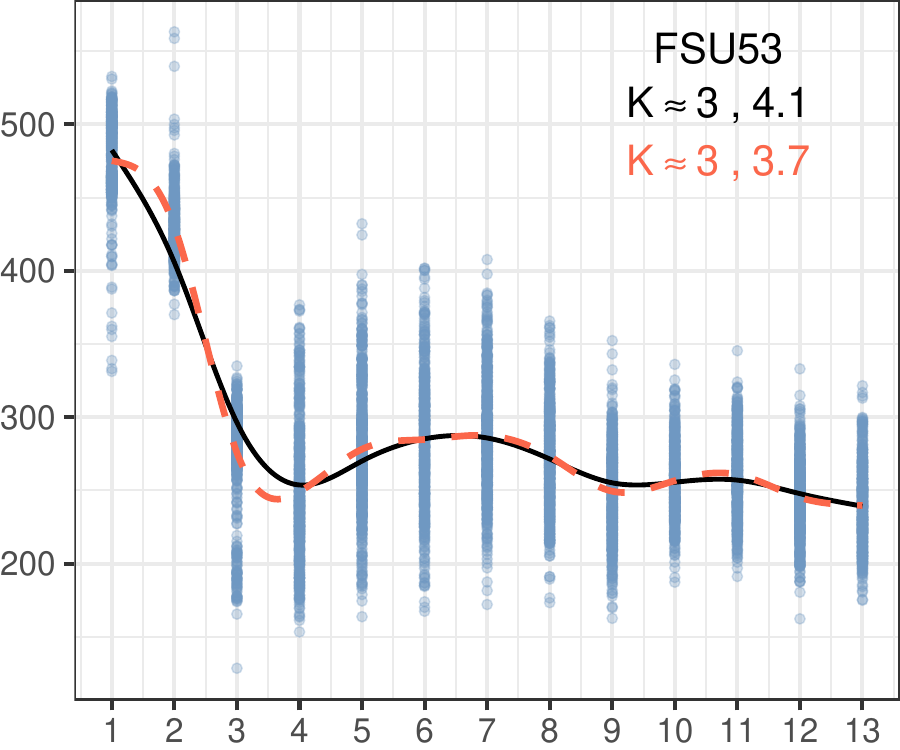}
	\caption{More examples on community profile plots from FB-100. They show a single elbow/dip pattern.}
\label{Fig:fb_profile_2}
\end{figure}

Figures~\ref{Fig:fb_profile_2} and \ref{Fig:fb_profile_3} provide more profile plots for the networks in the FB-100 dataset. The former collection shows profile plots with one-elbow pattern and the latter shows higher variability of \scacp statistics with multi-stage elbows/dips. We also point out that the Caltech network in  Figure~\ref{Fig:fb_profile_3} is the only FB-100 network for which \scacp drops to nearly zero (at $K=10$) within the range of candidate $K$. However, the statistic continues to decrease afterwards and does not show any dips/elbows like others. This suggests that although we cannot reject the null hypothesis of a DCSBM (with $K=10$) in this case, a DCSBM still might not be a good model for the network. That we cannot reject the null is most likely due to the small community sizes we get with $K=10$, leading to an insufficient signal.

\begin{figure}[t!]
    \includegraphics[width=0.329\linewidth]{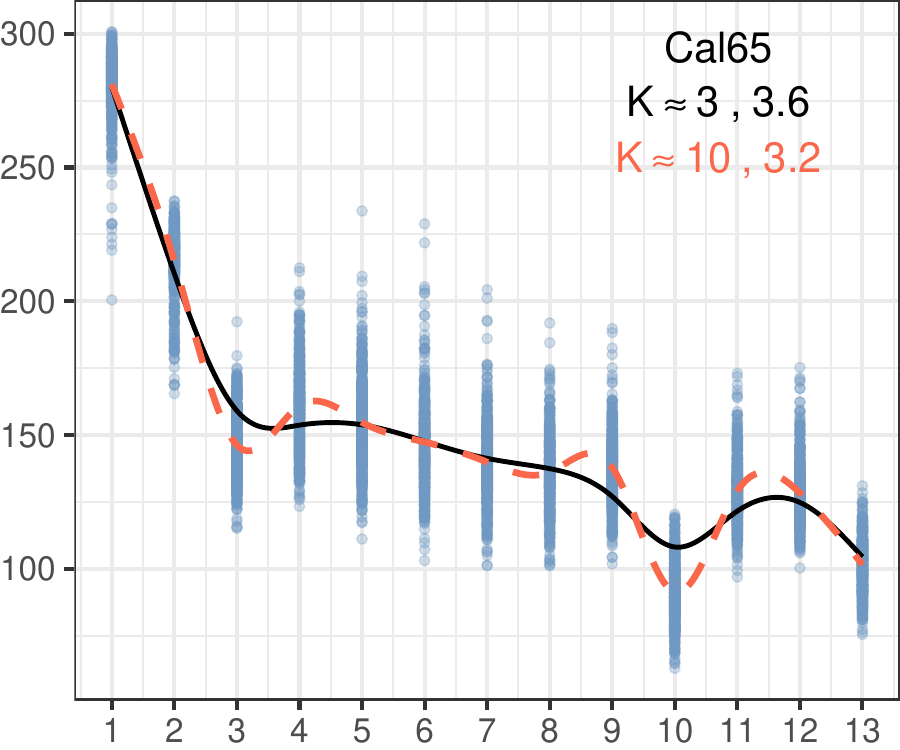}
	\includegraphics[width=0.329\linewidth]{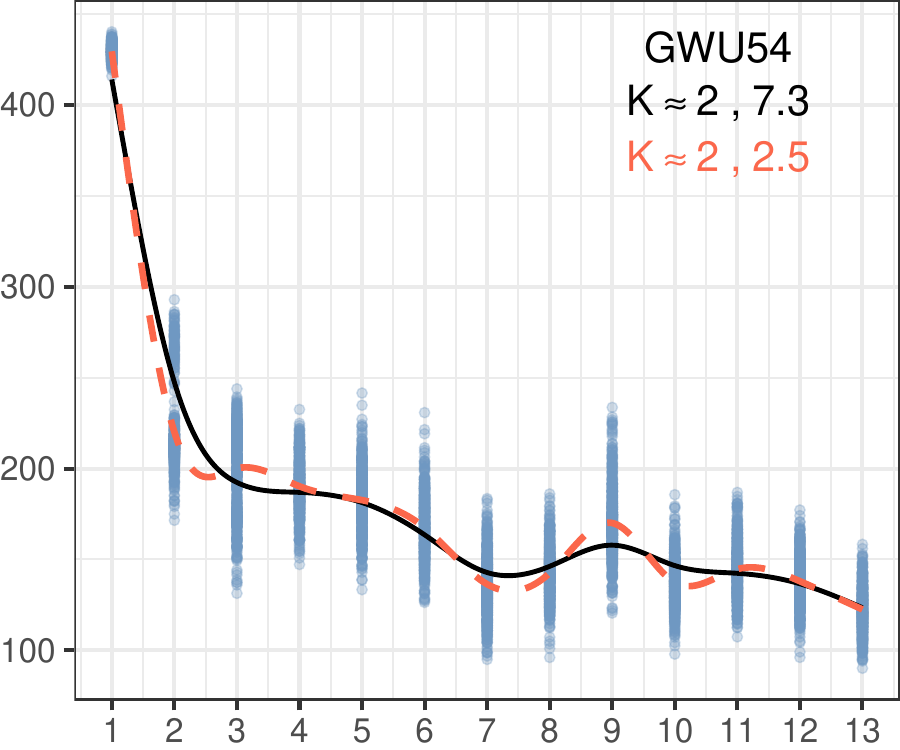}
    \includegraphics[width=0.329\linewidth]{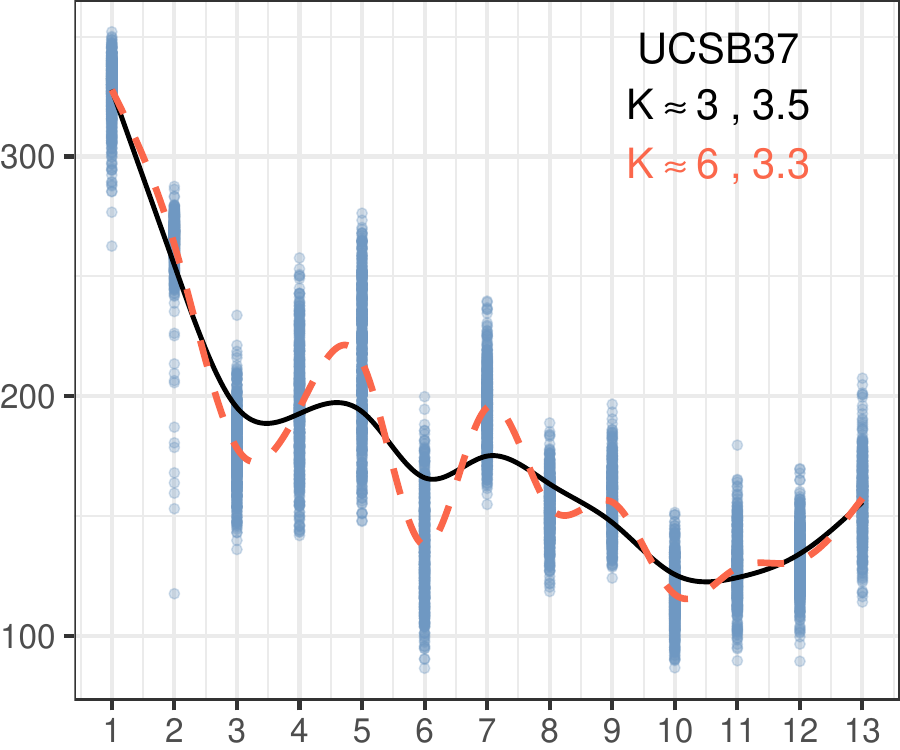}
    \includegraphics[width=0.329\linewidth]{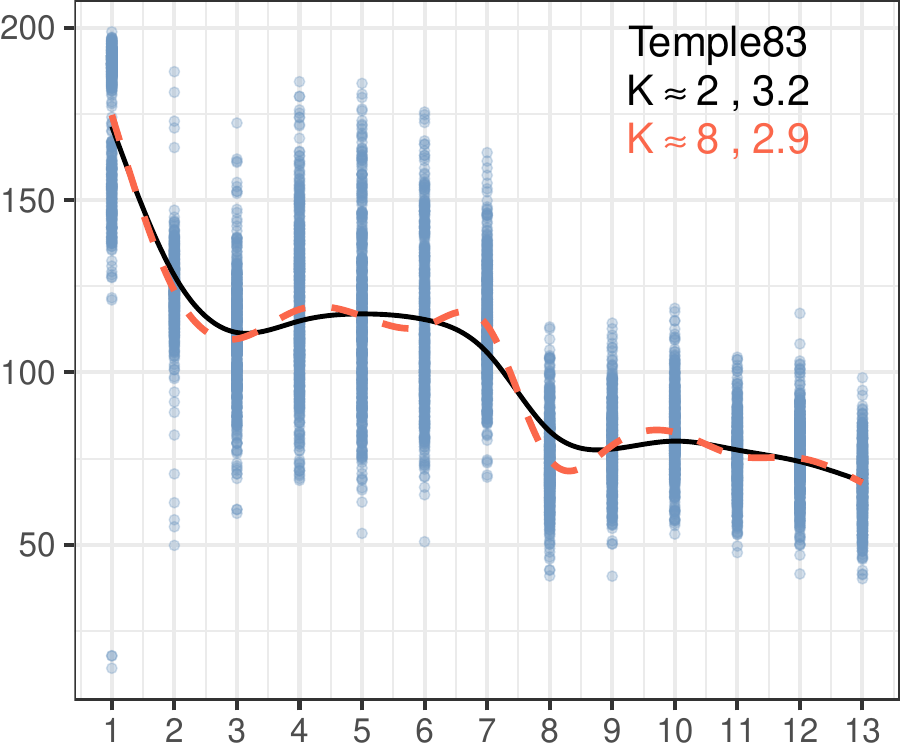}
    \includegraphics[width=0.329\linewidth]{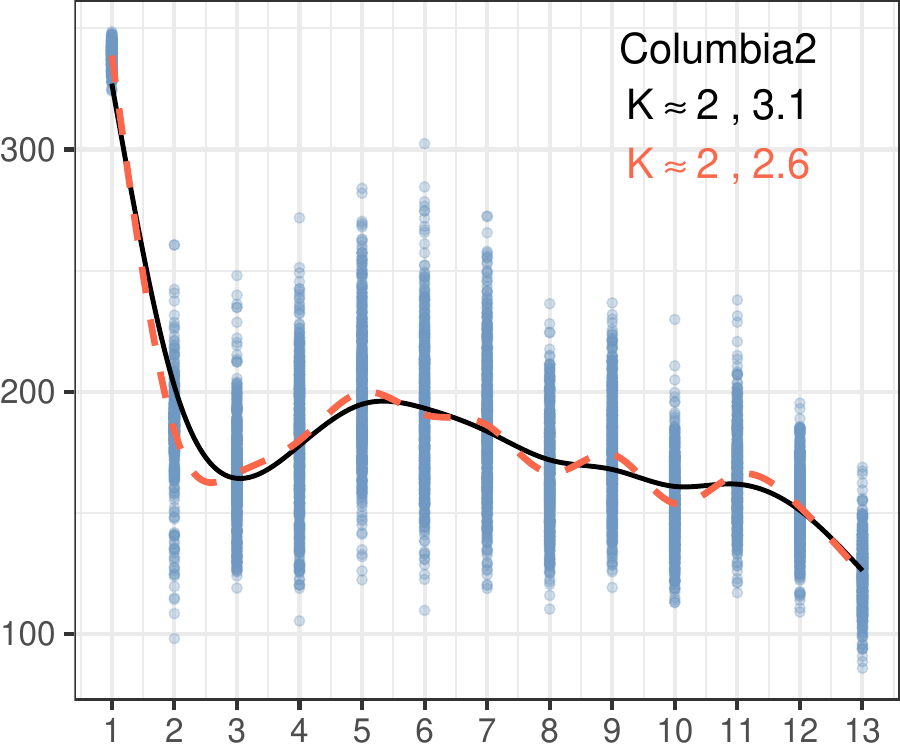}
	\includegraphics[width=0.329\linewidth]{figs_fb2/FSU53_SNAC+_smooth_500.pdf}
	\includegraphics[width=0.329\linewidth]{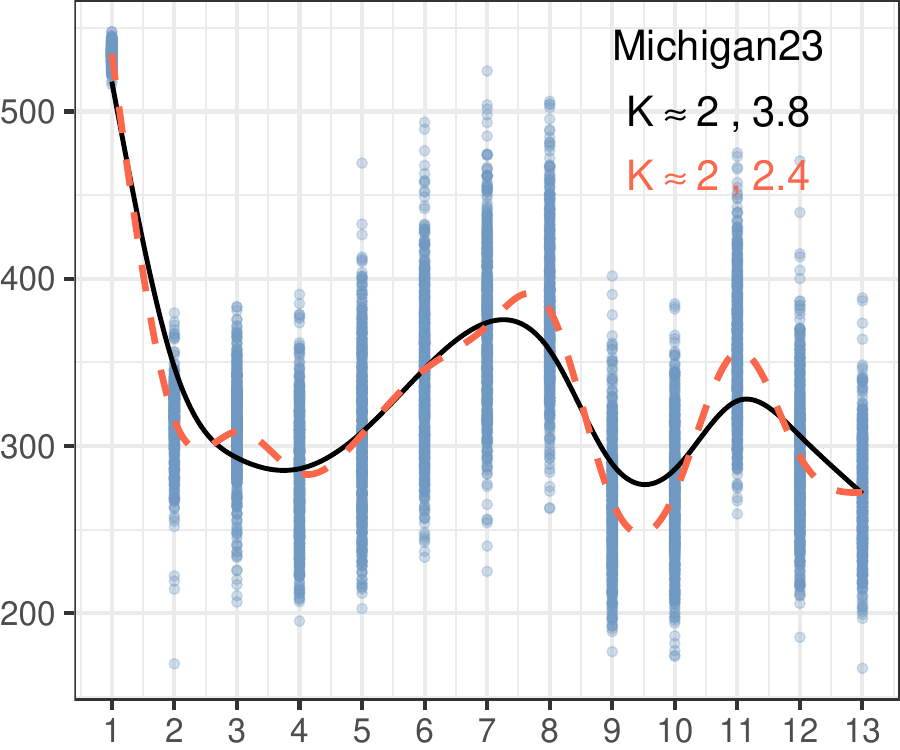}
    \includegraphics[width=0.329\linewidth]{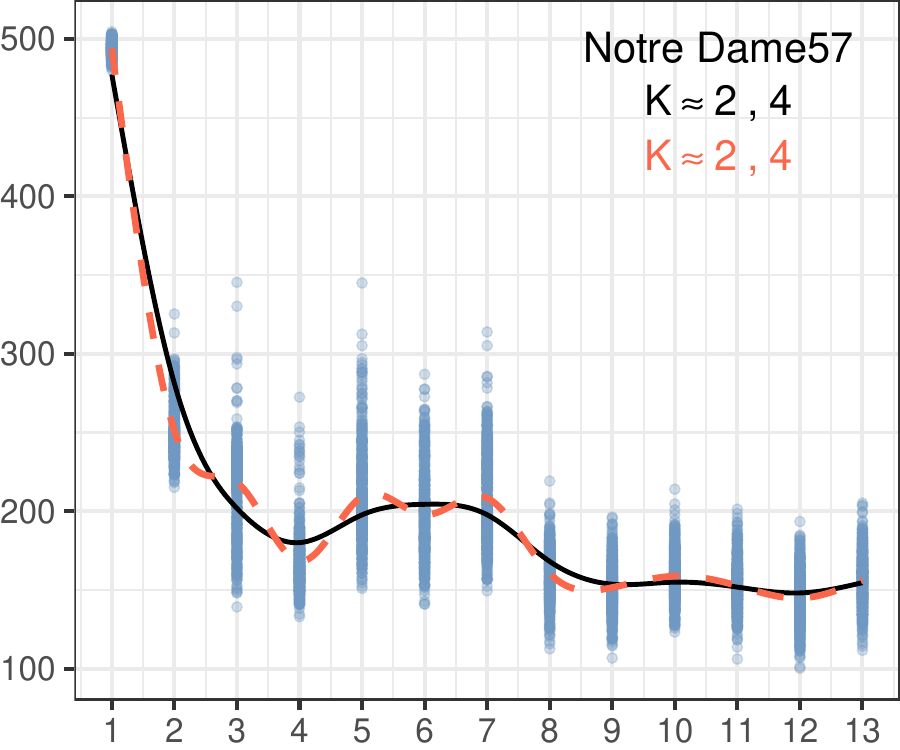}
    \includegraphics[width=0.329\linewidth]{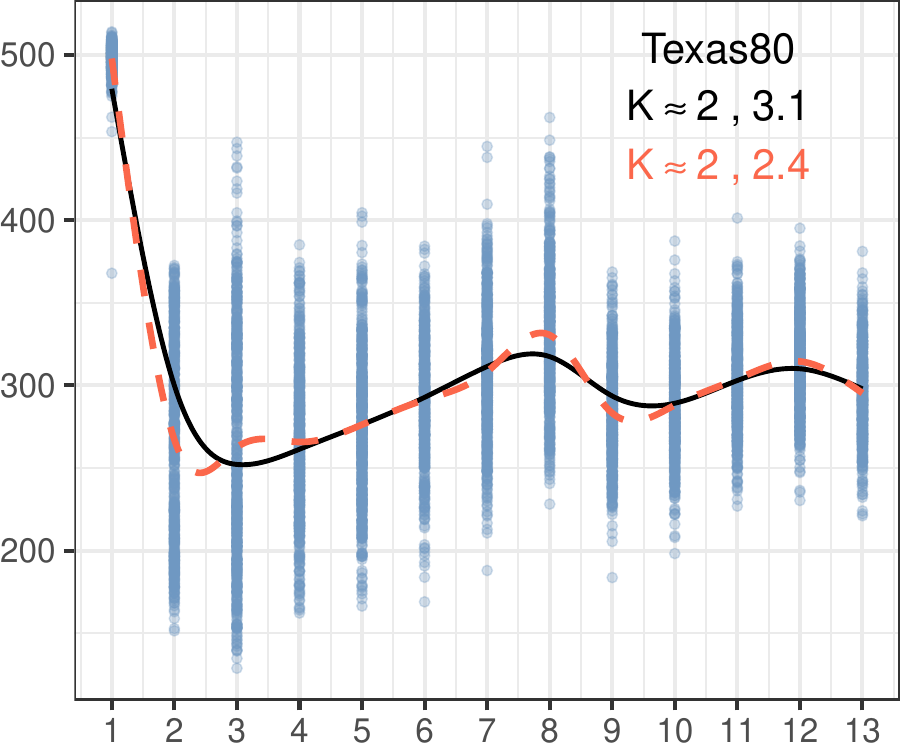}
    \includegraphics[width=0.329\linewidth]{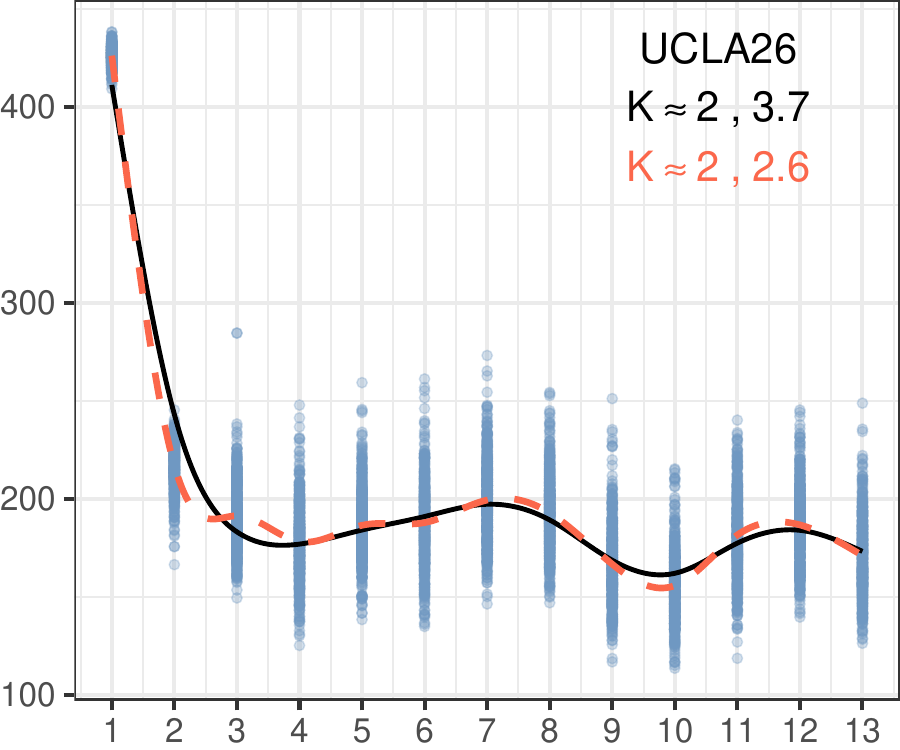}
    \includegraphics[width=0.329\linewidth]{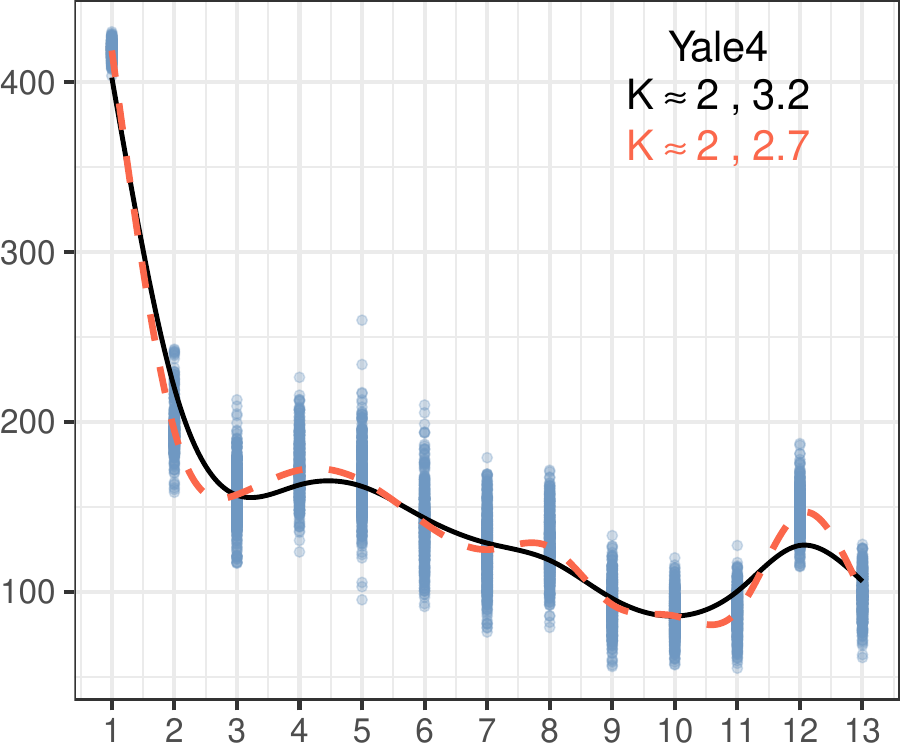}
    \includegraphics[width=0.329\linewidth]{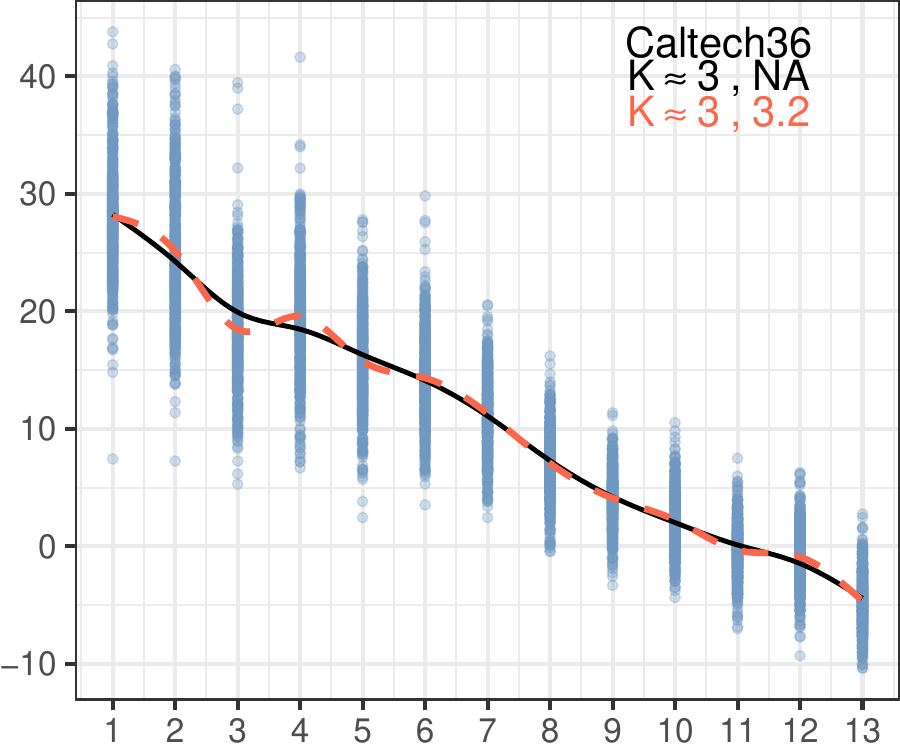}
	\caption{More examples on community profile plots from FB-100. They show a multiple elbows/dips pattern. }
\label{Fig:fb_profile_3}
\end{figure}

Figure~\ref{Fig:polblog_net} shows the profile plot for the political blog network and its community structure. In the profile plot, the elbow point identified by the largest second derivative is at $K=2$, matching the presumed ground truth number of communities in this case. The colored community structure also shows that the fitted two-community model gives a reasonable split of the nodes.

\begin{figure}[th!]
\includegraphics[width=0.49\linewidth]{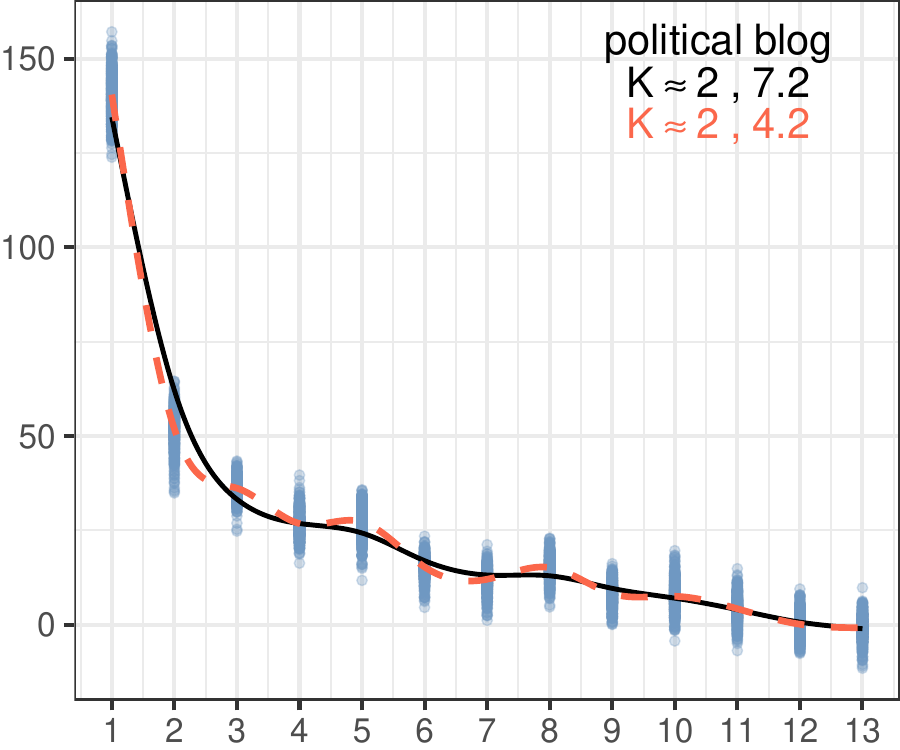}
    \includegraphics[width=0.49\linewidth]{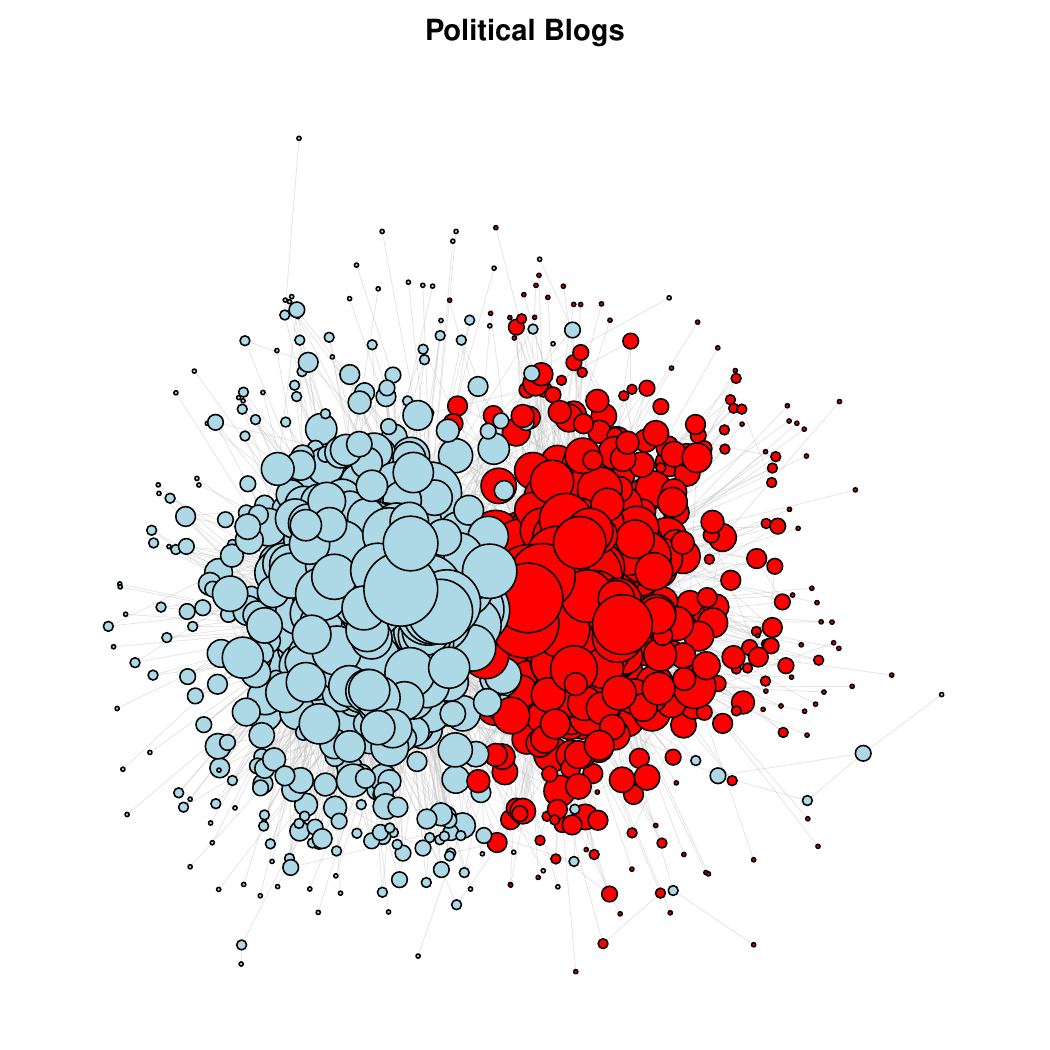}
    \caption{Political blog network: profile plot (left) and community structure (right)}
\label{Fig:polblog_net}
\end{figure}

\end{document}